\documentclass[12pt,reqno]{article}
\usepackage{colordvi}
\usepackage{amsmath,amsfonts,amsthm,amssymb}
\usepackage{graphicx}
\usepackage{verbatim}
\usepackage{color}
\usepackage{amscd}
\usepackage{verbatim}
\usepackage{graphicx}
\hbadness=10000

\usepackage{url}

\usepackage{setspace}

\usepackage[left=1.25in, right=1.25in, top=1in, bottom=1in]{geometry}
\usepackage{longtable}

\usepackage{CJKutf8}

\usepackage{amsmath,amssymb,amsthm,color,mathrsfs}
\usepackage{enumitem,anysize}%
\usepackage{verbatim}
\usepackage{amssymb}
\usepackage{tikz}
\usepackage{pgfplots}
\marginsize{1in}{1in}{1in}{1in}%

\newtheorem{theorem}{Theorem}[section]
\newtheorem{corollary}[theorem]{Corollary}
\newtheorem{cor}[theorem]{Corollary}
\newtheorem{Thm}[theorem]{Theorem}
\newtheorem{assertion}[theorem]{Assertion}
\newtheorem{lem}[theorem]{Lemma}
\newtheorem{lemma}[theorem]{Lemma}
\newtheorem{proposition}[theorem]{Proposition}

\newtheorem{exa}[theorem]{Example}

\newtheorem{definition}[theorem]{Definition}
\newtheorem{Def}[theorem]{Definition}
\newtheorem{ass}[theorem]{Assumption}
\newtheorem{assumption}[theorem]{Assumption}
\newtheorem{assumptions}[theorem]{Assumptions}
\newtheorem{remark}[theorem]{Remark}
\newtheorem{Rem}[theorem]{Remark}

\newtheorem*{claim*}{Claim}
\newtheorem*{notation}{Notation}

\newcommand{\diver}{\mathrm{div}\,}

\newcommand{\R}{\mathbb{R}}

\newcommand{\N}{\mathbb{N}}

\newcommand{\loc}{{\rm loc}}
\numberwithin{equation}{section}

\newcommand{\Hmm}[1]{\leavevmode{\marginpar{\tiny%
			$\hbox to 0mm{\hspace*{-0.5mm}$\leftarrow$\hss}%
			\vcenter{\vrule depth 0.1mm height 0.1mm width \the\marginparwidth}%
			\hbox to
			0mm{\hss$\rightarrow$\hspace*{-0.5mm}}$\\\relax\raggedright #1}}}

\DeclareMathOperator{\diam}{diam}
\DeclareMathOperator{\supp}{supp}
\DeclareMathOperator{\capacity}{Cap}
\newcommand{\core}{C_c^{\infty}(\Omega)}

\newcommand{\dx}{\,\mathrm{d}x}


\DeclareMathOperator{\dive}{div}

\def\<{\langle}
\def\>{\rangle}

\long\def\prob#1\soln#2\endps{{\color{blue}#1}\medskip\par
	\noindent\underline{\sc Solution}:\hspace*{1em}\parindent=2em #2}

\AtBeginDocument{\begin{CJK}{UTF8}{gbsn}}
\AtEndDocument{\end{CJK}}

            
       \def\gd{\delta}      
                         
\def\gf{\phi}       \def\vgf{\varphi}    
      \def\gk{\kappa}      \def\gl{\lambda}
\def\gm{\mu}

      \def\gw{\omega}

\def\Gw{\Omega}              
\begin{document}
\pagenumbering{gobble}
\title{\textbf{Positive solutions of the~$\mathcal{A}$-Laplace equation with a potential}}
\author{Yongjun Hou\thanks{Guangdong Technion -- Israel Institute of Technology, 241 Daxue Road, Shantou, Guangdong, China, 515063 and Department of Mathematics, Technion -- Israel Institute of Technology, Haifa, Israel, 3200003. {Email: yongjun.hou@campus.technion.ac.il; houmathlaw@outlook.com}},
Yehuda Pinchover\thanks{Department of Mathematics, Technion -- Israel Institute of Technology, Haifa, Israel, 3200003, Email: pincho@technion.ac.il}, and
Antti Rasila\thanks{Corresponding author. Guangdong Technion -- Israel Institute of Technology, 241 Daxue Road, Shantou, Guangdong, China, 515063 and  Department of Mathematics, Technion -- Israel Institute of Technology, Haifa, Israel, 3200003. Email: antti.rasila@iki.fi; antti.rasila@gtiit.edu.cn}}
\date{December 02, 2024}
\maketitle
\pagenumbering{arabic}

\vspace{-10mm}

	\begin{abstract}In this paper, we study positive solutions of the quasilinear elliptic equation
$$Q'_{p,\mathcal{A},V}[u]\triangleq-\mathrm{div}{\mathcal{A}(x,\nabla u)}+V(x)|u|^{p-2}u=0,$$
in a domain $\Omega\subseteq \mathbb{R}^n$, where $n\geq 2$, $1<p<\infty$, the divergence of $\mathcal{A}$ is the well known $\mathcal{A}$-Laplace operator considered in the influential book of  Heinonen, Kilpel\"{a}inen, and Martio, and the potential $V$ belongs to a certain local Morrey space. The main aim of the paper is to extend criticality theory to the operator $Q'_{p,\mathcal{A},V}$. In particular, we prove an Agmon-Allegretto-Piepenbrink (AAP) type theorem, establish the uniqueness and simplicity of the principal eigenvalue of $Q'_{p,\mathcal{A},V}$ in a domain $\omega\Subset\Omega$, and give various  characterizations of criticality. Furthermore, we also study positive solutions of the equation $Q'_{p,\mathcal{A},V}[u]=0$ of minimal growth at infinity in $\Omega$, the existence of a minimal positive Green function, and the minimal decay at infinity of Hardy-weights.
\end{abstract}
\medskip

\noindent  \emph{Mathematics Subject Classification:} Primary 35B09; Secondary 35B50, 35J08, 35J62.\\[2mm]
\noindent\emph{Key words:} Agmon-Allegretto-Piepenbrink theorem, $\mathcal{A}$-Laplacian, criticality theory, positive solutions, principal eigenvalue,   minimal growth.

\section{Introduction}\label{sec1}
Let $\Omega\subseteq \mathbb{R}^{n}~(n\geq 2)$ be a domain and $1<p<\infty$. In the monograph \cite{HKM} by Heinonen, Kilpel\"{a}inen, and  Martio, the authors studied a nonlinear potential theory for the second-order quasilinear elliptic operator~$\dive(\mathcal{A}(x,\nabla u))$, which is called the $\mathcal{A}$-Laplace operator (or in short, the $\mathcal{A}$-Laplacian). We recall that the $\mathcal{A}$-Laplacian might be degenerate or singular elliptic operator that satisfies some natural local regularity assumptions. In addition, it is assumed that the operator $\mathcal{A}$ is $(p-1)$-homogeneous and monotone in its second variable (for details, see Assumption \ref{ass8}).  Prototypes of the $\mathcal{A}$-Laplace operator are the $p$-Laplacian $\dive{\left(|\nabla u|^{p-2}\nabla u\right)}$ and the $(p,A)$-Laplacian
$$\dive{\left(|\nabla u|^{p-2}_{A}A\nabla u\right)}\triangleq
\diver\left((A(x)\nabla u\cdot\nabla u)^{(p-2)/2}A(x)\nabla u\right),$$
where $A$ is a locally bounded, {symmetric,} and locally uniformly positive definite matrix function (see, \cite{HKM,Pinchover,Regev,Tintarev}).

A systematic criticality theory has been developed for the $p$-Laplace operator and the~$(p,A)$-Laplace operator with a locally bounded potential in \cite{Tintarev} and \cite{Regev}, respectively. Furthermore, in \cite{Pinchover}, Pinchover and Psaradakis have extended the theory to the case of the $(p,A)$-Laplace operator with a potential in the local Morrey space. See \cite{Murata, Pinchoverlinear} for the criticality theory for the second-order linear  elliptic (not necessarily symmetric) case. We refer also to Pinsky's book \cite{Pinsky}, where the author studies this topic from the probabilistic point of view. Moreover, a criticality theory for Schr\"{o}dinger operators on graphs has also been established by Keller, Pinchover, and Pogorzelski in \cite{Keller}. {For recent papers on criticality theory for a certain class of quasilinear operators on graphs, see \cite{Fischer4, Fischer7}.} The theory has witnessed its applications in the works of Murata and Pinchover and  their collaborators (see recent examples in \cite{Beckus, KellerHardy, MT}). For the case of generalized Schr\"{o}dinger forms, we refer to \cite{Takeda2014, Takeda2016}.

Criticality theory has applications in a number of areas of analysis, for example, in spectral theory of Schr\"odinger operators \cite{Pinchoverlinear}, variational inequalities (like Hardy, Rellich, and  Hardy-Sobolev-Maz'ya type inequalities) \cite{Kovarik,HSM}, and stochastic processes \cite{Pinsky}.  Among the applications in PDE we mention results concerning the large time behavior of  the heat kernel \cite{PinchoverGreen}, Liouville-type theorems \cite{Lioupincho}, the behavior of the minimal positive Green function \cite{PinchoverGreen2,Pinchoverlinear}, and the asymptotic behavior of positive solutions near an isolated singularity \cite{Fraas}.

The goal of the present paper is to extend the results in \cite{Pinchover,Regev,Tintarev} concerning positive solutions of the homogeneous quasilinear equation$$Q'_{p,A,V}[u]\triangleq -\dive{\left(|\nabla u|^{p-2}_{A(x)}A(x)\nabla u\right)}+V(x)|u|^{p-2}u=0\quad \mbox{ in } \Omega,$$  to the equation
	$$Q'_{p,\mathcal{A},V}[u]\triangleq -\dive{\mathcal{A}(x,\nabla u)}+V(x)|u|^{p-2}u=0\quad \mbox{ in } \Omega.$$

The latter equation is the {\em local} Euler-Lagrange equation of the energy functional
$$Q_{p,\mathcal{A},V}[\vgf]\triangleq Q_{p,\mathcal{A},V}[\vgf;\Omega]\triangleq \int_{\Omega}\left(\mathcal{A}(x,\nabla \vgf)\cdot\nabla \vgf + V(x)|\vgf|^{p}\right)\dx \qquad \vgf\in\core.$$
Note that the equation $Q'_{p,\mathcal{A},V}[u]=0$ (and in particular, $Q'_{p,A,V}[u]=0$) is {\em half-linear}, that is, if $v$ is a solution of this equation, then for every $c\in\R$, $cv$ is also a solution.
 We assume that the potential $V$ belongs to
 the local Morrey space $M^{q}_{\loc}(p;\Omega)$ associated with the exponent $p$ (see Definitions~\ref{Morreydef1} and \ref{Morreydef2}), which is almost the largest class of potentials that  guarantees the validity of the Harnack inequality and the H\"older continuity of solutions (see also \cite[p. 1319]{Pinchover}).  The assumptions on $\mathcal{A}$ are as in \cite{HKM} (see Assumption \ref{ass8}). In addition, {two  further assumptions (assumptions \ref{ass2} and \ref{ngradb})
are} assumed to prove certain important results in Sections~\ref{sec_eigenvalue}, \ref{criticality}, and \ref{minimal}. In fact, {Assumption \ref{ass2}} is utilized in two different ways. One is direct (see Proposition \ref{mainlemma}). The other is indirect, i.e., via the D\'{\i}az-Sa\'{a}-type inequality Lemma~\ref{elementary} (see Theorem \ref{maximum}).

 Our main results include the existence, uniqueness, and simplicity of the principal eigenvalue of the operator~$Q'_{p,\mathcal{A},V}$ in a domain~$\omega\Subset\Omega$, a weak comparison principle,  and the criticality theory for $Q'_{p,\mathcal{A},V}$. Moreover, based on a  Picone-type identity and a generalized H\"older inequality (see Lemma~\ref{ass1}), two alternative proofs of Agmon-Allegretto-Piepenbrink type (AAP) theorem are given (see Lemma \ref{lem_alter} and Theorem \ref{thm_AAP}, see also \cite{Agmon, Allegretto1974}, and also \cite{Pinchover} for a short updated review on the AAP theorem). In addition, we characterize in a Lipschitz domain~$\omega\Subset\Omega$ the validity of the generalized strong/weak maximum principles and the unique solvability in $W^{1,p}_0(\gw)$ of a nonnegative solution of the Dirichlet problem $Q'_{p,\mathcal{A},V}[v]=g\geq 0$ with~$g\in L^{p'}(\omega)$ via the strict positivity of the principal eigenvalue.

 The paper is organized as follows. In Section \ref{back},
 we introduce a variational Lagrangian $F$ and then obtain from $F$ the operator~$\mathcal{A}$ by virtue of \cite[Lemma 5.9]{HKM}. We establish a generalized H\"{o}lder inequality (Lemma~\ref{ass1}) which is a key result used to prove several fundamental results, and formulate the additional assumption discussed above (Assumption~\ref{ass2}). In addition, we recall the definition of the associated local Morrey spaces, and the Morrey-Adams theorem, which is an essential tool for our study. Finally, we define the notion of weak solutions of the quasilinear equation $Q'_{p,\mathcal{A},V}[u]=0$.
In Section \ref{toolbox}, we present certain a priori properties of weak solutions of the quasilinear equation $Q'_{p,\mathcal{A},V}[u]=0$, including Harnack-type inequalities, local H\"{o}lder estimate, and the Harnack convergence principle.

 In Section \ref{sec_eigenvalue}, we first extend D\'{\i}az-Sa\'{a} type inequalities, and then prove the coercivity and weak lower semicontinuity of certain related functionals. We also establish a Picone-type identity. Then we show that in a domain~$\omega\Subset\Omega$, the generalized principal eigenvalue is a principal eigenvalue, that is a Dirichlet eigenvalue with a nonnegative eigenfunction. Moreover, we prove that the generalized principal eigenvalue is simple. With these preliminaries, we also study the generalized weak and strong maximum principles, the positivity of the generalized principal eigenvalue and other related properties. Furthermore, we establish a weak comparison principle by virtue of the super/sub-solution technique.

In Section \ref{AP}, we prove for our setting the corresponding AAP type theorem which turns out to be closely related to the existence of solutions of a certain nonlinear first-order equation of the divergence type. As a result, we show that the AAP  theorem implies the uniqueness of the principal eigenvalue in a domain~$\omega\Subset\Omega$.

In Section \ref{criticality}, we establish a systematic criticality theory for the operator $Q'_{p,\mathcal{A},V}$ with applications to a Hardy-Sobolev-Maz'ya inequality and the $(\mathcal{A},V)$-capacity.

In Section \ref{minimal}, we study the removability of an isolated singularity. We also show that the criticality of~$Q_{p,\mathcal{A},V}$ is equivalent to the existence of a global minimal positive solution. Moreover, we prove that the existence of a minimal positive Green function, with an additional assumption in the case of~$p>n$, implies the subcriticality  of $Q_{p,\mathcal{A},V}$. Finally, we extend the results in \cite{Kovarik}  and answer the question: How large can Hardy-weights be?
\section{$\mathcal{A}$-Laplacian  and Morrey potentials}\label{back}
In this section, we introduce the~$\mathcal{A}$-Laplace operator.  We recall the local Morrey space where our potential~$V$ lies and the Morrey-Adams theorem, which are defined and proved in \cite{Pinchover}. Finally, we define weak solutions and supersolutions of the quasilinear elliptic equation~$Q'_{p,\mathcal{A},V}[v]=g$.

Let $g_1,g_2$ be two positive functions defined in $\Gw$. We use the notation $g_1\asymp g_2$ in
$\Gw$ if there exists a positive constant $C$ such that $C^{-1}g_{2}(x)\leq g_{1}(x) \leq Cg_{2}(x)$ for all  $x\in \Gw$.
\subsection{Variational Lagrangian $F$ and its gradient $\mathcal{A}$}
In this subsection, we present a variational Lagrangian $F$ which satisfies certain desired conditions. Then we  define the $\mathcal{A}$-Laplacian as the divergence of the gradient of $F$.
\subsubsection{Variational Lagrangian $F$}
Following the assumptions in {\cite[page 97]{HKM}}, we list below our structural and regularity assumptions on the variational Lagrangian $F.$
	\begin{assumptions}\label{ass9}
		{\em
		\label{assump1}
		Let~$\Omega\! \subseteq \!\R^{n}$ be a nonempty domain, let $F:\Gw\times \mathbb{R}^{n} \! \rightarrow \!\mathbb{R}_+$, and let $1\!<\!p\!<\!\infty$. We assume that $F$ satisfies the following conditions:
		\begin{itemize}
  \item {\bf Measurability:} For all~$\xi\in\mathbb{R}^{n}$, the mapping $x\mapsto F(x,\xi)$ is measurable in $\Gw$.
\item {\bf Ellipticity:} For  all $\omega\Subset \Gw$ there exist $0<\kappa_\omega\leq\nu_\omega<\infty$ such that for almost all $x\in \omega$ and all $\xi\in \R^n$,
$\kappa_\omega|\xi|^{p}\leq F(x,\xi)\leq\nu_\omega|\xi|^{p}$.
\item {\bf Convexity and differentiability with respect to $\xi$}: For a.e.~$x\in \Gw$, the mapping $\xi\mapsto F(x,\xi)$ is strictly convex and continuously differentiable in $\R^n$.
\item {\bf Homogeneity:} $F(x,\lambda\xi)=|\lambda|^{p}F(x,\xi)$ for a.e.~$x\in \Gw$, all~$\lambda\in\mathbb{R}$, and all~$\xi\in\mathbb{R}^{n}$.
\end{itemize}		
}
	\end{assumptions}
The following is a useful inequality derived directly from the strict convexity of $F$.
\begin{lemma}[{\cite[Lemma 5.6]{HKM}}]\label{strictconvexity}
For a.e.~$x\in\Omega$ and all~$\xi_{1},\xi_{2}\in\R^{n}$ with~$\xi_{1}\neq\xi_{2}$, we have:
  $$F(x,\xi_{1})-F(x,\xi_{2})>\nabla_{\xi}F(x,\xi_{2})\cdot(\xi_{1}-\xi_{2}).$$
\end{lemma}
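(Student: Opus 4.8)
The plan is to reduce the multivariate inequality to a one–dimensional convexity statement along the line segment joining $\xi_2$ to $\xi_1$. Fix a point $x\in\Omega$ at which $\xi\mapsto F(x,\xi)$ is strictly convex and continuously differentiable (this holds for a.e.\ $x$ by Assumptions~\ref{ass9}), and fix $\xi_1\neq\xi_2$ in $\R^n$. Define $\varphi:[0,1]\to\R$ by $\varphi(t)\triangleq F\big(x,\xi_2+t(\xi_1-\xi_2)\big)$. Since $t\mapsto \xi_2+t(\xi_1-\xi_2)$ is affine and $F(x,\cdot)$ is $C^1$, the chain rule shows that $\varphi$ is continuously differentiable on $[0,1]$ with
$$\varphi'(t)=\nabla_\xi F\big(x,\xi_2+t(\xi_1-\xi_2)\big)\cdot(\xi_1-\xi_2),$$
so that $\varphi(0)=F(x,\xi_2)$, $\varphi(1)=F(x,\xi_1)$, and $\varphi'(0)=\nabla_\xi F(x,\xi_2)\cdot(\xi_1-\xi_2)$, which is exactly the quantity on the right-hand side of the claimed inequality.

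Next I would record that $\varphi$ is strictly convex on $[0,1]$: the map $t\mapsto \xi_2+t(\xi_1-\xi_2)$ is injective because $\xi_1\neq\xi_2$, and precomposing a strictly convex function with an injective affine map preserves strict convexity. A $C^1$ convex function of one variable has non-decreasing derivative; if that derivative failed to be strictly increasing, it would be constant on some nondegenerate subinterval $[a,b]\subseteq[0,1]$, forcing $\varphi$ to coincide there with the affine function $s\mapsto\varphi(a)+\varphi'(a)(s-a)$, contradicting strict convexity. Hence $\varphi'$ is strictly increasing, and in particular $\varphi'(t)>\varphi'(0)$ for every $t\in(0,1]$.

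Finally, by the fundamental theorem of calculus together with the strict monotonicity just established,
$$F(x,\xi_1)-F(x,\xi_2)=\varphi(1)-\varphi(0)=\int_0^1\varphi'(t)\dt>\int_0^1\varphi'(0)\dt=\varphi'(0)=\nabla_\xi F(x,\xi_2)\cdot(\xi_1-\xi_2),$$
where the strict inequality is justified because $\varphi'$ is continuous and satisfies $\varphi'(t)>\varphi'(0)$ on all of $(0,1]$, hence on a set of positive measure. This yields the assertion.

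There is no genuine obstacle in this argument; the one point that deserves care is that mere convexity of $F(x,\cdot)$ would only give the non-strict subgradient inequality (equivalently, passing to the limit $t\to 0^+$ in the difference quotient yields only ``$\ge$''). The strictness is recovered by routing the proof through the strict monotonicity of $\varphi'$, which is the one place where the strict convexity hypothesis in Assumptions~\ref{ass9} is actually used.
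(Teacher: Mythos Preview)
Your argument is correct. The paper does not actually prove this lemma; it is stated with a citation to \cite[Lemma~5.6]{HKM} and no proof is given, so there is nothing to compare against beyond noting that your one-variable reduction via $\varphi(t)=F(x,\xi_2+t(\xi_1-\xi_2))$ is the standard way to derive the strict first-order inequality from strict convexity and $C^1$ regularity.
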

\subsubsection{$\mathcal{A}$-Laplacian}
\begin{Def}
  {\em
 Let~$\Omega\subseteq \R^{n}$ be a nonempty domain and $F(x,\xi)$ satisfy Assumptions \ref{ass9}. For a.e.~$x\in \Gw$, we denote by $\mathcal{A}(x,\xi) \triangleq  \nabla_{\xi}F(x,\xi)$ the classical gradient  of $F(x,\xi)$ with respect to~$\xi$. The\emph{~$\mathcal{A}$-Laplacian} is defined as the divergence of~$\mathcal{A}$.
 	}
\end{Def}
 \begin{remark}
 \emph{ By Euler's homogeneous function theorem, for a.e.~$x\in\omega$,
  	 $$\mathcal{A}(x,\xi)\cdot\xi =
  	p F(x,\xi)  \geq p\gk_\gw |\xi|^p \qquad \forall \xi\in \R^n .$$
 Moreover,  since for a.e. $x\in \Gw$ the nonnegative function
 \begin{equation}\label{newformula}
 \vert\xi\vert_{\mathcal{A}}=\vert\xi\vert_{\mathcal{A}(x)}\triangleq  (\mathcal{A}(x,\xi)\cdot\xi)^{1/p}
 \end{equation}
  is positively homogeneous of degree $1$ in $\xi$, and $\{\xi \in \R^n \mid \vert\xi\vert_{\mathcal{A}}\leq 1\}$ is a convex set, it follows that for a.e. $x\in \Gw$,
  $\vert\xi\vert_{\mathcal{A}}$
 is  a norm on $\R^n$  (see, for example, \cite[Theorem 1.9]{Simon}).}
 \end{remark}
\begin{Thm}[{\cite[Lemma 5.9]{HKM}}]\label{thm_1}
Let~$\Omega\subseteq \R^{n}$ be a nonempty domain. For every domain $\omega\Subset\Omega$, denote  $\alpha_{\omega}=\kappa_{\omega}$, $\beta_{\omega}=2^{p}\nu_{\omega}$. Then the vector-valued function~$\mathcal{A}(x,\xi):  \Gw\times \mathbb{R}^{n}\rightarrow \mathbb{R}^{n}$ satisfies the following conditions:
  \begin{itemize}
  \item {\bf Regularity:} For a.e. $x\in \Gw$, the function
  $\mathcal{A}(x,\xi ): \mathbb{R}^{n} \rightarrow \mathbb{R}^{n}$
  is continuous with respect to $\xi$, and  $x \mapsto \mathcal{A}(x,\xi)$ is Lebesgue measurable in $\Gw$ for all~$\xi\in \mathbb{R}^{n}$.
  \item {\bf Homogeneity:} For all~$\lambda\in {\mathbb{R}\setminus\{0\}}$,
  $\mathcal{A}(x,\lambda \xi)=\lambda\,|\lambda|^{p-2}\,\mathcal{A}(x,\xi).$
  \item {\bf Ellipticity:} For all domains $\omega\Subset \Gw$, all $\xi \in \mathbb{R}^{n}$, and a.e. $x\in \omega$,
\begin{equation}\label{structure}
\alpha_\omega|\xi|^{p}\le\mathcal{A}(x,\xi)\cdot\xi,
\quad
|\mathcal{A}(x,\xi)|\le \beta_\omega\,|\xi|^{{p}-1}.
\end{equation}
\item {\bf Monotonicity:} For a.e.~$x\!\in\! \Gw$ and all~$\xi\!\neq \! \eta \! \in \! \mathbb{R}^{n}$,
$\big(\mathcal{A}(x,\xi)-\mathcal{A}(x,\eta)\big) \! \cdot \! (\xi-\eta)> 0.$
\end{itemize}
\end{Thm}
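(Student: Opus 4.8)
The plan is to read off all four properties from the definition $\mathcal{A}(x,\xi)=\nabla_{\xi}F(x,\xi)$, the structural hypotheses on $F$ in Assumptions~\ref{ass9}, and the strict convexity inequality of Lemma~\ref{strictconvexity}. For the \emph{regularity}, continuity of $\xi\mapsto\mathcal{A}(x,\xi)$ for a.e.\ fixed $x$ is precisely the assumed continuous differentiability of $\xi\mapsto F(x,\xi)$; for the measurability of $x\mapsto\mathcal{A}(x,\xi)$ at a fixed $\xi$, I would write each coordinate as a pointwise (a.e.) limit $\mathcal{A}(x,\xi)\cdot e_{i}=\lim_{k\to\infty}k\big(F(x,\xi+e_{i}/k)-F(x,\xi)\big)$ of functions measurable in $x$ (by the measurability hypothesis on $F$), and a pointwise limit of measurable functions is measurable. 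For the \emph{homogeneity}, differentiating the identity $F(x,\lambda\xi)=|\lambda|^{p}F(x,\xi)$ in $\xi$ gives $\lambda\,\mathcal{A}(x,\lambda\xi)=|\lambda|^{p}\mathcal{A}(x,\xi)$, that is $\mathcal{A}(x,\lambda\xi)=\lambda|\lambda|^{p-2}\mathcal{A}(x,\xi)$ for $\lambda\neq0$; taking $\lambda=2$ and $\xi=0$ and using $p\neq1$ also forces $\mathcal{A}(x,0)=0$, which is needed for the upper ellipticity bound at $\xi=0$.

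For the \emph{ellipticity}, the lower bound is immediate from Euler's homogeneous function theorem and the lower ellipticity bound on $F$: for a.e.\ $x\in\omega$,
\[
\mathcal{A}(x,\xi)\cdot\xi=p\,F(x,\xi)\ge p\,\kappa_{\omega}|\xi|^{p}\ge\kappa_{\omega}|\xi|^{p}=\alpha_{\omega}|\xi|^{p}.
\]
The upper bound is the one point that needs a trick. Fixing $x\in\omega$ and $\xi\neq0$ with $\mathcal{A}(x,\xi)\neq0$ (the excluded cases being trivial), I would apply Lemma~\ref{strictconvexity} with $\xi_{2}=\xi$ and $\xi_{1}=\xi+|\xi|\,\mathcal{A}(x,\xi)/|\mathcal{A}(x,\xi)|$; then $|\xi_{1}|\le2|\xi|$, and using $F\ge0$ together with $F(x,\zeta)\le\nu_{\omega}|\zeta|^{p}$,
\[
|\xi|\,|\mathcal{A}(x,\xi)|=\mathcal{A}(x,\xi)\cdot(\xi_{1}-\xi)\le F(x,\xi_{1})-F(x,\xi)\le\nu_{\omega}|\xi_{1}|^{p}\le 2^{p}\nu_{\omega}|\xi|^{p},
\]
so that $|\mathcal{A}(x,\xi)|\le2^{p}\nu_{\omega}|\xi|^{p-1}=\beta_{\omega}|\xi|^{p-1}$.

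Finally, for the \emph{monotonicity}, I would add the strict convexity inequality of Lemma~\ref{strictconvexity} for the pair $(\xi_{1},\xi_{2})=(\xi,\eta)$ to the one for $(\xi_{1},\xi_{2})=(\eta,\xi)$ with $\xi\neq\eta$; the $F$-terms cancel and one is left with
\[
0>\mathcal{A}(x,\eta)\cdot(\xi-\eta)+\mathcal{A}(x,\xi)\cdot(\eta-\xi)=-\big(\mathcal{A}(x,\xi)-\mathcal{A}(x,\eta)\big)\cdot(\xi-\eta),
\]
which is the claimed strict monotonicity. I expect the only genuinely delicate step to be the upper ellipticity estimate: one must test the convexity inequality exactly in the direction of $\mathcal{A}(x,\xi)$ and at radius $|\xi|$ in order to produce the constant $\beta_{\omega}=2^{p}\nu_{\omega}$; everything else reduces to a single differentiation of a defining identity or to adding two instances of Lemma~\ref{strictconvexity}.
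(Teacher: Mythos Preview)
Your proof is correct and follows the standard route (essentially the argument in \cite[Lemma~5.9]{HKM}). Note that the paper does not supply its own proof of this theorem: it merely quotes the result from \cite{HKM}, so there is no internal proof to compare against. Your derivation of the upper ellipticity bound --- testing the convexity inequality of Lemma~\ref{strictconvexity} in the direction $\mathcal{A}(x,\xi)/|\mathcal{A}(x,\xi)|$ at radius $|\xi|$ --- is exactly the trick that yields the constant $\beta_{\omega}=2^{p}\nu_{\omega}$, and the remaining items (regularity via difference quotients, homogeneity by differentiating the scaling identity, monotonicity by adding two instances of Lemma~\ref{strictconvexity}) are handled cleanly. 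One cosmetic point: in your displayed chain for the upper bound you write $\le$, but Lemma~\ref{strictconvexity} gives a strict inequality; of course this only strengthens the conclusion.
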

	\begin{ass}\label{ass8}
		{\em Throughout the paper we assume that $\mathcal{A}(x,\xi)=\nabla_{\xi}F(x,\xi)$, where $F$ satisfies Assumptions~\ref{ass9}. In particular, we assume that  $\mathcal{A}$ satisfies all the conditions mentioned in Theorem~\ref{thm_1}.
		}
	\end{ass}
 \subsubsection{Generalized H\"older inequality}
In the proof of the AAP Theorem (Theorem~\ref{thm_AAP}), 
we use the following generalized H\"older inequality. The inequality follows similarly to the proof of \cite[Lemma 2.2]{newpicone}, where the case $\mathcal{A}=\mathcal{A}(\xi)$ is considered. Nevertheless, since the generalized H\"older inequality is a pointwise  inequality with respect to $x$, the proof holds also for $\mathcal{A}=\mathcal{A}(x,\xi)$.
\begin{lemma}[Generalized H\"older inequality]\label{ass1}
 Let~$p'$ be the conjugate exponent of~$1<p<\infty$. Then the following inequality holds $$\big|\mathcal{A}(x,\xi)\cdot\eta\big|
 	\leq\big(\mathcal{A}(x,\xi)\cdot\xi\big)^{1/p'}\big(\mathcal{A}(x,\eta)\cdot\eta\big)^{1/p}
    =\vert\xi\vert_{\mathcal{A}}^{p-1}\vert\eta\vert_{\mathcal{A}}, \qquad \forall \xi,\eta\in\mathbb{R}^{n} \mbox{ and a.e. } x\in \Gw.$$
\end{lemma}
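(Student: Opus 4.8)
The plan is to exploit the fact that, for a.e.\ fixed $x$, the function $\xi\mapsto \vert\xi\vert_{\mathcal{A}} = (\mathcal{A}(x,\xi)\cdot\xi)^{1/p}$ is a genuine norm on $\R^n$ (as recorded in the remark following the definition of the $\mathcal{A}$-Laplacian). First I would fix a representative $x\in\Gw$ for which all the pointwise structure properties hold and suppress it from the notation. The claimed identity $(\mathcal{A}(x,\xi)\cdot\xi)^{1/p'}(\mathcal{A}(x,\eta)\cdot\eta)^{1/p} = \vert\xi\vert_{\mathcal{A}}^{p-1}\vert\eta\vert_{\mathcal{A}}$ is immediate from the definition \eqref{newformula} together with $p/p' = p-1$, so the whole content is the inequality $\big|\mathcal{A}(x,\xi)\cdot\eta\big|\le \vert\xi\vert_{\mathcal{A}}^{p-1}\,\vert\eta\vert_{\mathcal{A}}$.

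The key step is a convexity/homogeneity argument: I would apply the strict convexity inequality of Lemma~\ref{strictconvexity} with the pair $(\xi_1,\xi_2) = (\xi + t\eta,\ \xi)$ for $t>0$, giving
$$F(x,\xi+t\eta) - F(x,\xi) \ge t\,\mathcal{A}(x,\xi)\cdot\eta.$$
Since $F(x,\cdot) = \tfrac1p\vert\cdot\vert_{\mathcal{A}}^{p}$ by Euler's homogeneous function theorem, the left-hand side is $\tfrac1p\big(\vert\xi+t\eta\vert_{\mathcal{A}}^{p} - \vert\xi\vert_{\mathcal{A}}^{p}\big)$. Using the triangle inequality for the norm $\vert\cdot\vert_{\mathcal{A}}$ and the elementary estimate $(a+b)^p - a^p \le p\,b\,(a+b)^{p-1}$ for $a,b\ge 0$ (convexity of $s\mapsto s^p$), the left side is bounded above by $t\,\vert\eta\vert_{\mathcal{A}}\,(\vert\xi\vert_{\mathcal{A}} + t\vert\eta\vert_{\mathcal{A}})^{p-1}$. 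Dividing by $t$ and letting $t\to 0^+$ yields $\mathcal{A}(x,\xi)\cdot\eta \le \vert\xi\vert_{\mathcal{A}}^{p-1}\vert\eta\vert_{\mathcal{A}}$. Replacing $\eta$ by $-\eta$ and using the homogeneity $\vert-\eta\vert_{\mathcal{A}} = \vert\eta\vert_{\mathcal{A}}$ gives the lower bound $-\mathcal{A}(x,\xi)\cdot\eta \le \vert\xi\vert_{\mathcal{A}}^{p-1}\vert\eta\vert_{\mathcal{A}}$, and together these give the absolute value bound.

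An alternative, perhaps cleaner, route is the one hinted at in the text (following \cite{newpicone}): view $\langle\xi,\eta\rangle_{\mathcal{A}} \triangleq \mathcal{A}(x,\xi)\cdot\eta$ — note this is not symmetric in general — but observe that $|\xi|_{\mathcal{A}}$ being a norm lets one argue via its dual norm, or one simply applies the classical Hölder/Young inequality after writing $\mathcal{A}(x,\xi)\cdot\eta$ in terms of the duality pairing between $|\cdot|_{\mathcal{A}}$ and its polar. The main obstacle — really the only subtle point — is justifying that $t\mapsto F(x,\xi+t\eta)$ is differentiable at $t=0$ with derivative $\mathcal{A}(x,\xi)\cdot\eta$; this is exactly the continuous differentiability of $\xi\mapsto F(x,\xi)$ assumed in Assumptions~\ref{ass9}, so the limiting argument is legitimate. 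Everything else is bookkeeping with the homogeneity relations $p/p' = p-1$ and $\vert\lambda\xi\vert_{\mathcal{A}} = |\lambda|\,\vert\xi\vert_{\mathcal{A}}$. I would therefore expect the write-up to be short, with the only care needed in the $t\to 0^+$ passage and in invoking the triangle inequality for $\vert\cdot\vert_{\mathcal{A}}$.
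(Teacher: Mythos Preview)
Your argument is correct. The paper does not give its own proof of this lemma; it simply notes that the inequality ``follows similarly to the proof of \cite[Lemma~2.2]{newpicone}'' and observes that the pointwise nature of the statement makes the $x$-dependence irrelevant. Your proposal therefore supplies strictly more than the paper does.

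Your route --- apply the convexity inequality of Lemma~\ref{strictconvexity} to the pair $(\xi+t\eta,\xi)$, bound $F(x,\xi+t\eta)-F(x,\xi)$ from above via the triangle inequality for the norm $|\cdot|_{\mathcal{A}}$ together with $(a+b)^p-a^p\le p\,b\,(a+b)^{p-1}$, divide by $t$ and let $t\to 0^+$ --- is valid and self-contained. The $t\to 0^+$ passage is unproblematic: after dividing by $t$ you already have $\mathcal{A}(x,\xi)\cdot\eta\le |\eta|_{\mathcal{A}}\,(|\xi|_{\mathcal{A}}+t|\eta|_{\mathcal{A}})^{p-1}$ for every $t>0$, and the right-hand side is continuous in $t$, so no differentiability of $t\mapsto F(x,\xi+t\eta)$ is actually needed. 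The approach in \cite{newpicone} is slightly different in packaging: one writes the convexity inequality directly as $\mathcal{A}(x,\xi)\cdot\eta\le F(x,\eta)+(p-1)F(x,\xi)=\tfrac1p|\eta|_{\mathcal{A}}^{p}+\tfrac{p-1}{p}|\xi|_{\mathcal{A}}^{p}$ and then optimizes over a scaling $\eta\mapsto t\eta$ (equivalently, applies Young's inequality in reverse). Both arguments rest on the same two ingredients --- convexity of $F(x,\cdot)$ and the identity $pF=|\cdot|_{\mathcal{A}}^{p}$ --- but yours additionally invokes the triangle inequality for $|\cdot|_{\mathcal{A}}$, which the paper has already established, whereas the scaling argument avoids it.
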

\subsubsection{{Stronger convexity assumption on $|\xi|_{\mathcal{A}}^p$}}
By our assumptions, for a.e. $x\in \Gw$, the function  $\xi\mapsto |\xi|_{\mathcal{A}}^p$ defined by \eqref{newformula} is strictly convex. For certain important results in Sections~\ref{sec_eigenvalue}, \ref{criticality}, and \ref{minimal} we need to assume:
\begin{ass}[{Stronger convexity assumption}]
\label{ass2}
  {\em
{We suppose that for every subdomain $\gw\Subset \Gw$ there exists
	a positive constant $C_\gw(p, \mathcal{A})$   such that
$$|\xi|^{p}_{\mathcal{A}}-|\eta|^{p}_{\mathcal{A}}-p\mathcal{A}(x,\eta)\cdot(\xi-\eta)\geq C_\gw(p, \mathcal{A}) [\xi,\eta]_{\mathcal{A}}\qquad \forall \xi,\eta\in \R^n \mbox{ and a.e. } x\in \gw,$$
where
\begin{equation}\label{eq_strict}
[\xi,\eta]_{\mathcal{A}}\triangleq
 \begin{cases}
   |\xi-\eta|^{p}_\mathcal{A}&\mbox{if $p\geq 2$,}\\
(|\eta|_{\mathcal{A}}+|\xi-\eta|_{\mathcal{A}})^{p-2}|\xi-\eta|^{2}_\mathcal{A}&\mbox{if $1<p<2$.}
   \end{cases}
\end{equation}
}}
\end{ass}
\begin{Rem}\label{pAlaplacian}
  {\em See \cite[Lemma 2.2]{Lioupincho} and \cite[Lemma 3.4]{Regev}  for such inequalities for the~$p$-Laplacian and the~$(p,A)$-Laplacian. {Note that in these two papers, double-sided inequalities called the {\em simplified energy}  are proved. Also, Assumption \ref{ass2} clearly implies strict convexity.}}
\end{Rem}
\subsubsection{Pseudo $p$-Laplacian}
We present further examples of operators which fulfill the assumptions above.
\begin{Def}
	\emph{A measurable matrix function $A:\Gw\to \R^{n^2}$ is called \emph{locally bounded} if for every subdomain $\omega\Subset\Omega$, there exists a positive constant~$C(\omega)$ such that,~$|A(x)\xi|\leq C(\omega)|\xi|$ for all  $\xi\in\R^n$ and  a.e. $x\in \omega$.}
\end{Def}
\begin{exa}\label{exa}
\emph{For a.e.~$x\in\Omega$ and every~$\xi =(\xi_1,\ldots,\xi_n)\in\mathbb{R}^{n}$, let	$$F(x,\xi)\triangleq \frac{1}{p}\sum_{i=1}^{n}a_{i}(x)|\xi_{i}|^{p},$$  	where {$1<p<\infty$} and the Lebesgue measurable functions locally satisfy~$a_{i}\asymp 1$.}
\end{exa}
\begin{lemma}\label{pseudo}
 Let~$F$ be as in Example \ref{exa}. For a.e.~$x\in\Omega$ and every~$\xi =(\xi_1,\ldots,\xi_n)\in\mathbb{R}^{n}$, we have
  \begin{enumerate}
   \item[$(1)$] \!$\nabla_{\xi}F(x,\xi)\!=\!\mathcal{A}(x,\xi)\!=\!(a_{1}(x)|\xi_{1}|^{p-2}\xi_{1},\ldots,a_{n}(x)|\xi_{n}|^{p-2}\xi_{n})$, \mbox{and}
$|\xi|_{\mathcal{A}}^{p}\!=\!\sum_{i=1}^{n}a_{i}(x)|\xi_{i}|^{p}$;
   \item[$(2)$] the operator $\mathcal{A}$ satisfies assumptions~\ref{ass8} and \ref{ass2}.
\end{enumerate}
 Furthermore,
 \begin{enumerate} \item[$(3)$]for $0\leq t\leq 1$ {and~$p\geq 2$},  consider the Lagrangian $F_{t,A}\triangleq tF+((1-t)/p)|\xi|_{A}^{p}$, where $A$ is a locally bounded, symmetric, and locally uniformly positive definite matrix function. Then $\mathcal{A}_{t,A}$, the gradient  of $F_{t,A}$,  satisfies assumptions~\ref{ass8} and \ref{ass2}.
 \end{enumerate}
 \end{lemma}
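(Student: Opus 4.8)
The plan is to prove Lemma \ref{pseudo} in three parts, following the order of the statement. Part (1) is a direct computation: given $F(x,\xi)=\frac1p\sum_i a_i(x)|\xi_i|^p$, differentiating in $\xi_i$ gives $\partial_{\xi_i}F=a_i(x)|\xi_i|^{p-2}\xi_i$, which yields the stated formula for $\mathcal{A}(x,\xi)=\nabla_\xi F(x,\xi)$; then $|\xi|_{\mathcal{A}}^p=\mathcal{A}(x,\xi)\cdot\xi=\sum_i a_i(x)|\xi_i|^{p-2}\xi_i\cdot\xi_i=\sum_i a_i(x)|\xi_i|^p$. One should note $F$ is nonnegative (since $a_i\asymp1$ locally implies $a_i>0$ a.e.), and check the elementary facts that $F(x,\cdot)$ is $C^1$ (the map $t\mapsto|t|^{p-2}t$ is $C^1$ on $\R$ for $p>1$... for $1<p<2$ it is $C^1$ with derivative $(p-1)|t|^{p-2}$, continuous including at $0$ since $p-2>-1$) and is strictly convex, so that Assumptions \ref{ass9} hold; this makes Theorem \ref{thm_1} and Assumption \ref{ass8} automatically available via the general machinery, but here one verifies the ellipticity constants $\kappa_\omega,\nu_\omega$ directly from $a_i\asymp1$.

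For part (2), Assumption \ref{ass8} follows since, as just noted, $F$ meets Assumptions \ref{ass9}. The substance is verifying the stronger convexity Assumption \ref{ass2}. Since $|\xi|_{\mathcal{A}}^p=\sum_i a_i(x)|\xi_i|^p$ decouples as a sum over coordinates, and $\mathcal{A}(x,\eta)\cdot(\xi-\eta)=\sum_i a_i(x)|\eta_i|^{p-2}\eta_i(\xi_i-\eta_i)$ likewise decouples, the whole inequality reduces to the one-dimensional fact that, for each coordinate,
$$|s|^p-|t|^p-p|t|^{p-2}t(s-t)\ \geq\ c(p)\,[s,t]$$
with $[s,t]=|s-t|^p$ when $p\geq2$ and $[s,t]=(|t|+|s-t|)^{p-2}|s-t|^2$ when $1<p<2$, summed against the weights $a_i(x)$, which lie in a fixed compact subinterval of $(0,\infty)$ on each $\omega\Subset\Omega$. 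The scalar inequality is the classical "simplified energy''/strong convexity estimate for $t\mapsto|t|^p$; I would cite it from \cite[Lemma 2.2]{Lioupincho} or \cite[Lemma 3.4]{Regev} (the $p$-Laplacian case, $n=1$), or prove it by the standard homogeneity-plus-compactness argument. Then $C_\omega(p,\mathcal{A})$ is obtained by comparing $\sum_i a_i(x)[s_i,t_i]$ with $[\xi,\eta]_{\mathcal{A}}$: for $p\geq2$ this is immediate since both sides are $\sum_i a_i(x)|s_i-t_i|^p$ up to the constant; for $1<p<2$ one needs a short estimate relating $\sum_i a_i(x)(|t_i|+|s_i-t_i|)^{p-2}|s_i-t_i|^2$ to $(|\eta|_{\mathcal{A}}+|\xi-\eta|_{\mathcal{A}})^{p-2}|\xi-\eta|_{\mathcal{A}}^2$ using that all norms on $\R^n$ are equivalent (with constants depending on the ellipticity bounds on $\omega$) together with the power-mean / superadditivity properties of $r\mapsto r^{p-2}$ for $p-2<0$ — this coordinate-wise-to-vector comparison in the singular range is the one genuinely fiddly point.

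For part (3), fix $p\geq2$ and $0\leq t\leq1$, and set $F_{t,A}=tF+\tfrac{1-t}{p}|\xi|_A^p$ where $|\xi|_A^p=(A(x)\xi\cdot\xi)^{p/2}$. Both summands satisfy Assumptions \ref{ass9} on any $\omega\Subset\Omega$ — $F$ by part (1), and $\tfrac1p|\xi|_A^p$ because $A$ is locally bounded, symmetric, and locally uniformly positive definite, so $(A(x)\xi\cdot\xi)^{1/2}$ is a norm comparable to $|\xi|$ with local constants, and raising to the $p$-th power preserves measurability, $C^1$-smoothness, $p$-homogeneity, the two-sided $|\xi|^p$ bound, and strict convexity. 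A nonnegative convex combination of functions satisfying Assumptions \ref{ass9} again satisfies them (ellipticity constants combine as convex combinations, and strict convexity is inherited as long as at least one summand is strictly convex and the other merely convex — here both are strictly convex), so $\mathcal{A}_{t,A}=\nabla_\xi F_{t,A}=t\mathcal{A}+(1-t)|\xi|_A^{p-2}A(x)\xi$ satisfies Assumption \ref{ass8}. For Assumption \ref{ass2}: the defining inequality is additive in $F$, i.e. if $G_1,G_2$ each satisfy a bound of the form $|\xi|^p_{\mathcal{A}_j}-|\eta|^p_{\mathcal{A}_j}-p\mathcal{A}_j(x,\eta)\cdot(\xi-\eta)\geq C_j[\xi,\eta]_{\mathcal{A}}$ with the \emph{same} comparison quantity $[\xi,\eta]_{\mathcal{A}}$ on the right (here, since $p\geq2$, $[\xi,\eta]_{\mathcal{A}}=|\xi-\eta|^p_{\mathcal{A}}$, and all the $\mathcal{A}$-norms are mutually equivalent on $\omega$, so we may use a single fixed such quantity up to constants), then the convex combination satisfies it with constant $tC_1+(1-t)C_2$. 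Part (1)–(2) gives the bound for $F$ (i.e. for $\mathcal{A}$), and \cite[Lemma 3.4]{Regev} gives it for $\tfrac1p|\xi|_A^p$ (the $(p,A)$-Laplacian); combining and absorbing the norm-equivalence constants into a new $C_\omega(p,\mathcal{A}_{t,A})$ — which can be taken uniform in $t\in[0,1]$ — completes the proof. The main obstacle throughout is bookkeeping the dependence of constants on $\omega$ and, in part (2) for $1<p<2$, the coordinate-to-vector comparison of the degenerate weight $(|\eta|+|\xi-\eta|)^{p-2}$; everything else is routine verification against Assumptions \ref{ass9}, Theorem \ref{thm_1}, and the cited scalar/vector convexity estimates.
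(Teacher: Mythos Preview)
Your proposal is correct and structurally parallel to the paper's proof, but the key step in part~(2) for $p\geq 2$ is handled differently. The paper argues directly at the level of the sum $\sum_i a_i(x)|\xi_i|^p$: it applies ordinary convexity at the midpoint $(\xi+\eta)/2$, then invokes Clarkson's inequality $|\xi|_{\mathcal{A}}^p+|\eta|_{\mathcal{A}}^p\geq 2|(\xi+\eta)/2|_{\mathcal{A}}^p+2|(\xi-\eta)/2|_{\mathcal{A}}^p$ to extract the explicit constant $2^{1-p}$ in front of $|\xi-\eta|_{\mathcal{A}}^p$. Your route---reduce coordinate-wise to the scalar inequality for $t\mapsto|t|^p$ and cite \cite{Lioupincho,Regev}---is equally valid and arguably more modular, since the same citation covers the $1<p<2$ range, which the paper simply defers to \cite[Remark 8.27]{Hou}. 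What the paper's approach buys is self-containment and an explicit constant; what yours buys is a uniform treatment of both ranges of $p$ without repeating the Clarkson trick.

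For part~(3) both arguments are the same in substance: both rest on the identity $|\xi|_{\mathcal{A}_{t,A}}^p=t|\xi|_{\mathcal{A}}^p+(1-t)|\xi|_A^p$ together with additivity of the strong-convexity defect. Note that because of this identity (and since $p\geq 2$, so $[\xi,\eta]_{\mathcal{B}}=|\xi-\eta|_{\mathcal{B}}^p$ is linear in the Lagrangian), adding the two inequalities gives the bound with constant $\min(C_1,C_2)$ directly---no norm-equivalence constants need to be absorbed, so your argument is slightly cleaner than you make it sound.

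One small slip: in your part~(1) you write that ``the map $t\mapsto|t|^{p-2}t$ is $C^1$ on $\R$ for $p>1$.'' For $1<p<2$ this map is merely continuous (its derivative $(p-1)|t|^{p-2}$ blows up at $0$). What you actually need---and what you are implicitly using---is that $t\mapsto|t|^p$ is $C^1$ with continuous derivative $p|t|^{p-2}t$; that is enough for $F(x,\cdot)\in C^1$.
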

\begin{remark}
\emph{If $a_{i}= 1$ for all~$i=1,2,\ldots,n$, then the operator $\diver\!(\mathcal{A})$ is called the {\em pseudo $p$-Laplacian}.}
\end{remark}
\begin{proof}[Proof of Lemma \ref{pseudo}]
Part (1) is obtained by a straightforward differentiation.

(2) {Let~$p\geq 2$ (for~$p<2$, see \cite[Remark 8.27]{Hou}).} Our proof is inspired by \cite[Lemma 4.2]{Lindqvist}. Since $\displaystyle{\sum_{i=1}^{n}a_{i}(x)|\xi_{i}|^{p}}$ is convex with respect to $\xi$, {we get} $$\sum_{i=1}^{n}a_{i}(x)|\xi_{i}|^{p}\geq \sum_{i=1}^{n}a_{i}(x)|\eta_{i}|^{p}+p\sum_{i=1}^{n}a_{i}(x)|\eta_{i}|^{p-2}\eta_{i}(\xi_{i}-\eta_{i}),$$
  for a.e.~$x\in\Omega$ and all~$\xi,\eta\in\mathbb{R}^{n}$.
  Hence,
  $$\sum_{i=1}^{n}a_{i}(x)\left\vert\frac{\xi_{i}+\eta_{i}}{2}\right\vert^{p}\geq \sum_{i=1}^{n}a_{i}(x)|\eta_{i}|^{p}+\frac{p}{2}\sum_{i=1}^{n}a_{i}(x)|\eta_{i}|^{p-2}\eta_{i}(\xi_{i}-\eta_{i}).$$
  By Clarkson's inequality for $p\geq 2$ \cite[Theorem 4.10]{Brezis}, we obtain
  $$\sum_{i=1}^{n}a_{i}(x)|\xi_{i}|^{p}+\sum_{i=1}^{n}a_{i}(x)|\eta_{i}|^{p}\geq 2\sum_{i=1}^{n}a_{i}(x)\left\vert\frac{\xi_{i}+\eta_{i}}{2}\right\vert^{p} +2\sum_{i=1}^{n}a_{i}(x)\left\vert\frac{\xi_{i}-\eta_{i}}{2}\right\vert^{p}.$$
  Then
  $$\sum_{i=1}^{n}a_{i}(x)|\xi_{i}|^{p}\geq \sum_{i=1}^{n}a_{i}(x)|\eta_{i}|^{p}+ p\sum_{i=1}^{n}a_{i}(x)|\eta_{i}|^{p-2}\eta_{i}(\xi_{i}-\eta_{i}) +2^{1-p}\sum_{i=1}^{n}a_{i}(x)\left\vert\xi_{i}-\eta_{i}\right\vert^{p},$$ which gives Assumption \ref{ass2} for $p\geq 2$ because locally~$a_{i}\asymp 1$ for all~$i=1,2,\ldots,n$.

  Moreover, on the unit Euclidean sphere in~$\mathbb{R}^{n}$, the function~$f(\xi)\triangleq\sum_{i=1}^{n}a_{i}(x)|\xi_{i}|^{p}$ has a positive lower bound and a finite upper bound, and therefore, the local ellipticity conditions follow.

  (3) For all~$\xi\in\R^{n}$ and a.e.~$x\in\Omega$, $|\xi|_{\mathcal{A}_{t,A}}^{p}= t|\xi|_{\mathcal{A}}^{p}+(1-t)|\xi|_{A}^{p}$. Hence,
  {Assumption~\ref{ass2}} and the ellipticity conditions of~$|\cdot|_{\mathcal{A}_{t,A}}^{p}$ follow from Remark \ref{pAlaplacian} and~$(2)$.
\end{proof}
\subsection{Morrey potentials}
In this subsection, we give a short review of the local Morrey space $M^{q}_{\mathrm{loc}}(p;\Omega)$, the functional space where the potential~$V$ belongs to, and recall the Morrey-Adams theorem.
\subsubsection{Local Morrey space $M^{q}_{\mathrm{loc}}(p;\Omega)$}
The following is a reformulation of the local Morrey space $M^{q}_{\mathrm{loc}}(p;\Omega)$, where $q=q(p)$.
  \begin{Def}[{\cite[definitions 2.1 and 2.3]{Pinchover}}]\label{Morreydef1}{\em
  Let $\omega\Subset \Omega$ be a domain and $f\in L^1_\loc(\omega)$ be a real-valued function. Then
  \begin{itemize}
  \item for $p<n$, we say that $f\in M^{q}(p;\omega)$ if $q>n/p$ and
  $$\Vert f\Vert_{M^{q}(p;\omega)}\triangleq \sup_{\substack{y\in\gw\\0<r<\diam(\gw)}}
  \frac{1}{r^{n/q'}}\int_{\omega\cap B_{r}(y)}|f|\dx<\infty,$$ where $\mathrm{diam}(\omega)$ is the diameter of~$\omega$; 
  \item for $p=n$, we say that~$f\in M^{q}(n;\omega)$ if  $q>n$ and
$$\Vert f\Vert_{M^{q}(n;\omega)}\triangleq \sup_{\substack{y\in\gw\\0<r<\diam(\gw)}} \varphi_{q}(r)\int_{\omega\cap B_{r}(y)}|f|\dx<\infty,$$
where $\varphi_{q}(r)\triangleq \Big(\log\big(\mathrm{diam}(\omega)/r\big)\Big)^{q/n'};$
\item for $p>n$ and $q=1$, we define~$M^{q}(p;\omega)\triangleq L^{1}(\omega)$.
\end{itemize}
}
\end{Def}
\begin{Def}[{\cite[Definition 2.3]{Pinchover}}]\label{Morreydef2}{\em
For every real-valued function $f\in L^1_\loc(\Omega)$ and~$1<p<\infty$, we say that $f\in M^{q}_{\loc}(p;\Omega)$ if $f\in M^{q}(p;\omega)$ for every domain~$\omega\Subset\Omega$.
}
\end{Def}
For a more detailed discussion on Morrey spaces, see \cite{Maly,Pinchover} and references therein.
\subsubsection{Morrey-Adams theorem}
We present the Morrey-Adams theorem proved in \cite{Pinchover}, which is crucial when dealing with the potential term. See \cite{Maly,Morrey1966,Rakotoson1990,Trudinger1967} for relevant earlier results.
\begin{Thm}[{\cite[Theorem 2.4]{Pinchover}}]\label{MA_thm}
	Let~$\omega\Subset\mathbb{R}^{n}$ be a domain and~$V\in M^{q}(p;\omega)$.
	\begin{enumerate}
	\item[$(1)$] There exists a constant~$C(n,p,q)>0$ such that for any~$\delta>0$ and all~$u\in W^{1,p}_{0}(\omega)$,
	\begin{equation*}
	\int_{\omega}|V||u|^{p}\dx\leq \delta\Vert\nabla u\Vert^{p}_{L^{p}(\omega;\mathbb{R}^{n})}+\frac{C(n,p,q)}{\delta^{n/(pq-n)}}\Vert V\Vert^{pq/(pq-n)}_{M^{q}(p;\omega)}\Vert u\Vert^{p}_{L^{p}(\omega)}.
	\end{equation*}
	\item[$(2)$] For any~$\omega'\Subset\omega$ with Lipschitz boundary, there exists $\delta_{0}$ such that for any~$0<\delta\leq \delta_{0}$ and all~$u\in W^{1,p}(\omega')$,
	\begin{equation*}
	\int_{\omega'}|V||u|^{p}\dx\leq \delta\Vert\nabla u\Vert^{p}_{L^{p}(\omega';\mathbb{R}^{n})}+C\left(n,p,q,\omega',\omega,\delta,\Vert V\Vert_{M^{q}(p;\omega)}\right)\Vert u\Vert^{p}_{L^{p}(\omega')}.
	\end{equation*}
\end{enumerate}
\end{Thm}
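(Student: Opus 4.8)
The statement to prove is the Morrey-Adams theorem (Theorem 2.4 from \cite{Pinchover}), consisting of two parts: a global estimate on $W^{1,p}_0(\omega)$ and a local estimate on $W^{1,p}(\omega')$ for Lipschitz subdomains. Let me sketch how I would prove each part.

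\medskip

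\textbf{Plan for Part (1).} The plan is to reduce the estimate to a statement about the Riesz potential of $|V|$ and then exploit the Morrey condition $q > n/p$ (when $p < n$). First I would recall that for $u \in C_c^\infty(\omega)$ (which is dense in $W^{1,p}_0(\omega)$), one has the pointwise bound $|u(x)| \le c_n \int_\omega |\nabla u(y)|\,|x-y|^{1-n}\dy$, i.e., $|u| \le c_n I_1(|\nabla u|)$ where $I_1$ is the Riesz potential of order $1$. Then $\int_\omega |V|\,|u|^p\dx$ should be controlled by testing $|V|$ against $|u|^p$ and using that $|u|^p$ is, roughly, an $I_1$-type potential of $|u|^{p-1}|\nabla u|$. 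The cleaner route, which I expect the authors take, is: localize by a Calderón–Zygmund / dyadic decomposition of the ball of radius $\diam(\omega)$, on each dyadic annulus use Hölder together with the defining bound $\int_{\omega \cap B_r(y)} |V|\dx \le r^{n/q'}\|V\|_{M^q(p;\omega)}$, and sum the resulting geometric series — the summability is exactly where $q > n/p$ enters (it makes the exponent of $r$ positive after accounting for the Sobolev scaling). To get the free small parameter $\delta$ in front of $\|\nabla u\|_p^p$, I would split the Riesz potential at a radius $\rho$: the near part (integration over $|x-y| < \rho$) contributes $\lesssim \rho^{\,\theta}\|V\|_{M^q}\|\nabla u\|_p^p$ by the annular summation, and the far part ($|x-y|\ge \rho$) contributes a term $\lesssim \rho^{-\sigma}\|V\|_{M^q}^{\text{power}}\|u\|_p^p$ by Hölder on the tail; then choosing $\rho$ so that $\rho^\theta \|V\|_{M^q} = \delta$ yields precisely the stated exponents $n/(pq-n)$ and $pq/(pq-n)$ on the right-hand side. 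The cases $p = n$ (logarithmic Morrey weight $\varphi_q$) and $p > n$ ($q=1$, $V \in L^1$) are handled by the same scheme, replacing the power $r^{n/q'}$ by the logarithmic gauge or using the Morrey embedding $W^{1,p}\hookrightarrow L^\infty$ directly when $p>n$.

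\medskip

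\textbf{Plan for Part (2).} Here the domain of the functions is $W^{1,p}(\omega')$ rather than $W^{1,p}_0(\omega')$, so one cannot extend by zero; instead I would invoke a bounded extension operator $E: W^{1,p}(\omega') \to W^{1,p}_0(\omega)$, which exists because $\omega'$ has Lipschitz boundary and $\omega' \Subset \omega$ — take $Eu = \eta\,\tilde u$ where $\tilde u$ is a Stein extension of $u$ to $\R^n$ and $\eta \in C_c^\infty(\omega)$ is a cutoff with $\eta \equiv 1$ on $\omega'$. Then $\int_{\omega'}|V||u|^p\dx \le \int_\omega |V|\,|Eu|^p\dx$, and applying Part (1) to $Eu$ gives $\delta\|\nabla(Eu)\|_{L^p(\omega)}^p + C\delta^{-n/(pq-n)}\|V\|_{M^q}^{\cdots}\|Eu\|_{L^p(\omega)}^p$. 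Finally I would use the extension bound $\|\nabla(Eu)\|_{L^p(\omega)} \le C_{\text{ext}}\|u\|_{W^{1,p}(\omega')}$, which produces a term $\delta C_{\text{ext}}^p\|u\|_{W^{1,p}(\omega')}^p = \delta C_{\text{ext}}^p(\|\nabla u\|_{L^p(\omega')}^p + \|u\|_{L^p(\omega')}^p)$; after renaming $\delta C_{\text{ext}}^p$ as a new small parameter one gets $\delta\|\nabla u\|_{L^p(\omega')}^p$ plus a term absorbed into the $\|u\|_{L^p(\omega')}^p$ coefficient, which explains why $\delta$ must be taken below some threshold $\delta_0$ (so that $\delta C_{\text{ext}}^p < 1$, say) and why the constant $C$ now also depends on $\omega', \omega, \delta$.

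\medskip

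\textbf{Main obstacle.} The technical heart is the dyadic/annular summation in Part (1) that converts the raw Morrey bound into the quantitative inequality with the explicit exponents — getting the bookkeeping of the powers of $r$ right (combining the Sobolev scaling $1-n$ from the Riesz kernel, the factor $|u|^{p-1}$, and the Morrey gain $r^{n/q'}$) so that the near-part series converges and the split-radius optimization reproduces the exponent $n/(pq-n)$. Everything else — density of smooth functions, the pointwise Riesz potential bound, and the extension operator in Part (2) — is standard. Since this theorem is quoted from \cite[Theorem 2.4]{Pinchover}, in the present paper it would simply be cited rather than reproved; but the above is the route one would follow to establish it from scratch.
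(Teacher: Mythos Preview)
The paper does not prove this theorem at all; it is stated with a citation to \cite[Theorem~2.4]{Pinchover} and used as a black box, exactly as you anticipated in your final paragraph. Your outline of a Hedberg/Adams-type argument (pointwise Riesz-potential bound, near/far split at a scale~$\rho$, optimizing~$\rho$ to produce the exponents $n/(pq-n)$ and $pq/(pq-n)$) and the extension-operator reduction for Part~(2) is the standard route to this inequality and is consistent with what is carried out in the cited reference, so there is nothing to correct.
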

\subsection{Weak solutions of $Q'_{p,\mathcal{A},V}[u]=g$}
With the preliminaries of the previous subsections in hand, we may define weak solutions of the equation~$Q'_{p,\mathcal{A},V}[u]=g$.
\begin{Def}\label{def_sol}
{\em Suppose that $\mathcal{A}$ satisfies Assumption~\ref{ass8} and~$V, g\in M^{q}_{\mathrm{loc}}(p;\Omega)$. A function~$v\in W^{1,p}_{\loc}(\Omega)$ is a {\em (weak) solution} of the equation
  \begin{equation}\label{half}
  Q'_{p,\mathcal{A},V}[v]\triangleq -\dive\mathcal{A}(x,\nabla v)+V|v|^{p-2}v=g,
\end{equation}
in~$\Omega$ if for all~$\vgf \in C_{c}^{\infty}(\Omega)$,$$\int_{\Omega}\mathcal{A}(x,\nabla v)\cdot \nabla \vgf\dx+\int_{\Omega}V|v|^{p-2}v \vgf\dx=\int_{\Omega} g\vgf\dx,$$ it is a \emph{supersolution} of \eqref{half} if for all nonnegative~$\vgf \in C_{c}^{\infty}(\Omega)$,$$\int_{\Omega}\mathcal{A}(x,\nabla v)\cdot \nabla \vgf\dx+\int_{\Omega}V|v|^{p-2}v \vgf\dx\geq \int_{\Omega} g\vgf\dx,$$
 and it is a \emph{subsolution} of \eqref{half} if for all nonnegative~$\vgf \in C_{c}^{\infty}(\Omega)$,$$\int_{\Omega}\mathcal{A}(x,\nabla v)\cdot \nabla \vgf\dx+\int_{\Omega}V|v|^{p-2}v \vgf\dx\leq \int_{\Omega} g\vgf\dx.$$
A supersolution~$v\in W^{1,p}_{\loc}(\Omega)$ of \eqref{half} is said to be \emph{proper} if~$v$ is not a solution of \eqref{half}.
}
\end{Def}
By virtue of the Morrey-Adams theorem and an approximation argument, we obtain:
\begin{lemma}\label{lem4.11}
  Suppose that $\mathcal{A}$ satisfies Assumption~\ref{ass8} and~$V\in M^{q}_{\mathrm{loc}}(p;\Omega)$.
  \begin{enumerate}
  \item[$(1)$] All the integrals in Definition \ref{def_sol} are well defined.
  \item[$(2)$] The test function space~$C_{c}^{\infty}(\Omega)$ in Definition \ref{def_sol} can be replaced  with~$W^{1,p}_{c}(\Omega)$.
  \end{enumerate}
\end{lemma}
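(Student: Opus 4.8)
The plan is to establish both assertions using the Morrey--Adams theorem (Theorem~\ref{MA_thm}) together with the ellipticity bounds on $\mathcal{A}$ from Theorem~\ref{thm_1}, and then a density argument for part~(2).

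For part~(1), fix $v\in W^{1,p}_{\loc}(\Omega)$ and $\vgf\in C_c^\infty(\Omega)$, and let $\omega\Subset\Omega$ be a domain containing $\supp\vgf$. The first integral is controlled by the upper ellipticity bound: $|\mathcal{A}(x,\nabla v)\cdot\nabla\vgf|\le \beta_\omega|\nabla v|^{p-1}|\nabla\vgf|$, and since $\nabla v\in L^p(\omega)$ and $\nabla\vgf\in L^\infty$, H\"older's inequality with exponents $p'$ and $p$ gives integrability over $\omega$, hence over $\Omega$. For the potential integral, I would write $|V|\,|v|^{p-2}v\,\vgf|\le |V|\,|v|^{p-1}|\vgf|$ on $\omega$; since $\vgf\in L^\infty$ and $|v|^{p-1}\in L^{p'}(\omega)$, it suffices to know $|V|\,|v|^{p-1}\in L^1(\omega)$, which follows because $V\in M^q(p;\omega)\subseteq L^1_{\loc}$ and, more precisely, from the Morrey--Adams inequality applied to a cutoff of $v$: choosing $\eta\in C_c^\infty(\Omega)$ with $\eta\equiv 1$ on a neighborhood of $\supp\vgf$ and $\eta v\in W^{1,p}_0$ of a slightly larger domain, Theorem~\ref{MA_thm}(1) yields $\int |V|\,|\eta v|^p<\infty$; combining with $\vgf\in L^\infty$ bounds the term. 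The right-hand side integral $\int g\vgf$ is handled identically (in fact more easily, taking $p-1=0$ in the above, i.e. directly $g\in L^1_{\loc}$). The supersolution inequalities involve the same integrands, so well-definedness is the same statement.

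For part~(2), the claim is that if the weak-solution (resp.\ supersolution) identity holds for all $\vgf\in C_c^\infty(\Omega)$, then it holds for all $\vgf\in W^{1,p}_c(\Omega)$ (resp.\ all nonnegative such $\vgf$). Given $\vgf\in W^{1,p}_c(\Omega)$, pick $\omega\Subset\Omega$ with $\supp\vgf\subset\omega$ and $\omega'$ with $\supp\vgf\subset\omega'\Subset\omega$, and take $\vgf_k\in C_c^\infty(\omega)$ with $\vgf_k\to\vgf$ in $W^{1,p}(\omega)$ and $\supp\vgf_k\subset\omega'$ for all $k$ (standard mollification). Passing to a subsequence we may assume $\vgf_k\to\vgf$ a.e.\ and $|\nabla\vgf_k|$ is dominated in $L^p$. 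Then $\int\mathcal{A}(x,\nabla v)\cdot\nabla\vgf_k\to\int\mathcal{A}(x,\nabla v)\cdot\nabla\vgf$ because $\mathcal{A}(x,\nabla v)\in L^{p'}(\omega';\R^n)$ by the bound $|\mathcal{A}(x,\nabla v)|\le\beta_{\omega'}|\nabla v|^{p-1}$ and $\nabla\vgf_k\to\nabla\vgf$ in $L^p$. For the potential term, $\int V|v|^{p-2}v\,\vgf_k\to\int V|v|^{p-2}v\,\vgf$: the sequence $\vgf_k$ is bounded in $W^{1,p}(\omega')$, so by Theorem~\ref{MA_thm}(2) (applied to $\omega'\Subset\omega$ with Lipschitz $\omega'$, chosen to be a ball) the measures $|V|\,|\vgf_k|^p\dx$ are uniformly integrable, hence so are $|V|^{1/p'}|\vgf_k|$ via H\"older, and then $|V|\,|v|^{p-1}|\vgf_k|$ are uniformly integrable on $\omega'$ (using $|V|^{1/p}|v|^{p-1}\in L^{p'}$, again from Morrey--Adams applied to a cutoff of $v$); combined with a.e.\ convergence, Vitali's convergence theorem gives the limit. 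Similarly $\int g\vgf_k\to\int g\vgf$. For supersolutions one additionally notes that a nonnegative $\vgf\in W^{1,p}_c(\Omega)$ can be approximated by nonnegative $\vgf_k\in C_c^\infty(\Omega)$ (mollify $\max(\vgf,0)=\vgf$, mollification preserves nonnegativity), so the inequality is preserved in the limit.

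The main obstacle is the potential term: ensuring that the products $V|v|^{p-2}v\,\vgf_k$ are uniformly integrable so that one may pass to the limit, rather than merely individually integrable. This is exactly what the two-sided smallness in the Morrey--Adams theorem is designed to provide --- the $\delta$-parameter lets one absorb the $\|\nabla\vgf_k\|_{L^p}$ contributions uniformly in $k$ --- so the argument reduces to a careful bookkeeping of which cutoff functions live in $W^{1,p}_0$ of which domain, plus an application of Vitali's theorem. Everything else (the $\mathcal{A}$-term, the $g$-term) is a routine consequence of the ellipticity bound $|\mathcal{A}(x,\xi)|\le\beta_\omega|\xi|^{p-1}$ and H\"older's inequality.
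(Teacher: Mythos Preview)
Your proposal is correct and follows precisely the approach the paper indicates (``By virtue of the Morrey--Adams theorem and an approximation argument''): the paper gives no further details, and your argument fills them in appropriately.

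Two minor remarks. First, there is a typo: you write $|V|^{1/p}|v|^{p-1}\in L^{p'}$, but the exponent on $|V|$ should be $1/p'$, since $\bigl(|V|^{1/p'}|v|^{p-1}\bigr)^{p'}=|V|\,|v|^{p}$, which is what Morrey--Adams controls. Second, the detour through Vitali's theorem is unnecessary: once you observe (via Theorem~\ref{MA_thm}(1) applied to $\vgf_k-\vgf\in W^{1,p}_0(\omega)$) that $\int_\omega |V|\,|\vgf_k-\vgf|^p\to 0$, a single H\"older inequality
\[
\left|\int_\Omega V|v|^{p-2}v(\vgf_k-\vgf)\dx\right|
\le \left(\int_\omega |V|\,|\vgf_k-\vgf|^p\dx\right)^{1/p}
\left(\int_\omega |V|\,|v|^{p}\dx\right)^{1/p'}
\]
gives the convergence directly, with the second factor finite by Morrey--Adams applied to a cutoff of $v$ as you describe. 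The same works for the $g$-term.
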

\section{Properties of weak solutions of~$Q'_{p,\mathcal{A},V}[u]=0$}\label{toolbox}
In this section, we present various properties of weak solutions of~$Q'_{p,\mathcal{A},V}[u]=0$ which are frequently used subsequently, including Harnack and weak Harnack inequalities, standard elliptic H\"{o}lder estimates, and a Harnack convergence principle.
\subsection{Harnack inequality}
By \cite[Theorem 3.14]{Maly} for~$p\!\leq \!n$ and \cite[Theorem 7.4.1]{Pucci} for~$p\!>\!n$, we have the following local Harnack inequality for nonnegative solutions of $Q'_{p,\mathcal{A},V}[u]\!=\!0$. See \cite{Trudinger,Maly,Moser,Serrin1964} for Harnack's inequalities for linear and quasilinear equations in divergence form.
\begin{Thm}
	Assume that~$\mathcal{A}$ satisfies Assumption \ref{ass8} and~$V\in M^{q}_{\loc}(p;\Omega)$. Let~$v$ be a nonnegative solution~$v$ of $Q'_{p,\mathcal{A},V}[u]=0$ in a domain $\gw\Subset \Omega$. Then for any~$\omega'\Subset\omega$,
	$$\sup_{\omega'}v\leq C\inf_{\omega'} v,$$ where~$C$ is a positive constant depending only on~$n,p,q,\omega',\omega,\alpha_{\omega},\beta_{\omega},$ and $\Vert V\Vert_{M^{q}(p;\omega)}$.
\end{Thm}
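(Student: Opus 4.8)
The plan is to deduce the assertion from the quasilinear Harnack inequalities cited above, once the equation is put into the standard divergence-form framework and the potential is identified as an admissible lower-order coefficient. Throughout, $\gw\Subset\Gw$ is fixed, together with the ellipticity constants $\alpha_\gw=\kappa_\gw$, $\beta_\gw=2^p\nu_\gw$ of Theorem~\ref{thm_1}.

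First I would regard a nonnegative solution $v$ of $Q'_{p,\mathcal{A},V}[u]=0$ in $\gw$ as a solution of
\[
-\dive\mathcal{A}(x,\nabla v)=\mathcal{B}(x,v)\quad\text{in }\gw,\qquad \mathcal{B}(x,s)\triangleq -V(x)|s|^{p-2}s,
\]
and check that this falls under the hypotheses of the quasilinear De Giorgi--Nash--Moser theory. By Theorem~\ref{thm_1}, on $\gw$ the field $\mathcal{A}$ is a Carath\'eodory map which is $(p-1)$-homogeneous, strictly monotone, and satisfies $\alpha_\gw|\xi|^p\le\mathcal{A}(x,\xi)\cdot\xi$ and $|\mathcal{A}(x,\xi)|\le\beta_\gw|\xi|^{p-1}$; these are exactly the required structural conditions on the principal part. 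For the lower-order term one has, for $|s|\le M$, the bound $|\mathcal{B}(x,s)|\le M^{p-1}|V(x)|$ with $|V|\in M^q(p;\gw)$, and this Morrey class is precisely the admissible class of coefficients for which Harnack's inequality is established in \cite[Ch.~3]{Maly} when $p\le n$. When $p>n$ one has $M^q(p;\gw)=L^1(\gw)$ by definition, so $V\in L^1(\gw)$ and the corresponding statement is covered by the Pucci--Serrin framework in \cite[Ch.~7]{Pucci}.

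Next I would apply the ball version of the cited theorems: \cite[Theorem~3.14]{Maly} for $p\le n$ and \cite[Theorem~7.4.1]{Pucci} for $p>n$ give, for every ball $B_r(x_0)$ with $B_{2r}(x_0)\Subset\gw$, an inequality $\sup_{B_r(x_0)}v\le c\inf_{B_r(x_0)}v$ in which $c$ depends only on $n$, $p$, $q$, $\alpha_\gw$, $\beta_\gw$, $\|V\|_{M^q(p;\gw)}$, and $r$ (and on $\diam(\gw)$ when $p=n$). To pass from balls to $\gw'$, a standard Harnack-chain argument suffices: since $\overline{\gw'}$ is a compact connected subset of $\gw$, one covers it by finitely many balls of a fixed radius $\rho\asymp\operatorname{dist}(\gw',\partial\gw)$ whose concentric doubles lie in $\gw$, and any two points of $\gw'$ are linked by a chain of at most $N=N(n,\gw',\gw)$ consecutive overlapping balls of this family; composing the local estimates along the chain yields $\sup_{\gw'}v\le c^{N}\inf_{\gw'}v$, and one takes $C\triangleq c^{N}$, which depends only on $n,p,q,\gw',\gw,\alpha_\gw,\beta_\gw$, and $\|V\|_{M^q(p;\gw)}$.

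There is no deep difficulty here; the one point requiring care is the compatibility of hypotheses — verifying that membership of $V$ in the local Morrey space $M^q_{\loc}(p;\Omega)$ places $\mathcal{B}$ inside (a subclass of) the coefficient classes for which the quoted Harnack inequalities hold, and remembering to invoke \cite{Pucci} rather than \cite{Maly} in the supercritical range $p>n$, where the local Morrey space collapses to $L^1_{\loc}(\Omega)$. The remaining localization and chaining steps are routine.
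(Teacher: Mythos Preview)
Your proposal is correct and follows essentially the same approach as the paper: the paper simply invokes \cite[Theorem~3.14]{Maly} for $p\le n$ and \cite[Theorem~7.4.1]{Pucci} for $p>n$ without further argument, and you have supplied the natural verification that the structural hypotheses of those theorems are met, together with the standard Harnack-chain step to pass from balls to a general $\omega'\Subset\omega$.
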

\subsection{H\"{o}lder estimate}
Let $v$ be a  H\"{o}lder continuous function of the order $0<\gamma\leq 1$ in $\gw$. We denote
$$[v]_{\gamma,\omega}\triangleq\sup_{x,y\in\omega,x\neq y}\frac{\big|v(x)-v(y)\big|}{|x-y|^{\gamma}}\,.$$
The H\"{o}lder continuity of solutions of $Q'_{p,\mathcal{A},V}[u]=0$ follows from \cite[Theorem 4.11]{Maly} for~$p\leq n$ and \cite[Theorem 7.4.1]{Pucci} for~$p>n$. For further regularity of solutions of quasilinear elliptic equations, see \cite{Trudinger, Maly, Pucci}. We need the following result:
\begin{Thm}
	Assume that~$\mathcal{A}$ satisfies Assumption \ref{ass8} and~$V\in M^{q}_{\loc}(p;\Omega)$. Let~$v$ be a solution of $Q'_{p,\mathcal{A},V}[u]=0$ in a domain   $\gw\Subset \Omega$. Then~$v$ is locally H\"{o}lder continuous of order $0<\gamma\leq 1$ (depending on~$n,p,q,\alpha_{\omega}$, and~$\beta_{\omega}$). Furthermore, for any~$\omega'\Subset\omega$,$$[v]_{\gamma,\omega'}\leq C\sup_{\omega}|v|,$$ where~$C$ is a positive constant depending only on~$n,p,q,\omega',\omega,\alpha_{\omega},\beta_{\omega}$, and~$\Vert V\Vert_{M^{q}(p;\omega)}$.
\end{Thm}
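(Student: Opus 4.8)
The plan is to reduce the statement to the interior regularity theory for quasilinear equations in divergence form --- \cite[Theorem 4.11]{Maly} when $1<p\le n$ and \cite[Theorem 7.4.1]{Pucci} when $p>n$ --- the only delicate point being that the zeroth-order coefficient $V$ lies merely in the local Morrey space $M^q_\loc(p;\Omega)$ and not in $L^\infty_\loc(\Omega)$.

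First I would recast the equation. A weak solution $v\in W^{1,p}_\loc(\Omega)$ of $Q'_{p,\mathcal A,V}[u]=0$ in $\gw$ is a weak solution of the quasilinear equation $\dive \mathbf a(x,v,\nabla v)=\mathbf b(x,v,\nabla v)$ with $\mathbf a(x,s,\xi)\triangleq\mathcal A(x,\xi)$ and $\mathbf b(x,s,\xi)\triangleq V(x)|s|^{p-2}s$. By Theorem~\ref{thm_1} (valid under Assumption~\ref{ass8}), $\mathbf a$ is a Carath\'eodory vector field satisfying, on every $\gw\Subset\Omega$, the ellipticity and growth bounds $\mathbf a(x,s,\xi)\cdot\xi\ge\alpha_\gw|\xi|^{p}$ and $|\mathbf a(x,s,\xi)|\le\beta_\gw|\xi|^{p-1}$ of \eqref{structure}, while $|\mathbf b(x,s,\xi)|\le |V(x)|\,|s|^{p-1}$ with $|V|\in M^q(p;\gw)$. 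These are precisely the structural hypotheses under which the De Giorgi--Nash--Moser theory with Morrey-class lower-order data is carried out in \cite{Maly}; the fact that a coefficient in $M^q(p;\gw)$ is admissible there is exactly the content of the Morrey--Adams theorem (Theorem~\ref{MA_thm}(1)), which lets one absorb, for every $\delta>0$, the zeroth-order term into $\delta\Vert\nabla u\Vert_{L^{p}}^{p}$ plus a multiple of $\Vert u\Vert_{L^{p}}^{p}$ whose constant is controlled by $\Vert V\Vert_{M^q(p;\gw)}$.

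Granting the structure conditions, I would then apply \cite[Theorem 4.11]{Maly} for $p\le n$: it gives that $v$ is locally bounded and locally H\"{o}lder continuous in $\gw$ of some order $\gamma=\gamma(n,p,q,\alpha_\gw,\beta_\gw)\in(0,1]$, with oscillation decay $\mathrm{osc}_{B_r}v\le C r^{\gamma}$ on small concentric balls. Fixing $\omega'\Subset\gw$ and covering $\overline{\omega'}$ by finitely many balls whose doubles lie in $\gw$ upgrades this to $[v]_{\gamma,\omega'}\le C\,\mathrm{osc}_{\gw}v\le C\sup_{\gw}|v|$, with $C$ depending only on $n,p,q,\omega',\gw,\alpha_\gw,\beta_\gw$ and $\Vert V\Vert_{M^q(p;\gw)}$ (the Morrey norm scaling harmlessly under the covering, the exponent $q'$ being fixed). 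For $p>n$ one argues the same way with \cite[Theorem 7.4.1]{Pucci} in place of \cite[Theorem 4.11]{Maly} (here $q=1$, $V\in L^1_\loc$); alternatively, in this range local boundedness of $\nabla v$ combined with the Morrey--Sobolev embedding $W^{1,p}\hookrightarrow C^{0,1-n/p}_{\loc}$ already yields the conclusion.

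The main obstacle is thus the verification, in the second paragraph, that a Morrey potential is an admissible lower-order coefficient for the cited regularity theorems; this is precisely what Theorem~\ref{MA_thm} is tailored to provide, and it is the reason the Morrey space is defined as it is. No circularity arises: local boundedness of $v$, which is needed to bound $\mathbf b$ pointwise by $|V|\,|v|^{p-1}$, is itself part of the De Giorgi iteration and is produced simultaneously with the oscillation estimate; and since the estimate is purely interior, no regularity of $\partial\omega'$ is required.
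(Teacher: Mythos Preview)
Your proposal is correct and follows precisely the paper's approach: the paper simply cites \cite[Theorem 4.11]{Maly} for $p\le n$ and \cite[Theorem 7.4.1]{Pucci} for $p>n$ without further argument, so your write-up is in fact more detailed than the paper's own treatment (which gives no proof beyond the citations).
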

  \subsection{Weak Harnack inequality}
 If $v$ is a nonnegative supersolution of \eqref{half}, then the Harnack inequality still holds for $p>n$ by \cite[Theorem 7.4.1]{Pucci} (See also \cite{Trudinger1967}).
  On the other hand, for~$p\leq n$, we have:
  \begin{Thm}[{\cite[Theorem 3.13]{Maly}}]
     Assume that~$\mathcal{A}$ satisfies Assumption \ref{ass8} and~$V\in M^{q}_{\loc}(p;\Omega)$. Let~$p\leq n$ and~$s=n(p-1)/(n-p)$. For any nonnegative supersolution~$v$ of   $Q'_{p,\mathcal{A},V}[u]=0$ in a domain  $\omega\Subset \Omega$, any~$\omega'\Subset\omega$, and any~$0<t<s$,
    $$\Vert v\Vert_{L^{t}(\omega')}\leq C\inf_{\omega'}v,$$ where~$C$ is a positive constant depending only on~$n,p,t,\omega,\omega',$ and~$\Vert V\Vert_{M^{q}(p;\omega)}$. In particular, such a supersolution is either strictly positive in the domain $\gw$ or vanishes identically.
  \end{Thm}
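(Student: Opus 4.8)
The final theorem is the weak Harnack inequality for nonnegative supersolutions of $Q'_{p,\mathcal{A},V}[u]=0$ when $p \leq n$: for $s = n(p-1)/(n-p)$, any nonnegative supersolution $v$ on $\omega \Subset \Omega$, any $\omega' \Subset \omega$, and any $0 < t < s$, one has $\|v\|_{L^t(\omega')} \leq C \inf_{\omega'} v$; and consequently such a supersolution either is strictly positive or vanishes identically. The author attributes this to \cite[Theorem 3.13]{Maly}, so the "proof" is really a reduction of our hypotheses to the framework in which that result is stated.

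**Plan.** The plan is to verify that the operator $-\dive\mathcal{A}(x,\nabla v) + V|v|^{p-2}v$ falls under the class of quasilinear operators treated in \cite{Maly}, and then to quote their weak Harnack estimate directly. First I would recall that, by Assumption~\ref{ass8} and Theorem~\ref{thm_1}, the vector field $\mathcal{A}(x,\xi)$ satisfies on each $\omega \Subset \Omega$ the standard structural conditions: the ellipticity bound $\alpha_\omega |\xi|^p \leq \mathcal{A}(x,\xi)\cdot\xi$ and the growth bound $|\mathcal{A}(x,\xi)| \leq \beta_\omega |\xi|^{p-1}$ from \eqref{structure}, together with measurability in $x$ and continuity in $\xi$. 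The zeroth-order term is $V|v|^{p-2}v$ with $V \in M^q_\loc(p;\Omega)$; by the Morrey--Adams theorem (Theorem~\ref{MA_thm}) this potential is subordinate (with arbitrarily small coupling constant, up to a lower-order $L^p$ correction) to the $p$-Dirichlet energy on every $\omega' \Subset \omega$, which is precisely the integrability hypothesis on the lower-order coefficient required in \cite[Ch.~3]{Maly}. Hence the equation $Q'_{p,\mathcal{A},V}[v] = 0$ is an instance of the equations covered there, and a nonnegative (weak) supersolution in our sense is a nonnegative supersolution in theirs. Then I would invoke \cite[Theorem 3.13]{Maly} to obtain, for $\omega' \Subset \omega$ and $0 < t < s = n(p-1)/(n-p)$, the bound $\|v\|_{L^t(\omega')} \leq C \inf_{\omega'} v$, with $C$ depending only on $n,p,t,\omega,\omega'$ and $\|V\|_{M^q(p;\omega)}$ (and the ellipticity constants $\alpha_\omega,\beta_\omega$).

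**The dichotomy.** For the final assertion, suppose $\inf_\omega v$ is not bounded below by a positive constant on some $\omega' \Subset \omega$ containing a zero, i.e. $v(x_0) = 0$ for some $x_0 \in \omega$ (using that $v$, being a supersolution, is lower semicontinuous / has a suitable representative). Choosing $\omega'$ a small ball around $x_0$ with $\omega' \Subset \omega$, the weak Harnack inequality gives $\|v\|_{L^t(\omega')} \leq C \inf_{\omega'} v = 0$, whence $v \equiv 0$ on $\omega'$. A standard connectedness argument then propagates this: the set where $v$ vanishes in a neighborhood is open (just shown) and closed in $\omega$ (by the same estimate applied at boundary points of that set and continuity of the representative), so by connectedness of $\omega$ either $v \equiv 0$ on $\omega$ or $v > 0$ on all of $\omega$.

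**Main obstacle.** The only real subtlety — and it is a bookkeeping point rather than a deep one — is making the translation between the hypotheses precise: one must check that the local Morrey condition $V \in M^q_\loc(p;\Omega)$ actually implies the specific integrability/smallness hypothesis on the zero-order coefficient that \cite[Theorem 3.13]{Maly} requires, and that the weak formulation in Definition~\ref{def_sol} (with test functions in $W^{1,p}_c(\Omega)$, by Lemma~\ref{lem4.11}) matches theirs. Both follow from the Morrey--Adams theorem (Theorem~\ref{MA_thm}(2)), which provides exactly the form-boundedness with small constant needed to absorb the potential into the leading term in the Moser/De Giorgi iteration underlying \cite[Theorem 3.13]{Maly}. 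Once this identification is in place, nothing further is needed; in the paper this can be stated in one or two sentences citing Theorem~\ref{MA_thm} and \cite{Maly}.
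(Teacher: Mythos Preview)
Your proposal is correct and, in fact, more detailed than what the paper itself provides: the paper simply states this theorem with the citation \cite[Theorem 3.13]{Maly} in the header and gives no proof, treating it as a direct quotation of a known result. Your reduction---verifying that the structural conditions \eqref{structure} on $\mathcal{A}$ and the Morrey condition on $V$ place the equation within the scope of \cite[Theorem 3.13]{Maly}, and then invoking that theorem---is exactly the implicit content of the citation, and your connectedness argument for the dichotomy is the standard one.

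One minor point: in the dichotomy argument you write ``$v(x_0)=0$ for some $x_0$'' and appeal to lower semicontinuity; since supersolutions here are only in $W^{1,p}_{\loc}$, it is cleaner to phrase this in terms of the essential infimum. If $\operatorname{ess\,inf}_{\omega'} v = 0$ on some ball $\omega'\Subset\omega$, the weak Harnack inequality forces $\|v\|_{L^t(\omega')}=0$, hence $v=0$ a.e.\ on $\omega'$; then the set $\{x\in\omega: v=0 \text{ a.e.\ in a neighborhood of }x\}$ is open, and its complement is open by the same argument, so connectedness finishes. This is what you intended, and the paper's one-line ``In particular'' clause is meant to encapsulate precisely this.
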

\subsection{Harnack convergence principle}
In this subsection, we generalize the Harnack convergence principle \cite[Proposition 2.11]{Pinchover} to our setting. See \cite[Proposition 2.7]{Giri} for a slightly more general Harnack convergence principle in the sense that the second-order term is also not fixed but a sequence.
\begin{Def}
\emph{  By a \emph{Lipschitz exhaustion} of~$\Omega$, we mean a sequence of Lipschitz domains~$\{\omega_{i}\}_{i\in\mathbb{N}}$ satisfying for all~$i\in\mathbb{N}$,~$\omega_{i}\Subset\omega_{i+1}\Subset\Omega$ and~$\cup_{i=1}^{\infty}\omega_{i}=\Omega.$}
\end{Def}
For the existence of a Lipschitz exhaustion of~$\Omega$, see  for example \cite[Proposition 8.2.1]{smooth}.
\begin{Thm}[Harnack convergence principle]\label{HCP}
 Let~$\mathcal{A}$ satisfy Assumption~\ref{ass8} and let $\{\omega_{i}\}_{i\in\mathbb{N}}$ be a Lipschitz exhaustion of~$\Omega$ and~$x_{0}\in \omega_{1}$. Assume that~$\mathcal{V}_{i}\in M^{q}(p;\omega_{i})$ converges weakly in~$M^{q}_{\loc}(p;\Omega)$ to~$\mathcal{V}\in M^{q}_{\loc}(p;\Omega)$ as~$i\rightarrow\infty$. For every~$i\in\mathbb{N}$, suppose that~$v_{i}$ is a positive solution of the equation~$Q'_{p,\mathcal{A},\mathcal{V}_{i}}[u]=0$ in~$\omega_{i}$ with~$v_{i}(x_{0})=1$. Then there exists a subsequence of~$\{v_{i}\}_{i\in\mathbb{N}}$ converging weakly in $W^{1,p}_{\loc}(\Omega)$ and locally uniformly in~$\Omega$ to a positive weak solution~$v\in W^{1,p}_{\loc}(\Omega)$ of the equation~$Q'_{p,\mathcal{A},\mathcal{V}}[u]=0$ in~$\Omega$.
\end{Thm}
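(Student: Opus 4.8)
The plan is to follow the classical three-step scheme — uniform local bounds, compactness/diagonal extraction, passage to the limit in the weak formulation — adapting the argument of \cite[Proposition 2.11]{Pinchover} to the $\mathcal{A}$-Laplacian and to the weak convergence of the potentials. Fix a Lipschitz exhaustion $\{\omega_i\}$ and the normalization point $x_0\in\omega_1$, with $v_i(x_0)=1$.

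\emph{Step 1: Uniform local Harnack bounds.} Fix $j\in\mathbb{N}$. For all $i$ large enough we have $\omega_j\Subset\omega_{j+1}\Subset\omega_i$, so $v_i>0$ solves $Q'_{p,\mathcal{A},\mathcal{V}_i}[u]=0$ in $\omega_{j+1}$. The weak convergence $\mathcal{V}_i\rightharpoonup\mathcal{V}$ in $M^q_\loc(p;\Omega)$ implies, in particular, that $\sup_i\Vert\mathcal{V}_i\Vert_{M^q(p;\omega_{j+1})}<\infty$ (a weakly convergent sequence is norm-bounded). Since $x_0$ can be joined to any point of $\omega_j$ by a chain of overlapping balls compactly contained in $\omega_{j+1}$, a standard Harnack chain argument applied to the Harnack inequality of Section \ref{toolbox} — whose constant depends only on $n,p,q$, the geometry, $\alpha_{\omega_{j+1}},\beta_{\omega_{j+1}}$, and $\Vert\mathcal{V}_i\Vert_{M^q(p;\omega_{j+1})}$, hence is uniform in $i$ — yields constants $0<c_j\le C_j<\infty$, independent of $i$, with
\begin{equation*}
c_j\le \inf_{\omega_j}v_i\le\sup_{\omega_j}v_i\le C_j\qquad\text{for all large }i.
\end{equation*}

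\emph{Step 2: Compactness and extraction of a limit.} The uniform $L^\infty(\omega_j)$ bound combined with the local H\"older estimate of Section \ref{toolbox} (again with constants uniform in $i$, by Step 1) gives a uniform $C^{0,\gamma}(\overline{\omega_j})$ bound, so by Arzel\`a--Ascoli $\{v_i\}$ is precompact in $C(\overline{\omega_j})$. Moreover, testing the equation for $v_i$ with a cutoff times $v_i$ and using the ellipticity $\alpha_{\omega_{j+1}}|\nabla v_i|^p\le\mathcal{A}(x,\nabla v_i)\cdot\nabla v_i$ together with the Morrey--Adams theorem (Theorem \ref{MA_thm}) to absorb $\int|\mathcal{V}_i|v_i^p$ — the relevant Morrey norms being uniformly bounded — yields a uniform $W^{1,p}(\omega_j)$ bound. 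Hence $\{v_i\}$ is also weakly precompact in $W^{1,p}(\omega_j)$. A diagonal argument over $j$ produces a subsequence (not relabeled) and a function $v\in W^{1,p}_\loc(\Omega)$ such that $v_i\to v$ locally uniformly in $\Omega$ and $v_i\rightharpoonup v$ weakly in $W^{1,p}_\loc(\Omega)$; by Step 1, $v(x_0)=1$ and $v\ge c_j>0$ on $\omega_j$, so $v$ is positive in $\Omega$.

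\emph{Step 3: Passage to the limit.} It remains to show $v$ solves $Q'_{p,\mathcal{A},\mathcal{V}}[u]=0$ in $\Omega$. Fix $\vgf\in C_c^\infty(\Omega)$ with $\supp\vgf\subset\omega_j$. For the potential term, $|v_i|^{p-2}v_i\to|v|^{p-2}v$ uniformly on $\supp\vgf$ while $\mathcal{V}_i\rightharpoonup\mathcal{V}$ in $M^q(p;\omega_j)$; the definition of weak convergence in Morrey space (pairing against $L^{?}$-type test functions, here $|v_i|^{p-2}v_i\vgf$, which converge strongly in the appropriate dual) gives $\int\mathcal{V}_i|v_i|^{p-2}v_i\vgf\to\int\mathcal{V}|v|^{p-2}v\vgf$. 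For the principal part one shows $\nabla v_i\to\nabla v$ in $L^p_\loc(\Omega;\R^n)$ (strong convergence of the gradients): this is the standard monotonicity/Minty argument — use the weak formulation with test function $(v_i-v)\chi$ for a cutoff $\chi$, exploit the uniform bounds and the Morrey--Adams estimate to show $\int\chi\,\mathcal{A}(x,\nabla v_i)\cdot\nabla(v_i-v)\to0$, subtract $\int\chi\,\mathcal{A}(x,\nabla v)\cdot\nabla(v_i-v)\to0$, and invoke the strict monotonicity from Theorem \ref{thm_1} to upgrade to strong $L^p$ convergence of $\nabla v_i$. Continuity of $\xi\mapsto\mathcal{A}(x,\xi)$ and the growth bound $|\mathcal{A}(x,\xi)|\le\beta_\omega|\xi|^{p-1}$ then let us pass to the limit in $\int\mathcal{A}(x,\nabla v_i)\cdot\nabla\vgf$ by dominated convergence, giving $\int\mathcal{A}(x,\nabla v)\cdot\nabla\vgf+\int\mathcal{V}|v|^{p-2}v\vgf=0$, as required.

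\emph{Main obstacle.} The delicate point is Step 3, specifically the interaction between the \emph{weak} (not strong) convergence of $\mathcal{V}_i$ in the Morrey space and the need for strong convergence of $\nabla v_i$: one must make sure the test functions used in the Minty argument (products like $(v_i-v)\chi$ and $|v_i|^{p-2}v_i\vgf$) converge in whatever topology is dual to weak Morrey convergence, and that the Morrey--Adams constants stay uniform along the sequence. Once the uniform local bounds of Step 1 are in place this is technical but routine; isolating exactly which dual pairing the weak $M^q_\loc(p;\Omega)$-convergence refers to (as set up in \cite{Pinchover}) is what makes the potential terms behave.
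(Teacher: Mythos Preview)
Your three-step scheme matches the paper's proof closely: uniform Harnack/H\"older bounds, Arzel\`a--Ascoli plus a Caccioppoli-type estimate (the paper tests with $v_i|\varphi|^p$ and uses Young's inequality and the Morrey--Adams theorem, exactly as you sketch), then a monotonicity argument with test function $\psi(v_i-v)$ to handle the principal part. The treatment of the potential term is also the same: split $\int \mathcal{V}_i v_i^{p-1}\psi - \int \mathcal{V} v^{p-1}\psi$ into a piece controlled by the uniform convergence $v_i^{p-1}\to v^{p-1}$ together with the bound $\sup_i\int_{\tilde\omega}|\mathcal{V}_i|<\infty$, and a piece controlled by the weak Morrey convergence against the fixed function $v^{p-1}\psi$.

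One point in Step~3 is overstated. Strict monotonicity from Theorem~\ref{thm_1} is only qualitative, and by itself does \emph{not} upgrade $\mathcal{I}_i\to 0$ to strong $L^p$-convergence of $\nabla v_i$; for that you would need the quantitative lower bound of Assumption~\ref{ass2}, which Theorem~\ref{HCP} does not assume. What the paper actually extracts from $\mathcal{I}_i\to 0$ (via \cite[Lemma~3.73]{HKM}) is weak convergence $\mathcal{A}(x,\nabla v_i)\rightharpoonup\mathcal{A}(x,\nabla v)$ in $L^{p'}_{\loc}$, and this is precisely what is needed to pass to the limit in $\int\mathcal{A}(x,\nabla v_i)\cdot\nabla\varphi$ since $\nabla\varphi$ is a fixed $L^p$ function. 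Your dominated-convergence step should therefore be replaced by this weak-$L^{p'}$ argument; with that correction the proof goes through exactly as in the paper.
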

\begin{proof}
 We use the same approach as in the proof of \cite[Proposition 2.11]{Pinchover}.
Note our convention throughout the proof: when extracting a suitable subsequence of~$\{v_{i}\}_{i\in\mathbb{N}}$, we keep denoting the obtained subsequence by~$\{v_{i}\}_{i\in\mathbb{N}}$ without stating it.

 By the local H\"{o}lder continuity,~$v_{i}$ are continuous in~$\omega_{i}$ for all~$i\in\mathbb{N}$. Fix  a subdomain  $\gw_1\Subset \gw\Subset \Gw$. By the local Harnack inequality, $\{v_{i}\}_{i\in\mathbb{N}}$ is uniformly bounded in $\omega$. Therefore, the local H\"{o}lder continuity guarantees that $\{v_{i}\}_{i\in\mathbb{N}}$ is equicontinuous over~$\omega$. Applying the Arzel\`{a}-Ascoli theorem, we obtain a subsequence converging uniformly in $\gw$ to a positive continuous function $v$.

 Now we aim to find a subsequence of~$\{v_{i}\}_{i\in\mathbb{N}}$ converging weakly in~$W^{1,p}(\omega)$ to a positive solution of~$Q'_{p,\mathcal{A},\mathcal{V}}[u]=0$ in~$\Omega$.  Fix $k\in\mathbb{N}$. Then for any~$\varphi\in C^{\infty}_{c}(\omega_{k})$,  we have~$v_{i}|\varphi|^{p}\in W^{1,p}_{c}(\omega_{k})$ for $i>k$. Testing~$v_{i}|\varphi|^{p}$ in the definition of~$v_{i}$ being a positive weak solution of the equation~$Q'_{p,\mathcal{A},\mathcal{V}_{i}}[u]=0$ in $\gw_k$, we obtain
 $$\left\Vert|\nabla v_{i}|_{\mathcal{A}}\varphi\right\Vert^{p}_{L^{p}(\omega_{k})}\leq p\int_{\omega_{k}}|\nabla v_{i}|_{\mathcal{A}}^{p-1}|\varphi|^{p-1}v_{i}|\nabla \varphi|_{\mathcal{A}}\dx+\int_{\omega_{k}}|\mathcal{V}_{i}|v_{i}^{p}|\varphi|^{p}\dx.$$
 Applying Young inequality $pab\leq \varepsilon a^{p'}+\left((p-1)/\varepsilon\right)^{p-1}b^{\,p},$ on $p\!\int_{\omega_{k}}\!|\nabla v_{i}|_{\mathcal{A}}^{p-1}|\varphi|^{p-1}v_{i}|\nabla \varphi|_{\mathcal{A}}\!\dx$ with $\varepsilon\in (0,1), a=|\nabla v_{i}|_{\mathcal{A}}^{p-1}|\varphi|^{p-1}$, and~$b=v_{i}|\nabla \varphi|_{\mathcal{A}}$, and the Morrey-Adams theorem (Theorem~\ref{MA_thm})  on $\int_{\omega_{k}}|\mathcal{V}_{i}|v_{i}^{p}|\varphi|^{p}\dx$, we conclude:
  \begin{equation*}
    (1-\varepsilon)\!\left\Vert|\nabla v_{i}|_{\mathcal{A}}\varphi\right\Vert^{p}_{L^{p}(\omega_{k})}
    \leq  \!\left(\frac{p-1}{\varepsilon}\right)^{p-1}\!\!\left\Vert|\nabla \varphi|_{\mathcal{A}}v_{i}\right\Vert^{p}_{L^{p}(\omega_{k})}+\delta\left\Vert\nabla(v_{i}\varphi)\right\Vert^{p}_{L^{p}(\omega_{k};\mathbb{R}^{n})}
    +C\left\Vert v_{i}\varphi\right\Vert^{p}_{L^{p}(\omega_{k})},
  \end{equation*}
 where $C=C\left(n,p,q,\delta,\Vert\mathcal{V}\Vert_{M^{q}(p;\omega_{k+1})}\right)$.
   By virtue of the structural properties of~$\mathcal{A}$ and the frequently used inequality:
   $$\big\Vert\nabla(v_{i}\varphi)\big\Vert^{p}_{L^{p}(\omega_{k};\mathbb{R}^{n})}\leq 2^{p-1}\Big(\big\Vert v_{i}\nabla \varphi\big\Vert^{p}_{L^{p}(\omega_{k};\mathbb{R}^{n})}+\big\Vert \varphi\nabla v_{i}\big\Vert^{p}_{L^{p}(\omega_{k};\mathbb{R}^{n})}\Big),$$
  we observe that for all~$i>k$ and all~$\varphi\in C^{\infty}_{c}(\omega_{k})$:
  \begin{equation*}
   	 \left((1\!-\!\varepsilon)\alpha_{\omega_{k}} \! \!- \! 2^{p-1}\delta\right)\!\left\Vert|\nabla v_{i}|\varphi\right\Vert_{L^{p}(\omega_{k})}^{p}
    \!\leq \!\! \left(\!\!\left(\!\frac{p-1}{\varepsilon} \! \right)^{p-1} \!\!\beta_{\omega_{k}} \! + \! 2^{p-1}\delta \!\right)\!\!\left\Vert v_{i}|\nabla \varphi|\right\Vert_{L^{p}(\omega_{k})}^{p}
   \!\! + \!C\left\Vert v_{i}\varphi\right\Vert^{p}_{L^{p}(\omega_{k})} \!,
  \end{equation*}
where~$C=C\left(n,p,q,\delta,\Vert\mathcal{V}\Vert_{M^{q}(p;\omega_{k+1})}\right)$.
  Let $\delta>0$ be such that~$(1-\varepsilon)\alpha_{\omega_{k}}-2^{p-1}\delta>0$, and fix $\omega\Subset\omega'\Subset\omega_{k}$. Choose~$\varphi\in C^{\infty}_{c}(\omega_{k})$ \cite[Theorem 1.4.1]{cutoff} such that$$\supp(\varphi)\subseteq\omega',\quad0\leq \varphi\leq 1~\mbox{in}~\omega',\quad \varphi=1~\mbox{in}~\omega, \mbox{ and } |\nabla \varphi|\leq C(\omega',\omega)~\mbox{in}~\omega'.$$ Consequently, with~$C'=C\left(n,p,q,\delta,\varepsilon,\alpha_{\omega_{k}},\Vert\mathcal{V}\Vert_{M^{q}(p;\omega_{k+1})}\right)$ and~$C''=C(p,\delta,\varepsilon,\alpha_{\omega_{k}},\beta_{\omega_{k}})$, we have
   \begin{eqnarray*}
    \Vert\nabla v_{i}\Vert_{L^{p}(\omega;\mathbb{R}^{n})}^{p}+\Vert v_{i}\Vert_{L^{p}(\omega)}^{p}
    &\leq& \Vert|\nabla v_{i}|\varphi\Vert_{L^{p}(\omega_{k})}^{p}+\Vert v_{i}\varphi\Vert_{L^{p}(\omega_{k})}^{p}\\
    &\leq& C'\big\Vert v_{i}\varphi\big\Vert^{p}_{L^{p}(\omega_{k})}
    + C''\big\Vert v_{i}|\nabla \varphi|\big\Vert_{L^{p}(\omega_{k})}^{p}
    \leq \tilde{C},
  \end{eqnarray*}
  where the positive constant $\tilde{C}$ does not depend on~$v_{i}$.
  So~$\{v_{i}\}_{i\in\mathbb{N}}$ is bounded in~$W^{1,p}(\omega)$. Hence, there exists a subsequence converging weakly in~$W^{1,p}(\omega)$ to the nonnegative function~$v\in W^{1,p}(\omega)$ with~$v(x_{0})=1$ because~$\{v_{i}\}_{i\in\mathbb{N}}$ converges uniformly to~$v$ in~$\omega$.

   The task is now to show that~$v$ is a positive solution of~$Q'_{p,\mathcal{A},\mathcal{V}}[u]=0$ in~$\tilde{\omega}\Subset\omega$ such that~$x_{0}\in\tilde{\omega}$. For any~$\psi\in C^{\infty}_{c}(\tilde{\omega})$, we have
   \begin{eqnarray*}
   &&\left\vert\int_{\tilde{\omega}}\mathcal{V}_{i}v_{i}^{p-1}\psi\dx-\int_{\tilde{\omega}}\mathcal{V}v^{p-1}\psi\dx\right\vert\\
   &=&\left\vert\int_{\tilde{\omega}}\mathcal{V}_{i}v_{i}^{p-1}\psi\dx-\int_{\tilde{\omega}}\mathcal{V}_{i}v^{p-1}\psi\dx+\int_{\tilde{\omega}}\mathcal{V}_{i}v^{p-1}\psi\dx-\int_{\tilde{\omega}}\mathcal{V}v^{p-1}\psi\dx\right\vert\\
   &\leq& \int_{\tilde{\omega}}\vert\mathcal{V}_{i}\vert \vert v_{i}^{p-1}-v^{p-1}\vert|\psi|\dx+\left\vert\int_{\tilde{\omega}}(\mathcal{V}_{i}-\mathcal{V}) v^{p-1}\psi\dx\right\vert\\
   &\leq& C(\psi)\int_{\tilde{\omega}}\vert\mathcal{V}_{i}\vert \vert v_{i}^{p-1}-v^{p-1}\vert\dx+\left\vert\int_{\tilde{\omega}}(\mathcal{V}_{i}-\mathcal{V}) v^{p-1}\psi\dx\right\vert.
   \end{eqnarray*}
The sequence~$\{v_{i}\}_{i\in\mathbb{N}}$ is uniformly bounded by Harnack's inequality in~$\tilde{\omega}$. The limit function~$v$ is continuous in~$\omega$ and hence bounded in~$\tilde{\omega}$. The function~$f(t)\triangleq t^{p-1}$ is uniformly continuous on~$[0,L]$ for any~$L>0$. Then~$\{v_{i}^{p-1}\}_{i\in\mathbb{N}}$ converges uniformly in~$\tilde{\omega}$ to~$v^{p-1}$ as~$\{v_{i}\}_{i\in\mathbb{N}}$ converges uniformly to~$v$. Furthermore, by a standard finite covering argument, because~$\mathcal{V}_{i}$ converges weakly to~$\mathcal{V}$ in~$M^{q}_{\loc}(p;\Omega)$, we infer that~$\int_{\tilde{\omega}}\vert\mathcal{V}_{i}\vert\dx$ is bounded with respect to~$i$. Hence,  $$\int_{\tilde{\omega}}\vert\mathcal{V}_{i}\vert \vert v_{i}^{p-1}-v^{p-1}\vert\dx\to 0 \qquad \mbox{ as } i\to \infty.$$
Moreover, by the weak convergence of $\{\mathcal{V}_{i}\}_{i\in\mathbb{N}}$ to
	$\mathcal{V}$, it follows that $$ \int_{\tilde{\omega}}(\mathcal{V}_{i}-\mathcal{V}) v^{p-1}\psi\dx\to 0 \qquad \mbox{ as } i\to\infty.$$
	Consequently, it follows that
\begin{equation}\label{potentialconvergence}
\lim_{i\rightarrow\infty}\int_{\tilde{\omega}}\mathcal{V}_{i}v_{i}^{p-1}\psi\dx=\int_{\tilde{\omega}}\mathcal{V}v^{p-1}\psi\dx.
\end{equation}

  {\bf Claim:} The sequence $\{\mathcal{A}(x,\nabla v_{i})\}$ converges weakly in~$L^{p'}(\tilde{\omega};\mathbb{R}^{n})$ to
  $ \mathcal{A}(x,\nabla v)$. This will  imply that for any~$\psi\in C^{\infty}_{c}(\tilde{\omega})$  we have
$$\int_{\tilde{\omega}}\mathcal{A}(x,\nabla v)\cdot\nabla \psi\dx+\int_{\tilde{\omega}}\mathcal{V}v^{p-1}\psi\dx=\lim_{i\to\infty}\int_{\tilde{\omega}}\left(\mathcal{A}(x,\nabla v_{i})\cdot\nabla \psi+\mathcal{V}_{i}v_{i}^{p-1}\psi\right)\dx=0.$$ In other words, $v$ is a nonnegative solution of the equation~$Q'_{p,\mathcal{A},\mathcal{V}}[u]=0$ in~$\tilde{\omega}$.

  To this end, choose~$\psi\in C^{\infty}_{c}(\omega)$ \cite[Theorem 1.4.1]{cutoff} such that$$\supp(\psi)\subseteq\omega,\quad0\leq \psi\leq 1~\mbox{in}~\omega,\quad \psi=1~\mbox{in}~\tilde{\omega}, \mbox{ and }  |\nabla \psi|\leq C(\tilde{\omega},\omega)~\mbox{in}~\omega.$$ Testing~$\psi(v_{i}-v)$ in the definition of~$v_{i}$ being a solution of~$Q'_{p,\mathcal{A},\mathcal{V}_{i}}[w]=0$ in~$\omega_{i}$, we get$$\int_{\omega}\psi\mathcal{A}(x,\nabla v_{i})\cdot \nabla(v_{i}-v)\dx=-\int_{\omega}(v_{i}-v)\mathcal{A}(x,\nabla v_{i})\cdot\nabla \psi\dx-\int_{\omega}\mathcal{V}_{i}v_{i}^{p-1}\psi(v_{i}-v)\dx.$$

   We claim that
   \begin{equation}\label{ineq7}
   \int_{\omega}\psi\mathcal{A}(x,\nabla v_{i})\cdot \nabla(v_{i}-v)\dx\rightarrow 0\mbox{ as } i\rightarrow \infty.
   \end{equation}
  As in the proof of \eqref{potentialconvergence}, $\int_{\omega}\mathcal{V}_{i}v_{i}^{p-1}\psi(v_{i}-v)\dx \to 0$ as $i\rightarrow\infty$.
In addition,
  \begin{eqnarray*}
    \left\vert-\int_{\omega}(v_{i}-v)\mathcal{A}(x,\nabla v_{i})\cdot\nabla \psi\dx\right\vert
    &\leq& \beta_{\omega}\int_{\omega}|\nabla v_{i}|^{p-1}|(v_{i}-v)\nabla \psi|\dx\\
    &\leq& \beta_{\omega}\Big(\int_{\omega}|(v_{i}-v)\nabla \psi|^{p}\dx\Big)^{1/p}\Vert\nabla v_{i}\Vert^{p/p'}_{L^{p}(\omega;\mathbb{R}^{n})}\\
    &\leq &C\left(\beta_{\omega},\omega,\tilde{\omega}, \psi\right)\Vert v_{i}-v\Vert_{L^{p}(\omega)}\Vert\nabla v_{i}\Vert^{p/p'}_{L^{p}(\omega;\mathbb{R}^{n})}.
  \end{eqnarray*}
  Because the norms $\Vert\nabla v_{i}\Vert^{{p}/{p'}}_{L^{p}(\omega;\mathbb{R}^{n})}$ are uniformly bounded for all~$i\in\mathbb{N}$, and~$v_{i}$ converges to~$v$ uniformly in~$\omega$ as~$i \to \infty$,
 we get $$\left\vert-\int_{\omega}(v_{i}-v)\mathcal{A}(x,\nabla v_{i})\cdot\nabla \psi\dx\right\vert \to 0 \mbox{ as } i\to \infty.$$
It follows that
\begin{eqnarray*}
0\leq \mathcal{I}_{i}&\triangleq&\int_{\tilde{\omega}}\big(\mathcal{A}(x,\nabla v_{i})-\mathcal{A}(x,\nabla v)\big)\cdot(\nabla v_{i}-\nabla v)\dx\\
&\leq&\int_{\omega}\psi\big(\mathcal{A}(x,\nabla v_{i})-\mathcal{A}(x,\nabla v)\big)\cdot(\nabla v_{i}-\nabla v)\dx \to 0 \mbox{ as } i\to \infty,
\end{eqnarray*}
which is derived from \eqref{ineq7} and the weak convergence of $\{\nabla v_{i}\}_{i\in\mathbb{N}}$ to~$\nabla v$.
Hence, $\lim_{i\rightarrow\infty}\mathcal{I}_{i}=0$. By means of~\cite[Lemma 3.73]{HKM}, we obtain in~$L^{p'}(\tilde{\omega};\mathbb{R}^{n})$,
$$\mathcal{A}(x,\nabla v_{i})\rightharpoonup \mathcal{A}(x,\nabla v)
\qquad \mbox{ as } i\to \infty.$$
Hence, $v$ is a positive solution of the equation~$Q'_{p,\mathcal{A},\mathcal{V}}[u]=0$ in~$\tilde{\omega}$ satisfying $v(x_{0})=1$.

In conclusion, for any~$\tilde{\omega}\Subset\omega\Subset\Omega$ with~$x_{0}\in \tilde{\omega}$, there exists a subsequence of~$\{v_{i}\}_{i\in\mathbb{N}}$ converging weakly and locally uniformly in~$\tilde{\omega}$ to a positive weak solution of the equation~$Q'_{p,\mathcal{A},\mathcal{V}}[u]=0$ in~$\tilde{\omega}$.  Using a standard diagonal argument, we may extract  a subsequence of~$\{v_{i}\}_{i\in\mathbb{N}}$ which converges weakly in~$W^{1,p}(\omega_{i})$ for all~$i\in\mathbb{N}$ and locally uniformly in~$\Omega$ to a positive weak solution~$v\in W^{1,p}_{\loc}(\Omega)$ of the equation~$Q'_{p,\mathcal{A},\mathcal{V}}[u]=0$ in~$\Omega$.
\end{proof}
\section{Generalized principal eigenvalue}\label{sec_eigenvalue}
Throughout the present section we consider solutions in a fixed domain $\gw\Subset \Gw$. First,  by virtue of the weak lower semicontinuity as well as the coercivity of certain functionals related to the functional $Q_{p,\mathcal{A},V}$, we prove that the generalized principal eigenvalue of the operator $Q'_{p,\mathcal{A},V}$ in $\gw$ is, in fact, a principal eigenvalue of $Q'_{p,\mathcal{A},V}$.  Moreover, the principal eigenvalue is simple, which is proved by virtue of the Picone-type identity (Lemma \ref{Picone}). After that, we show in Theorem \ref{maximum} (together with Theorem~\ref{complement}) that the following properties are equivalent: the positivity of the principal eigenvalue, the  validity of the generalized weak or strong maximum principles, and  the unique solvability of the Dirichlet problem $Q'_{p,\mathcal{A},V}[u]=g$ in~$W^{1,p}_{0}(\omega)$. We also establish a weak comparison principle, which is of core importance in Section~\ref{minimal}. See \cite{Pinchovergp} for more on the generalized principal eigenvalue.
\subsection{D\'{\i}az-Sa\'a type inequalities}
In this subsection, we generalize the D\'{\i}az-Sa\'a type inequalities as a counterpart of \cite[Lemma 3.3]{Pinchover},
see also \cite{Anane1987, Diaz, Lindqvist} for related results. {To this end, Assumption \ref{ass2}, concerning a stronger convexity requirement on~$|\xi|_{\mathcal{A}}^{p}$, is assumed.} The D\'{\i}az-Sa\'a type inequalities are used to prove
the uniqueness of solutions of two Dirichlet problems (see theorems~\ref{maximum} and \ref{5proposition}). But in Lemma \ref{newDiaz} and hence in Theorem \ref{5proposition}, this assumption is not supposed.
\begin{Def}
	{\em Assume that $\mathcal{A}$  satisfies Assumption \ref{ass8}, and let $V\in M_\loc^{q}(p;\Omega)$. Let $\omega\Subset\Omega$ be a subdomain.
	   A real number~$\lambda$ is called an \emph{eigenvalue with an eigenfunction v} of the Dirichlet eigenvalue problem
			\begin{equation}\label{evp}
			\begin{cases}
			Q'_{p,\mathcal{A},V}[u]=\lambda|u|^{p-2}u&\text{in~$\omega$},\\
			u=0& \text{on~$\partial\omega$},
			\end{cases}
			\end{equation}
			if~$v\in W^{1,p}_{0}(\omega)\setminus\{0\}$ is a solution of the equation $Q'_{p,\mathcal{A},V}[u]=\lambda |u|^{p-2}u$.
	}
\end{Def}
\begin{lem}[D\'{\i}az-Sa\'a type inequalities]\label{elementary}
	Suppose that~$\mathcal{A}$ satisfies Assumption~\ref{ass8} and  Assumption~\ref{ass2}. Let~$\omega\Subset\Omega$ be a domain and~$g_{i},V_{i}\in M^{q}(p;\omega)$, where~$i=1,2$. There exists a positive constant~${C_\gw(p,\mathcal A)}$ satisfying the following conclusions.
	\begin{enumerate}
	\item[$(1)$] Let $w_{i}\in W^{1,p}_{0}(\omega)\setminus\{0\}$ be nonnegative solutions of $Q'_{p,\mathcal{A},V_{i}}[u]=g_{i}$ in $\gw$, where $i=1,2$, and let~$w_{i,h}\triangleq w_{i}+h$, where~$h$ is a positive constant. Then
	\begin{multline*}
	I_{h,g_{1},g_{2},w_{1},w_{2}}
	\!\!\triangleq \!\!\int_{\omega}\!\!\left(\!\frac{g_{1} \! - \! V_{1}w_{1}^{p-1}}{w_{1,h}^{p-1}}-
	\frac{g_{2}\!-\!V_{2}w_{2}^{p-1}}{w_{2,h}^{p-1}}\!\right)\!\!(w^{p}_{1,h}-w^{p}_{2,h})\!\dx
	\!\geq \! {C_\gw(p,\mathcal A)}L_{h,w_{1},w_{2}},
	\end{multline*}
	where
	{$$L_{h,w_{1},w_{2}}\triangleq\int_{\omega}\left(w^{p}_{1,h}\bigg[\frac{\nabla w_{1,h}}{w_{1,h}},\frac{\nabla w_{2,h}}{w_{2,h}}\bigg]_{\mathcal{A}}+w^{p}_{2,h}\bigg[\frac{\nabla w_{2,h}}{w_{2,h}},\frac{\nabla w_{1,h}}{w_{1,h}}\bigg]_{\mathcal{A}}\right)\!\dx,$$
	and $[\xi,\eta]_{\mathcal{A}}$ is given by \eqref{eq_strict}.}
	\item[$(2)$]  Let $w_\gl$ and $w_\gm$ be nonnegative eigenfunctions  of the operators $Q'_{p,\mathcal{A},V_{i}}$ in $\gw$ with eigenvalues $\gl$ and $\gm$, respectively.  Then
	$$I_{0,g_{\lambda},g_{\mu},w_{\lambda},w_{\mu}}=\int_{\omega}\big((\lambda-\mu)-(V_{1}-V_{2})\big)(w_{\lambda}^{p}-w_{\mu}^{p})\dx\geq
	{C_\gw(p,\mathcal A)}L_{0,w_{\lambda},w_{\mu}}.$$
	\end{enumerate}
\end{lem}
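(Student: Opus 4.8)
The plan is to prove part (1) first and then obtain part (2) as a special case. For part (1), the starting point is the weak formulations: since $w_i \in W^{1,p}_0(\omega)\setminus\{0\}$ is a nonnegative solution of $Q'_{p,\mathcal{A},V_i}[u]=g_i$, one has for every admissible test function $\varphi$
$$\int_{\omega}\mathcal{A}(x,\nabla w_i)\cdot\nabla\varphi\dx+\int_{\omega}V_i w_i^{p-1}\varphi\dx=\int_{\omega}g_i\varphi\dx.$$
By the weak Harnack inequality each $w_i$ is strictly positive in $\omega$, so $w_{i,h}=w_i+h$ is bounded below by $h>0$ and the quotients $w_{1,h}^p/w_{2,h}^{p-1}$ etc.\ are well defined and lie in $W^{1,p}_0(\omega)\cap L^\infty$ after a truncation/approximation argument (this is the standard technical point needed to justify using them as test functions, following \cite[Lemma 3.3]{Pinchover}). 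The natural test functions are
$$\varphi_1 \triangleq \frac{w_{1,h}^p-w_{2,h}^p}{w_{1,h}^{p-1}}\quad\text{for the equation satisfied by }w_1, \qquad \varphi_2 \triangleq \frac{w_{1,h}^p-w_{2,h}^p}{w_{2,h}^{p-1}}\quad\text{for the equation satisfied by }w_2,$$
and then subtract. On the potential side this produces exactly the integrand $\bigl(\tfrac{g_1-V_1w_1^{p-1}}{w_{1,h}^{p-1}}-\tfrac{g_2-V_2w_2^{p-1}}{w_{2,h}^{p-1}}\bigr)(w_{1,h}^p-w_{2,h}^p)$ defining $I_{h,g_1,g_2,w_1,w_2}$ — note $\nabla w_{i,h}=\nabla w_i$ so replacing $w_i$ by $w_{i,h}$ inside $\mathcal{A}$ is harmless.

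The heart of the argument is the gradient side. After computing $\nabla\varphi_1$ and $\nabla\varphi_2$ and regrouping, the sum of the two gradient terms should reduce, via Euler's homogeneity identity $\mathcal{A}(x,\xi)\cdot\xi=p F(x,\xi)=|\xi|_{\mathcal{A}}^p$ and the $(p-1)$-homogeneity of $\mathcal{A}$, to
$$\int_{\omega}\left(w_{1,h}^p\Bigl(\Bigl|\tfrac{\nabla w_{1,h}}{w_{1,h}}\Bigr|_{\mathcal{A}}^p - p\,\mathcal{A}\bigl(x,\tfrac{\nabla w_{2,h}}{w_{2,h}}\bigr)\cdot\tfrac{\nabla w_{1,h}}{w_{1,h}} + (p-1)\Bigl|\tfrac{\nabla w_{2,h}}{w_{2,h}}\Bigr|_{\mathcal{A}}^p\Bigr) + \bigl(1\leftrightarrow 2\bigr)\right)\dx.$$
This is precisely the quantity to which Assumption~\ref{ass2} applies: with $\xi=\nabla w_{1,h}/w_{1,h}$ and $\eta=\nabla w_{2,h}/w_{2,h}$, the stronger convexity inequality gives
$$|\xi|_{\mathcal{A}}^p-|\eta|_{\mathcal{A}}^p-p\mathcal{A}(x,\eta)\cdot(\xi-\eta)\geq C_\omega(p,\mathcal{A})[\xi,\eta]_{\mathcal{A}},$$
and since $|\eta|_{\mathcal{A}}^p-p\mathcal{A}(x,\eta)\cdot\eta = |\eta|_{\mathcal{A}}^p - p|\eta|_{\mathcal{A}}^p = -(p-1)|\eta|_{\mathcal{A}}^p$, the left-hand side equals $|\xi|_{\mathcal{A}}^p - p\mathcal{A}(x,\eta)\cdot\xi + (p-1)|\eta|_{\mathcal{A}}^p$, matching the bracket above exactly. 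Multiplying by $w_{1,h}^p\geq 0$ (resp.\ $w_{2,h}^p$ for the symmetric term) and integrating yields the lower bound $C_\omega(p,\mathcal{A})L_{h,w_1,w_2}$. The main obstacle, as usual for D\'iaz--Sa\'a arguments, is the justification that $\varphi_1,\varphi_2$ are legitimate test functions in $W^{1,p}_0(\omega)$ (or $W^{1,p}_c$): one must check integrability of all the terms using $w_{i,h}\geq h$, the Morrey--Adams theorem (Theorem~\ref{MA_thm}) to control the $V_i$-terms, and a cutoff/truncation-plus-density argument; this is where the boundary condition $w_i\in W^{1,p}_0(\omega)$ and the positive shift $h$ are essential.

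For part (2), apply part (1) with $g_i = \lambda_i |w_{\lambda_i}|^{p-2}w_{\lambda_i}$ where $\lambda_1=\lambda$, $\lambda_2=\mu$, $w_{\lambda_1}=w_\lambda$, $w_{\lambda_2}=w_\mu$, and take $h\to 0^+$. The eigenfunctions are nonnegative solutions, so $g_i - V_i w_{\lambda_i}^{p-1} = \lambda_i w_{\lambda_i}^{p-1}$, and the quotient $g_i/w_{i,h}^{p-1} - V_i w_i^{p-1}/w_{i,h}^{p-1}$ tends to $\lambda_i - V_i$ as $h\to 0$ (using strict positivity of the $w_{\lambda_i}$ in $\omega$, again from the weak Harnack inequality, so the ratios $w_{\lambda_i}/w_{i,h}\to 1$ pointwise and the integrands converge); passing to the limit with dominated convergence — the dominating functions coming from the integrability estimates already established — gives $I_{0,g_\lambda,g_\mu,w_\lambda,w_\mu} = \int_\omega\bigl((\lambda-\mu)-(V_1-V_2)\bigr)(w_\lambda^p-w_\mu^p)\dx$. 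Lower semicontinuity (or direct convergence) of $L_{h,w_\lambda,w_\mu}$ to $L_{0,w_\lambda,w_\mu}$ as $h\to 0$ preserves the inequality, yielding the claimed bound with the same constant $C_\omega(p,\mathcal{A})$.
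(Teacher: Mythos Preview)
Your proposal is correct and follows essentially the same approach as the paper: test the weak formulation for $w_i$ with $\psi_{i,h}=(w_{1,h}^p-w_{2,h}^p)/w_{i,h}^{p-1}$, combine the two resulting identities to produce $I_{h,g_1,g_2,w_1,w_2}$ as a sum of two bracket expressions to which Assumption~\ref{ass2} applies pointwise, and for part~(2) pass to the limit $h\to 0$ using dominated convergence on the left and Fatou's lemma (your ``lower semicontinuity'') on the right. One small slip: in your part~(2) computation you write $g_i-V_i w_{\lambda_i}^{p-1}=\lambda_i w_{\lambda_i}^{p-1}$, but it should be $(\lambda_i-V_i)w_{\lambda_i}^{p-1}$ --- your next line nonetheless has the correct limit $\lambda_i-V_i$, so this is only a typo.
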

\begin{proof}	
$(1)$ Let~$\psi_{1,h}\triangleq(w^{p}_{1,h}-w^{p}_{2,h})w_{1,h}^{1-p}$. Then~$\psi_{1,h}\in W^{1,p}_{0}(\omega)$. It follows that
\begin{equation*}
\int_{\omega}\mathcal{A}(x,\nabla w_{1})\cdot \nabla \psi_{1,h}\dx+\int_{\omega}V_{1}|w_{1}|^{p-2}w_{1}\psi_{1,h}\dx=\int_{\omega}g_{1}\psi_{1,h}\dx.
\end{equation*}
We thus get
\begin{eqnarray*}\label{w1h}
	&&\int_{\omega}(w^{p}_{1,h}-w^{p}_{2,h})\left\vert\frac{\nabla w_{1,h}}{w_{1,h}}\right\vert_{\mathcal{A}}^{p}\dx
	-p\int_{\omega}w_{2,h}^{p}\mathcal{A}\left(x,\frac{\nabla w_{1,h}}{w_{1,h}}\right)\cdot\left(\frac{\nabla w_{2,h}}{w_{2,h}}-\frac{\nabla w_{1,h}}{w_{1,h}}\right) \!\! \dx\\
	&=&\int_{\omega}\frac{g_{1}-V_{1}w_{1}^{p-1}}{w_{1,h}^{p-1}}(w^{p}_{1,h}-w^{p}_{2,h})
	\dx.
\end{eqnarray*}
Similarly, we see that
\begin{eqnarray*}\label{w2h}
	&&\int_{\omega}(w^{p}_{2,h}-w^{p}_{1,h})\left\vert\frac{\nabla w_{2,h}}{w_{2,h}}\right\vert_{\mathcal{A}}^{p}\dx
	-p\int_{\omega}w_{1,h}^{p}\mathcal{A}\left(x,\frac{\nabla w_{2,h}}{w_{2,h}}\right)\cdot\left(\frac{\nabla w_{1,h}}{w_{1,h}}-\frac{\nabla w_{2,h}}{w_{2,h}}\right)\!\!\dx\\
	&=&\int_{\omega}\frac{g_{2}-V_{2}w_{2}^{p-1}}{w_{2,h}^{p-1}}(w^{p}_{2,h}-w^{p}_{1,h})\dx.
\end{eqnarray*}
Adding the previous derived equalities yields
\begin{eqnarray*}
	& &I_{h,g_{1},g_{2},w_{1},w_{2}}
	\!=\!\int_{\omega}\!\!w^{p}_{1,h} \! \left(\left\vert\frac{\nabla w_{1,h}}{w_{1,h}}\right\vert_{\mathcal{A}}^{p}\!\!-\!\left\vert\frac{\nabla w_{2,h}}{w_{2,h}}\right\vert_{\mathcal{A}}^{p}
	\!-\! p\mathcal{A}\!\!\left(x,\frac{\nabla w_{2,h}}{w_{2,h}}\right)\!\!\cdot\!\!\left(\frac{\nabla w_{1,h}}{w_{1,h}}-\frac{\nabla w_{2,h}}{w_{2,h}}\right) \! \right)\!\!\dx\\[2mm]
	&+& \!\!\int_{\omega}w^{p}_{2,h}\left(\left\vert\frac{\nabla w_{2,h}}{w_{2,h}}\right\vert_{\mathcal{A}}^{p}-\left\vert\frac{\nabla w_{1,h}}{w_{1,h}}\right\vert_{\mathcal{A}}^{p}-p\mathcal{A}\left(x,\frac{\nabla w_{1,h}}{w_{1,h}}\right)\cdot\left(\frac{\nabla w_{2,h}}{w_{2,h}}-\frac{\nabla w_{1,h}}{w_{1,h}}\right)\right)\!\!\dx.
\end{eqnarray*}
{Applying Assumption~\ref{ass2},} we obtain the conclusion $(1)$.

$(2)$ Using part $(1)$, we have
\begin{eqnarray*}
	&&\left\vert\left((\lambda-V_{1})\left(\frac{w_{\lambda}}{w_{\lambda,h}}\right)^{p-1}-(\mu-V_{2})\left(\frac{w_{\mu}}{w_{\mu,h}}\right)^{p-1}\right)(w_{\lambda,h}^{p}-w_{\mu,h}^{p})\right\vert\\
	&\leq& \left(|\lambda-V_{1}|+|\mu-V_{2}|\right)\left((w_{\lambda}+1)^{p}+(w_{\mu}+1)^{p}\right)\\
	&\leq& 2^{p-1}\left(|\lambda-V_{1}|+|\mu-V_{2}|\right)(w_{\lambda}^{p}+w_{\mu}^{p}+2)\in L^{1}(\omega).
\end{eqnarray*}
On the other hand, for a.e.~$x\in\omega$, the limit
\begin{eqnarray*}
	&&\lim_{h\rightarrow 0}\left((\lambda-V_{1})\left(\frac{w_{\lambda}}{w_{\lambda,h}}\right)^{p-1}-(\mu-V_{2})\left(\frac{w_{\mu}}{w_{\mu,h}}\right)^{p-1}\right)(w_{\lambda,h}^{p}-w_{\mu,h}^{p})\\
	&=&(\lambda-\mu-V_{1}+V_{2})(w_{\lambda}^{p}-w_{\mu}^{p}).
\end{eqnarray*}
Hence, the dominated convergence theorem and Fatou's lemma imply $(2)$.
\end{proof}
\begin{lemma}\label{newDiaz}
  Assume that $\omega\Subset \Gw$ is a bounded Lipschitz domain, $\mathcal{A}$ satisfies Assumption~\ref{ass8}, $g_{i},V_{i}\in M^{q}(p;\omega)$.  For $i=1,2$, let $w_{i}\in W^{1,p}(\omega)$ be respectively,
		positive solutions of the equations $Q'_{p,\mathcal{A},V_{i}}[w]=g_{i}$ in $\gw$,  which are bounded away from zero in~$\omega$ and  satisfy $w_{1}=w_{2}>0$ on~$\partial\omega$ in the trace sense.
		Then
		$$I_{0,g_{1},g_{2},w_{1},w_{2}}=\int_{\omega}\left(\left(\frac{g_{1}}{w_{1}^{p-1}}-\frac{g_{2}}{w_{2}^{p-1}}\right)-(V_{1}-V_{2})\right)(w_{1}^{p}-w_{2}^{p})\dx
		\geq 0,$$ and~$I_{0,g_{1},g_{2},w_{1},w_{2}}=0$ if and only if $\nabla w_{1}/w_{1}=\nabla w_{2}/w_{2}$.
\end{lemma}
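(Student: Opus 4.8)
The plan is to imitate the proof of Lemma~\ref{elementary}(1), but to exploit the hypothesis that $w_1,w_2$ are already bounded away from zero so that no shift $w_{i,h}=w_i+h$ is needed, and to replace the quantitative convexity provided by Assumption~\ref{ass2} by the bare strict convexity of $F$ recorded in Lemma~\ref{strictconvexity}. Throughout I write $\mathbf{v}_i\triangleq\nabla w_i/w_i=\nabla\log w_i$, so that $\nabla w_i=w_i\mathbf{v}_i$; by the homogeneity of $\mathcal{A}$ we have $\mathcal{A}(x,\nabla w_i)=w_i^{p-1}\mathcal{A}(x,\mathbf{v}_i)$, and we recall the identity $|\xi|_{\mathcal{A}}^p=\mathcal{A}(x,\xi)\cdot\xi=pF(x,\xi)$.

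First I would verify that $\psi_1\triangleq(w_1^p-w_2^p)w_1^{1-p}$ and $\psi_2\triangleq(w_2^p-w_1^p)w_2^{1-p}$ are admissible test functions, i.e.\ that they lie in $W^{1,p}_0(\omega)$. Since $w_i$ is bounded away from zero, $w_i^{1-p}\in W^{1,p}(\omega)\cap L^\infty(\omega)$; since $w_i\in L^\infty(\omega)$ (which we may assume, or derive from the local boundedness estimates of Section~\ref{toolbox} together with a De~Giorgi-type estimate up to the Lipschitz boundary), $w_i^p\in W^{1,p}(\omega)\cap L^\infty(\omega)$; and because $w_1=w_2$ on $\partial\omega$ in the trace sense and $\partial\omega$ is Lipschitz, $w_1^p-w_2^p\in W^{1,p}_0(\omega)$. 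Hence $\psi_i$, being a product of a $W^{1,p}_0(\omega)\cap L^\infty(\omega)$ function with a $W^{1,p}(\omega)\cap L^\infty(\omega)$ function, belongs to $W^{1,p}_0(\omega)$. By Lemma~\ref{lem4.11}(2) together with a standard density argument (approximate $\psi_i$ in $W^{1,p}(\omega)$ by $C^\infty_c(\omega)$ functions and use $\mathcal{A}(x,\nabla w_i)\in L^{p'}(\omega;\mathbb{R}^n)$ and the Morrey-Adams theorem, Theorem~\ref{MA_thm}, to pass to the limit in the potential and right-hand-side terms), $\psi_i$ may be inserted into the weak formulation of $Q'_{p,\mathcal{A},V_i}[w_i]=g_i$.

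Next I would carry out the computation of Lemma~\ref{elementary}(1). Testing the equation for $w_1$ against $\psi_1$, expanding $\nabla\psi_1$ by the product rule and using the homogeneity of $\mathcal{A}$, one obtains
\[
\int_\omega(w_1^p-w_2^p)|\mathbf{v}_1|_{\mathcal{A}}^p\dx-p\int_\omega w_2^p\,\mathcal{A}(x,\mathbf{v}_1)\cdot(\mathbf{v}_2-\mathbf{v}_1)\dx=\int_\omega\frac{g_1-V_1w_1^{p-1}}{w_1^{p-1}}(w_1^p-w_2^p)\dx,
\]
and, symmetrically, testing the equation for $w_2$ against $\psi_2$,
\[
\int_\omega(w_2^p-w_1^p)|\mathbf{v}_2|_{\mathcal{A}}^p\dx-p\int_\omega w_1^p\,\mathcal{A}(x,\mathbf{v}_2)\cdot(\mathbf{v}_1-\mathbf{v}_2)\dx=\int_\omega\frac{g_2-V_2w_2^{p-1}}{w_2^{p-1}}(w_2^p-w_1^p)\dx.
\]
Adding these two identities and regrouping, the right-hand side is exactly $I_{0,g_1,g_2,w_1,w_2}$, whereas the left-hand side becomes
\[
\int_\omega w_1^p\,\mathcal{D}(x;\mathbf{v}_1,\mathbf{v}_2)\dx+\int_\omega w_2^p\,\mathcal{D}(x;\mathbf{v}_2,\mathbf{v}_1)\dx,\qquad \mathcal{D}(x;\xi,\eta)\triangleq|\xi|_{\mathcal{A}}^p-|\eta|_{\mathcal{A}}^p-p\,\mathcal{A}(x,\eta)\cdot(\xi-\eta).
\]

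Finally, since $\mathcal{D}(x;\xi,\eta)=p\big(F(x,\xi)-F(x,\eta)-\nabla_\xi F(x,\eta)\cdot(\xi-\eta)\big)$, Lemma~\ref{strictconvexity} yields $\mathcal{D}(x;\xi,\eta)\ge 0$ for a.e.\ $x$ and all $\xi,\eta$, with $\mathcal{D}(x;\xi,\eta)=0$ if and only if $\xi=\eta$. As $w_1^p,w_2^p>0$ a.e.\ in $\omega$, both integrands in the displayed identity are nonnegative, hence $I_{0,g_1,g_2,w_1,w_2}\ge 0$; and $I_{0,g_1,g_2,w_1,w_2}=0$ forces $\mathcal{D}(x;\mathbf{v}_1,\mathbf{v}_2)=0$ a.e., i.e.\ $\mathbf{v}_1=\mathbf{v}_2$ a.e., that is $\nabla w_1/w_1=\nabla w_2/w_2$ a.e.; the reverse implication is immediate from the identity. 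The only genuinely delicate point is the second paragraph — checking that $\psi_1,\psi_2\in W^{1,p}_0(\omega)$ and that the integrals (in particular the potential terms $\int_\omega|V_i|w_i^{p-1}|\psi_i|\dx$) are finite — since this is where the Lipschitz regularity of $\partial\omega$, the positive lower bound and boundedness of $w_i$, and the Morrey-Adams theorem enter, playing the role that the shift $w_{i,h}=w_i+h$ played in Lemma~\ref{elementary}.
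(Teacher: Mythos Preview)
Your proposal is correct and follows essentially the same approach as the paper: the paper's proof simply says to set $h=0$ in the computation of Lemma~\ref{elementary}(1) and to invoke the strict convexity of Lemma~\ref{strictconvexity} in place of Assumption~\ref{ass2}, which is exactly what you do. You supply more detail than the paper on the admissibility of the test functions $\psi_1,\psi_2\in W^{1,p}_0(\omega)$ (where the hypotheses that $w_i$ is bounded away from zero, $w_1=w_2$ on $\partial\omega$, and $\partial\omega$ is Lipschitz enter), but the structure of the argument is identical.
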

\begin{proof}
Letting~$h=0$, we see at once that the lemma follows from the proof of $(1)$ of Theorem \ref{elementary} and Lemma \ref{strictconvexity} (without assuming the stronger convexity).
\end{proof}
\subsection{Weak lower semicontinuity and coercivity}
In this subsection, we study the weak lower semicontinuity and coercivity of certain functionals related to the functional $Q_{p,\mathcal{A},V}$. See also \cite[Section 8.2]{Evans}.
\begin{Def}
	{\em
    Let~$(X,\Vert\cdot\Vert_{X})$ be a Banach space. A functional $J:X\to\mathbb{R}\cup\{\infty\}$ is said to be {\em coercive} if
    $J[u] \to \infty\mbox{ as }\Vert u\Vert_{X} \to   \infty.$

\noindent The functional~$J$ is said to be {\em (sequentially) weakly lower semicontinuous} if $$J[u] \leq  \liminf_{k\to\infty} J[u_{k}]  \qquad \mbox{ whenever }u_{k}\rightharpoonup u.$$}
  \end{Def}
\begin{notation}
		\emph{For a domain~$\omega\subseteq \Gw$,~$\mathcal{A}$ satisfying Assumption~\ref{ass8}, and $V\in M^{q}_\loc (p;\Omega)$, let $Q_{p,\mathcal{A},V}[\varphi;\omega]$ be the functional on $C^{\infty}_{c}(\omega)$ given by  $$\vgf\mapsto \int_{\omega}\Big(\vert\nabla \varphi\vert_{\mathcal{A}}^{p}+V\vert \varphi\vert^{p}\Big)\dx.$$ When~$\omega=\Omega$, we write $Q_{p,\mathcal{A},V}[\varphi]\triangleq Q_{p,\mathcal{A},V}[\varphi;\Omega].$}
\end{notation}
The next four theorems can be proved by standard arguments which are similar to the proof of \cite[propositions 3.6 and 3.7]{Pinchover}, and therefore their proofs are omitted. We state them as four separate theorems for the sake of clarity.
\begin{Thm}\label{ThmJ}
  Consider the domains~$\omega\Subset\omega'\Subset \Gw$, and let $\mathcal{A}$ satisfy Assumption \ref{ass8}, and $g,\mathcal{V}\in M^{q}(p;\omega')$, where~$\omega$ is Lipschitz. Then the  functional
  $$\bar{J}:W^{1,p}(\omega)\rightarrow\mathbb{R}\cup\{\infty\},\quad \bar{J}[u]\triangleq Q_{p,\mathcal{A},\mathcal{V}}[u;\omega]-\int_{\omega}g\vert u\vert\dx,$$ is weakly lower semicontinuous in~$W^{1,p}(\omega)$.
\end{Thm}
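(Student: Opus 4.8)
The plan is to establish the two defining properties of weak lower semicontinuity for $\bar J$: coercivity is not asserted here, so the only thing to show is that $\bar J[u]\le\liminf_{k\to\infty}\bar J[u_k]$ whenever $u_k\rightharpoonup u$ in $W^{1,p}(\omega)$. I would split $\bar J$ into the principal (gradient) term $\int_\omega|\nabla u|_{\mathcal A}^p\dx$, the potential term $\int_\omega \mathcal V|u|^p\dx$, and the source term $-\int_\omega g|u|\dx$, and treat each separately, exploiting that $\omega$ is Lipschitz and bounded (since $\omega\Subset\omega'$), so that the Rellich--Kondrachov theorem gives a subsequence $u_{k}\to u$ strongly in $L^p(\omega)$ and a.e.\ in $\omega$ after passing to a further subsequence.

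For the gradient term, the key point is that for a.e.\ $x$ the map $\xi\mapsto|\xi|_{\mathcal A(x)}^p$ is convex (indeed strictly convex, by Assumptions~\ref{ass9} via Lemma~\ref{strictconvexity}), nonnegative, and continuous in $\xi$; by the ellipticity bound $|\xi|_{\mathcal A}^p=\mathcal A(x,\xi)\cdot\xi$ lies between $\alpha_\omega|\xi|^p$ and (using the generalized Hölder inequality, Lemma~\ref{ass1}, or directly the bound on $|\mathcal A|$) a constant times $|\xi|^p$. Hence $u\mapsto\int_\omega|\nabla u|_{\mathcal A}^p\dx$ is a convex, strongly lower semicontinuous (by Fatou, since the integrand is nonnegative and continuous in the gradient) functional on $W^{1,p}(\omega)$, and a convex strongly lsc functional on a Banach space is weakly lsc; alternatively one invokes the standard Tonelli--Serrin theorem on weak lower semicontinuity of integral functionals with a convex, nonnegative, Carathéodory integrand depending only on $\nabla u$. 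For the potential term, write $\mathcal V=\mathcal V^+-\mathcal V^-$; along the a.e.-convergent strongly-$L^p$ subsequence, $\mathcal V^+|u_k|^p\to\mathcal V^+|u|^p$ a.e., so Fatou gives $\int\mathcal V^+|u|^p\le\liminf\int\mathcal V^+|u_k|^p$, while for the (bad) negative part one uses the Morrey--Adams theorem (Theorem~\ref{MA_thm}(2)) on $\omega$ (Lipschitz): $\int_\omega\mathcal V^-|u_k-u|^p\dx\le\delta\|\nabla(u_k-u)\|_{L^p(\omega)}^p+C\|u_k-u\|_{L^p(\omega)}^p$, and since $\{\nabla u_k\}$ is bounded in $L^p$ and $u_k\to u$ in $L^p(\omega)$, the right-hand side is $\le\delta M+o(1)$; letting $k\to\infty$ then $\delta\to0$ shows $\int_\omega\mathcal V^-|u_k|^p\dx\to\int_\omega\mathcal V^-|u|^p\dx$. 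Combining, $\int_\omega\mathcal V|u_k|^p\dx\to \int_\omega\mathcal V|u|^p$ up to the subsequence, in particular $\liminf\int\mathcal V|u_k|^p\ge\int\mathcal V|u|^p$. Finally $-\int_\omega g|u|\dx$ is continuous under $u_k\to u$ in $L^p$: $|u_k|\to|u|$ in $L^p(\omega)$, and a Morrey--Adams/Hölder estimate bounds $\int_\omega|g|\,\big||u_k|-|u|\big|\dx$ by $\delta\|\nabla(|u_k|-|u|)\|_{L^p}^p+C\||u_k|-|u|\|_{L^p}^p$, again $\to0$ after $k\to\infty$, $\delta\to0$, using $\big|\nabla|u_k|\big|=|\nabla u_k|$ bounded in $L^p$.

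Putting the three pieces together along the common subsequence gives $\bar J[u]\le\liminf_{k}\bar J[u_{k})$ for that subsequence; the standard subsequence-of-subsequence argument (every subsequence of the original $u_k$ has a further subsequence along which this holds, with the same limit value $\liminf_k\bar J[u_k]$ as an outer bound) upgrades this to the full sequence, which is exactly weak lower semicontinuity. The only subtlety — the main obstacle — is the potential and source terms, since $\mathcal V$ and $g$ are merely in the Morrey space $M^q(p;\omega')$ and need not be in $L^\infty$ or even $L^{n/p}$; this is precisely why the Lipschitz regularity of $\omega$ and the interior Morrey--Adams inequality (Theorem~\ref{MA_thm}(2)) are essential, the absorption parameter $\delta$ being what lets one defeat the a priori lack of compactness of $|V||u_k|^p$. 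This is the same mechanism as in \cite[Propositions 3.6 and 3.7]{Pinchover}, and as noted in the text the detailed verification is routine once this structure is in place.
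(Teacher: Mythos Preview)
Your overall strategy---split $\bar J$ into the gradient, potential, and source terms, use convexity for the first, Rellich--Kondrachov plus Morrey--Adams for the latter two, then a subsequence argument---is exactly the standard approach the paper defers to (the paper omits the proof entirely and cites \cite[Propositions~3.6 and 3.7]{Pinchover}). So the plan is correct and on target.

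Two technical slips need repair. First, for the potential term: from $\int_\omega\mathcal V^-|u_k-u|^p\,\mathrm dx\to0$ you jump directly to $\int_\omega\mathcal V^-|u_k|^p\,\mathrm dx\to\int_\omega\mathcal V^-|u|^p\,\mathrm dx$, but $|u_k|^p-|u|^p\neq|u_k-u|^p$. You need the intermediate step (compare Lemma~\ref{functionalcv}): the elementary bound $\bigl||a|^p-|b|^p\bigr|\le p|a-b|(|a|^{p-1}+|b|^{p-1})$ together with H\"older against the measure $\mathcal V^-\,\mathrm dx$ gives
\[
\Bigl|\int_\omega\mathcal V^-(|u_k|^p-|u|^p)\,\mathrm dx\Bigr|\le C\Bigl(\int_\omega\mathcal V^-|u_k-u|^p\,\mathrm dx\Bigr)^{1/p}\Bigl(\int_\omega\mathcal V^-(|u_k|^p+|u|^p)\,\mathrm dx\Bigr)^{1/p'},
\]
whose second factor is bounded by Morrey--Adams and the $W^{1,p}$-boundedness of $(u_k)$. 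Second, for the source term: Morrey--Adams (Theorem~\ref{MA_thm}) controls $\int_\omega|g||w|^p\,\mathrm dx$, not $\int_\omega|g||w|\,\mathrm dx$, so the displayed bound $\int_\omega|g|\bigl||u_k|-|u|\bigr|\,\mathrm dx\le\delta\|\nabla(\cdot)\|_{L^p}^p+C\|\cdot\|_{L^p}^p$ is wrong as written (the two sides have different homogeneity in $w$). Insert a H\"older step first: with $w=|u_k|-|u|$,
\[
\int_\omega|g||w|\,\mathrm dx\le\Bigl(\int_\omega|g|\,\mathrm dx\Bigr)^{1/p'}\Bigl(\int_\omega|g||w|^p\,\mathrm dx\Bigr)^{1/p},
\]
noting $g\in M^q(p;\omega')\subset L^1(\omega)$, and then apply Morrey--Adams to the second factor. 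With these two fixes your argument goes through.
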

\begin{Thm}\label{ThmJ1}
  Consider the domains~$\omega\Subset \Gw$, and let $\mathcal{A}$ satisfy Assumption \ref{ass8}, $\mathcal{V}\in M^{q}(p;\omega)$, and~$g\in L^{p'}(\omega)$. Then the functional
  $$J:W^{1,p}_{0}(\omega)\rightarrow\mathbb{R}\cup\{\infty\},\quad J[u]\triangleq Q_{p,\mathcal{A},\mathcal{V}}[u;\omega]-\int_{\omega}gu\dx,$$ is weakly lower semicontinuous in~$W^{1,p}_{0}(\omega)$.
\end{Thm}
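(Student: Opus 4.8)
The plan is to establish weak lower semicontinuity of $J$ on $W^{1,p}_0(\omega)$ by splitting the functional into three pieces — the $\mathcal{A}$-Dirichlet term $\int_\omega |\nabla u|_{\mathcal{A}}^p\dx$, the potential term $\int_\omega \mathcal{V}|u|^p\dx$, and the linear term $\int_\omega g u\dx$ — and handling each separately under a weakly convergent sequence $u_k \rightharpoonup u$ in $W^{1,p}_0(\omega)$. First I would recall that by the Rellich--Kondrachov theorem $W^{1,p}_0(\omega) \hookrightarrow L^p(\omega)$ compactly (since $\omega$ is bounded), so $u_k \to u$ strongly in $L^p(\omega)$ along a subsequence, and in fact along the whole sequence after a standard subsequence-of-every-subsequence argument. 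This immediately gives $\int_\omega g u_k \dx \to \int_\omega g u \dx$ because $g \in L^{p'}(\omega)$, so the linear term is weakly \emph{continuous}.

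For the potential term, the strong $L^p$ convergence gives $|u_k|^p \to |u|^p$ in $L^1(\omega)$ along a subsequence (via a.e. convergence plus a dominating function, or directly from $\||u_k|^p - |u|^p\|_{L^1} \le C\|u_k - u\|_{L^p}(\|u_k\|_{L^p}^{p-1} + \|u\|_{L^p}^{p-1})$). However $\mathcal{V}$ is only in $M^q(p;\omega)$, not in $L^\infty$, so one cannot simply pair it against the $L^1$ convergence. The fix is the Morrey--Adams theorem (Theorem~\ref{MA_thm}): for the \emph{difference} $u_k - u \in W^{1,p}_0(\omega)$ one writes
$$\left|\int_\omega \mathcal{V}(|u_k|^p - |u|^p)\dx\right| \le \int_\omega |\mathcal{V}|\,\big||u_k|^p - |u|^p\big|\dx,$$
and controls this by a decomposition argument: split $\omega$ into where $|u_k - u|$ is small (handled by the $L^p$ smallness) and a set where one applies the Morrey--Adams bound with a small parameter $\delta$ to absorb $\delta\|\nabla(u_k-u)\|_{L^p}^p$, which is uniformly bounded since $\{u_k\}$ is bounded in $W^{1,p}_0(\omega)$. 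More cleanly, one can invoke the known fact (as in \cite[Prop.~3.7]{Pinchover}) that under these hypotheses the map $u \mapsto \int_\omega \mathcal{V}|u|^p\dx$ is weakly continuous on $W^{1,p}_0(\omega)$; this is precisely where the Morrey space hypothesis is used.

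The genuinely lower-semicontinuous (and not continuous) piece is the $\mathcal{A}$-Dirichlet term. Here I would use convexity: for a.e.\ $x$, the map $\xi \mapsto |\xi|_{\mathcal{A}(x)}^p = \mathcal{A}(x,\xi)\cdot\xi$ is convex (indeed strictly convex by Assumption~\ref{ass9}), nonnegative, and satisfies the coercivity bound $|\xi|_{\mathcal{A}}^p \ge p\kappa_\omega|\xi|^p$. By the standard theorem on weak lower semicontinuity of convex integral functionals of the gradient (e.g.\ \cite[Section 8.2]{Evans} or Tonelli's theorem), the functional $u \mapsto \int_\omega |\nabla u|_{\mathcal{A}}^p\dx$ is weakly lower semicontinuous on $W^{1,p}(\omega)$, hence on $W^{1,p}_0(\omega)$; measurability in $x$ and the local ellipticity bounds supply the required Carathéodory/growth conditions. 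Combining: along any subsequence of $u_k \rightharpoonup u$ we extract a further subsequence on which the potential and linear terms converge to their values at $u$ and the Dirichlet term has $\liminf$ at least its value at $u$, giving $\liminf_k J[u_k] \ge J[u]$; since this holds for every subsequence, it holds for the full sequence. The main obstacle is the potential term: one must be careful that $\mathcal{V}$ lies only in the Morrey space, so the argument cannot be a naive "weak-times-strong" pairing and must route through the Morrey--Adams inequality together with the uniform $W^{1,p}_0$ bound on $\{u_k\}$ — everything else is routine convex-analysis and compact-embedding bookkeeping, which is why the paper omits the details and refers to \cite[Propositions 3.6 and 3.7]{Pinchover}.
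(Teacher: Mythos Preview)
Your proposal is correct and follows essentially the same approach the paper has in mind: the paper omits the proof and refers to \cite[Propositions 3.6 and 3.7]{Pinchover}, and your three-term decomposition (weak lower semicontinuity of the convex $\mathcal{A}$-Dirichlet integral, weak continuity of the Morrey potential term via the Morrey--Adams theorem combined with compact embedding and the uniform $W^{1,p}_0$ bound, and weak continuity of the linear term since $g\in L^{p'}$) is precisely that argument. One minor simplification: for the linear term you do not need compactness at all, since $u_k\rightharpoonup u$ in $W^{1,p}_0(\omega)$ already implies $u_k\rightharpoonup u$ in $L^p(\omega)$ and $g\in L^{p'}(\omega)$ acts as a continuous linear functional.
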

\begin{Thm}
   Consider the domains~$\omega\Subset\omega'\Subset \Gw$, where~$\omega$ is a Lipschitz domain. Let $\mathcal{A}$ satisfy Assumption \ref{ass8},  $g, \mathcal{V}\in M^{q}(p;\omega')$, and let~$\mathcal{V}$ be nonnegative. For any~$f\in W^{1,p}(\omega)$, ~$\bar{J}$ is coercive in$$\mathbf{A}\triangleq\{u\in W^{1,p}(\omega):u-f\in W^{1,p}_{0}(\omega)\}.$$
\end{Thm}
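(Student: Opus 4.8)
\emph{Strategy.} The plan is to bound $\bar J$ from below on $\mathbf{A}$ by a positive multiple of $\|\nabla u\|_{L^{p}(\omega;\R^{n})}^{p}$ minus a constant independent of $u$, and then to observe that along the affine set $\mathbf{A}$ the full Sobolev norm blows up only together with the gradient norm. Positivity of $\mathcal{V}$ will be used only to discard the potential term in $Q_{p,\mathcal{A},\mathcal{V}}$.

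\emph{Reduction to the gradient.} First I would write $u=f+\phi$ with $\phi=u-f\in W^{1,p}_{0}(\omega)$. The Poincar\'e inequality on $W^{1,p}_{0}(\omega)$ gives $\|u\|_{L^{p}(\omega)}\le\|f\|_{L^{p}(\omega)}+C_{P}\|\nabla\phi\|_{L^{p}(\omega;\R^{n})}$, and since $\nabla\phi=\nabla u-\nabla f$ this yields, with a constant $C$ depending only on $p$ and $\omega$,
\[
\|u\|_{W^{1,p}(\omega)}\ \le\ C\big(\|\nabla u\|_{L^{p}(\omega;\R^{n})}+\|f\|_{W^{1,p}(\omega)}\big),
\]
so that $\|u\|_{W^{1,p}(\omega)}\to\infty$ along $\mathbf{A}$ forces $\|\nabla u\|_{L^{p}(\omega;\R^{n})}\to\infty$; the same estimates give $\|u\|_{L^{p}(\omega)}^{p}\le C_{1}\big(\|\nabla u\|_{L^{p}(\omega;\R^{n})}^{p}+\|f\|_{W^{1,p}(\omega)}^{p}\big)$ with $C_1=C_1(p,\omega)$.

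\emph{Lower bound for $\bar J$.} Since $\mathcal{V}\ge 0$ and, by \eqref{newformula} and the ellipticity \eqref{structure}, $|\nabla u|_{\mathcal{A}}^{p}=\mathcal{A}(x,\nabla u)\cdot\nabla u\ge\alpha_{\omega}|\nabla u|^{p}$ a.e., I get $\bar J[u]\ge\alpha_{\omega}\|\nabla u\|_{L^{p}(\omega;\R^{n})}^{p}-\int_{\omega}|g|\,|u|\dx$. To control the first-order term I would introduce a free parameter $\lambda>0$ and use Young's inequality pointwise, $|g|\,|u|\le\tfrac{\lambda^{p'}}{p'}|g|+\tfrac{1}{p\lambda^{p}}|g|\,|u|^{p}$; integrating (and noting $g\in L^{1}(\omega)$, e.g.\ by applying Theorem~\ref{MA_thm}(2) to the constant function $1$, which is legitimate because $g\in M^{q}(p;\omega')$ and $\omega\Subset\omega'$) and then applying the Morrey--Adams theorem (Theorem~\ref{MA_thm}(2)) to $g\in M^{q}(p;\omega')$ on the Lipschitz domain $\omega\Subset\omega'$ with a fixed admissible $\delta$, I obtain $\int_{\omega}|g|\,|u|^{p}\dx\le\delta\|\nabla u\|_{L^{p}(\omega;\R^{n})}^{p}+C_{0}\|u\|_{L^{p}(\omega)}^{p}\le K\|\nabla u\|_{L^{p}(\omega;\R^{n})}^{p}+K'$, by the reduction step, where $K,K'$ depend only on the fixed data $n,p,q,\omega,\omega',f,\|g\|_{M^{q}(p;\omega')}$. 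Combining,
\[
\bar J[u]\ \ge\ \Big(\alpha_{\omega}-\frac{K}{p\lambda^{p}}\Big)\|\nabla u\|_{L^{p}(\omega;\R^{n})}^{p}-\frac{\lambda^{p'}}{p'}\|g\|_{L^{1}(\omega)}-\frac{K'}{p\lambda^{p}},
\]
and fixing $\lambda$ so large that $K/(p\lambda^{p})\le\alpha_{\omega}/2$ gives $\bar J[u]\ge\tfrac{\alpha_{\omega}}{2}\|\nabla u\|_{L^{p}(\omega;\R^{n})}^{p}-C_{2}$ with $C_{2}$ independent of $u\in\mathbf{A}$; coercivity then follows from the reduction step.

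\emph{Expected main difficulty.} Everything here is routine except the handling of the first-order term $\int_{\omega}g|u|\dx$: the weight $g$ lies only in a local Morrey space and need not belong to $L^{p'}(\omega)$, so it cannot simply be paired with $u$ by H\"older's inequality. The point of the scheme above is that the two free parameters $\lambda$ (Young) and $\delta$ (Morrey--Adams) together let one first trade $g|u|$ for an arbitrarily small multiple of $g|u|^{p}$ plus an $L^{1}$ remainder, and then absorb $\int_{\omega}g|u|^{p}\dx$ into the leading gradient term via Morrey--Adams and Poincar\'e --- the use of Poincar\'e being valid precisely because $u$ differs from the fixed $f$ by an element of $W^{1,p}_{0}(\omega)$. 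I expect no other serious obstacle.
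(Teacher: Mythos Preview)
Your proof is correct. The paper omits its own proof of this theorem, merely pointing to ``standard arguments'' analogous to \cite[Propositions~3.6 and 3.7]{Pinchover}; your scheme --- drop the nonnegative $\mathcal{V}$ term, use ellipticity to get $\alpha_\omega\|\nabla u\|_{L^p}^p$, split $\int_\omega g|u|\dx$ by Young with a parameter, absorb $\int_\omega |g||u|^p\dx$ via Morrey--Adams (Theorem~\ref{MA_thm}(2)) and Poincar\'e on $\phi=u-f\in W^{1,p}_0(\omega)$ --- is exactly such a standard argument, so there is nothing to compare. One minor simplification: you do not need Theorem~\ref{MA_thm}(2) applied to the constant $1$ to get $g\in L^1(\omega)$; this is immediate from $g\in M^q(p;\omega')\subset L^1_{\loc}(\omega')$ and $\overline{\omega}\Subset\omega'$.
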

\begin{Thm}\label{thm-coercive}
  Consider a domain~$\omega\Subset\Omega$, and let $\mathcal{A}$ satisfy Assumption \ref{ass8}, $\mathcal{V}\in M^{q}(p;\omega)$, and~$g\in L^{p'}(\omega)$. If for some~$\varepsilon>0$ and all~$u\in W^{1,p}_{0}(\omega)$ we have,
  $$Q_{p,\mathcal{A},\mathcal{V}}[u;\omega]\geq\varepsilon\Vert u\Vert^{p}_{L^{p}(\omega)},$$
  then~$J$ is coercive in~$W^{1,p}_{0}(\omega)$.
\end{Thm}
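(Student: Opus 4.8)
The plan is to deduce from the hypothesis $Q_{p,\mathcal{A},\mathcal{V}}[u;\omega]\ge\varepsilon\Vert u\Vert^{p}_{L^{p}(\omega)}$ a genuine gradient lower bound $Q_{p,\mathcal{A},\mathcal{V}}[u;\omega]\ge c_{0}\Vert\nabla u\Vert^{p}_{L^{p}(\omega;\R^{n})}$ with some $c_{0}>0$, and then to absorb the linear term $\int_{\omega}gu\dx$ using H\"older's and Poincar\'e's inequalities; since $p>1$, the $p$-homogeneous term dominates the linear one and coercivity follows.

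First I would bound the Dirichlet part from below by ellipticity: by \eqref{structure} and \eqref{newformula}, $\vert\nabla u\vert_{\mathcal{A}}^{p}=\mathcal{A}(x,\nabla u)\cdot\nabla u\ge\alpha_{\omega}\vert\nabla u\vert^{p}$ a.e., whence
\[
Q_{p,\mathcal{A},\mathcal{V}}[u;\omega]\ge\alpha_{\omega}\Vert\nabla u\Vert^{p}_{L^{p}(\omega;\R^{n})}+\int_{\omega}\mathcal{V}\vert u\vert^{p}\dx .
\]
The (possibly sign-changing, unbounded) potential term is controlled by the Morrey--Adams theorem (Theorem~\ref{MA_thm}(1)): for any $\delta>0$,
\[
\int_{\omega}\mathcal{V}\vert u\vert^{p}\dx\ge-\delta\Vert\nabla u\Vert^{p}_{L^{p}(\omega;\R^{n})}-\frac{C(n,p,q)}{\delta^{n/(pq-n)}}\Vert\mathcal{V}\Vert^{pq/(pq-n)}_{M^{q}(p;\omega)}\Vert u\Vert^{p}_{L^{p}(\omega)} .
\]
Choosing $\delta=\alpha_{\omega}/2$ yields a constant $C_{0}=C_{0}\big(n,p,q,\alpha_{\omega},\Vert\mathcal{V}\Vert_{M^{q}(p;\omega)}\big)\ge0$ such that
\[
Q_{p,\mathcal{A},\mathcal{V}}[u;\omega]\ge\tfrac{\alpha_{\omega}}{2}\Vert\nabla u\Vert^{p}_{L^{p}(\omega;\R^{n})}-C_{0}\Vert u\Vert^{p}_{L^{p}(\omega)}\qquad\text{for all }u\in W^{1,p}_{0}(\omega).
\]

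Next I would feed the hypothesis back in: since $\Vert u\Vert^{p}_{L^{p}(\omega)}\le\varepsilon^{-1}Q_{p,\mathcal{A},\mathcal{V}}[u;\omega]$, the last display gives $\tfrac{\alpha_{\omega}}{2}\Vert\nabla u\Vert^{p}_{L^{p}(\omega;\R^{n})}\le(1+C_{0}/\varepsilon)\,Q_{p,\mathcal{A},\mathcal{V}}[u;\omega]$, that is,
\[
Q_{p,\mathcal{A},\mathcal{V}}[u;\omega]\ge c_{0}\Vert\nabla u\Vert^{p}_{L^{p}(\omega;\R^{n})},\qquad c_{0}\triangleq\frac{\alpha_{\omega}\varepsilon}{2(\varepsilon+C_{0})}>0 .
\]
Then, combining H\"older's inequality $\big|\int_{\omega}gu\dx\big|\le\Vert g\Vert_{L^{p'}(\omega)}\Vert u\Vert_{L^{p}(\omega)}$ with Poincar\'e's inequality $\Vert u\Vert_{L^{p}(\omega)}\le C_{P}\Vert\nabla u\Vert_{L^{p}(\omega;\R^{n})}$ (valid since $\omega$ is bounded), one obtains
\[
J[u]=Q_{p,\mathcal{A},\mathcal{V}}[u;\omega]-\int_{\omega}gu\dx\ge c_{0}\Vert\nabla u\Vert^{p}_{L^{p}(\omega;\R^{n})}-C_{P}\Vert g\Vert_{L^{p'}(\omega)}\Vert\nabla u\Vert_{L^{p}(\omega;\R^{n})}.
\]
Since $p>1$, the right-hand side tends to $\infty$ as $\Vert\nabla u\Vert_{L^{p}(\omega;\R^{n})}\to\infty$, and because $\Vert\nabla u\Vert_{L^{p}(\omega;\R^{n})}$ is an equivalent norm on $W^{1,p}_{0}(\omega)$ (again by Poincar\'e), $J$ is coercive.

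The one genuinely delicate point is the presence of the Morrey potential $\mathcal{V}$, which need not be bounded: it is handled precisely by the interpolation-type estimate of Theorem~\ref{MA_thm} with $\delta$ chosen small relative to $\alpha_{\omega}$, after which the proof reduces to the classical coercivity computation on $W^{1,p}_{0}(\omega)$. (When $\mathcal{V}\ge 0$ one may simply take $C_{0}=0$, and the gradient bound is immediate.)
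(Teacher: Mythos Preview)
Your proof is correct and follows what is almost certainly the intended route: the paper omits the argument, pointing to \cite[Propositions~3.6 and~3.7]{Pinchover}, and your combination of ellipticity \eqref{structure}, the Morrey--Adams estimate (Theorem~\ref{MA_thm}(1)) with $\delta=\alpha_{\omega}/2$, and the feedback of the hypothesis to recover a genuine gradient lower bound is precisely the standard mechanism for handling the Morrey potential. The final step via H\"older and Poincar\'e is routine once $Q_{p,\mathcal{A},\mathcal{V}}[u;\omega]\ge c_{0}\Vert\nabla u\Vert_{L^{p}}^{p}$ is in hand.
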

\subsection{Picone identity}
This subsection concerns a Picone-type identity for $Q_{p,\mathcal{A},V}$. Picone's identities for the $(p,A)$-Laplacian and the $p$-Laplacian are crucial tools in \cite{Regev, Tintarev} (see also \cite{Regev1, Regev2}). In the present work, it will be used to give an alternative and direct proof (without Assumption~\ref{ass2}) of the AAP type theorem (see Lemma~\ref{lem_alter}), and in Theorem~\ref{newthm}.
\begin{lem}[{cf. \cite[ Lemma 2.2]{newpicone}}]\label{RL}
	Let~$\mathcal{A}$ satisfy Assumption \ref{ass8}, and define
	$$L(u,v)\triangleq |\nabla u|_{\mathcal{A}}^{p}+(p-1)\frac{u^{p}}{v^{p}}|\nabla v|^{p}_{\mathcal{A}}-p\frac{u^{p-1}}{v^{p-1}}\mathcal{A}(x,\nabla v)\cdot\nabla u,$$
	and
	$$R(u,v) \triangleq |\nabla u|_{\mathcal{A}}^{p}- \mathcal{A}(x,\nabla v)
	\cdot\nabla\left(\frac{u^{p}}{v^{p-1}}\right),$$
	where the functions~$u\in W^{1,p}_{\loc}(\Omega)$ and~$v\in W^{1,p}_{\loc}(\Omega)$ are respectively nonnegative and positive with
	$u^{p}/v^{p-1}\in W^{1,p}_{\loc}(\Omega)$ such that the product {and chain rules} for $u^{p}/v^{p-1}$ hold.
 Then $$L(u,v)(x)=R(u,v)(x) \qquad \mbox{for a.e.~$x\in \Omega$}.$$
Furthermore,
we have~$L(u,v) \geq 0$ a.e. in $\Omega$ and $L(u,v)=0$ a.e. in $\Omega$ if and only if $u=kv$ for some constant~$k\geq 0$.
\end{lem}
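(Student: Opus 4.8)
The plan is to prove the identity $L(u,v)=R(u,v)$ first, and then derive nonnegativity and the equality case from the generalized Hölder inequality (Lemma~\ref{ass1}) together with the strict convexity of $|\xi|_{\mathcal{A}}^p$. First I would expand $R(u,v)$ using the product and chain rules for $w\triangleq u^p/v^{p-1}$, which are available by hypothesis. We have
$$\nabla\!\left(\frac{u^p}{v^{p-1}}\right)=p\,\frac{u^{p-1}}{v^{p-1}}\nabla u-(p-1)\frac{u^p}{v^p}\nabla v$$
a.e.\ in $\Omega$, so that
$$\mathcal{A}(x,\nabla v)\cdot\nabla\!\left(\frac{u^p}{v^{p-1}}\right)=p\,\frac{u^{p-1}}{v^{p-1}}\,\mathcal{A}(x,\nabla v)\cdot\nabla u-(p-1)\frac{u^p}{v^p}\,\mathcal{A}(x,\nabla v)\cdot\nabla v.$$
By Euler's homogeneity identity for $\mathcal{A}=\nabla_\xi F$ (recorded in the Remark after the definition of the $\mathcal{A}$-Laplacian), $\mathcal{A}(x,\nabla v)\cdot\nabla v=|\nabla v|_{\mathcal{A}}^p$. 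Substituting this into the displayed expression and then into $R(u,v)=|\nabla u|_{\mathcal{A}}^p-\mathcal{A}(x,\nabla v)\cdot\nabla(u^p/v^{p-1})$ yields exactly $L(u,v)$; this is a routine algebraic manipulation valid a.e.\ on $\{v>0\}=\Omega$ (recall $v$ is positive).

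Next, for the inequality $L(u,v)\ge 0$, I would bound the cross term $p\,\dfrac{u^{p-1}}{v^{p-1}}\mathcal{A}(x,\nabla v)\cdot\nabla u$ from above. Applying the generalized Hölder inequality (Lemma~\ref{ass1}) with $\xi=\nabla v$ and $\eta=\nabla u$ gives
$$\big|\mathcal{A}(x,\nabla v)\cdot\nabla u\big|\le |\nabla v|_{\mathcal{A}}^{p-1}\,|\nabla u|_{\mathcal{A}},$$
hence
$$p\,\frac{u^{p-1}}{v^{p-1}}\mathcal{A}(x,\nabla v)\cdot\nabla u\le p\left(\frac{u}{v}\right)^{p-1}|\nabla v|_{\mathcal{A}}^{p-1}\,|\nabla u|_{\mathcal{A}}.$$
Writing $a=|\nabla u|_{\mathcal{A}}$ and $b=(u/v)\,|\nabla v|_{\mathcal{A}}$, the right-hand side is $p\,a\,b^{p-1}$, and Young's inequality $p\,ab^{p-1}\le a^p+(p-1)b^p$ gives
$$p\,\frac{u^{p-1}}{v^{p-1}}\mathcal{A}(x,\nabla v)\cdot\nabla u\le |\nabla u|_{\mathcal{A}}^p+(p-1)\frac{u^p}{v^p}|\nabla v|_{\mathcal{A}}^p,$$
which is precisely $L(u,v)\ge 0$ a.e.\ in $\Omega$. (On the set where $u=0$ there is nothing to prove.)

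Finally, for the equality case: if $u=kv$ with $k\ge 0$ constant, then $\nabla u=k\nabla v$, and a direct substitution (using homogeneity of $|\cdot|_{\mathcal{A}}^p$ and of $\mathcal{A}$) gives $L(u,v)=k^p|\nabla v|_{\mathcal{A}}^p+(p-1)k^p|\nabla v|_{\mathcal{A}}^p-pk^p|\nabla v|_{\mathcal{A}}^p=0$. Conversely, suppose $L(u,v)=0$ a.e.\ in $\Omega$. Then both inequalities above must be equalities a.e.; equality in Young's inequality forces $a=b$, i.e.\ $|\nabla u|_{\mathcal{A}}=(u/v)|\nabla v|_{\mathcal{A}}$ a.e., and equality in the generalized Hölder inequality forces $\nabla u$ and $\nabla v$ to be ``$\mathcal{A}$-parallel''. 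The cleanest way to extract $u=kv$ is to consider $w=u/v$ on the connected open set $\Omega$: a computation shows $L(u,v)=0$ together with the equality cases implies, via strict convexity of $\xi\mapsto|\xi|_{\mathcal{A}}^p$ (equivalently, strict convexity of $F(x,\cdot)$, Assumptions~\ref{ass9}), that $\nabla w=0$ a.e., whence $w$ is constant on the connected domain $\Omega$ and $u=kv$ with $k=w\ge 0$ since $u\ge 0$, $v>0$. \textbf{The main obstacle} I anticipate is this last step: carefully justifying that pointwise $\mathcal{A}$-parallelism of $\nabla u$ and $\nabla v$ plus the magnitude condition $|\nabla u|_{\mathcal{A}}=(u/v)|\nabla v|_{\mathcal{A}}$ yields $\nabla u=(u/v)\nabla v$ a.e.\ — this is where strict convexity (rather than mere convexity) of $|\xi|_\mathcal{A}^p$ is essential, because it is what makes the subdifferential inequality strict off the diagonal — and then turning the a.e.\ gradient identity into the global statement $u=kv$ on the connected set $\Omega$ via a standard connectedness/Sobolev argument on $\nabla(u/v)=0$. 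Everything else is bookkeeping.
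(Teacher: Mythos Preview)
Your proposal is correct and essentially reconstructs the argument of \cite[Lemma~2.2]{newpicone}, which is exactly what the paper cites in lieu of a proof (the paper omits the details entirely, noting only that Jaro\v{s}'s pointwise argument transfers verbatim to the $x$-dependent case). Your use of the generalized H\"older inequality (Lemma~\ref{ass1}) followed by Young's inequality for $L\geq 0$, and the identification of strict convexity of $|\cdot|_{\mathcal{A}}^p$ as the mechanism forcing $\nabla u=(u/v)\nabla v$ in the equality case, matches that route; the only point to handle with care---as you flag---is the passage from the a.e.\ pointwise identity to $\nabla(u/v)=0$, which requires knowing $u/v\in W^{1,p}_{\loc}$ and that the quotient rule applies, but this is covered by the regularity hypotheses on $u,v$ and $u^p/v^{p-1}$.
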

\begin{remark}
  \emph{The lemma concerns pointwise equality and inequality. Therefore, the proof in \cite[ Lemma 2.2]{newpicone} applies to our more general case where $\mathcal{A}$ depends also on $x$. Hence, the proof is omitted.}
\end{remark}
\begin{lem}[Picone-type identity]\label{Picone}
 Let $\mathcal{A}$ satisfy Assumption \ref{ass8} and  $V\!\in\!M^{q}_{\loc}(p;\Omega)$. For any positive
	solution~$v\in W^{1,p}_{\loc}(\Omega)$ of $Q'_{p,\mathcal{A},V}[w]=0$ in~$\Omega$, and any nonnegative function~$u\in W^{1,p}_{c}(\Omega)$ with $u^{p}/v^{p-1}\in W^{1,p}_{c}(\Omega)$ such that the product {and chain rules} for $u^{p}/v^{p-1}$ hold, we have
	$$Q_{p,\mathcal{A},V}[u]=\int_{\Omega} L(u,v)(x)\dx.$$ If instead,~$v\in W^{1,p}_{\loc}(\Omega)$ is either a positive subsolution or a positive supersolution, and all the other conditions are satisfied, then respectively,
	$$Q_{p,\mathcal{A},V}[u]\leq\int_{\Omega} L(u,v)(x)\dx, \quad
	\mbox{or} \quad   Q_{p,\mathcal{A},V}[u]\geq\int_{\Omega} L(u,v)(x)\dx.$$
\end{lem}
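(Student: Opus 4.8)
The plan is to deduce the identity from Lemma~\ref{RL} together with the variational (weak) formulation of the equation satisfied by $v$. First I would test the equation $Q'_{p,\mathcal{A},V}[w]=0$ with the function $\vgf=u^{p}/v^{p-1}$. This is a legitimate test function: by hypothesis $u\in W^{1,p}_{c}(\Omega)$ is nonnegative, $v$ is positive and locally in $W^{1,p}$, and $u^{p}/v^{p-1}\in W^{1,p}_{c}(\Omega)$ with the product and chain rules available; moreover by Lemma~\ref{lem4.11}(2) the test function space in Definition~\ref{def_sol} may be enlarged to $W^{1,p}_{c}(\Omega)$, so all integrals are well defined and the weak formulation applies. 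Plugging in $\vgf=u^{p}/v^{p-1}$ gives
$$\int_{\Omega}\mathcal{A}(x,\nabla v)\cdot\nabla\!\left(\frac{u^{p}}{v^{p-1}}\right)\dx+\int_{\Omega}V\,v^{p-1}\,\frac{u^{p}}{v^{p-1}}\dx=0,$$
that is, $\int_{\Omega}\mathcal{A}(x,\nabla v)\cdot\nabla(u^{p}/v^{p-1})\dx=-\int_{\Omega}V|u|^{p}\dx$ (using $v>0$, so $|v|^{p-2}v=v^{p-1}$).

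Next I would add $\int_{\Omega}|\nabla u|_{\mathcal{A}}^{p}\dx$ to both sides. On the left this produces exactly $\int_{\Omega}\big(|\nabla u|_{\mathcal{A}}^{p}-\mathcal{A}(x,\nabla v)\cdot\nabla(u^{p}/v^{p-1})\big)\dx=\int_{\Omega}R(u,v)(x)\dx$, while on the right it produces $\int_{\Omega}\big(|\nabla u|_{\mathcal{A}}^{p}+V|u|^{p}\big)\dx=Q_{p,\mathcal{A},V}[u]$. Then I would invoke the pointwise identity $R(u,v)(x)=L(u,v)(x)$ a.e.\ from Lemma~\ref{RL} to rewrite the left-hand side as $\int_{\Omega}L(u,v)(x)\dx$, yielding $Q_{p,\mathcal{A},V}[u]=\int_{\Omega}L(u,v)(x)\dx$. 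For the subsolution (resp.\ supersolution) case, the weak formulation in Definition~\ref{def_sol} only gives an inequality when tested against the \emph{nonnegative} function $\vgf=u^{p}/v^{p-1}$: for a subsolution $\int\mathcal{A}(x,\nabla v)\cdot\nabla\vgf+\int V v^{p-1}\vgf\le 0$, hence $\int\mathcal{A}(x,\nabla v)\cdot\nabla(u^{p}/v^{p-1})\le -\int V|u|^{p}$; adding $\int|\nabla u|_{\mathcal{A}}^{p}$ and using $R=L$ gives $Q_{p,\mathcal{A},V}[u]\le\int_{\Omega}L(u,v)(x)\dx$, and the reversed inequality for supersolutions follows symmetrically.

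The routine checks are that $u^{p}/v^{p-1}\ge 0$ (so the sub/supersolution inequalities can be applied) and that all integrands lie in $L^{1}$: the term $\mathcal{A}(x,\nabla v)\cdot\nabla(u^{p}/v^{p-1})$ is integrable by the ellipticity bound $|\mathcal{A}(x,\nabla v)|\le\beta_\omega|\nabla v|^{p-1}$ and Hölder on the compact support of $u$, while $\int V|u|^{p}$ is controlled by the Morrey--Adams theorem (Theorem~\ref{MA_thm}) exactly as in Lemma~\ref{lem4.11}. I do not anticipate a serious obstacle here; the only point demanding a little care is the justification that $\vgf=u^{p}/v^{p-1}$ is an admissible test function and that the product and chain rules used to expand $\nabla(u^{p}/v^{p-1})$ hold --- but this is precisely what is built into the hypotheses of the lemma (and of Lemma~\ref{RL}), so it may simply be cited. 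Thus the proof is essentially a one-line consequence of the weak formulation plus Lemma~\ref{RL}, and the bulk of the work has already been discharged in establishing the pointwise identity $L=R$.
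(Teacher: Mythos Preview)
Your proposal is correct and follows exactly the standard approach the paper has in mind (the paper omits the proof as ``similar to \cite[Proposition 3.3]{Regev}'', which proceeds precisely by testing the equation with $\vgf=u^{p}/v^{p-1}$ and invoking the pointwise identity $R(u,v)=L(u,v)$ from Lemma~\ref{RL}). The only cosmetic point is that your ``add to both sides'' step tacitly multiplies the displayed equality by $-1$ first, but the algebra and the handling of the sub/supersolution cases are all correct.
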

\begin{proof}
	The proof is similar to that of \cite[Proposition 3.3]{Regev}, and therefore it is omitted.
	\end{proof}
\begin{lem}\label{lem_alter}
	Let~$\mathcal{A}$ satisfy Assumption \ref{ass8}, and let~$V\in M^{q}_{\loc}(p;\Omega)$.
	\begin{enumerate}
		\item[$(1)$] For any positive
		solution~$v\in W^{1,p}_{\loc}(\Omega)$ of $Q'_{p,\mathcal{A},V}[\psi]=0$ in~$\Omega$ and any nonnegative function~$u\in W^{1,p}_{c}(\Omega)$ such that~$u^{p}/v^{p-1}\in W^{1,p}_{c}(\Omega)$ and the product {and chain rules} for~$u^{p}/v^{p-1}$ hold, if~$vw$ satisfies the product rule for~$w\triangleq u/v$, then
		$$Q_{p,\mathcal{A},V}[vw]=\int_{\Omega} \left(|v\nabla w+w\nabla v|^{p}_{\mathcal{A}}-w^{p}|\nabla v|^{p}_{\mathcal{A}}-pw^{p-1}v\mathcal{A}(x,\nabla v)\cdot\nabla w\right)\dx.$$
		\item[$(2)$] For a positive subsolution or a positive supersolution~$v\in W^{1,p}_{\loc}(\Omega)$ of $Q'_{p,\mathcal{A},V}[\psi]=0$ in~$\Omega$ and any nonnegative function~$u\in W^{1,p}_{c}(\Omega)$ such that~$u^{p}/v^{p-1}\in W^{1,p}_{c}(\Omega)$ and the
  product {and chain rules} for~$u^{p}/v^{p-1}$ hold, if~$vw$ satisfies the product rule for~$w\triangleq u/v$, then, respectively,
		$$Q_{p,\mathcal{A},V}[vw]\leq\int_{\Omega}\left(|v\nabla w+w\nabla v|^{p}_{\mathcal{A}}-w^{p}|\nabla v|^{p}_{\mathcal{A}}-pw^{p-1}v\mathcal{A}(x,\nabla v)\cdot\nabla w \right) \dx,$$ or
		$$Q_{p,\mathcal{A},V}[vw]\geq\int_{\Omega}\left(|v\nabla w+w\nabla v|^{p}_{\mathcal{A}}-w^{p}|\nabla v|^{p}_{\mathcal{A}}-pw^{p-1}v\mathcal{A}(x,\nabla v)\cdot\nabla w \right) \dx.$$
		\item[$(3)$]If $v\in W^{1,p}_{\loc}(\Omega)$ is either a positive
		solution or a positive supersolution of $Q'_{p,\mathcal{A},V}[u]=0$ in~$\Omega$, then the functional
		$Q_{p,\mathcal{A},V}$ is nonnegative on $W^{1,p}_{c}(\Omega)$.
	\end{enumerate}
\end{lem}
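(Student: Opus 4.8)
The plan is to deduce all three parts from the Picone-type identity and pointwise inequalities already established in Lemma \ref{RL} and Lemma \ref{Picone}. For part $(1)$, I would start from Lemma \ref{Picone} applied to the nonnegative test function $u\in W^{1,p}_c(\Omega)$, which gives $Q_{p,\mathcal{A},V}[u]=\int_\Omega L(u,v)\dx$. Now substitute $u=vw$ with $w=u/v$; since by hypothesis $vw$ satisfies the product rule, we have $\nabla u=v\nabla w+w\nabla v$ a.e., and plugging this into the definition
$$L(u,v)=|\nabla u|_{\mathcal{A}}^{p}+(p-1)\frac{u^{p}}{v^{p}}|\nabla v|^{p}_{\mathcal{A}}-p\frac{u^{p-1}}{v^{p-1}}\mathcal{A}(x,\nabla v)\cdot\nabla u$$
turns the first term into $|v\nabla w+w\nabla v|^{p}_{\mathcal{A}}$, the second into $(p-1)w^{p}|\nabla v|^{p}_{\mathcal{A}}$, and the third into $-pw^{p-1}v\,\mathcal{A}(x,\nabla v)\cdot(v\nabla w+w\nabla v)$. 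Using Euler's homogeneity relation $\mathcal{A}(x,\nabla v)\cdot\nabla v=|\nabla v|^{p}_{\mathcal{A}}$ (recorded in the remark after the definition of the $\mathcal{A}$-Laplacian), the term $-pw^{p-1}v\,\mathcal{A}(x,\nabla v)\cdot(w\nabla v)=-pw^{p}|\nabla v|^{p}_{\mathcal{A}}$, and combining with $(p-1)w^{p}|\nabla v|^{p}_{\mathcal{A}}$ yields exactly $-w^{p}|\nabla v|^{p}_{\mathcal{A}}-pw^{p-1}v\,\mathcal{A}(x,\nabla v)\cdot\nabla w$. This is the claimed integrand, so $(1)$ follows by integrating.

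For part $(2)$, I would repeat the same algebraic rewriting, but now invoke the inequality form of Lemma \ref{Picone}: if $v$ is a positive subsolution then $Q_{p,\mathcal{A},V}[u]\le\int_\Omega L(u,v)\dx$, and if $v$ is a positive supersolution then $Q_{p,\mathcal{A},V}[u]\ge\int_\Omega L(u,v)\dx$. Since the pointwise identity $L(u,v)=|v\nabla w+w\nabla v|^{p}_{\mathcal{A}}-w^{p}|\nabla v|^{p}_{\mathcal{A}}-pw^{p-1}v\,\mathcal{A}(x,\nabla v)\cdot\nabla w$ is purely algebraic and does not use the equation at all, it persists verbatim; substituting it into the two inequalities gives the two displays in $(2)$ directly.

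For part $(3)$: given a positive solution or positive supersolution $v$ of $Q'_{p,\mathcal{A},V}[u]=0$ in $\Omega$ and an arbitrary $\varphi\in W^{1,p}_c(\Omega)$ (real-valued, possibly sign-changing), apply the supersolution case of Lemma \ref{Picone} with the nonnegative test function $u=|\varphi|$; note that $|\varphi|\in W^{1,p}_c(\Omega)$ with $|\nabla|\varphi||=|\nabla\varphi|$ a.e., hence $|\nabla|\varphi||_{\mathcal{A}}=|\nabla\varphi|_{\mathcal{A}}$ a.e., so that $Q_{p,\mathcal{A},V}[|\varphi|]=Q_{p,\mathcal{A},V}[\varphi]$. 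Lemma \ref{Picone} then gives $Q_{p,\mathcal{A},V}[\varphi]=Q_{p,\mathcal{A},V}[|\varphi|]\ge\int_\Omega L(|\varphi|,v)\dx$, and by the nonnegativity assertion in Lemma \ref{RL} the integrand $L(|\varphi|,v)\ge 0$ a.e., so $Q_{p,\mathcal{A},V}[\varphi]\ge 0$. The main technical point to be careful about is the regularity bookkeeping: one must check that $u=|\varphi|$ satisfies the hypotheses of Lemma \ref{Picone}, i.e. that $u^p/v^{p-1}\in W^{1,p}_c(\Omega)$ and that the product and chain rules hold for it; since $v$ is positive and locally bounded away from zero on the (compact) support of $\varphi$ by the Harnack inequality and local Hölder continuity from Section \ref{toolbox}, this is routine, and I expect no genuine obstacle beyond this verification — the whole lemma is essentially a change of variables inside the already-proved Picone identity.
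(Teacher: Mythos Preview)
Your argument for parts $(1)$ and $(2)$ is exactly the paper's: start from Lemma~\ref{Picone}, substitute $u=vw$ via the assumed product rule, and simplify using $\mathcal{A}(x,\nabla v)\cdot\nabla v=|\nabla v|_{\mathcal{A}}^{p}$. Your computation is correct and matches the paper's one-line indication.

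For part $(3)$ there is a small difference in packaging. You return directly to Lemma~\ref{Picone} and invoke the pointwise nonnegativity $L(|\varphi|,v)\geq 0$ from Lemma~\ref{RL}. The paper instead routes through the rewritten integrand of parts $(1)$/$(2)$ and then appeals to the strict convexity of $|\cdot|_{\mathcal{A}}^{p}$ (Lemma~\ref{strictconvexity}) applied with $\xi_{1}=v\nabla w+w\nabla v$, $\xi_{2}=w\nabla v$ to see that this integrand is nonnegative. These are two phrasings of the same fact: the nonnegativity of $L$ in Lemma~\ref{RL} is precisely that convexity inequality after the product-rule substitution. Your route is marginally shorter since it skips the rewriting; the paper's route makes the role of convexity explicit.

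One point to flag: you call the regularity check ``routine,'' but it is genuinely where the work in $(3)$ lies. For an arbitrary $\varphi\in W^{1,p}_{c}(\Omega)$ (which need not be bounded when $p\leq n$), the condition $|\varphi|^{p}/v^{p-1}\in W^{1,p}_{c}(\Omega)$ with valid product and chain rules is not automatic, and the paper explicitly records that an approximation argument is required (referring to \cite[Theorem~5.2]{Regev} and \cite[Theorem~2.3]{Tintarev}). Your sketch should at least say: approximate $\varphi$ in $W^{1,p}_{c}$ by functions in $C_{c}^{\infty}(\Omega)$, for which the hypotheses of Lemma~\ref{Picone} clearly hold, conclude nonnegativity for those, and pass to the limit using Lemma~\ref{functionalcv}. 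With that addition your proof is complete and equivalent to the paper's.
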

\begin{Rem}
	\emph{The third part of the lemma gives an alternative proof of~$(2)\Rightarrow (1)$ and~$(3)\Rightarrow (1)$ of the AAP type theorem (Theorem \ref{thm_AAP}).}
\end{Rem}
\begin{proof}[Proof of Lemma~\ref{lem_alter}]
	For the first two parts of the lemma, we apply the product rule directly in the final equality/inequalities of Lemma \ref{Picone}. The third part follows from the first two parts, the strict convexity of the function $|\cdot|^{p}_{\mathcal{A}}$, and an approximation argument. For details, see \cite[Theorem 5.2]{Regev} and \cite[Theorem 2.3]{Tintarev}.
\end{proof}
\subsection{Principal eigenvalues in domains~$\omega\Subset\Omega$}\label{eigenvalueunique}
\begin{Def}{\em
Let~$\mathcal{A}$ satisfy Assumption \ref{ass8} and let~$V\in M^{q}_{\loc}(p;\Omega)$. The \emph{generalized principal eigenvalue} of $Q'_{p,\mathcal{A},V}$ in a domain $\gw \subseteq \Gw$
is defind by
$$\lambda_{1}=\lambda_{1}(Q_{p,\mathcal{A},V};\omega)\triangleq\inf_{u\in C^{\infty}_{c}(\omega) \setminus\{0\}}\frac{Q_{p,\mathcal{A},V}[u;\omega]}{\Vert u\Vert_{L^{p}(\omega)}^{p}}\,.$$}	
\end{Def}
\begin{Rem}\label{rem_lambda1}{\em
	It follows that for a domain $\gw\Subset\Gw$  we have
\begin{equation}\label{eq_pev}
  \lambda_{1}(Q_{p,\mathcal{A},V};\omega) = \inf_{u\in W^{1,p}_{0}(\omega)\setminus\{0\}}\frac{Q_{p,\mathcal{A},V}[u;\omega]}{\Vert u\Vert_{L^{p}(\omega)}^{p}}\,.
\end{equation}
}
\end{Rem}
\begin{lemma}\label{easylemma}
  Let $\omega\Subset\Omega$ be a domain, let $\mathcal{A}$ satisfy Assumption \ref{ass8}, and let $V\in M^{q}(p;\omega)$. All eigenvalues of \eqref{evp} are larger than or equal to $\lambda_{1}$.
\end{lemma}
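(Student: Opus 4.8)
The plan is to show that if $\lambda$ is an eigenvalue of \eqref{evp} with eigenfunction $v\in W^{1,p}_{0}(\omega)\setminus\{0\}$, then $\lambda\geq\lambda_{1}$. First I would test the equation $Q'_{p,\mathcal{A},V}[v]=\lambda|v|^{p-2}v$ against $v$ itself. Since $v\in W^{1,p}_{0}(\omega)$, Lemma \ref{lem4.11}(2) permits the test function space to be enlarged from $C_{c}^{\infty}(\omega)$ to $W^{1,p}_{c}(\omega)$, and since $\omega\Subset\Omega$ a function in $W^{1,p}_{0}(\omega)$ lies in $W^{1,p}_{c}(\Omega)$; thus $v$ is an admissible test function. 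This yields
$$\int_{\omega}\mathcal{A}(x,\nabla v)\cdot\nabla v\dx+\int_{\omega}V|v|^{p}\dx=\lambda\int_{\omega}|v|^{p}\dx,$$
and since $\mathcal{A}(x,\nabla v)\cdot\nabla v=|\nabla v|_{\mathcal{A}}^{p}$ by \eqref{newformula}, the left-hand side is exactly $Q_{p,\mathcal{A},V}[v;\omega]$. Hence $Q_{p,\mathcal{A},V}[v;\omega]=\lambda\Vert v\Vert_{L^{p}(\omega)}^{p}$.

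Next I would invoke the variational characterization of $\lambda_{1}$ over $W^{1,p}_{0}(\omega)$ provided by Remark \ref{rem_lambda1}, equation \eqref{eq_pev}. Since $v\in W^{1,p}_{0}(\omega)\setminus\{0\}$, we have $\Vert v\Vert_{L^{p}(\omega)}^{p}>0$ (an eigenfunction cannot vanish a.e.), so dividing the displayed identity by $\Vert v\Vert_{L^{p}(\omega)}^{p}$ gives
$$\lambda=\frac{Q_{p,\mathcal{A},V}[v;\omega]}{\Vert v\Vert_{L^{p}(\omega)}^{p}}\geq\inf_{u\in W^{1,p}_{0}(\omega)\setminus\{0\}}\frac{Q_{p,\mathcal{A},V}[u;\omega]}{\Vert u\Vert_{L^{p}(\omega)}^{p}}=\lambda_{1},$$
which is the desired conclusion.

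The only subtle points are bookkeeping ones rather than genuine obstacles: one must check that the eigenfunction $v$ is a legitimate test function in its own weak formulation (handled by Lemma \ref{lem4.11}(2) together with $\omega\Subset\Omega$), that all three integrals are finite (again Lemma \ref{lem4.11}(1), using $V\in M^{q}(p;\omega)$ and $v\in W^{1,p}_{0}(\omega)$), and that $Q_{p,\mathcal{A},V}[v;\omega]$ is well-defined for $v\in W^{1,p}_{0}(\omega)$ and not merely for $C_{c}^{\infty}$ functions — this is implicit in the notation established before the statement and in \eqref{eq_pev}. I expect no real difficulty; the proof is essentially a one-line substitution of the eigenfunction into the Rayleigh quotient, and the author's proof is likely to be just that.
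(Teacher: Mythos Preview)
Your proposal is correct and follows exactly the same approach as the paper: the paper's proof reads in full ``Testing any eigenfunction, we get the conclusion.'' You have simply spelled out the details of that one-line argument.
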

\begin{proof}
 Testing any eigenfunction, we get the conclusion.
\end{proof}
\begin{Def}
	{\em A \emph{principal eigenvalue} of \eqref{evp} is an eigenvalue with a nonnegative eigenfunction, which is called a \emph{principal eigenfunction}.}
\end{Def}
We first state a useful lemma.
\begin{lemma}\label{functionalcv}
 Let~$\mathcal{A}$ satisfy Assumption~\ref{ass8}, and let $V\in M^{q}_{\loc}(p;\Omega)$. For every domain~$\omega\Subset\Omega$, if~$u_{k}\rightarrow u$ as~$k\rightarrow\infty$ in~$W^{1,p}_{0}(\omega)$, then
 $\displaystyle{\lim_{k\rightarrow\infty}}Q_{p,\mathcal{A},V}[u_{k}]=Q_{p,\mathcal{A},V}[u]$.
\end{lemma}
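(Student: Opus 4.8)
The plan is to establish the continuity of the functional $Q_{p,\mathcal{A},V}[\,\cdot\,;\omega]$ along strongly convergent sequences in $W^{1,p}_0(\omega)$ by handling the two summands separately. Write $Q_{p,\mathcal{A},V}[u_k] = \int_\omega |\nabla u_k|_{\mathcal{A}}^p\dx + \int_\omega V|u_k|^p\dx$. For the potential term, I would invoke the Morrey--Adams theorem (Theorem~\ref{MA_thm}(1)): applied to $u_k-u\in W^{1,p}_0(\omega)$ it gives $\int_\omega |V|\,|u_k-u|^p\dx \le \delta\|\nabla(u_k-u)\|_{L^p}^p + C_\delta\|u_k-u\|_{L^p}^p \to 0$, and then a routine estimate (e.g. via the elementary inequality $\bigl||a|^p-|b|^p\bigr|\le p\,|a-b|\,(|a|+|b|)^{p-1}$ together with H\"older's inequality, using that $\{u_k\}$ is bounded in $L^p$) shows $\int_\omega V|u_k|^p\dx \to \int_\omega V|u|^p\dx$.

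For the leading term, the key point is that $\xi\mapsto |\xi|_{\mathcal{A}(x)}$ is a norm on $\R^n$ for a.e.\ $x$ (as recorded in the Remark following \eqref{newformula}) with $\kappa_\omega^{1/p}|\xi|\le |\xi|_{\mathcal{A}(x)}\le \nu_\omega^{1/p}|\xi|$ on $\omega$ by the ellipticity condition \eqref{structure}; hence $u\mapsto \bigl(\int_\omega |\nabla u|_{\mathcal{A}}^p\dx\bigr)^{1/p}$ is a seminorm on $W^{1,p}_0(\omega)$ that is equivalent to $\|\nabla u\|_{L^p(\omega;\R^n)}$. By the triangle inequality for this seminorm,
$$\left|\left(\int_\omega |\nabla u_k|_{\mathcal{A}}^p\dx\right)^{1/p} - \left(\int_\omega |\nabla u|_{\mathcal{A}}^p\dx\right)^{1/p}\right| \le \left(\int_\omega |\nabla u_k - \nabla u|_{\mathcal{A}}^p\dx\right)^{1/p} \le \nu_\omega^{1/p}\|\nabla u_k - \nabla u\|_{L^p(\omega;\R^n)},$$
which tends to $0$ since $u_k\to u$ in $W^{1,p}_0(\omega)$; raising to the $p$-th power gives convergence of $\int_\omega |\nabla u_k|_{\mathcal{A}}^p\dx$.

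Adding the two limits yields $\lim_{k\to\infty}Q_{p,\mathcal{A},V}[u_k]=Q_{p,\mathcal{A},V}[u]$. I do not anticipate a genuine obstacle here; the only point requiring a little care is the potential term, where one must use the Morrey--Adams bound (rather than na\"ive integrability of $V$) to control $\int_\omega|V|\,|u_k|^p$, and must exploit $L^p$-boundedness of the sequence when passing from the difference estimate to the convergence of $\int_\omega V|u_k|^p$. Everything else is the elementary observation that the $\mathcal{A}$-norm of the gradient defines an equivalent seminorm, so the leading term is continuous for the trivial reason that $t\mapsto t^p$ is continuous on $[0,\infty)$.
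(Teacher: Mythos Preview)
Your proposal is correct and matches the paper's approach almost exactly. Both split into the gradient term and the potential term; for the latter you and the paper use precisely the same ingredients (the elementary inequality $\bigl||a|^p-|b|^p\bigr|\le p\,|a-b|(|a|^{p-1}+|b|^{p-1})$, H\"older, and the Morrey--Adams bound applied to $u_k-u$). The only difference is in the gradient term: the paper simply cites \cite[Lemma~5.23]{HKM} for $\int_\omega|\nabla u_k|_{\mathcal{A}}^p\dx\to\int_\omega|\nabla u|_{\mathcal{A}}^p\dx$, whereas you give a self-contained argument exploiting that $|\cdot|_{\mathcal{A}(x)}$ is a norm (as recorded in the Remark after \eqref{newformula}), so that $u\mapsto\bigl(\int_\omega|\nabla u|_{\mathcal{A}}^p\dx\bigr)^{1/p}$ is a seminorm equivalent to $\|\nabla u\|_{L^p}$ and hence continuous by the reverse triangle inequality. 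Your route is more transparent within the paper's own framework and avoids the external reference; the cited HKM lemma presumably covers more general Lagrangians $F$ where $|\cdot|_{\mathcal{A}}$ need not be a norm, but under the present assumptions your direct argument is both sufficient and cleaner.
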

\begin{proof}
  By \cite[Lemma 5.23]{HKM}, we get~$\lim_{k\rightarrow\infty}\int_{\omega}|\nabla u_{k}|_{\mathcal{A}}^{p}\dx=\int_{\omega}|\nabla u|_{\mathcal{A}}^{p}\dx$. The elementary inequality
  	$$|x^{p}-y^{p}|\leq p|x-y|(x^{p-1}+y^{p-1}) \qquad  \forall x,y\geq0,$$
  the H\"{o}lder inequality, and the Morrey-Adams theorem, imply
  \begin{eqnarray*}
    \left\vert\int_{\omega}\!V(|u_{k}|^{p}-|u|^{p})\!\dx \right\vert&\!\!\leq\! \!& \int_{\omega}\!|V|||u_{k}|^{p}-|u|^{p}|\!\dx
    \!\leq p\!\int_{\omega}\!|V||u_{k}-u|||u_{k}|^{p-1}+|u|^{p-1}|\!\dx\\
    &\!\!\leq\!\!& C(p)\!\left(\int_{\omega}\!\!|V||u_{k}\!-\! u|^{p}\!\dx\!\!\right)^{\!1/p}\!\!\left(\int_{\omega}\!|V||u_{k}|^{p} \!+\! |V||u|^{p}\!\dx\!\right)^{\!1/p'}\!
    \underset{ k\rightarrow\infty}{\rightarrow 0}.
 \end{eqnarray*}
 Hence, the desired convergence follows.
\end{proof}
Now we prove that in every domain~$\omega\Subset\Omega$, the generalized principal eigenvalue is a principal and simple eigenvalue, whose uniqueness is proved in Corollary \ref{newuniqueness}.
\begin{Thm}\label{principaleigenvalue}
  Let $\omega\Subset\Omega$ be a domain, let $\mathcal{A}$ satisfy Assumption~\ref{ass8},
   and let $V\in M^{q}(p;\omega)$.
  \begin{enumerate}
   \item[$(1)$] The generalized principal eigenvalue is a principal eigenvalue of the operator~$Q'_{p,\mathcal{A},\mathcal{V}}$.
   \item[$(2)$] The principal eigenvalue is simple, i.e., for any two eigenfunctions $u$ and~$v$ associated with the eigenvalue $\gl_1$, we have $u=cv$ for some~$c\in\R$.
   \end{enumerate}
 \end{Thm}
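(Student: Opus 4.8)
The plan is to establish part (1) via the direct method in the calculus of variations, applied to the constrained minimization problem defining $\lambda_1$, and then to derive part (2) from the Picone-type identity (Lemma~\ref{Picone}), with the D\'iaz-Sa\'a inequality (Lemma~\ref{elementary}) as an alternative route that also uses Assumption~\ref{ass2}.

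For part (1), I would first note that by Remark~\ref{rem_lambda1} the infimum defining $\lambda_1$ may be taken over $W^{1,p}_0(\omega)\setminus\{0\}$, and by homogeneity of $Q_{p,\mathcal{A},V}[\,\cdot\,;\omega]$ it suffices to minimize $Q_{p,\mathcal{A},V}[u;\omega]$ over the constraint set $\{u\in W^{1,p}_0(\omega):\|u\|_{L^p(\omega)}=1\}$. Take a minimizing sequence $\{u_k\}$; replacing $u_k$ by $|u_k|$ (which leaves both $\|u_k\|_{L^p}$ and, since $|\nabla |u_k||_{\mathcal{A}}=|\nabla u_k|_{\mathcal{A}}$ a.e. and $|u_k|^p=|u_k|^p$, the value of $Q_{p,\mathcal{A},V}$ unchanged) we may assume $u_k\ge 0$. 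The functional $u\mapsto Q_{p,\mathcal{A},V}[u;\omega]+\Lambda\|u\|_{L^p(\omega)}^p$ is coercive on $W^{1,p}_0(\omega)$ for $\Lambda$ large enough: this follows from the ellipticity bound $|\nabla u|_{\mathcal{A}}^p\ge \alpha_\omega|\nabla u|^p$ together with the Morrey-Adams theorem (Theorem~\ref{MA_thm}(1)) applied to the negative part of $V$ with a small $\delta$, and then the Poincar\'e inequality; hence along the constraint $\{u_k\}$ is bounded in $W^{1,p}_0(\omega)$. Passing to a subsequence, $u_k\rightharpoonup v$ weakly in $W^{1,p}_0(\omega)$, strongly in $L^p(\omega)$ by Rellich-Kondrachov (so $\|v\|_{L^p(\omega)}=1$, in particular $v\not\equiv 0$, and $v\ge 0$), and by Theorem~\ref{ThmJ1} (weak lower semicontinuity of $J$ with $g\equiv 0$) we get $Q_{p,\mathcal{A},V}[v;\omega]\le\liminf_k Q_{p,\mathcal{A},V}[u_k;\omega]=\lambda_1$. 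Combined with $Q_{p,\mathcal{A},V}[v;\omega]\ge\lambda_1\|v\|_{L^p(\omega)}^p=\lambda_1$ we conclude $v$ is a minimizer. Finally, a standard first-variation argument: for $\varphi\in C_c^\infty(\omega)$ the function $t\mapsto Q_{p,\mathcal{A},V}[v+t\varphi;\omega]-\lambda_1\|v+t\varphi\|_{L^p(\omega)}^p$ has a minimum at $t=0$; differentiating (the differentiability of the $\mathcal{A}$-term and the potential term being justified via the ellipticity bounds, the $(p-1)$-homogeneity of $\mathcal{A}$, and the Morrey-Adams theorem, with the elementary inequality $||a|^p-|b|^p|\le p|a-b|(|a|^{p-1}+|b|^{p-1})$ for the $L^p$ and $V$ terms exactly as in Lemma~\ref{functionalcv}) yields the Euler-Lagrange identity
$$\int_\omega\mathcal{A}(x,\nabla v)\cdot\nabla\varphi\dx+\int_\omega V|v|^{p-2}v\,\varphi\dx=\lambda_1\int_\omega |v|^{p-2}v\,\varphi\dx,$$
so $v\in W^{1,p}_0(\omega)\setminus\{0\}$ is a nonnegative eigenfunction with eigenvalue $\lambda_1$; by Lemma~\ref{easylemma}, $\lambda_1$ is the smallest eigenvalue, hence a principal eigenvalue.

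For part (2), let $u,v$ be eigenfunctions with eigenvalue $\lambda_1$; by the weak Harnack inequality (or Harnack for $p>n$) a nonnegative eigenfunction is either $\equiv 0$ or strictly positive, and by (1) and half-linearity we may assume $u,v>0$ in $\omega$. The cleanest argument uses Lemma~\ref{Picone}: since $v$ is a positive solution of $Q'_{p,\mathcal{A},V-\lambda_1}[w]=0$, for any nonnegative $u\in W^{1,p}_c(\omega)$ with $u^p/v^{p-1}\in W^{1,p}_c(\omega)$ we have $Q_{p,\mathcal{A},V-\lambda_1}[u;\omega]=\int_\omega L(u,v)\dx\ge 0$ with equality iff $u=kv$. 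Taking $u$ to be the eigenfunction (after a density/truncation argument to legitimize $u$ and $u^p/v^{p-1}$ as test functions in a fixed $\omega\Subset\Omega$, using that both are bounded and bounded away from the boundary after truncation, and the Harnack/H\"older estimates of Section~\ref{toolbox}), the left-hand side equals $Q_{p,\mathcal{A},V}[u;\omega]-\lambda_1\|u\|_{L^p(\omega)}^p=0$ by the definition of $\lambda_1$ and the fact that $u$ attains it; hence $L(u,v)\equiv 0$ a.e., which forces $u=kv$ for some $k>0$. Alternatively — and this is the route the authors most likely intend given the placement of Lemma~\ref{elementary} — one applies the D\'iaz-Sa\'a inequality with $g_1=g_2=0$, $V_1=V_2=V-\lambda_1$, $w_\lambda=u$, $w_\mu=v$, $\lambda=\mu=0$: part (2) of Lemma~\ref{elementary} gives $0=I_{0,0,0,u,v}\ge C_\omega(p,\mathcal{A})L_{0,u,v}\ge 0$, so $L_{0,u,v}=0$, and by the definition of $[\cdot,\cdot]_{\mathcal{A}}$ in \eqref{eq_strict} (which is positive whenever its two arguments differ) this forces $\nabla u/u=\nabla v/v$ a.e., i.e. $\nabla\log u=\nabla\log v$, whence $u=cv$ on the connected set $\omega$.

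The main obstacle is the justification of the test-function manipulations in part (2): the eigenfunctions $u,v$ lie only in $W^{1,p}_0(\omega)$, not in $W^{1,p}_c(\omega)$, and $u^p/v^{p-1}$ need not a priori lie in $W^{1,p}_0(\omega)$ nor satisfy the product and chain rules required by Lemma~\ref{Picone}; near $\partial\omega$ the ratio $u/v$ can be badly behaved. The standard fix is to work on subdomains $\omega'\Subset\omega$ with $u,v$ strictly positive and H\"older continuous there (Section~\ref{toolbox}), approximate $u$ by $u\eta_\varepsilon$ for suitable cutoffs, verify the product/chain rules for the approximants where $v$ is bounded away from zero, and then pass to the limit $\varepsilon\to 0$ using Lemma~\ref{functionalcv} and the a priori bounds; alternatively, the D\'iaz-Sa\'a route sidesteps some of this but instead invokes Assumption~\ref{ass2}. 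In either case the limiting arguments are routine but require care, and I would present the D\'iaz-Sa\'a version as the primary proof with the Picone version as a remark.
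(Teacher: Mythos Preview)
Your approach to part~(1) is correct and essentially identical to the paper's: both use the direct method, with coercivity coming from the Morrey--Adams theorem and weak lower semicontinuity from Theorem~\ref{ThmJ1}; the paper packages this slightly more abstractly by invoking Theorems~\ref{ThmJ1} and~\ref{thm-coercive} for the shifted functional $Q_{p,\mathcal{A},V-\lambda_1+\varepsilon}$, but the content is the same.

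For part~(2) you have the right ingredients but the wrong emphasis, and this matters for the hypotheses. The paper's proof uses the \emph{Picone} route, not D\'iaz--Sa\'a. The reason is that the D\'iaz--Sa\'a inequality (Lemma~\ref{elementary}) requires Assumption~\ref{ass2} (the stronger convexity), which is \emph{not} among the hypotheses of Theorem~\ref{principaleigenvalue}; so presenting D\'iaz--Sa\'a as the primary proof would either add an unnecessary assumption or leave a gap. Your Picone sketch is the correct one and should be the main argument.

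The paper's handling of the test-function issue is also cleaner than the cutoff scheme you outline. Rather than working on subdomains with $u\eta_\varepsilon$, the paper takes a nonnegative sequence $\{\varphi_k\}\subseteq C_c^\infty(\omega)$ approximating the eigenfunction $u$ in $W^{1,p}_0(\omega)$ and a.e.; then $\varphi_k^p/v^{p-1}\in W^{1,p}_c(\omega)$ directly (since $v>0$ in $\omega$ by Harnack and $\varphi_k$ is smooth with compact support), and Lemma~\ref{RL} gives $Q_{p,\mathcal{A},V-\lambda_1}[\varphi_k;\omega]=\int_\omega L(\varphi_k,v)\,dx$. The passage to the limit uses Fatou on the nonnegative integrand $L(\varphi_k,v)$ together with Lemma~\ref{functionalcv} for the continuity of $Q_{p,\mathcal{A},V-\lambda_1}$ under strong $W^{1,p}_0$ convergence, yielding $0\le\int_\omega L(u,v)\,dx\le Q_{p,\mathcal{A},V-\lambda_1}[u;\omega]=0$. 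This avoids any delicate boundary behaviour of $u/v$.
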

  \begin{proof}[Proof of Theorem \ref{principaleigenvalue}]
	$(1)$ Applying the Morrey-Adams theorem (Theorem~\ref{MA_thm}) with the positive number~$\delta=\alpha_{\omega}$ and the ellipticity condition \eqref{structure}, we obtain that
  $$\lambda_{1}\geq -C(n,p,q)\alpha_{\omega}^{-n/(pq-n)}\Vert V\Vert^{pq/(pq-n)}_{M^{q}(p;\omega)}>-\infty.$$
  For any~$\varepsilon>0$, letting~$\mathcal{V}=V-\lambda_{1}+\varepsilon$, we immediately see that for all~$u\in W^{1,p}_{0}(\omega)$, $$Q_{p,\mathcal{A},\mathcal{V}}[u;\omega]\geq \varepsilon\Vert u\Vert_{L^{p}(\omega)}^{p}.$$
  Therefore, the functional~$Q_{p,\mathcal{A},V-\lambda_{1}+\varepsilon}[\;\cdot\; ;\omega]$ is coercive and weakly lower semicontinuous in $W^{1,p}_{0}(\omega)$, and hence also in~$W^{1,p}_{0}(\omega)\cap\{\Vert u\Vert_{L^{p}(\omega)}=1\}$. Therefore,  the infimum in \eqref{eq_pev}
  is attained in~$ W^{1,p}_{0}(\omega)\setminus\{0\}$.

  Let~$v\in W^{1,p}_{0}(\omega)\setminus\{0\}$ be a minimizer of \eqref{eq_pev}.
  By standard variational calculus techniques, we conclude that the minimizer $v\in W^{1,p}_{0}(\omega)$ satisfies the equation $Q'_{p,\mathcal{A},V}[u]=\lambda_1|u|^{p-2}u$ in the weak sense. Note that~$|v|\in W^{1,p}_{0}(\omega)$. In addition, almost everywhere in~$\omega$, we have~$\big|\nabla(|v|)\big|=|\nabla v|$ and~$\big|\nabla(|v|)\big|_{\mathcal{A}}=|\nabla v|_{\mathcal{A}}.$ Thus~$|v|$ is also a minimizer, and therefore, it satisfies the equation $Q'_{p,\mathcal{A},V}[u]=\lambda_1|u|^{p-2}u$ in the weak sense. So~$\lambda_{1}$ is a principal eigenvalue. The Harnack inequality and H\"{o}lder estimates guarantee that~$|v|$ is strictly positive and continuous in~$\omega$. Therefore, we may assume that $v>0$.

  $(2)$ The proof is inspired by \cite[Theorem 2.1]{Regev1}. Let $v,u\in W^{1,p}_{0}(\omega)$ be, respectively, a positive principal eigenfunction and any eigenfunction associated with~$\lambda_{1}$. It suffices to show~$u=cv$ for some~$c\in\R$. By part $(1)$ we may assume that $u>0$ in~$\omega$. Let~$\{\varphi_{k}\}_{k\in\mathbb{N}}\subseteq C^{\infty}_{c}(\omega)$ a nonnegative sequence approximating $u$ in~$W^{1,p}_{0}(\omega)$ and a.e. in~$\omega$. Then the product and chain rules for~$\varphi_{k}^{p}/v^{p-1}$ hold for all~$k\in\mathbb{N}$. By Lemma \ref{RL}, we get, for all~$k\in\mathbb{N}$,
	$$\int_{\omega} L(\varphi_{k},v)(x)\dx=\int_{\omega}|\nabla \varphi_{k}|_{\mathcal{A}}^{p}\dx-\int_{\omega}\mathcal{A}(x,\nabla v)
	\cdot\nabla\left(\frac{\varphi_{k}^{p}}{v^{p-1}}\right)\dx.$$
	Since~$\varphi_{k}^{p}/v^{p-1}\in W^{1,p}_{c}(\omega)$, we obtain
	$$\int_{\omega}\mathcal{A}(x,\nabla v)\cdot\nabla\left(\frac{\varphi_{k}^{p}}{v^{p-1}}\right)\dx+\int_{\omega}(V-\lambda_{1})v^{p-1}\frac{\varphi_{k}^{p}}{v^{p-1}}\dx=0.$$
	It follows that~$Q_{p,\mathcal{A},V-\lambda_{1}}[\varphi_{k};\gw] =\int_{\omega} L(\varphi_{k},v)(x)\dx.$ 
 By Fatou's lemma and Lemma \ref{functionalcv}, we obtain
\begin{eqnarray*}
0 & \leq &\int_{\omega} L(u,v)(x)\dx
 \leq \int_{\omega}\liminf_{k\rightarrow\infty}L(\varphi_{k},v)(x)\dx \leq
	\liminf_{k\rightarrow\infty}\int_{\omega}L(\varphi_{k},v)(x)\dx\\
&=&\liminf_{k\rightarrow\infty}Q_{p,\mathcal{A},V-\lambda_{1}}[\varphi_{k};\gw]
=Q_{p,\mathcal{A},V-\lambda_{1}}[u;\gw]=0.
\end{eqnarray*}
Lemma \ref{RL} and the connectedness of  $\gw$ imply that $u=cv$ in $\gw$ for some~$c>0$.
\end{proof}
\subsection{Positivity of the principal eigenvalues}\label{localtheory}
In this subsection, we consider positivity features of the operator $Q'_{p,\mathcal{A},V}$ in a {\em Lipschitz} domain
$\gw\Subset \Gw$. In particular, we study the relationship between the validity of the generalized strong/weak maximum principles, the existence of a proper positive supersolution, the unique solvability in $W^{1,p}_{0}(\omega)$ of the nonnegative Dirichlet problem $Q'_{p,\mathcal{A},V}[u]=g \geq 0$ in $\gw$, and the positivity of the principal eigenvalue.
 \begin{Def}
 \emph{Let $\omega$ be a bounded Lipschitz domain. A function $v\in W^{1,p}(\omega)$ is said to be \emph{nonnegative} on~$\partial\omega$ if $v^{-}\in W^{1,p}_{0}(\omega)$. A function~$v$ is said to be \emph{zero} on~$\partial\omega$ if $v\in W^{1,p}_{0}(\omega)$.}
  \end{Def}
 \begin{Def}
\emph{ Let $\omega\Subset\Omega$ be a Lipschitz domain, $\mathcal{A}$ satisfy Assumption~\ref{ass8}, and let $V\in M^{q}(p;\omega)$.
\begin{itemize}
\item The operator~$Q'_{p,\mathcal{A},V}$ is said to satisfy the \emph{generalized weak maximum principle in $\gw$} if every solution~$v  \in W^{1,p}(\omega)$ of the equation  $Q'_{p,\mathcal{A},V}[u]=g$ in $\gw$ with $0\leq g\in L^{p'}(\omega)$ and $v\geq 0$ on~$\partial\omega$ is nonnegative in~$\omega$;
\item the operator~$Q'_{p,\mathcal{A},V}$ satisfies the \emph{generalized strong maximum principle in $\gw$} if any  such a solution $v$ is either the zero function or strictly positive in~$\omega$.
\end{itemize}}
 \end{Def}
 Under Assumption~\ref{ass2}, by Theorem \ref{complement}, all the assertions in the following theorem are in fact equivalent even though we can not prove it completely at this point.
\begin{Thm}\label{maximum}
  Let~$\omega\Subset\Omega$, where~$\omega$ is a Lipschitz domain,~$\mathcal{A}$ satisfy Assumption~\ref{ass8}, and $V\in M^{q}(p;\omega)$. Consider the following assertions:
  \begin{enumerate}
    \item[$(1)$] The operator~$Q'_{p,\mathcal{A},V}$ satisfies the generalized weak maximum principle in~$\omega$.
    \item[$(2)$] The operator~$Q'_{p,\mathcal{A},V}$ satisfies the generalized strong maximum principle in~$\omega$.
    \item[$(3)$] The principal eigenvalue $\lambda_{1} =\lambda_{1}(Q_{p,\mathcal{A},V};\omega)$ is positive.
    \item[$(4)$] The equation $Q'_{p,\mathcal{A},V}[u]=0$ has a proper positive supersolution in~$W^{1,p}_{0}(\omega)$.
    \item[$(4')$] The equation $Q'_{p,\mathcal{A},V}[u]=0$ has a proper positive supersolution in~$W^{1,p}(\omega)$.
    \item[$(5)$] For any nonnegative~$g\in L^{p'}(\omega)$, there exists a  nonnegative solution  $v\in W^{1,p}_{0}(\omega)$ of the equation~$Q'_{p,\mathcal{A},V}[u]=g$ in~$\omega$ which is either zero or positive.
  \end{enumerate}
Then $(1)\Leftrightarrow (2)\Leftrightarrow (3)\Rightarrow (4)\Rightarrow (4')$, and~$(3)\Rightarrow (5)\Rightarrow (4).$

\medskip

Furthermore,
\begin{enumerate}
\item[$(6)$] If Assumption~\ref{ass2} is satisfied and $\gl_1>0$, then the solution in $(5)$ is unique.
\end{enumerate}
\end{Thm}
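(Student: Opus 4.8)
The plan is to establish the equivalences and implications in a cyclic and branching fashion, relying on the variational machinery (coercivity and weak lower semicontinuity, Theorems \ref{ThmJ}--\ref{thm-coercive}), the Harnack and weak Harnack inequalities, the Díaz--Saá inequalities (Lemma \ref{elementary}), and the Picone identity (Lemma \ref{Picone} and Lemma \ref{lem_alter}). First I would prove $(3)\Rightarrow(5)$: assuming $\gl_1>0$, for nonnegative $g\in L^{p'}(\gw)$ the functional $J[u]=Q_{p,\mathcal{A},V}[u;\gw]-\int_\gw gu\dx$ is coercive (Theorem \ref{thm-coercive}, using $Q_{p,\mathcal{A},V}[u;\gw]\ge \gl_1\|u\|_{L^p(\gw)}^p$) and weakly lower semicontinuous (Theorem \ref{ThmJ1}) on the reflexive Banach space $W^{1,p}_0(\gw)$, so it attains a minimizer $v$, which is a weak solution of $Q'_{p,\mathcal{A},V}[u]=g$. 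Since replacing $u$ by $|u|$ does not increase $J$ (here $g\ge 0$ and $|\nabla|u||_{\mathcal{A}}=|\nabla u|_{\mathcal{A}}$ a.e.), we may take $v\ge 0$; then the weak Harnack inequality for the nonnegative supersolution $v$ (of $Q'_{p,\mathcal{A},V}[u]=0$, since $g\ge 0$) shows $v$ is either $\equiv 0$ or strictly positive in $\gw$.

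Next I would handle $(5)\Rightarrow(4)$: given $(5)$, take any $0\le g\in L^{p'}(\gw)$ that is not identically zero (e.g.\ a bump function) and let $v$ be the resulting nonnegative solution of $Q'_{p,\mathcal{A},V}[u]=g$. If $v\equiv 0$ then $g\equiv 0$, a contradiction, so $v>0$ in $\gw$; and $v$ is a supersolution of $Q'_{p,\mathcal{A},V}[u]=0$ that is not a solution (because $Q'_{p,\mathcal{A},V}[v]=g\not\equiv0$), hence a proper positive supersolution in $W^{1,p}_0(\gw)$. The implication $(4)\Rightarrow(4')$ is trivial since $W^{1,p}_0(\gw)\subseteq W^{1,p}(\gw)$. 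For $(3)\Rightarrow(4)$ one can also argue directly: a principal eigenfunction for a slightly shifted potential, or just the solution produced in $(5)$, works; I would simply route $(3)\Rightarrow(5)\Rightarrow(4)$. For $(4')\Rightarrow(3)$ (which together with the above closes the loop $(3)\Rightarrow(4)\Rightarrow(4')\Rightarrow(3)$, and hence $(3)\Leftrightarrow(4)\Leftrightarrow(4')\Leftrightarrow(5)$-modulo $(5)$), I would use Lemma \ref{lem_alter}(3): if $v$ is a positive supersolution then $Q_{p,\mathcal{A},V}$ is nonnegative on $W^{1,p}_c(\gw)$, giving $\gl_1\ge 0$; to upgrade to $\gl_1>0$ one uses that the supersolution is \emph{proper}, together with a ground-state/Poincaré-type argument: since $\gw\Subset\Gw$ is bounded Lipschitz, $W^{1,p}_0(\gw)$ embeds compactly into $L^p(\gw)$, so $\gl_1$ is attained; if $\gl_1=0$ the minimizer would be a positive solution of $Q'_{p,\mathcal{A},V}[u]=0$, and combining this with the proper supersolution $v$ via the Díaz--Saá inequality (Lemma \ref{elementary}, with $g_1=0$, $g_2\ge0$ proper, $V_1=V_2=V$) forces $\nabla v/v$ to be the logarithmic gradient of the solution, making $v$ a solution too — contradicting properness. (Note: this last step is exactly where Assumption \ref{ass2} would enter if invoked, but for the equivalence $(1)$--$(5)$ one can instead use the strict-convexity form in Lemma \ref{newDiaz} / Lemma \ref{strictconvexity}, so no extra assumption is needed here.)

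For the maximum-principle equivalences: $(2)\Rightarrow(1)$ is immediate (strictly positive or zero implies nonnegative). For $(1)\Rightarrow(3)$: if $\gl_1\le 0$, then for $\gl_1<0$ the principal eigenfunction $v>0$ satisfies $Q'_{p,\mathcal{A},V}[v]=\gl_1 v^{p-1}\le 0$, so $-v$ is a nonnegative-on-$\partial\gw$ (in fact zero on $\partial\gw$) solution of $Q'_{p,\mathcal{A},V}[u]=g$ with $g=-\gl_1 v^{p-1}\ge 0$, $g\in L^{p'}(\gw)$, yet $-v<0$ in $\gw$, violating the weak maximum principle; the borderline case $\gl_1=0$ is handled similarly by testing against the solution of $Q'_{p,\mathcal{A},V}[u]=g$ for suitable $g>0$ and using that no such solution in $W^{1,p}_0(\gw)$ exists when $\gl_1=0$ (a positive solution would contradict $\gl_1=0$ via Picone, as above). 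For $(3)\Rightarrow(2)$: let $v\in W^{1,p}(\gw)$ solve $Q'_{p,\mathcal{A},V}[u]=g$ with $0\le g\in L^{p'}(\gw)$ and $v\ge 0$ on $\partial\gw$, i.e.\ $v^-\in W^{1,p}_0(\gw)$; testing the equation with $-v^-\in W^{1,p}_0(\gw)$ (justified by Lemma \ref{lem4.11}) gives $Q_{p,\mathcal{A},V}[v^-;\gw]= -\int_\gw g v^-\dx\le 0$, but $Q_{p,\mathcal{A},V}[v^-;\gw]\ge \gl_1\|v^-\|_{L^p(\gw)}^p$, forcing $v^-\equiv 0$, so $v\ge 0$; then $v$ is a nonnegative supersolution of $Q'_{p,\mathcal{A},V}[u]=0$ and the weak Harnack inequality gives the strong maximum principle dichotomy. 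Finally, for $(6)$ — uniqueness of the solution in $(5)$ under Assumption \ref{ass2} and $\gl_1>0$ — I would apply Lemma \ref{elementary}(1) (the full Díaz--Saá inequality, which requires Assumption \ref{ass2}) to two nonnegative solutions $w_1,w_2$ of $Q'_{p,\mathcal{A},V}[u]=g$ with $w_1=w_2=0$ on $\partial\gw$: passing $h\to 0$ via dominated convergence as in the proof of Lemma \ref{elementary}(2), the left-hand side $I_{0,g,g,w_1,w_2}=\int_\gw\big(\tfrac{g}{w_1^{p-1}}-\tfrac{g}{w_2^{p-1}}\big)(w_1^p-w_2^p)\dx$ is $\le 0$ (each factor has opposite monotonicity in pointwise comparison) while the right-hand side $C_\gw(p,\mathcal{A})L_{0,w_1,w_2}\ge 0$ with equality iff $\nabla w_1/w_1=\nabla w_2/w_2$ a.e.; hence $w_1=cw_2$, and plugging back into the equation with $\gl_1>0$ (so $0$ is not an eigenvalue, ruling out the homogeneous ambiguity) gives $c=1$.

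\textbf{Main obstacle.} The delicate point is the strict inequality $\gl_1>0$ in the implications $(4')\Rightarrow(3)$ and in closing the maximum-principle loop at the borderline $\gl_1=0$: one must rule out that a \emph{proper} positive supersolution coexists with a positive solution (equivalently, that $\gl_1=0$ is attained). This is precisely where the Picone/Díaz--Saá rigidity is used — a proper supersolution cannot have the same logarithmic gradient as a genuine solution — and care is needed because at $\gl_1=0$ the minimizer lies only in $W^{1,p}_0(\gw)$ and an approximation argument (as in the simplicity proof, Theorem \ref{principaleigenvalue}(2)) is required to apply the Picone identity to it rather than to $C_c^\infty$ functions.
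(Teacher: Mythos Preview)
Your argument for the implications actually claimed in the theorem --- $(1)\Leftrightarrow(2)\Leftrightarrow(3)\Rightarrow(4)\Rightarrow(4')$, $(3)\Rightarrow(5)\Rightarrow(4)$, and the uniqueness in $(6)$ --- is essentially the paper's proof: testing $v^{-}$ to get $(3)\Rightarrow(1)$, using $w=-v$ with the principal eigenfunction to contradict the maximum principle when $\gl_1\le 0$, coercivity plus weak lower semicontinuity for $(3)\Rightarrow(5)$, and the D\'{\i}az--Sa\'{a} inequality with $h\to 0$ for $(6)$. Two small remarks here: the borderline $\gl_1=0$ in your $(1)\Rightarrow(3)$ needs no separate treatment --- the same argument works since $w=-v$ then solves $Q'_{p,\mathcal{A},V}[w]=0$ with $g=0\ge 0$, $w=0$ on $\partial\omega$, yet $w<0$; and the paper gets $(3)\Rightarrow(4)$ directly (the principal eigenfunction with $\gl_1>0$ is a proper positive supersolution) rather than routing through $(5)$.

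There is, however, a genuine misreading: you attempt to prove $(4')\Rightarrow(3)$ and close the loop $(3)\Leftrightarrow(4)\Leftrightarrow(4')$, but the theorem does \emph{not} claim these reverse implications. They are only one-way here; the converse $(4')\Rightarrow(3)$ is deferred to Theorem~\ref{complement}, where it is proved under Assumption~\ref{ass2} (and Assumption~\ref{ngradb} for $1<p<2$) using the ground-state machinery of Section~\ref{criticality} (Corollary~\ref{nullrem} and Theorem~\ref{mainthm}). Your proposed shortcut via D\'{\i}az--Sa\'{a}/Picone does not go through as stated: Lemma~\ref{newDiaz} requires both functions to be bounded away from zero with equal \emph{positive} traces on $\partial\omega$, which fails for a $W^{1,p}_0(\omega)$ eigenfunction; Lemma~\ref{elementary} requires Assumption~\ref{ass2} (so you cannot claim ``no extra assumption is needed'') and in any case its part~(1) is stated for $w_i\in W^{1,p}_0(\omega)$, while your proper supersolution $v$ lies only in $W^{1,p}(\omega)$. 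The Picone route via Lemma~\ref{Picone} runs into the same obstruction --- you would need $\phi^{p}/v^{p-1}\in W^{1,p}_c(\omega)$, which is not guaranteed without control on $v$ near $\partial\omega$. So simply drop the $(4')\Rightarrow(3)$ step; it is neither required nor available at this stage.
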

\begin{proof}
$(1)\Rightarrow (2)$ The generalized weak maximum principle implies that any solution~$v$ of $Q'_{p,\mathcal{A},V}[u]=g$ with $g\geq 0$, which is nonnegative on~$\partial\omega$, is nonnegative in~$\omega$. Then $v$ is a nonnegative supersolution of \eqref{half}.  The weak Harnack inequality implies that either $v>0$ or $v=0$ in $\omega$.

$(2)\Rightarrow (3)$ Let~$\lambda_{1}\leq 0$ and $v>0$ be its principal eigenfunction. By the homogeneity, the function $w=-v$ satisfies  $Q'_{p,\mathcal{A},V}[w]=\gl_1|w|^{p-1}w$, and $w=0 $ on $\partial\omega$ in the weak sense,  but this contradicts the generalized strong maximum principle.

$(3)\Rightarrow (1)$ Let~$v$ satisfy $v$ of $Q'_{p,\mathcal{A},V}[u]=g$  with $g\geq 0$, and $v\geq 0$ on~$\partial\omega$. Suppose that $v^{-}\neq 0$. Testing~$v^{-}$ in the definition of the solution of $Q'_{p,\mathcal{A},\mathcal{V}}[u] =g\geq 0$ , we get $$Q_{p,\mathcal{A},\mathcal{V}}[v^{-};\omega]=\int_{\{x\,\in\,\omega\,:\,v\,<\,0\}}gv\dx\leq 0.$$
	 Therefore, $\lambda_{1}\leq 0$, which  contradicts the assumption.

$(3)\Rightarrow (4)$ Since  $\lambda_{1}>0$, its principal eigenfunction is a proper positive supersolution of \eqref{half} in~$\omega$.

$(4)\Rightarrow (4')$ This implication follows from~$W^{1,p}_{0}(\omega)\subseteq W^{1,p}(\omega)$.

$(3)\Rightarrow (5)$ By Theorems \ref{ThmJ1} and \ref{thm-coercive}, the functional
 $J[u]= Q_{p,\mathcal{A},V}[u;\omega]- p\int_{\omega}gu\dx $
   is weakly lower semicontinuous and coercive in~$W^{1,p}_{0}(\omega)$ for $g\in L^{p'}(\gw)$. Therefore, the functional~$J$ has a minimizer in~$W^{1,p}_{0}(\omega)$ (see for example \cite[Theorem 1.2]{Struwe}). Consequently, the corresponding equation~$Q'_{p,\mathcal{A},V}[u]=g$ has a weak solution $v_{1}\in W^{1,p}_{0}(\omega)$. Note that~$(3)\Rightarrow (2)$. Therefore,  the solution~$v_{1}$ is either zero or positive in~$\omega$.

$(5)\Rightarrow (4)$  Use $(5)$ with~$g = 1$ to obtain a proper positive supersolution.

 $(6)$ Assume now that Assumption~\ref{ass2} is satisfied, and let us prove that $v_1=v$ is unique.  If~$v_{1}=0$, then~$g=0$. Hence, $Q_{p,\mathcal{A},V}[v;\omega]=0$, but this contradicts the assumption that $\gl_1 >0$.

Assume now that~$v_1>0$. Let~$v_{2}\in W^{1,p}_{0}(\omega)$ be any other positive solution. By part $(1)$ of Lemma~\ref{elementary} with~$g_{i}=g,V_{i}=V$  and $i=1,2$, we conclude
\begin{equation*}
\int_{\omega}V\!\!\left(\left(\frac{v_{1}}{v_{1,h}}\right)^{p-1}\!\!\! -
\!\!\left(\frac{v_{2}}{v_{2,h}}\right)^{p-1}\!\right)\!\!\left(v_{1,h}^{p}-v_{2,h}^{p}\right)\!\!\dx
\leq \!  \int_{\omega} \! g\left(\!\frac{1}{v_{1,h}^{p-1}}-\frac{1}{v_{2,h}^{p-1}} \!\right)\!\!
\left(v_{1,h}^{p}-v_{2,h}^{p}\right)\!\!\dx
\leq 0.
\end{equation*}
We note that  $$\lim_{h\rightarrow 0}V\left(\left(\frac{v_{1}}{v_{1,h}}\right)^{p-1}-\left(\frac{v_{2}}{v_{2,h}}\right)^{p-1}\right)\left(v_{1,h}^{p}-v_{2,h}^{p}\right)=0,$$
and
\begin{eqnarray*}
\left\vert V\left(\left(\frac{v_{1}}{v_{1,h}}\right)^{\!p-1}\!\!-\left(\frac{v_{2}}{v_{2,h}}\right)^{\!p-1}\right)\left(v_{1,h}^{p}-v_{2,h}^{p}\right)\right\vert
\leq 2|V|\left(\left(v_{1}+1\right)^{p}+\left(v_{2}+1\right)^{p}\right)
\in L^{1}(\omega).
\end{eqnarray*}
It follows that$$\lim_{h\rightarrow 0}\int_{\omega}g\left(\frac{1}{v_{1,h}^{p-1}}-\frac{1}{v_{2,h}^{p-1}}\right)\left(v_{1,h}^{p}-v_{2,h}^{p}\right)\dx=0.$$
By Fatou's lemma, and Lemma~\ref{elementary},  we  infer that $L_{0,v_1,v_2}\!=\!0$. Hence, $v_{2}\!=\!v_{1}$ in~$\omega$.
\end{proof}
\subsection{Weak comparison principle}\label{WCP}
\subsubsection{Super/sub-solution technique}
The following two theorems can be obtained by similar arguments to those of \cite[Lemma 5.1 and Proposition 5.2]{Pinchover}. We first state a weak comparison principle under the assumption that the potential is nonnegative.
\begin{lem}\label{5lemma}
	Let $\omega\Subset\Omega$ be a Lipschitz domain, $\mathcal{A}$ satisfy Assumption \ref{ass8}, $g\in M^{q}(p;\omega)$, and~$\mathcal{V}\in M^{q}(p;\omega)$, where $\mathcal{V}$ is nonnegative. For any subsolution~$v_{1}$ and any supersolution~$v_{2}$ of the equation $Q'_{p,\mathcal{A},\mathcal{V}}[u]=g$, in $\omega$ with $v_{1},v_{2}\in W^{1,p}(\omega)$ satisfying $\left(v_{2}-v_{1}\right)^{-}\in W^{1,p}_{0}(\omega)$, we have
		$v_{1}\leq v_{2}\quad \mbox{in}~\omega$.
\end{lem}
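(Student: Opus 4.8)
The plan is to test the weak (sub/super)solution inequalities against the function $\varphi\triangleq (v_2-v_1)^-$, which by hypothesis lies in $W^{1,p}_0(\omega)$ and is nonnegative, and then to combine the two resulting inequalities on the set $\{v_1>v_2\}$. First I would record that, since $v_1$ is a subsolution and $v_2$ is a supersolution of $Q'_{p,\mathcal{A},\mathcal{V}}[u]=g$, for every nonnegative $\psi\in W^{1,p}_c(\omega)$ (this class is admissible by Lemma~\ref{lem4.11}) one has
$$\int_\omega \mathcal{A}(x,\nabla v_1)\cdot\nabla\psi\dx+\int_\omega \mathcal{V}|v_1|^{p-2}v_1\psi\dx\le\int_\omega g\psi\dx\le \int_\omega \mathcal{A}(x,\nabla v_2)\cdot\nabla\psi\dx+\int_\omega \mathcal{V}|v_2|^{p-2}v_2\psi\dx .$$
Subtracting, we get $\int_\omega\big(\mathcal{A}(x,\nabla v_1)-\mathcal{A}(x,\nabla v_2)\big)\cdot\nabla\psi\dx\le \int_\omega \mathcal{V}\big(|v_2|^{p-2}v_2-|v_1|^{p-2}v_1\big)\psi\dx$.

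Next I would take $\psi=\varphi=(v_2-v_1)^-$, so that $\nabla\varphi=-(\nabla v_2-\nabla v_1)=\nabla v_1-\nabla v_2$ a.e.\ on the set $E\triangleq\{x\in\omega:v_1(x)>v_2(x)\}$ and $\nabla\varphi=0$ a.e.\ off $E$ (standard truncation facts for Sobolev functions). The left-hand side becomes
$$\int_E \big(\mathcal{A}(x,\nabla v_1)-\mathcal{A}(x,\nabla v_2)\big)\cdot(\nabla v_1-\nabla v_2)\dx,$$
which by the strict monotonicity of $\mathcal{A}$ (Theorem~\ref{thm_1}) is nonnegative, and is strictly positive unless $\nabla v_1=\nabla v_2$ a.e.\ on $E$. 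For the right-hand side, on $E$ we have $v_1>v_2$, hence $v_1>v_2$ wherever $\varphi>0$; since $t\mapsto |t|^{p-2}t$ is strictly increasing, $|v_2|^{p-2}v_2-|v_1|^{p-2}v_1\le 0$ on $E$, and since $\mathcal V\ge 0$ and $\varphi\ge 0$, the whole right-hand side is $\le 0$. Combining, $0\le \int_E(\cdots)\cdot(\cdots)\dx\le 0$, so the monotonicity integrand vanishes a.e.\ on $E$, forcing $\nabla v_1=\nabla v_2$ a.e.\ on $E$, i.e.\ $\nabla\varphi=0$ a.e.\ in $\omega$. Since $\varphi\in W^{1,p}_0(\omega)$ and $\omega$ is connected (a domain), this gives $\varphi\equiv 0$, i.e.\ $v_1\le v_2$ in $\omega$, as claimed.

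The main technical point to be careful about — rather than a genuine obstacle — is the justification that all integrals are finite and that $\varphi$ is a legitimate test function: one needs $V=\mathcal V\in M^q(p;\omega)$ together with the Morrey–Adams theorem (Theorem~\ref{MA_thm}) and Lemma~\ref{lem4.11} to see that $\int_\omega|\mathcal V||v_i|^{p-1}|\varphi|\dx<\infty$ for $v_i\in W^{1,p}(\omega)$ and $\varphi\in W^{1,p}_0(\omega)$, and likewise $\int_\omega g\varphi\dx$ makes sense. One also must note $(v_2-v_1)^-\in W^{1,p}(\omega)$ automatically (as $v_1,v_2\in W^{1,p}(\omega)$) so the hypothesis $(v_2-v_1)^-\in W^{1,p}_0(\omega)$ is exactly the boundary condition needed. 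Everything else is the standard monotone-operator comparison argument; the nonnegativity of $\mathcal V$ is used precisely once, to kill the zeroth-order term's sign, which is why this lemma (unlike Theorem~\ref{maximum}) needs no positivity of $\lambda_1$ nor Assumption~\ref{ass2}.
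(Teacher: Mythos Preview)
Your proposal is correct and follows essentially the same approach as the paper's proof: test with $(v_2-v_1)^-$, use the monotonicity of $\mathcal{A}$ on the gradient term and the nonnegativity of $\mathcal{V}$ together with the monotonicity of $t\mapsto|t|^{p-2}t$ on the zeroth-order term, and conclude that $\nabla(v_2-v_1)^-=0$ a.e., hence $(v_2-v_1)^-=0$ since it lies in $W^{1,p}_0(\omega)$. The paper invokes Poincar\'e's inequality for the last step whereas you use connectedness plus the $W^{1,p}_0$-membership, but this is the same idea.
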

\begin{proof} Testing the integral inequalities for the subsolution~$v_{1}$ and the supersolution~$v_{2}$ with ~$\left(v_{2}-v_{1}\right)^{-}$ and then subtracting one from the other, we arrive at
	\begin{equation*}
	\int_{\omega}\left(\mathcal{A}(x,\nabla v_{1})-\mathcal{A}(x,\nabla v_{2})\right)\cdot\nabla\left((v_{2}-v_{1})^-\right)\dx
	+\int_{\omega}\mathcal{V}v_{1,2}\left(v_{2}-v_{1}\right)^{-}\dx\leq 0,
	\end{equation*}
where~$v_{1,2}\triangleq|v_{1}|^{p-2}v_{1}-|v_{2}|^{p-2}v_{2}$. 
	It follows that
	\begin{equation*}
	\int_{\{v_{2}<v_{1}\}}\left(\mathcal{A}(x,\nabla v_{1})-\mathcal{A}(x,\nabla v_{2})\right)\cdot\left(\nabla v_{1}-\nabla v_{2}\right)\dx\\
	+\int_{\{v_{2}<v_{1}\}}\mathcal{V}v_{1,2}\left(v_{1}-v_{2}\right)\dx\leq 0.
	\end{equation*}
	Since the above two terms are nonnegative, the two integrals are zero. Hence,
	$\nabla (v_{2}  - v_{1})^-\!=\!0$ a.e. in $\omega$. Therefore, the negative part of $v_{2} -v_{1}$ is a constant a.e. in $\omega$. Hence,  Poincar\'{e}'s inequality implies
	$(v_{2}\! - \! v_{1})^{-} \! = \! 0$  a.e. in $\omega$. Namely, $v_{1}\leq v_{2}$  a.e. in $\omega$.
\end{proof}
In order to establish a weak comparison principle when $V$ is not necessarily nonnegative, we need the following result which is of independent interest.
\begin{Thm}[Super/sub-solution technique]\label{5proposition}
	Let~$\omega\Subset\Omega$ be a Lipschitz domain, let~$\mathcal{A}$ satisfy Assumption~\ref{ass8}, and let $g,V\in M^{q}(p;\omega)$, where~$g$ is nonnegative a.e.~in~$\omega$. We assume that~$f,\varphi,\psi\in W^{1,p}(\omega)\cap C(\bar{\omega}),$ where~$f\geq 0$ a.e. in~$\omega$, and
	\[\begin{cases}
	Q'_{p,\mathcal{A},V}[\psi]\leq g\leq Q'_{p,\mathcal{A},V}[\varphi]&\text{in~$\omega$ in the weak sense,}\\
	\psi\leq f\leq \varphi& \text{on~$\partial\omega$,}\\
	0\leq \psi\leq\varphi&\text{in~$\omega$.}
	\end{cases}\]
	Then there exists a nonnegative function $u\in W^{1,p}(\omega)\cap C(\bar{\omega})$ satisfying
	\[\begin{cases}
	Q'_{p,\mathcal{A},V}[u]=g&\text{in~$\omega$,}\\
	u=f& \text{on~$\partial\omega$,}
	\end{cases}\]
	 and $\psi\leq u\leq \varphi$ in~$\omega$.
	 Moreover, if $f>0$ a.e. on~$\partial\omega$, then the above boundary value problem has a unique positive solution.
\end{Thm}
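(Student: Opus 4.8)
The plan is to run the classical monotone iteration (super/sub-solution) scheme. First, fix a constant $c>0$ large enough that the map $t\mapsto c\,t^{p-1}-V(x)t^{p-1}$ is monotone nondecreasing in $t$ on the relevant range; more precisely, since we only need the iteration on the order interval $[\psi,\varphi]$ and both endpoints are in $C(\bar\omega)$ hence bounded, and since $V\in M^q(p;\omega)$, I would choose $c$ so that $V+c\geq 0$ a.e.\ in $\omega$. Define the modified operator $\tilde Q'[u]\triangleq -\dive\mathcal{A}(x,\nabla u)+(V+c)|u|^{p-2}u$, whose zeroth-order coefficient $V+c$ is nonnegative, so that Lemma \ref{5lemma} (the weak comparison principle for nonnegative potentials) applies to $\tilde Q'$. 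Then build a sequence $\{u_k\}$ by setting $u_0\triangleq\varphi$ and letting $u_{k+1}\in W^{1,p}(\omega)$ be the unique solution of the linearized-in-the-lower-order-term problem
$$\tilde Q'_{p,\mathcal{A},V+c}[u_{k+1}]=g+c\,u_k^{p-1}\quad\text{in }\omega,\qquad u_{k+1}=f\ \text{on }\partial\omega.$$
Existence and uniqueness of each $u_{k+1}$ follow from the direct method: the relevant functional $\bar J$ is weakly lower semicontinuous (Theorem \ref{ThmJ}) and coercive on the affine set $\{u: u-f\in W^{1,p}_0(\omega)\}$ because the potential $V+c$ is nonnegative (the coercivity theorem stated after Theorem \ref{ThmJ1}), while uniqueness follows from Lemma \ref{newDiaz} applied with $V_1=V_2=V+c$, $g_1=g_2=g+c\,u_k^{p-1}$ (or directly from Lemma \ref{5lemma}).

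Next I would establish, by induction on $k$ together with Lemma \ref{5lemma}, the monotonicity and trapping
$$\psi\leq \cdots\leq u_{k+1}\leq u_k\leq\cdots\leq u_0=\varphi\qquad\text{in }\omega.$$
The base step uses that $\varphi$ is a supersolution of the original equation, hence $\tilde Q'[\varphi]=Q'[\varphi]+c\,\varphi^{p-1}\geq g+c\,\varphi^{p-1}=g+c\,u_0^{p-1}=\tilde Q'[u_1]$ with $\varphi\geq f=u_1$ on $\partial\omega$, so $u_1\leq\varphi$ by Lemma \ref{5lemma}; similarly $\psi$ is a subsolution, $\tilde Q'[\psi]\leq g+c\,\psi^{p-1}\leq g+c\,u_0^{p-1}=\tilde Q'[u_1]$ (using $\psi\leq\varphi=u_0$ and $t\mapsto t^{p-1}$ nondecreasing on $[0,\infty)$), and $\psi\leq f$ on $\partial\omega$, so $\psi\leq u_1$. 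The inductive step repeats this: from $\psi\leq u_k\leq u_{k-1}$ one gets $g+c\,u_{k}^{p-1}\leq g+c\,u_{k-1}^{p-1}$, hence $\tilde Q'[u_{k+1}]\leq \tilde Q'[u_k]$ with equal boundary data, so $u_{k+1}\leq u_k$, and likewise $u_{k+1}\geq\psi$. Thus $u_k(x)\downarrow u(x)$ pointwise a.e.\ to some measurable $u$ with $\psi\leq u\leq\varphi$.

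Finally I would upgrade pointwise convergence to convergence in $W^{1,p}(\omega)$ and pass to the limit in the weak formulation. Testing the equation for $u_{k+1}$ with $u_{k+1}-f\in W^{1,p}_0(\omega)$, using the ellipticity bounds \eqref{structure}, the Morrey--Adams theorem (Theorem \ref{MA_thm}) to absorb the $(V+c)$-term and the $L^{p'}$-bounded right-hand side $g+c\,u_k^{p-1}$ (uniformly bounded since $0\leq u_k\leq\varphi\in C(\bar\omega)$), one gets a uniform bound on $\|u_k\|_{W^{1,p}(\omega)}$; hence along a subsequence $u_k\rightharpoonup u$ in $W^{1,p}(\omega)$, and by the monotonicity the whole sequence converges. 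The standard strong-convergence argument — test the difference of the equations for $u_{k}$ and $u_{m}$ with $u_k-u_m$, apply the monotonicity inequality of Theorem \ref{thm_1} together with \cite[Lemma 3.73]{HKM} — shows $\nabla u_k\to\nabla u$ in $L^p$, so $\mathcal{A}(x,\nabla u_k)\to\mathcal{A}(x,\nabla u)$ in $L^{p'}(\omega;\R^n)$ and $u_k^{p-1}\to u^{p-1}$ (dominated convergence, dominated by $\varphi^{p-1}$), letting us pass to the limit to conclude $Q'_{p,\mathcal{A},V}[u]=g$ weakly in $\omega$ with $u-f\in W^{1,p}_0(\omega)$, i.e.\ $u=f$ on $\partial\omega$; continuity $u\in C(\bar\omega)$ follows from the interior H\"older estimate plus a boundary-regularity argument using $f,\varphi,\psi\in C(\bar\omega)$ (squeezing $u$ between $\psi$ and $\varphi$ near $\partial\omega$). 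For the last assertion, if $f>0$ a.e.\ on $\partial\omega$ then any solution produced is $\geq\psi\geq 0$ and not identically zero, hence strictly positive by the weak Harnack inequality; uniqueness of the positive solution then follows directly from Lemma \ref{newDiaz} (with $V_1=V_2=V$, $g_1=g_2=g$, $w_1,w_2$ two positive solutions bounded away from zero with equal boundary trace), which forces $\nabla w_1/w_1=\nabla w_2/w_2$, hence $w_1=w_2$ on the connected set $\omega$ after matching the boundary values.

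\medskip

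\noindent\textbf{Main obstacle.} The delicate points are: (i) choosing the shift $c$ correctly — it must make $t\mapsto (V(x)+c)t^{p-1}$ and the right-hand side monotone in the iteration, which is only an issue because $V$ merely lies in a Morrey space, not $L^\infty$, so "$V+c\geq0$" is the right formulation rather than pointwise boundedness of $V$; and (ii) the passage to the limit, specifically proving the strong $W^{1,p}$-convergence of $\nabla u_k$ needed to identify the limit of $\mathcal{A}(x,\nabla u_k)$, which is the standard but technical monotone-operator argument relying on \cite[Lemma 3.73]{HKM}. The boundary continuity $u\in C(\bar\omega)$ is comparatively routine once $u$ is trapped between the continuous functions $\psi$ and $\varphi$ which agree with $f$ on $\partial\omega$ up to the stated inequalities, combined with interior H\"older regularity from Section \ref{toolbox}.
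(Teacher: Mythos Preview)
Your monotone-iteration scheme is the right overall strategy, and indeed the paper follows the same outline: solve an auxiliary Dirichlet problem with a nonnegative potential (so that Lemma~\ref{5lemma} applies), show the resulting solution map is order-preserving, iterate from $\varphi$ (and $\psi$), and pass to the limit. Uniqueness via Lemma~\ref{newDiaz} is also exactly what the paper does.

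However, there is a genuine gap in your setup. You write that you will ``choose $c$ so that $V+c\geq 0$ a.e.\ in $\omega$'', but this is impossible in general: $V\in M^{q}(p;\omega)$ need not be essentially bounded from below, so no finite constant shift makes $V+c\geq 0$. Your own ``Main obstacle'' paragraph flags this but does not resolve it --- saying that ``$V+c\geq 0$ is the right formulation'' simply restates the unavailable assumption that $\mathrm{ess\,inf}\,V>-\infty$.

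The paper's fix is to replace the constant shift by a data-dependent splitting: write $V=V^{+}-V^{-}$, use $|V|=V^{+}+V^{-}\geq 0$ as the (automatically nonnegative) potential in the auxiliary operator, and move $2V^{-}v^{p-1}$ to the right-hand side. Concretely, for $v$ in the order interval $[\psi,\varphi]$ one sets $G(x,v)\triangleq g(x)+2V^{-}(x)v(x)^{p-1}\in M^{q}(p;\omega)$ (since $v$ is bounded by $\varphi\in C(\bar\omega)$), and lets $T(v)$ be the minimizer-solution of
\[
Q'_{p,\mathcal{A},|V|}[u]=G(x,v)\quad\text{in }\omega,\qquad u=f\ \text{on }\partial\omega.
\]
This auxiliary problem has nonnegative potential $|V|$ regardless of the sign or local unboundedness of $V$, so Lemma~\ref{5lemma} applies and $T$ is increasing; moreover a fixed point of $T$ solves the original equation since $|V|u^{p-1}-2V^{-}u^{p-1}=Vu^{p-1}$. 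From here on, your argument (trapping $\psi\leq T(v)\leq\varphi$, monotone convergence, the \cite[Lemma~3.73]{HKM}-type passage to the limit in $\mathcal{A}(x,\nabla u_k)$) goes through essentially unchanged. If you replace your constant shift $c$ by this $|V|$/$2V^{-}$ decomposition, the remainder of your plan is correct.
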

\begin{proof}
	Set$$\mathcal{K}\triangleq\left\{v\in W^{1,p}(\omega)\cap C(\bar{\omega}):0\leq \psi\leq v\leq \varphi \mbox{ in } \omega\right\}.$$
	For every~$x\in\omega$ and~$v\in\mathcal{K}$, let $G(x,v)\triangleq g(x)+2V^{-}(x)v(x)^{p-1}$. Then~$G\in M^{q}(p;\omega)$ and~$G\geq 0$ a.e. in~$\omega$.
	Let  the functionals
	$J$,$\bar{J}:W^{1,p}(\omega)\rightarrow \mathbb{R}\cup\{\infty\}$, be as in Theorems \ref{ThmJ} and \ref{ThmJ1} with~$|V|$ and~$G(x,v)$ as the potential and the right hand side, respectively. Choose a sequence~$\{u_{k}\}_{k\in\mathbb{N}}$ in
	$$\mathbf{A}\triangleq\{u\in W^{1,p}(\omega):u=f \mbox{ on }\partial\omega\},$$
	satisfying
	$$J[u_{k}]\downarrow m\triangleq \inf_{u\in\mathbf{A}}J[u].$$
	Because~$f\geq 0$,~$\{|u_{k}|\}_{k\in\mathbb{N}}\subseteq \mathbf{A},$ we infer $$m\leq J[|u_{k}|]=\bar{J}[u_{k}]\leq J[u_{k}],$$
	where the last inequality is on account of~$G(x,v)\geq 0$ a.e. in~$\omega$. Then~$\lim_{k\rightarrow\infty}\bar{J}[u_{k}]=m$. It is also immediate that~$\inf_{u\in\mathbf{A}}\bar{J}[u]=m$. On the other hand,~$\bar{J}$ is weakly lower semicontinuous and coercive. Noting that~$\mathbf{A}$ is weakly closed, it follows from  \cite[Theorem 1.2]{Struwe} that~$m$ is attained by a nonnegative function~$u_0\in\mathbf{A}$, that is,~$\bar{J}[u_0]=m$. In addition,~$J[u_0]=\bar{J}[u_0]=m$. Then~$u_0$ is a solution of
	\[\begin{cases}\label{problem1}
	Q'_{p,\mathcal{A},|V|}[u]=G(x,v)&\text{in~$\omega$,}\\
	u=f& \text{in the trace sense on~$\partial\omega$,}
	\end{cases}\]
	and for any~$v\in\mathcal{K}$, let~$T(v)$ be a solution of this Dirichlet problem.
	Then the map
	$$T:\mathcal{K}\longrightarrow W^{1,p}(\omega),$$ is increasing.
	Indeed, pick any~$v_{1}\leq v_{2}$ in~$\mathcal{K}$. Because~$G(x,v_{1})\leq G(x,v_{2})$, we infer that~$T(v_{1})$ and $T(v_{2})$ are respectively a solution and a supersolution of$$Q'_{p,\mathcal{A},|V|}[u]=G(x,v_{1}).$$ On~$\partial\omega$, we have~$T(v_{1})=f=T(v_{2})$. By Lemma \ref{5lemma}, we obtain~$T(v_{1})\leq T(v_{2})$ in~$\omega$.
	Consider any subsolution~$v\in W^{1,p}(\omega)\cap C(\bar{\omega})$ of the boundary value problem
	\[\begin{cases}
	Q'_{p,\mathcal{A},V}[u]=g&\text{in~$\omega$,}\\
	u=f& \text{on~$\partial\omega$.}
	\end{cases}\]
	Then in the weak sense in~$\omega$,$$Q'_{p,\mathcal{A},|V|}[v]=Q'_{p,\mathcal{A},V}[v]+G(x,v)-g(x)\leq G(x,v).$$ Furthermore, $T(v)$ is a solution of
	\[\begin{cases}
	Q'_{p,\mathcal{A},|V|}[u]=G(x,v)&\text{in~$\omega$,}\\
	u=f& \text{in the trace sense on~$\partial\omega$.}
	\end{cases}\]
	Invoking Lemma \ref{5lemma}, we get~$v\leq T(v)$ a.e. in~$\omega$. Furthermore, $$Q'_{p,\mathcal{A},V}[T(v)]=g+2V^{-}\left(|v|^{p-2}v-|T(v)|^{p-2}T(v)\right)\leq g\; \mbox{ in } \omega.$$
	
	Analogously, for any supersolution $v\in W^{1,p}(\omega)\cap C(\bar{\omega})$ of the boundary value problem
	\[\begin{cases}
	
	Q'_{p,\mathcal{A},V}[u]=g&\text{in~$\omega$,}\\
	
	u=f& \text{on~$\partial\omega$,}\\
	\end{cases}\]
	~$T(v)$ is a supersolution of the same problem with~$v\geq T(v)$ a.e. in~$\omega$.
	
	We define two sequences by recursion: for any~$k\in\mathbb{N}$,
	$$\underline{u}_{0}\triangleq \psi,\; \underline{u}_{k}\triangleq T(\underline{u}_{k-1})=T^{(k)}(\psi),
	\quad \mbox{ and } \; \bar{u}_{0} \triangleq \varphi,\; \bar{u}_{k}\triangleq T(\bar{u}_{k-1})=T^{(k)}(\varphi).$$
	Then the monotone sequences~$\{\underline{u}_{k}\}_{k\in\mathbb{N}}$ and~$\{\bar{u}_{k}\}_{k\in\mathbb{N}}$ converge  to $\underline{u}$ and~$\bar{u}$ a.e. in~$\omega$, respectively. Using \cite[Theorem 1.9]{Lieb}, we conclude that the convergence is also in~$L^{p}(\omega)$. Arguing as in the Harnack convergence principle, we may assert that~$\underline{u}$ and~$\bar{u}$ are both fixed points of~$T$ and solutions of \[\begin{cases}
	Q'_{p,\mathcal{A},V}[u]=g&\text{in~$\omega$,}\\
	u=f& \text{on~$\partial\omega$,}
	\end{cases}\]
	with~$\psi\leq\underline{u}\leq \bar{u}\leq\varphi$ in~$\omega$. The uniqueness is derived from Lemma \ref{newDiaz}.
\end{proof}
\subsubsection{Weak comparison principle}
The following weak comparison principle extends \cite[Theorem 5.3]{Pinchover} to our setting and has a similar proof to \cite[Theorem 5.3]{Pinchover}, and therefore it is omitted.
\begin{Thm}[Weak comparison principle]\label{thm_wcp}
	Let~$\omega\Subset\Omega$ be a Lipschitz domain, let~$\mathcal{A}$ satisfy Assumption~\ref{ass8}, and let $g,V\in M^{q}(p;\omega)$ with~$g\geq 0$ a.e. in~$\omega$. Assume that~$\lambda_{1}>0$. If~$u_{2}\in W^{1,p}(\omega)\cap C(\bar{\omega})$ satisfies
	\[\begin{cases}
	
	Q'_{p,\mathcal{A},V}[u_{2}]=g&\text{in~$\omega$,}\\
	
	u_{2}>0& \text{on~$\partial\omega$,}\\
	\end{cases}\]
	and $u_{1}\in W^{1,p}(\omega)\cap C(\bar{\omega})$ satisfies
	\[\begin{cases}
	
	Q'_{p,\mathcal{A},V}[u_{1}]\leq Q'_{p,\mathcal{A},V}[u_{2}]&\text{in~$\omega$,}\\
	
	u_{1}\leq u_{2}& \text{on~$\partial\omega$,}\\
	\end{cases}\]
	then $u_{1}\leq u_{2}$ in $\omega$.
\end{Thm}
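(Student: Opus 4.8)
The plan is to reduce the claim to showing that the open set $U\triangleq\{x\in\omega:\,u_1(x)>u_2(x)\}$ is empty, and to reach a contradiction from $U\neq\varnothing$ by a Picone/D\'{\i}az-Sa\'{a} computation on $U$, following the scheme of the proof of \cite[Theorem 5.3]{Pinchover}. First I would check that $u_2$ is bounded away from zero on $\bar\omega$. Since $u_2>0$ on $\partial\omega$, the function $u_2^{-}$ belongs to $W^{1,p}_0(\omega)$, and testing $Q'_{p,\mathcal{A},V}[u_2]=g$ against it yields $Q_{p,\mathcal{A},V}[u_2^{-};\omega]=\int_\omega g\,u_2^{-}\dx\leq 0$ (because $g\geq 0$); as $\lambda_1>0$, this forces $u_2^{-}=0$, so $u_2\geq 0$ in $\omega$. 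Hence $u_2$ is a nonnegative supersolution of $Q'_{p,\mathcal{A},V}[u]=0$, and the weak Harnack inequality gives that $u_2$ is either identically $0$ or strictly positive in $\omega$; the former is excluded because $u_2\in C(\bar\omega)$ and $u_2|_{\partial\omega}>0$. Compactness of $\bar\omega$ then produces $m>0$ with $u_2\geq m$ on $\bar\omega$, and in particular $u_1>u_2\geq m$ on $U$.

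Next I would carry out the D\'{\i}az-Sa\'{a}-type argument on $U$. Introduce the functions $\psi_1$ equal to $(u_1^p-u_2^p)\,u_1^{1-p}$ on $U$ and $0$ on $\omega\setminus U$, and $\psi_2$ equal to $(u_2^p-u_1^p)\,u_2^{1-p}$ on $U$ and $0$ elsewhere. Using $u_1,u_2\geq m$ on $U$ and $u_1,u_2\in C(\bar\omega)$, both $\psi_i$ are bounded, lie in $W^{1,p}(\omega)$, and are continuous up to $\bar\omega$ with vanishing boundary values (since $u_1\leq u_2$ on $\partial\omega$), hence $\psi_i\in W^{1,p}_0(\omega)$; moreover $\psi_1\geq 0$. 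Testing the subsolution inequality $Q'_{p,\mathcal{A},V}[u_1]\leq g$ against $\psi_1$ and the equation $Q'_{p,\mathcal{A},V}[u_2]=g$ against $\psi_2$ (admissibility of $W^{1,p}_0(\omega)$ test functions following from density of $C_c^\infty(\omega)$ together with the Morrey-Adams theorem), and then invoking the relation $\mathcal{A}(x,\nabla u)\cdot\nabla u=|\nabla u|_{\mathcal{A}}^p$ and the identity $L=R$ of Lemma~\ref{RL} (which rearranges to $\mathcal{A}(x,\nabla u_1)\cdot\nabla\psi_1=L(u_2,u_1)+|\nabla u_1|_{\mathcal{A}}^p-|\nabla u_2|_{\mathcal{A}}^p$ on $U$, and symmetrically for $\psi_2$), I expect to obtain, after adding the two relations and cancelling the gradient terms,
$$\int_{U}\big(L(u_2,u_1)+L(u_1,u_2)\big)\dx\ \leq\ \int_{U}g\left(\frac{1}{u_1^{p-1}}-\frac{1}{u_2^{p-1}}\right)\big(u_1^{p}-u_2^{p}\big)\dx .$$

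To conclude, note that on $U$ we have $u_1>u_2>0$, so $u_1^p-u_2^p>0$ and $u_1^{p-1}>u_2^{p-1}$; together with $g\geq 0$ this makes the right-hand integrand nonpositive, whereas $L(u_2,u_1),L(u_1,u_2)\geq 0$ a.e.\ by Lemma~\ref{RL}. Hence $L(u_2,u_1)=L(u_1,u_2)=0$ a.e.\ on $U$, so by Lemma~\ref{RL} on each connected component of $U$ one has $u_1=ku_2$ for a constant $k$, necessarily $k>1$ since $u_1>u_2$ there. If such a component has a boundary point $x_0\in\omega$, then $u_1(x_0)=u_2(x_0)>0$ by continuity, forcing $k=1$; and if the component is all of $\omega$, then $u_1=ku_2$ on $\omega$ with $k>1$ contradicts $u_1\leq u_2$ and $u_2>0$ on $\partial\omega$. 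Either way we get a contradiction, so $U=\varnothing$, that is, $u_1\leq u_2$ in $\omega$. The step I expect to demand the most care is the admissibility of $\psi_1,\psi_2$ — that they lie in $W^{1,p}_0(\omega)$ and may legitimately be inserted into the subsolution inequality and the solution identity — which rests on the lower bound $u_2\geq m>0$, the continuity up to $\bar\omega$, and an approximation argument based on the Morrey-Adams theorem; this regularity is also what powers the final connectedness argument, and it is precisely where the hypotheses $u_1,u_2\in C(\bar\omega)$ and $\lambda_1>0$ are used. This route needs only the Picone identity of Lemma~\ref{RL} and not Assumption~\ref{ass2}; one could alternatively route the same estimate through the D\'{\i}az-Sa\'{a} inequality of Lemma~\ref{newDiaz} or through the super/sub-solution technique of Theorem~\ref{5proposition}.
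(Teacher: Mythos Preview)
Your proposal is correct and matches the route the paper intends: the paper omits the proof, referring to \cite[Theorem~5.3]{Pinchover}, and the scheme you outline---first bounding $u_2$ away from zero via $\lambda_1>0$ and the weak Harnack inequality, then running a D\'{\i}az--Sa\'{a}/Picone computation on $U=\{u_1>u_2\}$ to force $u_1=ku_2$ on each component of $U$ and reach a contradiction---is precisely that argument transported to the present $\mathcal{A}$-setting. Your identification of the admissibility of $\psi_1,\psi_2\in W^{1,p}_0(\omega)$ as the delicate step is apt; writing them globally via $w_1=\max(u_1,u_2)$ (which satisfies $w_1\geq m>0$ on all of $\bar\omega$) rather than via $u_1$ on $U$ alone makes that verification clean.
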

\section{Agmon-Allegretto-Piepenbrink (AAP)  theorem}\label{AP}
In this section, we establish an Agmon-Allegretto-Piepenbrink (in short, AAP) type theorem. See \cite{Agmon, Allegretto1974}, \cite{Pinchover}, and \cite{Keller, AAPform}, respectively, for the counterparts, in the linear case, the quasilinear case, and the cases of graphs and certain Dirichlet forms.
\subsection{Divergence-type equation}
We introduce a divergence-type equation of the first order. For a related study, see \cite{firstreference}.
\begin{Def}\label{def2}
{\em Let~$\mathcal{A}$ satisfy Assumption~\ref{ass8} and let~$V\in M^{q}_{\loc}(p;\Omega)$. A vector field
$S\in L^{p}_{\mathrm{loc}}(\Omega;\mathbb{R}^{n})$ is a {\em solution} of the first order partial differential equation
  \begin{equation}\label{first}
  -\dive{\mathcal{A}(x,S)}+(1-p)\mathcal{A}(x,S)\cdot S+V=0 \qquad \mbox{ in } \Omega,
\end{equation}
   if
   $$\int_{\Omega}\mathcal{A}(x,S)\cdot\nabla \psi\dx+(1-p)\int_{\Omega}(\mathcal{A}(x,S)\cdot S)\psi\dx + \int_{\Omega}V\psi\dx=0,$$ for every function~$\psi\in C_{c}^{\infty}(\Omega)$, and a {\em supersolution} of the same equation
   if$$\int_{\Omega}\mathcal{A}(x,S)\cdot\nabla \psi\dx+(1-p)\int_{\Omega}(\mathcal{A}(x,S)\cdot S)\psi\dx+ \int_{\Omega}V\psi\dx\geq 0,$$ for every nonnegative function~$\psi\in C_{c}^{\infty}(\Omega)$.
   }
\end{Def}
We state a straightforward assertion without proof.
   \begin{assertion}
  All the integrals in Definition~\ref{def2} are finite.
Moreover, for any solution $S$ defined as above, the corresponding integral equality holds for all bounded functions in $W^{1,p}_{c}(\Omega)$, and for any supersolution~$S$, the corresponding integral inequality holds for all nonnegative bounded functions in $W^{1,p}_{c}(\Omega)$.
\end{assertion}
\subsection{AAP type theorem}
Following the approach in  \cite{Pinchover}, we establish the AAP type theorem. In other words, we prove that the nonnegativity of the functional~$Q_{p,\mathcal{A},V}$ on $\core$ is equivalent to the existence of a positive solution or positive supersolution of the equation~$Q'_{p,\mathcal{A},V}[u]=0$ in $\Gw$. We also obtain certain other conclusions involving the first-order equation \eqref{first} defined above. Recall that for every~$\vgf \in \core$,
	$$Q_{p,\mathcal{A},V}[\vgf]=\int_{\Omega}\left(\vert\nabla \vgf\vert_{\mathcal{A}}^{p}+V\vert \vgf\vert^{p}\right)\dx.$$
	The functional~$Q_{p,\mathcal{A},V}$ is said to be {\em nonnegative} in $\Gw$ if $Q_{p,\mathcal{A},V}[\varphi]\geq 0$ for all~$\varphi\in C^{\infty}_{c}(\Omega)$.
\begin{theorem}[AAP type theorem]\label{thm_AAP}
Let the operator~$\mathcal{A}$ satisfy Assumption~\ref{ass8}, and let $V\in M^{q}_{\loc}(p;\Omega).$ Then the following assertions are equivalent:
\begin{enumerate}
  \item[$(1)$] the functional~$Q_{p,\mathcal{A},V}$ is nonnegative in $\Gw$;
  \item[$(2)$] the equation~$Q'_{p,\mathcal{A},V}[u]= 0$ admits a positive solution~$v\in W^{1,p}_{\loc}(\Omega)$;
  \item[$(3)$] the equation~$Q'_{p,\mathcal{A},V}[u]= 0$ admits a positive supersolution~$\tilde{v}\in W^{1,p}_{\loc}(\Omega)$;
  \item[$(4)$]the first-order equation \eqref{first} admits a solution~$S\in L^{p}_{\loc}(\Omega;\mathbb{R}^{n})$;
  \item[$(5)$] the first-order equation \eqref{first} admits a supersolution $\tilde{S}\in L^{p}_{\loc}(\Omega;\mathbb{R}^{n})$.
\end{enumerate}
\end{theorem}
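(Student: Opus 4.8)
The plan is to establish the cycle of implications $(1)\Rightarrow(4)\Rightarrow(5)\Rightarrow(3)\Rightarrow(1)$ together with $(2)\Rightarrow(3)$ and an argument for $(4)\Rightarrow(2)$ (or $(3)\Rightarrow(2)$), so that all five assertions become equivalent. The easy implications are $(2)\Rightarrow(3)$ (a solution is a supersolution) and $(4)\Rightarrow(5)$ (a solution is a supersolution). The bridge between the second-order picture $(2),(3)$ and the first-order picture $(4),(5)$ is the substitution $S=\nabla v/v=\nabla(\log v)$: if $v>0$ solves (or supersolves) $Q'_{p,\mathcal{A},V}[u]=0$, then by the $(p-1)$-homogeneity of $\mathcal{A}$ one has $\mathcal{A}(x,\nabla v)=v^{p-1}\mathcal{A}(x,S)$, and testing the equation for $v$ against $\psi/v^{p-1}$ for $\psi\in C_c^\infty(\Omega)$ nonnegative, after expanding $\nabla(\psi v^{1-p})=v^{1-p}\nabla\psi+(1-p)v^{-p}\psi\nabla v$, yields exactly the (super)solution inequality/identity for \eqref{first}. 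This gives $(2)\Rightarrow(4)$ and $(3)\Rightarrow(5)$ at once; one must only check the integrability $S\in L^p_{\loc}$, which follows from the local boundedness and Harnack inequality for $v$ together with the Caccioppoli-type estimate already available via the structural conditions \eqref{structure}.

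**From $(5)$ back to $(3)$ and $(1)$.** Given a supersolution $\tilde S\in L^p_{\loc}(\Omega;\R^n)$ of \eqref{first}, I would feed it into the generalized Hölder inequality (Lemma~\ref{ass1}) and the pointwise convexity of $|\cdot|_{\mathcal{A}}^p$ to show directly that $Q_{p,\mathcal{A},V}$ is nonnegative, i.e. $(5)\Rightarrow(1)$. Concretely, for $\varphi\in C_c^\infty(\Omega)$ test \eqref{first} against $\psi=|\varphi|^p$ (legitimate by the Assertion, since $|\varphi|^p\in W^{1,p}_c(\Omega)$ is bounded and nonnegative); using $\nabla|\varphi|^p=p|\varphi|^{p-1}\,\mathrm{sgn}(\varphi)\nabla\varphi$ and the inequality
$$\big|\mathcal{A}(x,\tilde S)\cdot\nabla\varphi\big|\le|\tilde S|_{\mathcal{A}}^{p-1}|\nabla\varphi|_{\mathcal{A}}\le\frac{p-1}{p}|\tilde S|_{\mathcal{A}}^{p}|\varphi|+\frac1p\frac{|\nabla\varphi|_{\mathcal{A}}^p}{|\varphi|^{p-1}}$$
(the last step being Young's inequality applied after multiplying and dividing by suitable powers of $|\varphi|$), one collects terms and, recalling $\mathcal{A}(x,\tilde S)\cdot\tilde S=|\tilde S|_{\mathcal{A}}^p$, obtains $\int_\Omega(|\nabla\varphi|_{\mathcal{A}}^p+V|\varphi|^p)\dx\ge0$. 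This is essentially the computation behind Lemma~2.2 of \cite{newpicone}; care is needed near the zero set of $\varphi$, which is handled by the standard regularization $(\varphi^2+\varepsilon^2)^{p/2}-\varepsilon^p$ and passing $\varepsilon\to0$. Then $(1)\Rightarrow(4)$ is the hard direction.

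**The main obstacle: $(1)\Rightarrow(4)$.** This is the genuine content of the AAP theorem and the step I expect to be the crux. Following the approach of \cite{Pinchover}, I would fix a Lipschitz exhaustion $\{\omega_i\}$ of $\Omega$ with a base point $x_0\in\omega_1$, and for each $i$ produce a positive solution $v_i$ in $\omega_i$ of a suitable Dirichlet eigenvalue-type problem. The standard device is to perturb the potential: for $\varepsilon>0$ small, consider $V+\varepsilon\mathbf{1}_{\omega_1}$ or use the positivity of $\lambda_1(Q_{p,\mathcal{A},V+1/i};\omega_i)$-type quantities to solve $Q'_{p,\mathcal{A},V}[v_i]=f_i$ in $\omega_i$ with a nonnegative right-hand side $f_i$ supported near $x_0$, normalized so $v_i(x_0)=1$; nonnegativity of $Q_{p,\mathcal{A},V}$ on $C_c^\infty(\omega_i)$ guarantees the relevant functional is coercive after an arbitrarily small shift, so a positive solution exists by the weakly-lower-semicontinuous/coercive minimization of Theorems~\ref{ThmJ1} and \ref{thm-coercive} together with the Harnack inequality to upgrade nonnegativity to strict positivity. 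Then the Harnack convergence principle (Theorem~\ref{HCP}) extracts a subsequence $v_i\to v$ converging locally uniformly and weakly in $W^{1,p}_{\loc}$ to a positive solution of $Q'_{p,\mathcal{A},V}[u]=0$ in $\Omega$ — giving $(1)\Rightarrow(2)$ — and finally $S\triangleq\nabla v/v$ solves \eqref{first} by the first-order substitution described above, yielding $(1)\Rightarrow(4)$. The delicate points are: choosing the right exhausting problems so that the limit potential is exactly $V$ (not a perturbation), ensuring the normalization $v_i(x_0)=1$ is compatible with solvability (handled by a scaling argument, using half-linearity), and verifying the hypotheses of Theorem~\ref{HCP} — in particular that the potentials converge weakly in $M^q_{\loc}(p;\Omega)$, which is immediate if the perturbations are chosen to vanish in the limit.
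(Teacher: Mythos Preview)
Your approach is essentially the same as the paper's: the bridge $(2)\Rightarrow(4)$ and $(3)\Rightarrow(5)$ via $S=\nabla v/v$ and testing against $\psi v^{1-p}$, the implication $(5)\Rightarrow(1)$ via the generalized H\"older inequality (Lemma~\ref{ass1}) plus Young, and $(1)\Rightarrow(2)$ via a Lipschitz exhaustion, perturbation $V\mapsto V+1/i$, solving Dirichlet problems by Theorems~\ref{ThmJ1} and~\ref{thm-coercive}, and the Harnack convergence principle (Theorem~\ref{HCP}) --- all of this matches the paper exactly.

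One technical slip to fix: in the $(1)\Rightarrow(2)$ step you place $f_i$ ``supported near $x_0$'', but this is the wrong location. Theorem~\ref{HCP} applies to positive solutions of the \emph{homogeneous} equation in $\omega_i$, so you need $v_i$ to solve $Q'_{p,\mathcal{A},V+1/i}[u]=0$ on a domain exhausting $\Omega$. The paper takes $f_i\in C_c^\infty(\omega_i\setminus\bar\omega_{i-1})\setminus\{0\}$, so that $v_i$ is a genuine solution of the homogeneous equation on $\omega_{i-1}$; the support of $f_i$ escapes to infinity and the limit $v$ solves the homogeneous equation on all of $\Omega$. With $f_i$ supported in a fixed neighborhood of $x_0$ you would only get a solution in $\Omega\setminus\{x_0\}$ and would need a separate removable-singularity argument. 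Also, your stated cycle $(1)\Rightarrow(4)\Rightarrow(5)\Rightarrow(3)\Rightarrow(1)$ includes $(5)\Rightarrow(3)$, which you never prove directly; the implications you actually establish ($(1)\Rightarrow(2)\Rightarrow(3)\Rightarrow(5)\Rightarrow(1)$ and $(2)\Rightarrow(4)\Rightarrow(5)$) do close the circle, so this is only a mismatch between your roadmap and your execution.
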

\begin{proof}[Proof of Theorem~\ref{thm_AAP}]
The proof of the theorem is similar to that of \cite[Theorem 4.3]{Pinchover}, but the arguments for the implications~$(2)\Rightarrow (4)$ and~$(3)\Rightarrow (5)$ are simpler.

It suffices to show~$(1)\Rightarrow (2)\Rightarrow (j)\Rightarrow (5)$, where~$j=3,4$, $(3)\Rightarrow (1)$, and~$(5)\Rightarrow (1)$.

$(1)\Rightarrow (2)$ 
Fix a point~$x_{0}\in \Omega$ and let~$\{\omega_{i}\}_{i\in \mathbb{N}}$  be a Lipschitz exhaustion of $\Gw$ such that~$x_{0}\in \omega_{1}$. Assertion (1) yields for~$i\in \mathbb{N}$,$$\lambda_{1}\big(Q_{p,\mathcal{A},V+1/i};\omega_{i}\big)\triangleq\inf_{\substack{u\in W^{1,p}_{0}(\omega_{i})\setminus\{0\}}}\frac{Q_{p,\mathcal{A},V+1/i}[u;\omega_{i}]}{\Vert u\Vert^{p}_{L^{p}(\omega_{i})}}\geq \frac{1}{i},$$ which, combined with Theorem \ref{maximum}, gives a positive solution~$v_{i}\in W^{1,p}_{0}(\omega_{i})$ of the problem~$Q'_{p,\mathcal{A},V+1/i}[u]=f_{i}$ in~$\omega_{i}$ with~$u=0$ on~$\partial\omega_{i}$, where~$f_{i}\in C^{\infty}_{c}(\omega_{i}\setminus\bar{\omega}_{i-1})\setminus\{0\}, i\geq 2,$ are nonnegative. and$$Q_{p,\mathcal{A},V+1/i}[u;\omega_{i}]\triangleq\int_{\omega_{i}}(\vert\nabla u\vert_{\mathcal{A}}^{p}+(V+1/i)\vert u\vert^{p})\dx.$$
Setting~$\omega'_{i}=\omega_{i-1}$, we get for all~$u\in W^{1,p}_{c}(\omega'_{i})$,
$$\int_{\omega_{i}'}\mathcal{A}(x,\nabla v_{i})\cdot \nabla u\dx+\int_{\omega_{i}'}\Big(V+\frac{1}{i}\Big)v_{i}^{p-1}u\dx=0.$$
Normalize~$f_{i}$ so that~$v_{i}(x_{0})=1$ for all~$i\geq 2$. Applying the Harnack convergence principle with~$\mathcal{V}_{i}\triangleq V+1/i$, we get the second assertion.

$(2)\Rightarrow (3)$ We may choose~$\tilde{v}=v$.

$(3)\Rightarrow (1)$ Let~$\tilde{v}$ be a positive supersolution of~$Q'_{p,\mathcal{A},V}[u]=0$ and~$T\triangleq -|\nabla \tilde{v}/\tilde{v}|_{\mathcal{A}}^{p-2}.$ For any~$\psi\in C^{\infty}_{c}(\Omega)$, picking~$|\psi|^{p}\tilde{v}^{1-p}\in W^{1,p}_{c}(\Omega)$ as a test function, we obtain:
 $$(p-1)\int_{\Omega}|T|_{\mathcal{A}}^{p'}|\psi|^{p}\dx\leq p\int_{\Omega}|T|_{\mathcal{A}}|\psi|^{p-1}|\nabla \psi|_{\mathcal{A}}\dx+\int_{\Omega}V|\psi|^{p}\dx.$$ Then Young's inequality~$pab\leq (p-1)a^{p'}+b^{p}$ with~$a=|T|_{\mathcal{A}}|\psi|^{p-1}$ and~$b=|\nabla \psi|_{\mathcal{A}}$ yields the first assertion. For an alternative proof, see Lemma \ref{lem_alter}. 

$(2)\Rightarrow (4)$ Let~$v$ be a positive solution of~$Q'_{p,\mathcal{A},V}[u]= 0$. Then~$1/v\in L^{\infty}_{\loc}(\Omega)$ by the weak Harnack inequality (or by the Harnack inequality if~$p>n$). Let $S\triangleq \nabla v/v$. Because~$v\in W^{1,p}_{\loc}(\Omega)$ and~$1/v\in L^{\infty}_{\loc}(\Omega)$, it follows that~$S\in L^{p}_{\loc}(\Omega;\mathbb{R}^{n})$.

We claim that~$S$ is a solution of the equation \eqref{first}. For any~$\psi\in C^{\infty}_{c}(\Omega)$, we employ~$\psi v^{1-p}\in W^{1,p}_{c}(\Omega)$, with$$\nabla\big(\psi v^{1-p}\big)=v^{1-p}\nabla \psi+(1-p)\psi v^{-p}\nabla v,$$ as a test function in the definition of the equation~$Q'_{p,\mathcal{A},V}[w]= 0$ with~$v$ as a solution.
\begin{eqnarray*}
  &&\int_{\Omega}\mathcal{A}(x,\nabla v)\!\cdot \!  v^{1-p}\nabla \psi \!\dx
  +\int_{\Omega}\mathcal{A}(x,\nabla v) \! \cdot \! (1-p)\psi v^{-p}\nabla v\dx+\int_{\Omega}V|v|^{p-2}v\psi v^{1-p}\dx\\
 &=&\int_{\Omega}\mathcal{A}\left(x,\frac{\nabla v}{v}\right)\cdot\nabla \psi\dx+(1-p)\int_{\Omega}\psi\mathcal{A}\left(x,\frac{\nabla v}{v}\right)\cdot\frac{\nabla v}{v}\dx+\int_{\Omega}V\psi\dx\\
  &=&\int_{\Omega}\mathcal{A}(x,S)\cdot\nabla \psi\dx+(1-p)\int_{\Omega}\psi\mathcal{A}(x,S)\cdot S\dx+\int_{\Omega}V\psi\dx=0.
 \end{eqnarray*}

$(3)\Rightarrow (5)$ For a positive supersolution~$\tilde{v}$ of~$Q'_{p,\mathcal{A},V}[u]= 0$ , we adopt the same argument as above with~$S$ replaced by~$\tilde{S}\triangleq \nabla \tilde{v}/\tilde{v}$, except using nonnegative test functions~$\psi\in C^{\infty}_{c}(\Omega)$ and in the last step.

$(4)\Rightarrow (5)$ We may choose~$ \tilde{S}=S$.

$(5)\Rightarrow (1)$
For any~$\psi\in C_{c}^{\infty}(\Omega)$, we get
\begin{eqnarray}
 \notag\int_{\Omega}\!\!\mathcal{A}(x,\tilde{S})\! \cdot \! \nabla |\psi|^{p}\!\!\dx
&\!\!=\!\!& p\!\int_{\Omega}\!|\psi|^{p-1}\mathcal{A}(x,\tilde{S}) \!\cdot \! \nabla |\psi| \!\dx\\\notag
&\!\!\leq \!\!& p\!\int_{\Omega} \! |\psi|^{p-1}|\tilde{S}|_{\mathcal{A}}^{p-1}|\nabla \psi|_{\mathcal{A}} \!\dx\\
&\!\leq \!&  (p\!-\! 1)\int_{\Omega}\!\!|\psi|^{p}|\tilde{S}|_{\mathcal{A}}^{p}\!\dx \!+ \! \int_{\Omega}\!|\nabla \psi|_{\mathcal{A}}^{p} \! \dx, \label{eqS}
\end{eqnarray}
where the first inequality is derived from the generalized H\"older inequality (Lemma \ref{ass1}), and in the last step, Young's inequality $pab\leq (p-1)a^{p'}+b^{p}$ is applied with~$a=|\psi|^{p-1}|\tilde{S}|_{\mathcal{A}}^{p-1}$ and~$b=|\nabla \psi|_{\mathcal{A}}$.

Because $\tilde{S}$ is a supersolution of \eqref{first}, we have
 $$\int_{\Omega}\mathcal{A}(x,\tilde{S})\cdot\nabla |\psi|^{p}\dx+(1-p)\int_{\Omega}|\tilde{S}|_{\mathcal{A}}^{p}|\psi|^{p}\dx+ \int_{\Omega}V|\psi|^{p}\dx\geq 0, $$
which together with \eqref{eqS} implies $Q_{p,\mathcal{A},V}[\psi]\geq 0$ for all $\psi\in C_{c}^{\infty}(\Omega)$.
\end{proof}
\begin{corollary}\label{newuniqueness}
  Let $\omega\Subset\Omega$ be a domain, let $\mathcal{A}$ satisfy Assumption~\ref{ass8}, and let~$V\in M^{q}(p;\omega)$. Then the principal eigenvalue is unique.
\end{corollary}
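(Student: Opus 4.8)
The plan is to derive uniqueness of the principal eigenvalue from the AAP-type theorem (Theorem~\ref{thm_AAP}), exactly in the spirit of the classical argument for the $p$-Laplacian. Suppose $\lambda$ and $\mu$ are both principal eigenvalues of \eqref{evp} in $\omega$, with nonnegative (hence, by Harnack and Hölder regularity, positive) eigenfunctions $v_\lambda$ and $v_\mu$ respectively. Without loss of generality assume $\lambda\le\mu$. Then $v_\mu$ is a positive solution of $Q'_{p,\mathcal{A},V-\mu}[u]=0$ in $\omega$, and since $\lambda\le\mu$ we have $V-\lambda\ge V-\mu$, so $v_\mu$ is a positive supersolution of $Q'_{p,\mathcal{A},V-\lambda}[u]=0$ in $\omega$. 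By the implication $(3)\Rightarrow(1)$ of Theorem~\ref{thm_AAP} (applied on the domain $\omega$ with potential $V-\lambda\in M^q(p;\omega)$), the functional $Q_{p,\mathcal{A},V-\lambda}[\,\cdot\,;\omega]$ is nonnegative on $C_c^\infty(\omega)$, i.e. $\lambda_1(Q_{p,\mathcal{A},V};\omega)\ge\lambda$.

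Next I would use the variational characterization together with $v_\lambda$ to pin down the reverse inequality. Since $\lambda$ is an eigenvalue with eigenfunction $v_\lambda\in W^{1,p}_0(\omega)\setminus\{0\}$, testing the equation $Q'_{p,\mathcal{A},V}[v_\lambda]=\lambda|v_\lambda|^{p-2}v_\lambda$ against $v_\lambda$ itself (legitimate by Lemma~\ref{lem4.11}) gives $Q_{p,\mathcal{A},V}[v_\lambda;\omega]=\lambda\|v_\lambda\|_{L^p(\omega)}^p$, whence by Remark~\ref{rem_lambda1}, $\lambda_1(Q_{p,\mathcal{A},V};\omega)\le\lambda$; this is also precisely the content of Lemma~\ref{easylemma}. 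Combining with the previous paragraph yields $\lambda=\lambda_1(Q_{p,\mathcal{A},V};\omega)$, and the same argument applied with the roles reversed (or simply $\mu\ge\lambda=\lambda_1$ together with $\mu\le\lambda_1$ from Lemma~\ref{easylemma}) forces $\mu=\lambda_1$ as well. Hence any principal eigenvalue equals the generalized principal eigenvalue, which proves uniqueness.

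The one point that needs a little care is the application of Theorem~\ref{thm_AAP} on the subdomain $\omega$ rather than on $\Omega$: the theorem is stated for an arbitrary nonempty domain with a potential in $M^q_{\loc}$ of that domain, and since $V\in M^q(p;\omega)\subseteq M^q_{\loc}(p;\omega)$ and $v_\mu\in W^{1,p}(\omega)\subseteq W^{1,p}_{\loc}(\omega)$, all hypotheses are met with $\omega$ in place of $\Omega$. I do not expect a genuine obstacle here — the proof is short and the heavy lifting has already been done in Theorem~\ref{thm_AAP} and Lemma~\ref{easylemma}; the only thing to be vigilant about is that we already know from Theorem~\ref{principaleigenvalue} that $\lambda_1$ is itself a (simple) principal eigenvalue, so the statement is genuinely an identification of \emph{all} principal eigenvalues with $\lambda_1$, not merely an existence result.
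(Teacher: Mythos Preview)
Your argument is essentially the paper's own proof: show that any principal eigenvalue $\lambda$ satisfies $\lambda\le\lambda_1$ (via the AAP theorem applied to the positive (super)solution $v_\lambda$ of $Q'_{p,\mathcal{A},V-\lambda}[u]=0$) and $\lambda\ge\lambda_1$ (via Lemma~\ref{easylemma}). The detour through $v_\mu$ as a supersolution of the $\lambda$-shifted equation is unnecessary --- the paper simply uses that $v_\lambda$ is itself a positive \emph{solution} of $Q'_{p,\mathcal{A},V-\lambda}[u]=0$ and invokes $(2)\Rightarrow(1)$ of Theorem~\ref{thm_AAP} directly, which handles each principal eigenvalue independently without ever comparing two of them.

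One slip to fix: your parenthetical ``$\mu\le\lambda_1$ from Lemma~\ref{easylemma}'' has the inequality backwards --- Lemma~\ref{easylemma} gives $\mu\ge\lambda_1$, not $\mu\le\lambda_1$. The correct way to conclude for $\mu$ is exactly what you wrote first: run the same AAP argument with $v_\mu$ (as a solution of $Q'_{p,\mathcal{A},V-\mu}[u]=0$) to get $\lambda_1\ge\mu$, then combine with Lemma~\ref{easylemma}.
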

\begin{proof}
Let~$\lambda$ be any eigenvalue with an eigenfunction~$v_{\lambda}\geq 0$. By Harnack's inequality, the eigenfunction~$v_{\lambda}$ is positive in~$\omega$. Then~$v_{\lambda}$ is a positive solution of~$Q'_{p,\mathcal{A},V-\gl}[u]=0$. By the AAP type theorem, the functional~$Q_{p,\mathcal{A},V-\gl}$ is nonnegative in~$\omega$ and hence by the definition of~$\gl_{1}$ and Lemma \ref{easylemma}, we get $\gl_{1}\leq\gl\leq \gl_{1}$. Thus, $\lambda_{1}=\gl$.
\end{proof}
\section{Criticality theory}\label{criticality}
In this section we introduce the notions of criticality and subcriticality and establish fundamental results in criticality theory for the functional $Q_{p,\mathcal{A},V}$ that generalize the counterpart results in \cite[Section 4B]{Pinchover} and \cite[Theorem~6.8]{Regev}.

{First, let us discuss the following extra assumption for~$1<p<2$ that we need for certain results in the sequel.}
\begin{assumption}\label{ngradb}
	\emph{{Let $1<p<2$ and $V\in M^{q}_{\loc}(p;\Omega)$. On top of Assumption~\ref{ass8} and  Assumption \ref{ass2}, we assume that $\mathcal{A}$ and $V$ are regular enough to guarantee that positive solutions of the underlying equations are in $W^{1,\infty}_{\loc}$. Moreover, in this case we consider only positive supersolutions in $W^{1,\infty}_{\loc}$ (cf.  \cite{Pinchover})}} \end{assumption}

\begin{remark}
	{\emph{For sufficient conditions to ensure the above assumption see \cite{Lieb1}. For a more concrete example, see the hypothesis (H1) in \cite{Pinchover}. Another example  is given by the pseudo $p$-Laplacian plus a locally bounded potential by \cite[page 805]{Lieberman}.}}
\end{remark}
\subsection{Characterizations of criticality}\label{subsec_null}
\subsubsection{Null-sequences and ground states}
\begin{Def}{\em
Let~$\mathcal{A}$ satisfy Assumption~\ref{ass8} and let $V\in M^{q}_{\loc}(p;\Omega)$.
  \begin{itemize}
    \item If there exists a nonzero nonnegative function~$W\!\in\! M^{q}_{\loc}(p;\Omega)\!\setminus\!\{0\}$  such that
    \begin{equation*}\label{subcritical}
    Q_{p,\mathcal{A},V}[\varphi]\geq \int_{\Omega}W|\varphi|^{p}\dx,
    \end{equation*}
    for all $\varphi\in C^{\infty}_{c}(\Omega)$, we say that the functional $Q_{p,\mathcal{A},V}$ is \emph{subcritical} in $\Gw$, and $W$ is a \emph{Hardy-weight} for $Q_{p,\mathcal{A},V}$ in $\Gw$.
    {Clearly, if $W$ is a Hardy-weight for $Q_{p,\mathcal{A},V}$ in $\Gw$, then there exists a Hardy-weight $0\lneq\tilde W\leq W$ 
   such that  $\tilde W\!\in\! L^{\infty}_{c}(\Omega)$ for $Q_{p,\mathcal{A},V}$ in $\Gw$.}
    
    \item if $Q_{p,\mathcal{A},V}$ is nonnegative in~$\Omega$ but $Q_{p,\mathcal{A},V}$ does not admit a Hardy-weight in $\Gw$, we say that the functional $Q_{p,\mathcal{A},V}$ is \emph{critical} in $\Gw$;
    \item if there exists~$\varphi\in C^{\infty}_{c}(\Omega)$ such that $Q_{p,\mathcal{A},V}[\varphi]<0$, we say that the functional~$Q_{p,\mathcal{A},V}$ is \emph{supercritical} in $\Gw$.
  \end{itemize}

}
\end{Def}
\begin{Def}
   \emph{ Let~$\mathcal{A}$ satisfy Assumption~\ref{ass8} and let $V\in M^{q}_{\loc}(p;\Omega)$. A nonnegative sequence~$\{u_{k}\}_{k\in\mathbb{N}}\subseteq W^{1,p}_{c}(\Omega)$ is called a \emph{null-sequence} with respect to the nonnegative functional~$Q_{p,\mathcal{A},V}$ in~$\Omega$ if
    \begin{itemize}
      \item for every $k\in\mathbb{N}$, the function~$u_{k}$ is bounded in~$\Omega$;
      \item there exists a fixed open set~$U\Subset\Omega$ such that~$\Vert u_{k}\Vert_{L^{p}(U)}=1$ for all~$k\in\mathbb{N}$;
      \item and~$\displaystyle{\lim_{k\rightarrow\infty}}Q_{p,\mathcal{A},V}[u_{k}]=0.$
    \end{itemize}}
  \end{Def}
  \begin{Def}
  \emph{  Let~$\mathcal{A}$ satisfy Assumption~\ref{ass8} and let $V\in M^{q}_{\loc}(p;\Omega)$. A \emph{ground state} of the nonnegative functional~$Q_{p,\mathcal{A},V}$ is a positive  function~$\phi\in W^{1,p}_{\loc}(\Omega)$, which is an~$L^{p}_{\loc}(\Omega)$ limit of a null-sequence.}
  \end{Def}
\begin{lem}\label{simplelemma}
		Let~$\mathcal{A}$ satisfy Assumption~\ref{ass8} and let $V\in M^{q}_{\loc}(p;\Omega)$. If a nonnegative sequence~$\{u_{k}\}_{k\in\mathbb{N}}\subseteq W^{1,p}_{c}(\Omega)$ satisfies:
		\begin{itemize}
            \item for every $k\in\mathbb{N}$, the function~$u_{k}$ is bounded in~$\Omega$;
			\item there exists a fixed open set~$U\Subset\Omega$ such that~$\Vert u_{k}\Vert_{L^{p}(U)}\asymp 1$ for all~$k\in\mathbb{N}$;
			\item $\displaystyle{\lim_{k\rightarrow\infty}}Q_{p,\mathcal{A},V}[u_{k}]=0;$
			\item and~$\{u_{k}\}_{k\in\mathbb{N}}$ converges in~$L^{p}_{\loc}(\Omega)$ to a positive~$u\in W^{1,p}_{\loc}(\Omega)$,
		\end{itemize}
		then $u$ is a ground state up to a multiplicative constant.
	\end{lem}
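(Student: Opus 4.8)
The plan is to rescale the sequence so that its $L^p(U)$-norms equal $1$ exactly, check that the rescaled sequence is a genuine null-sequence, and identify its $L^p_{\loc}$-limit. (Recall that the notions of null-sequence and ground state only make sense for a nonnegative functional $Q_{p,\mathcal{A},V}$, so this is part of the ambient hypotheses.)

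First I would note that since $u_{k}\to u$ in $L^p_{\loc}(\Omega)$ and $U\Subset\Omega$, the reverse triangle inequality $\big|\,\|u_{k}\|_{L^p(U)}-\|u\|_{L^p(U)}\,\big|\le\|u_{k}-u\|_{L^p(U)}$ gives $\|u_{k}\|_{L^p(U)}\to\|u\|_{L^p(U)}$. Put $\ell\triangleq\|u\|_{L^p(U)}$; since $u>0$ a.e. on the nonempty open set $U$ we have $\ell>0$, and the hypothesis $\|u_{k}\|_{L^p(U)}\asymp 1$ (say with implied constant $c\ge 1$) ensures $\|u_{k}\|_{L^p(U)}\in[c^{-1},c]$, in particular $\|u_{k}\|_{L^p(U)}>0$ for every $k$. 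Set
$$v_{k}\triangleq\frac{u_{k}}{\|u_{k}\|_{L^p(U)}}\,.$$
Then $v_{k}\in W^{1,p}_{c}(\Omega)$, $v_{k}\ge 0$, $\|v_{k}\|_{L^p(U)}=1$, and each $v_{k}$ is bounded in $\Omega$ because $\|v_{k}\|_{L^\infty(\Omega)}\le c\,\|u_{k}\|_{L^\infty(\Omega)}<\infty$. Since both $|\nabla\varphi|_{\mathcal{A}}^{p}$ and $|\varphi|^{p}$ are positively homogeneous of degree $p$, we have $Q_{p,\mathcal{A},V}[v_{k}]=\|u_{k}\|_{L^p(U)}^{-p}\,Q_{p,\mathcal{A},V}[u_{k}]$; as the factor $\|u_{k}\|_{L^p(U)}^{-p}$ stays in the bounded interval $[c^{-p},c^{p}]$ while $Q_{p,\mathcal{A},V}[u_{k}]\to 0$, we get $Q_{p,\mathcal{A},V}[v_{k}]\to 0$. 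Hence $\{v_{k}\}_{k\in\mathbb{N}}$ is a null-sequence with respect to $Q_{p,\mathcal{A},V}$ with the fixed set $U$. Finally, writing
$$v_{k}-\ell^{-1}u=\|u_{k}\|_{L^p(U)}^{-1}(u_{k}-u)+\big(\|u_{k}\|_{L^p(U)}^{-1}-\ell^{-1}\big)u,$$
and using that $\|u_{k}\|_{L^p(U)}^{-1}$ is bounded and converges to $\ell^{-1}$, that $u_{k}\to u$ in $L^p_{\loc}(\Omega)$, and that $u\in L^p_{\loc}(\Omega)$, we conclude $v_{k}\to\ell^{-1}u$ in $L^p_{\loc}(\Omega)$. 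Thus $\ell^{-1}u$ is positive, lies in $W^{1,p}_{\loc}(\Omega)$, and is the $L^p_{\loc}$-limit of the null-sequence $\{v_{k}\}$, i.e. it is a ground state of $Q_{p,\mathcal{A},V}$; equivalently $u=\ell\,(\ell^{-1}u)$ is a ground state up to the multiplicative constant $\ell=\|u\|_{L^p(U)}$.

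There is no genuinely hard step here: the argument is bookkeeping resting on the degree-$p$ homogeneity of $Q_{p,\mathcal{A},V}$ and the continuity of $v\mapsto\|v\|_{L^p(U)}$ under $L^p_{\loc}$-convergence. The only point needing care is that the rescaling factors $\|u_{k}\|_{L^p(U)}$ stay bounded and bounded away from $0$; in fact the $L^p_{\loc}$-convergence $u_{k}\to u$ with $u>0$ already forces $\|u_{k}\|_{L^p(U)}\to\|u\|_{L^p(U)}>0$, so the conclusion would hold even without explicitly invoking $\|u_{k}\|_{L^p(U)}\asymp 1$, the latter merely making the relevant bounds uniform in $k$.
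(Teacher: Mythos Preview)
Your proof is correct and follows essentially the same approach as the paper: rescale by $\|u_{k}\|_{L^{p}(U)}$ to obtain a null-sequence converging in $L^{p}_{\loc}$ to a positive multiple of $u$. Your version is in fact a little cleaner than the paper's, which first passes to a subsequence along which $\|u_{k}\|_{L^{p}(U)}\to C_{0}>0$; you instead observe directly that the $L^{p}_{\loc}$-convergence $u_{k}\to u$ already forces $\|u_{k}\|_{L^{p}(U)}\to\|u\|_{L^{p}(U)}>0$, so no subsequence is needed.
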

	\begin{proof}
		By the second condition, we may assume that up to a subsequence $\displaystyle{\lim_{k\rightarrow\infty}}\Vert u_{k}\Vert_{L^{p}(U)}=C_{0}$ for some positive constant~$C_{0}$. Set $C_{k}\triangleq\Vert u_{k}\Vert_{L^{p}(U)}$. Then $\left\{u_{k}/C_{k}\right\}_{k\in\mathbb{N}}$ is a null-sequence converging in~$L^{p}_{\loc}(\Omega)$  to~${u/C_{0}}$.
\end{proof}
  \begin{corollary}\label{nullrem}
    Let $\omega\Subset\Omega$ be a domain, let $\mathcal{A}$ satisfy Assumption~\ref{ass8} and  let $V\in M^{q}_{\loc}(p;\Omega)$. Then a positive principal eigenfunction~$v$ associated to the principal eigenvalue $\gl_1=\gl_1(Q_{p,\mathcal{A},V};\omega)$ is a ground state of the functional~$Q_{p,\mathcal{A},V-\lambda_{1}}$ in~$\omega$.
  \end{corollary}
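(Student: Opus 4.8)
The plan is to produce a null-sequence for $Q_{p,\mathcal{A},V-\lambda_{1}}$ in $\omega$ whose $L^{p}_{\loc}(\omega)$-limit is $v$, and then to apply Lemma~\ref{simplelemma}. Two facts from the preceding material do the work. First, by the very definition of $\lambda_{1}$, $Q_{p,\mathcal{A},V-\lambda_{1}}[\varphi;\omega]\geq 0$ for all $\varphi\in C^{\infty}_{c}(\omega)$, so $Q_{p,\mathcal{A},V-\lambda_{1}}$ is nonnegative in $\omega$. Second, by Theorem~\ref{principaleigenvalue} the eigenfunction $v$ is a positive scalar multiple of a minimizer of the Rayleigh quotient in \eqref{eq_pev}, whence
$$Q_{p,\mathcal{A},V-\lambda_{1}}[v;\omega]=Q_{p,\mathcal{A},V}[v;\omega]-\lambda_{1}\Vert v\Vert_{L^{p}(\omega)}^{p}=0;$$
alternatively, one obtains this by testing the equation $Q'_{p,\mathcal{A},V-\lambda_{1}}[v]=0$ with $v\in W^{1,p}_{0}(\omega)$ itself, approximating $v$ in $W^{1,p}_{0}(\omega)$ by functions in $C^{\infty}_{c}(\omega)$ and invoking the Morrey--Adams theorem to handle the potential term, as in the proof of Lemma~\ref{functionalcv}. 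Finally, recall that $v>0$ in $\omega$ by the Harnack inequality and $v\in W^{1,p}_{\loc}(\omega)$.

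Since $v\geq 0$ lies in $W^{1,p}_{0}(\omega)$, I would choose a sequence of \emph{nonnegative} functions $\varphi_{k}\in C^{\infty}_{c}(\omega)$ with $\varphi_{k}\to v$ in $W^{1,p}_{0}(\omega)$; each $\varphi_{k}$ is bounded in $\omega$. Then $\varphi_{k}\to v$ in $L^{p}(\omega)$, hence in $L^{p}_{\loc}(\omega)$, so that for a fixed open set $U\Subset\omega$ one has $\Vert\varphi_{k}\Vert_{L^{p}(U)}\to\Vert v\Vert_{L^{p}(U)}>0$, i.e.\ $\Vert\varphi_{k}\Vert_{L^{p}(U)}\asymp 1$; and by Lemma~\ref{functionalcv}, $Q_{p,\mathcal{A},V-\lambda_{1}}[\varphi_{k};\omega]\to Q_{p,\mathcal{A},V-\lambda_{1}}[v;\omega]=0$. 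Hence $\{\varphi_{k}\}_{k\in\mathbb{N}}$ satisfies every hypothesis of Lemma~\ref{simplelemma} (applied with $\omega$ in place of $\Omega$ and $V-\lambda_{1}$ in place of $V$), and that lemma gives that $v$ is, up to a positive multiplicative constant, a ground state of $Q_{p,\mathcal{A},V-\lambda_{1}}$ in $\omega$. Normalizing the chosen eigenfunction by $\Vert v\Vert_{L^{p}(U)}=1$ and rescaling the $\varphi_{k}$ so that $\Vert\varphi_{k}\Vert_{L^{p}(U)}=1$ for all $k$ makes that constant equal to $1$, so $v$ itself is a ground state, as claimed.

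The construction of the nonnegative smooth approximants and the $L^{p}(U)$-normalization are routine. The only point requiring care is the identity $Q_{p,\mathcal{A},V-\lambda_{1}}[v;\omega]=0$ together with the limit passage in Lemma~\ref{functionalcv}, both of which rest on controlling the Morrey potential term $\int_{\omega}(V-\lambda_{1})|\varphi_{k}|^{p}\dx$ along the $W^{1,p}_{0}(\omega)$-convergent sequence through the Morrey--Adams theorem; I do not anticipate any further obstacle. In particular, neither Assumption~\ref{ass2} nor Assumption~\ref{ngradb} is needed here, only Assumption~\ref{ass8}.
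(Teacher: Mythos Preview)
Your proof is correct and follows essentially the same approach as the paper: approximate the eigenfunction in $W^{1,p}_{0}(\omega)$ by smooth compactly supported functions, invoke Lemma~\ref{functionalcv} to get the energy to zero, and apply Lemma~\ref{simplelemma}. You are in fact slightly more careful than the paper in explicitly taking the approximants to be nonnegative (required for a null-sequence) and in handling the normalization so that $v$ itself, not merely a constant multiple, is the ground state.
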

  \begin{proof}
    Let $v'\in W^{1,p}_{0}(\omega)$ be a principal eigenfunction associated to $\gl_1$, and let $\{\varphi_{k}\}_{k\in\mathbb{N}}\subseteq C^{\infty}_{c}(\omega)$
   be a sequence approximating $v'$ in $W^{1,p}_{0}(\omega)$. Then Lemma~\ref{functionalcv} implies that
   	$$\lim_{k\rightarrow\infty}Q_{p,\mathcal{A},V-\lambda_{1}}[\varphi_{k};\omega]=Q_{p,\mathcal{A},V-\lambda_{1}}[v';\omega]=0,\; \mbox{and } \; \Vert \varphi_{k}\Vert_{L^{p}(U)}\asymp 1 \;\; \forall k\in\mathbb{N},$$
   	 where~$U\Subset\omega$ is a fixed nonempty open set. By Lemma \ref{simplelemma}, for some positive constant~$C$, the principal eigenfunction~$v\triangleq Cv'$ is a ground state of~$Q_{p,\mathcal{A},V-\lambda_{1}}$ in~$\omega$.
  \end{proof}
   \begin{proposition}\label{mainlemma}
    Let~$\{u_{k}\}_{k\in\mathbb{N}}$ be a null-sequence with respect to a nonnegative functional $Q_{p,\mathcal{A},V}$ in~$\Omega$,  where $\mathcal{A}$ satisfies assumptions~\ref{ass8} and \ref{ass2}, and  $V\in M^{q}_{\loc}(p;\Omega)$. Let $v$ be a  positive supersolution {(and assume $v\in W^{1,\infty}_{\loc}(\Omega)$ if~$p<2$)} of $Q'_{p,\mathcal{A},V}[u]=0$  in $\Omega$ and let $w_{k}=u_{k}/v$, where $k\in\mathbb{N}$. Then the sequence~$\{w_{k}\}_{k\in\mathbb{N}}$ is bounded in~$W^{1,p}_{\loc}(\Omega)$ and~$\nabla w_{k}\rightarrow 0$ as~$k\rightarrow\infty$ in~$L^{p}_{\loc}(\Omega;\mathbb{R}^{n})$.
  \end{proposition}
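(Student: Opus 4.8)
The plan is to use the Picone-type identity machinery (Lemma~\ref{lem_alter}) to convert the smallness of $Q_{p,\mathcal{A},V}[u_k]$ into smallness of a weighted energy of $w_k$, and then to exploit the stronger convexity of Assumption~\ref{ass2} to extract from that weighted energy a genuine $L^p_{\loc}$ bound on $\nabla w_k$. Concretely, fix a subdomain $\gw\Subset\Gw$. Since $v$ is a positive supersolution of $Q'_{p,\mathcal{A},V}[u]=0$ and $u_k\in W^{1,p}_c(\Gw)$ is nonnegative and bounded (and, when $p<2$, $v\in W^{1,\infty}_{\loc}$ ensures the needed product and chain rules for $u_k^p/v^{p-1}$ and $vw_k$), part~(2) of Lemma~\ref{lem_alter} applies with $u=u_k$. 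Writing $u_k=vw_k$, this gives
$$Q_{p,\mathcal{A},V}[u_k]\geq \int_{\Gw}\Big(|v\nabla w_k+w_k\nabla v|^p_{\mathcal{A}}-w_k^p|\nabla v|^p_{\mathcal{A}}-pw_k^{p-1}v\,\mathcal{A}(x,\nabla v)\cdot\nabla w_k\Big)\dx.$$
Now divide and multiply inside the integrand so that the bracket becomes $v^p$ times the ``simplified-energy'' expression of Assumption~\ref{ass2} evaluated at $\xi=\nabla w_k+w_k\nabla v/v=\nabla u_k/v$ and $\eta=w_k\nabla v/v$: indeed $|v\nabla w_k+w_k\nabla v|^p_{\mathcal{A}}=v^p|\xi|^p_{\mathcal{A}}$, $w_k^p|\nabla v|^p_{\mathcal{A}}=v^p|\eta|^p_{\mathcal{A}}$, and $pw_k^{p-1}v\,\mathcal{A}(x,\nabla v)\cdot\nabla w_k=pv^p\mathcal{A}(x,\eta)\cdot(\xi-\eta)$ by the homogeneity of $\mathcal{A}$. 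Hence, by Assumption~\ref{ass2} applied on any $\gw'$ with $\supp u_k\cap\gw'$ of interest,
$$Q_{p,\mathcal{A},V}[u_k]\geq C_{\gw'}(p,\mathcal{A})\int_{\gw'} v^p\,[\xi_k,\eta_k]_{\mathcal{A}}\dx,$$
where $\xi_k=\nabla u_k/v$, $\eta_k=w_k\nabla v/v$, and $\xi_k-\eta_k=\nabla w_k$. Since $Q_{p,\mathcal{A},V}[u_k]\to 0$, the right-hand side tends to $0$; and since $v$ is locally bounded below away from zero (weak Harnack, Theorem above, or Harnack if $p>n$) and $[\xi_k,\eta_k]_{\mathcal{A}}\geq c|\nabla w_k|^p_{\mathcal{A}}\geq c\alpha_{\gw'}|\nabla w_k|^p$ when $p\geq 2$, we already get $\nabla w_k\to 0$ in $L^p_{\loc}$ in the case $p\geq 2$.

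For $1<p<2$ the quantity $[\xi_k,\eta_k]_{\mathcal{A}}=(|\eta_k|_{\mathcal{A}}+|\nabla w_k|_{\mathcal{A}})^{p-2}|\nabla w_k|^2_{\mathcal{A}}$ does not directly dominate $|\nabla w_k|^p$, so an extra H\"older step is needed. Here I would argue as in \cite[Lemma~2.2]{Lioupincho}: on a fixed $\gw'\Subset\Gw$ write
$$\int_{\gw'}|\nabla w_k|^p\dx=\int_{\gw'}\big(|\nabla w_k|^{2}(|\eta_k|+|\nabla w_k|)^{p-2}\big)^{p/2}\big((|\eta_k|+|\nabla w_k|)^{p}\big)^{(2-p)/2}\dx,$$
and apply H\"older with exponents $2/p$ and $2/(2-p)$ to bound this by a constant times $\big(\int_{\gw'} v^p[\xi_k,\eta_k]_{\mathcal{A}}\dx\big)^{p/2}$ times $\big(\int_{\gw'}(|\eta_k|+|\nabla w_k|)^{p}\dx\big)^{(2-p)/2}$, using again that $v$ is bounded above and below on $\gw'$ and the ellipticity of $|\cdot|_{\mathcal{A}}$. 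The first factor goes to $0$; so it remains to show that $\int_{\gw'}(|\eta_k|+|\nabla w_k|)^{p}\dx$ stays bounded, i.e. that $\{w_k\}$ is bounded in $W^{1,p}_{\loc}$, which is exactly the first assertion of the Proposition and must therefore be proved first (and independently of the $p<2$ split). For that boundedness: the inequality just derived (valid for all $p$, before the final H\"older trick) combined with $Q_{p,\mathcal{A},V}[u_k]$ bounded and $v$ locally bounded below gives $\int_{\gw'} v^p[\xi_k,\eta_k]_{\mathcal{A}}\dx$ bounded; then a similar (but now with bounded rather than vanishing right-hand side) H\"older/Young argument, together with the uniform $L^p_{\loc}$ bound on $u_k$ — which follows from $\|u_k\|_{L^p(U)}=1$, the nonnegativity of $Q_{p,\mathcal{A},V}$, the Morrey--Adams theorem (Theorem~\ref{MA_thm}) to absorb the potential term, and a Poincar\'e-type/chaining argument linking $\|u_k\|_{L^p(\gw')}$ to $\|u_k\|_{L^p(U)}$ and $\|\nabla u_k\|_{L^p}$ — closes the loop and yields $\|\nabla w_k\|_{L^p(\gw')}\le C$. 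A standard exhaustion/diagonal argument then upgrades local boundedness and local convergence to all of $\Gw$.

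The main obstacle I anticipate is the $1<p<2$ case: there the weighted simplified energy only controls $|\nabla w_k|^2$ weighted against the degenerate factor $(|\eta_k|+|\nabla w_k|)^{p-2}$, so one genuinely needs the a priori $W^{1,p}_{\loc}$ bound on $\{w_k\}$ in hand before the H\"older interpolation can convert ``weighted energy $\to 0$'' into ``$\nabla w_k\to 0$ in $L^p$''. Establishing that a priori bound requires carefully combining the Picone/Lemma~\ref{lem_alter} inequality with the Morrey--Adams estimate and a chaining argument for the $L^p$ norm of $u_k$ across $U$ and $\gw'$ — this bookkeeping, rather than any single hard inequality, is where the real work lies. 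The case $p\ge 2$ is comparatively immediate once the identity of Lemma~\ref{lem_alter} is rewritten via homogeneity in the form of Assumption~\ref{ass2}.
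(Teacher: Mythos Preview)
Your core mechanism is the same as the paper's: both proofs rewrite the Picone-type integrand in the form of Assumption~\ref{ass2} (the paper takes $\xi_1=\nabla u_k$, $\xi_2=w_k\nabla v$; you equivalently rescale by $v$), use the pointwise nonnegativity of that integrand (Lemma~\ref{strictconvexity}) to pass from the global integral in Lemma~\ref{lem_alter} to a local one on $\gw'$, and then invoke the supersolution property. For $p\ge 2$ your argument is complete and coincides with the paper's.

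The gap is in your $W^{1,p}_{\loc}$-boundedness step, which you correctly identify as the crux for $1<p<2$. You propose to get a uniform $L^p_{\loc}$ bound on $u_k$ via Morrey--Adams and a Poincar\'e/chaining argument involving $\|\nabla u_k\|_{L^p}$. This does not close: the supports of the $u_k$ exhaust $\Omega$, the potential $V$ may be very negative, and the Morrey--Adams constant depends on the domain, so $Q_{p,\mathcal{A},V}[u_k]\to 0$ gives no a priori control on $\int_{\gw'}|\nabla u_k|^p$. The paper's fix is to apply Poincar\'e to $w_k$ rather than to $u_k$: from $\|u_k\|_{L^p(U)}=1$ and $\inf_U v>0$ one bounds $\langle w_k\rangle_U$, and hence
\[
\|w_k\|_{L^p(\gw')}\le C\|\nabla w_k\|_{L^p(\gw')}+C'.
\]
Feeding this into your second H\"older factor (using $|\eta_k|\le C|w_k|$, valid since $\nabla v/v\in L^\infty_{\loc}$ under Assumption~\ref{ngradb}) yields a self-referential inequality $A_k\le CB_k^{p/2}(A_k+1)^{(2-p)/2}$ with $A_k=\|\nabla w_k\|^p_{L^p(\gw')}$ and $B_k\le CQ_{p,\mathcal{A},V}[u_k]$; since $(2-p)/2<1$, this bootstraps to give $A_k$ bounded and then $A_k\to 0$. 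Replace your $u_k$-based bookkeeping with this $w_k$-based Poincar\'e estimate and the proof closes.
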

  \begin{proof}
    Let $U$ be a fixed open set such that for all~$k\in\mathbb{N}$,~$\Vert u_{k}\Vert_{L^{p}(U)}=1$, and let  $U\Subset\omega\Subset\Omega$ be  a Lipschitz domain. Using the Minkowski inequality, the Poincar\'{e} inequality, and the weak Harnack inequality, we obtain for every $k\in\mathbb{N}$,
    \begin{eqnarray*}
      \Vert w_{k}\Vert_{L^{p}(\omega)}&\leq& \Vert w_{k}-\langle w_{k}\rangle_{U}\Vert_{L^{p}(\omega)}+\langle w_{k}\rangle_{U}\left(\mathcal{L}^{n}(\omega)\right)^{1/p}\\
      &\leq& C(n,p,\omega,U)\Vert \nabla w_{k}\Vert_{L^{p}(\omega;\mathbb{R}^{n})}+\frac{1}{\inf_{U}v}\langle u_k\rangle_{U}\left(\mathcal{L}^{n}(\omega)\right)^{1/p}.
    \end{eqnarray*}
  By H\"{o}lder's inequality, noting that $\Vert u_{k}\Vert_{L^{p}(U)}=1$, we obtain
  \begin{equation}\label{estimate}
    \Vert w_{k}\Vert_{L^{p}(\omega)}\leq C(n,p,\omega,U)\Vert \nabla w_{k}\Vert_{L^{p}(\omega;\mathbb{R}^{n})}+\frac{1}{\inf_{U}v}\left(\frac{\mathcal{L}^{n}(\omega)}{\mathcal{L}^{n}(U)}\right)^{1/p}.
  \end{equation}
   By Lemma \ref{strictconvexity} with $\xi_{1}=\nabla(vw_{k})=\nabla(u_{k})$ and $\xi_{2}=w_{k}\nabla v$, we obtain
   $$|\nabla u_{k}|_{\mathcal{A}}^{p} -w_{k}^{p}|\nabla v|_{\mathcal{A}}^{p} - p\mathcal{A}(x,w_k\nabla v)\!\cdot\! v\nabla w_{k}\geq 0,$$
    which together with {(Assumption~\ref{ass2})} implies
    \begin{eqnarray*}
     { C_\gw(p, \mathcal{A})\int_{\gw}[\nabla u_{k}, w_{k}\nabla v]_{\mathcal{A}}\dx}
      &\leq& \int_{\omega}\left(|\nabla u_{k}|_{\mathcal{A}}^{p} -w_{k}^{p}|\nabla v|_{\mathcal{A}}^{p} - p\mathcal{A}(x,w_k\nabla v)\!\cdot\! v\nabla w_{k}\right)\dx\\
      &\leq& \int_{\Omega}\left(|\nabla u_{k}|_{\mathcal{A}}^{p} - w_{k}^{p}|\nabla v|_{\mathcal{A}}^{p} - v\mathcal{A}(x,\nabla v)\!\cdot\! \nabla\left(w_{k}^{p}\right)\right)\dx\\
      &=&\int_{\Omega}|\nabla u_{k}|_{\mathcal{A}}^{p}\dx-\int_{\Omega}\mathcal{A}(x,\nabla v)\!\cdot\! \nabla\left(w_{k}^{p}v\right)\!\dx.
    \end{eqnarray*}
     Since $v$ is a positive supersolution, we obtain
    $$ {C_\gw(p, \mathcal{A})\int_{\gw}[\nabla u_{k}, w_{k}\nabla v]_{\mathcal{A}}\dx}
    \leq \int_{\Omega}|\nabla u_{k}|_{\mathcal{A}}^{p}\dx+\int_{\Omega}Vu_{k}^{p}\dx=Q_{p,\mathcal{A},V}[u_{k}].$$
   {Suppose that~$p\geq 2$.}
    By the  weak Harnack inequality, and the ellipticity condition \eqref{structure}, we get for a positive constant~$c$ which does not depend on~$k$,
    	$${c\int_{\omega}|\nabla w_{k}|^{p}\dx\leq C_\gw(p, \mathcal{A})\int_{\Omega}v^{p}|\nabla w_{k}|^{p}_{\mathcal{A}}\dx}\leq Q_{p,\mathcal{A},V}[u_{k}]\rightarrow 0\; \mbox{ as } k\to \infty.$$
    	Consequently,  {by H\"{o}lder's inequality,}
    	$$\nabla w_{k}\rightarrow 0\; \mbox{ as } k\to \infty\; \mbox{ in } L^{p}_{\loc}(\Omega;\mathbb{R}^{n}),$$ and  this and \eqref{estimate} also imply that~$\{w_{k}\}_{k\in\mathbb{N}}$ is bounded in~$W^{1,p}_{\loc}(\Omega)$.
{The proof for~$p<2$ is similar to the counterpart proof of \cite[Proposition~4.11]{Pinchover}.
}
  \end{proof}
\begin{Thm}\label{mainthm}   Let~$\mathcal{A}$  satisfy assumptions~\ref{ass8} and \ref{ass2} and let $V\in M^{q}_{\loc}(p;\Omega)$. {In addition, for $1<p<2$ suppose that Assumption~\ref{ngradb} holds.} Assume that the functional $Q_{p,\mathcal{A},V}$ is nonnegative  on $\core$. Then every null-sequence of~$Q_{p,\mathcal{A},V}$ converges, both in~$L^{p}_{\loc}(\Omega)$ and a.e. in~$\Omega$, to a unique (up to a multiplicative constant) positive supersolution  of the equation $Q'_{p,\mathcal{A},V}[u]=0$ in~$\Omega$.

 Furthermore, a ground state is in~$C^{\gamma}_{\loc}(\Omega)$ for some~$0<\gamma\leq 1$, and it is the unique positive
  solution and the unique positive  supersolution of $Q'_{p,\mathcal{A},V}[u]=0$ in~$\Omega$.
  \end{Thm}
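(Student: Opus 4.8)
The plan is to follow the blueprint of \cite[Theorem 4.12]{Pinchover}, using the machinery already set up in this section, in particular Proposition~\ref{mainlemma}, Lemma~\ref{simplelemma}, and the Harnack-type regularity results of Section~\ref{toolbox}. Let $\{u_k\}_{k\in\mathbb N}$ be a null-sequence with $\|u_k\|_{L^p(U)}=1$ for a fixed $U\Subset\Omega$. Since $Q_{p,\mathcal A,V}$ is nonnegative on $\core$, the AAP theorem (Theorem~\ref{thm_AAP}) provides a positive supersolution $v\in W^{1,p}_{\loc}(\Omega)$ of $Q'_{p,\mathcal A,V}[u]=0$ (taken in $W^{1,\infty}_{\loc}$ if $p<2$, by Assumption~\ref{ngradb}). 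Set $w_k=u_k/v$. By Proposition~\ref{mainlemma}, $\{w_k\}$ is bounded in $W^{1,p}_{\loc}(\Omega)$ and $\nabla w_k\to 0$ in $L^p_{\loc}(\Omega;\R^n)$. Hence a subsequence of $\{w_k\}$ converges weakly in $W^{1,p}_{\loc}$ and, by Rellich--Kondrachov, strongly in $L^p_{\loc}$ to some $w\in W^{1,p}_{\loc}(\Omega)$; since $\nabla w_k\to 0$ in $L^p_{\loc}$, we get $\nabla w=0$ a.e., so $w$ is a nonnegative constant $c$ on the connected set $\Omega$. The normalization $\|u_k\|_{L^p(U)}=1$ together with the local boundedness of $1/v$ forces $c>0$ after rescaling; thus $u_k=vw_k\to cv$ in $L^p_{\loc}(\Omega)$ and, along a further subsequence, a.e. in $\Omega$. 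This shows that every null-sequence has a subsequence converging in $L^p_{\loc}$ and a.e. to a positive multiple of \emph{some} positive supersolution $v$.

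The next step is to upgrade ``some supersolution'' to ``\emph{the} unique positive supersolution, which is in fact a solution.'' Here is where the hard part lies: a priori the limit $cv$ depends on the choice of supersolution $v$ supplied by AAP. To pin it down, I would first show that the limit $\phi\triangleq cv$ is itself a ground state (by Lemma~\ref{simplelemma}, since $\{u_k\}$ is a null-sequence converging in $L^p_{\loc}$ to the positive function $\phi\in W^{1,p}_{\loc}$) and is therefore independent of the auxiliary $v$ up to normalization. Then I would argue that $\phi$ is a positive \emph{solution} of $Q'_{p,\mathcal A,V}[u]=0$: test the equation for $v$ against $w_k^p v$-type test functions, pass to the limit using $\nabla w_k\to 0$, and deduce that the inequality defining the supersolution $v$ is actually an equality in the limit, i.e.\ $Q_{p,\mathcal A,V}[\phi;\omega']$-localized energy identities hold; concretely one shows $\int L(u_k,v)\dx\to 0$ and invokes the equality case of the Picone-type identity (Lemma~\ref{RL}, Lemma~\ref{Picone}) to conclude $u_k/v\to\mathrm{const}$ and that $\phi$ solves the equation. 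Regularity $\phi\in C^\gamma_{\loc}(\Omega)$ then follows from the local H\"older estimate of Section~\ref{toolbox} applied to the solution $\phi$.

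It remains to prove uniqueness: if $\tilde v$ is \emph{any} positive supersolution of $Q'_{p,\mathcal A,V}[u]=0$ in $\Omega$, then $\tilde v=c\phi$ for some $c>0$. Apply Proposition~\ref{mainlemma} with the null-sequence $\{u_k\}$ converging to $\phi$ and the supersolution $\tilde v$: then $\tilde w_k\triangleq u_k/\tilde v$ satisfies $\nabla\tilde w_k\to 0$ in $L^p_{\loc}$, so $u_k/\tilde v\to$ a positive constant a.e.; but also $u_k\to\phi$ a.e., hence $\phi/\tilde v$ is a.e.\ equal to a positive constant, giving $\tilde v=c\phi$. In particular $\phi$ is simultaneously the unique positive supersolution and (being itself a solution) the unique positive solution. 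Since every null-sequence converges to a positive multiple of this same $\phi$, the full sequence $\{u_k\}$ — not merely a subsequence — converges in $L^p_{\loc}$ and a.e. to $c\phi$: indeed any subsequence has a sub-subsequence converging to a positive multiple of $\phi$, and the common normalization on $U$ fixes the constant. The main obstacle, as indicated, is the passage to the limit showing that the limiting supersolution is actually a solution and is uniquely determined; this is handled by combining the gradient convergence $\nabla w_k\to 0$ from Proposition~\ref{mainlemma} with the equality cases in the Picone-type identities, exactly as in \cite[Section 4B]{Pinchover} and \cite[Theorem 6.8]{Regev}.
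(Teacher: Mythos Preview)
Your overall architecture is correct and matches the paper's proof: use AAP to supply a positive supersolution $v$, apply Proposition~\ref{mainlemma} to get $\nabla(u_k/v)\to 0$ in $L^p_{\loc}$, deduce via Rellich--Kondrachov that $u_k\to cv$ in $L^p_{\loc}$ with $c=1/\|v\|_{L^p(U)}>0$, and then obtain uniqueness of positive supersolutions by rerunning the same argument with an arbitrary supersolution $\tilde v$ (your step~9). This is exactly what the paper does.

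However, the step you flag as ``the hard part'' --- showing directly that the limit $\phi$ is a \emph{solution} by passing to the limit in Picone-type identities and invoking the equality case of Lemma~\ref{RL} --- is both unnecessary and not clearly workable as sketched. The equality case $L(u,v)=0$ only yields $u=kv$; it says nothing about whether the supersolution $v$ is in fact a solution, so the inequality in Lemma~\ref{Picone} for a supersolution cannot be upgraded to an equality this way. The paper bypasses this entirely with a one-line observation you overlooked: the AAP theorem (Theorem~\ref{thm_AAP}) gives not only a positive supersolution (assertion~(3)) but also a positive \emph{solution} $\tilde v$ (assertion~(2)). Since a solution is in particular a supersolution, your own step-9 argument applied with $\tilde v$ shows $\phi=C\tilde v$ for some $C>0$, so $\phi$ is automatically a solution and the H\"older regularity follows from Section~\ref{toolbox}. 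Drop the Picone limit argument and use this instead.
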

  \begin{Rem}
    \emph{By uniqueness we mean uniqueness up to a multiplicative constant.}
  \end{Rem}
   \begin{proof}[Proof of Theorem~\ref{mainthm}]
   By the AAP type theorem, there exists a positive
     supersolution $v\in W^{1,p}_{\loc}(\Omega)$  and a positive
      solution~$\tilde{v}\in W^{1,p}_{\loc}(\Omega)$  of $Q'_{p,\mathcal{A},V}[u]=0$ in $\Gw$. Let $\{u_{k}\}_{k\in\mathbb{N}}$ be a null-sequence of $Q_{p,\mathcal{A},V}$ in $\Gw$, and set $w_{k}=u_{k}/v$. Using Proposition \ref{mainlemma}, we obtain$$\nabla w_{k}\rightarrow 0\; \mbox{ as } k\rightarrow \infty \mbox{ in } L^{p}_{\loc}(\Omega;\mathbb{R}^{n}).$$ The Rellich-Kondrachov theorem yields a subsequence, which is still denoted by~$w_{k}$, with~$w_{k}\rightarrow c$ for some~$c\geq 0$ in~$W^{1,p}_{\loc}(\Omega)$ as~$k\rightarrow\infty$.
      Since $v$ is locally bounded away from zero, it follows that up to a subsequence, $u_{k}\rightarrow cv$ a.e. in~$\Omega$ and also in~$L^{p}_{\loc}(\Omega)$. 
      Therefore, $c=1/\Vert v\Vert_{L^{p}(U)}>0$. Furthermore, any null-sequence~$\{u_{k}\}_{k\in\mathbb{N}}$ converges (up to a positive constant multiple) to the same positive supersolution~$v$. Noting that the solution~$\tilde{v}$  is
      also  a positive supersolution, we conclude that~$v=C\tilde{v}$ for some~$C>0$. It follows that~$v$ is also the unique positive solution.
  \end{proof}
  As a corollary of the above theorem we have:
     \begin{Thm}\label{complement}
     Let $\omega\Subset\Omega$ be a domain. Let $\mathcal{A}$ satisfy assumptions~\ref{ass8} and \ref{ass2} and let~$V\in M^{q}(p;\omega)$. In addition, for $1<p<2$ suppose that Assumption~\ref{ngradb} holds. Suppose that the equation $Q'_{p,\mathcal{A},V}[u]=0$ in $\gw$ admits a proper positive supersolution in~$W^{1,p}(\omega)$. Then the principal eigenvalue $\lambda_{1} =\lambda_{1}(Q_{p,\mathcal{A},V};\omega)$ is strictly positive.

     Therefore, all the assertions in Theorem \ref{maximum} are equivalent if~$\mathcal{A}$ and~$V$ are as above and~$\omega\Subset\Omega$ is a Lipschitz domain.
   \end{Thm}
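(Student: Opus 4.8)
The statement is precisely the implication $(4')\Rightarrow(3)$ of Theorem~\ref{maximum}, and once it is available all the implications already displayed there close into a chain of equivalences. Throughout I would apply the results of Sections~\ref{sec_eigenvalue}--\ref{criticality} with the ambient domain taken to be $\omega$ itself; this is legitimate since $V\in M^{q}(p;\omega)\subseteq M^{q}_{\loc}(p;\omega)$, the structural hypotheses on $\mathcal{A}$ (assumptions~\ref{ass8} and \ref{ass2}) restrict to subdomains $\Subset\omega$, and for $1<p<2$ Assumption~\ref{ngradb} is in force --- in particular the given proper positive supersolution $v$ then lies in $W^{1,\infty}_{\loc}(\omega)$, so it is of the admissible class.

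First I would observe that $\lambda_{1}\triangleq\lambda_{1}(Q_{p,\mathcal{A},V};\omega)\geq 0$: the proper positive supersolution $v\in W^{1,p}(\omega)\subseteq W^{1,p}_{\loc}(\omega)$ is in particular a positive supersolution of $Q'_{p,\mathcal{A},V}[u]=0$ in $\omega$, so by the implication $(3)\Rightarrow(1)$ of the AAP type theorem (Theorem~\ref{thm_AAP}, with $\Omega$ replaced by $\omega$) the functional $Q_{p,\mathcal{A},V}$ is nonnegative on $C^{\infty}_{c}(\omega)$, whence $\lambda_{1}\geq 0$ by Remark~\ref{rem_lambda1}.

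Next I would rule out $\lambda_{1}=0$ by contradiction. Assume $\lambda_{1}=0$. By Theorem~\ref{principaleigenvalue} there is a positive principal eigenfunction $\phi\in W^{1,p}_{0}(\omega)$; since $\lambda_{1}=0$, $\phi$ is a positive solution of $Q'_{p,\mathcal{A},V}[u]=0$ in $\omega$, and by Corollary~\ref{nullrem} it is a ground state of $Q_{p,\mathcal{A},V-\lambda_{1}}=Q_{p,\mathcal{A},V}$ in $\omega$. Now I invoke the ``Furthermore'' part of Theorem~\ref{mainthm} with ambient domain $\omega$: since $Q_{p,\mathcal{A},V}$ is nonnegative on $C^{\infty}_{c}(\omega)$ and admits the ground state $\phi$, the function $\phi$ is the \emph{unique} positive supersolution of $Q'_{p,\mathcal{A},V}[u]=0$ in $\omega$, up to a positive multiplicative constant. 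Hence the given $v$ equals $c\phi$ for some $c>0$; but $\phi$ is a solution, so $v=c\phi$ is also a solution, contradicting the properness of $v$. Therefore $\lambda_{1}>0$, which is assertion $(3)$.

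Finally, Theorem~\ref{maximum} already supplied $(1)\Leftrightarrow(2)\Leftrightarrow(3)\Rightarrow(4)\Rightarrow(4')$ and $(3)\Rightarrow(5)\Rightarrow(4)$; adjoining the implication $(4')\Rightarrow(3)$ just proved yields $(3)\Leftrightarrow(4)\Leftrightarrow(4')\Leftrightarrow(5)$, so all six assertions of Theorem~\ref{maximum} are equivalent when $\omega\Subset\Omega$ is a Lipschitz domain and $\mathcal{A},V$ are as in the statement. The one genuinely substantial ingredient is the uniqueness of positive supersolutions in the critical case, which is exactly Theorem~\ref{mainthm} (resting on Proposition~\ref{mainlemma} and hence on Assumption~\ref{ass2}); the remaining work is to transplant the cited machinery from $\Omega$ to the subdomain $\omega$ and to respect the regularity caveat of Assumption~\ref{ngradb} when $1<p<2$.
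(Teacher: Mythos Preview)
Your proof is correct and follows essentially the same approach as the paper's own argument: use the AAP type theorem to secure $\lambda_{1}\geq 0$, then if $\lambda_{1}=0$ invoke Corollary~\ref{nullrem} and Theorem~\ref{mainthm} so that the principal eigenfunction is the unique positive supersolution, contradicting the existence of a proper one. Your write-up is in fact somewhat more careful than the paper's in making explicit the transplantation of the ambient domain to $\omega$ and the role of Assumption~\ref{ngradb} when $1<p<2$.
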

   \begin{proof}
   We need to prove $(4')\Rightarrow (3)$ in Theorem \ref{maximum}.  Indeed, by the AAP type theorem,~$Q_{p,\mathcal{A},V}$ is nonnegative. In particular, $\lambda_{1}\geq 0.$ If~$\lambda_{1}=0$, then positive principal eigenfunctions
    are all positive solutions  of the equation $Q'_{p,\mathcal{A},V}[u]=0$. By Corollary \ref{nullrem} and Theorem \ref{mainthm}, the positive principal eigenfunctions are the unique
     positive supersolution of $Q'_{p,\mathcal{A},V}[u]=0$ in $\gw$, but this contradicts our assumption that the equation $Q'_{p,\mathcal{A},V}[u]=0$ in $\gw$ admits a  proper positive supersolution in~$W^{1,p}(\omega)$. Hence, $\lambda_{1}>0$.
   \end{proof}
   \subsubsection{Characterizations of criticality}
   The next theorem contains fundamental characterizations of criticality or subcriticality.
   \begin{Thm}\label{thm_Poincare}
 Let~$\mathcal{A}$ satisfy assumptions~\ref{ass8} and \ref{ass2} and let~$V\in M^{q}_{\loc}(p;\Omega)$. {In addition, for $1<p<2$ suppose that Assumption~\ref{ngradb} holds.}
Assume that~$Q_{p,\mathcal{A},V}$ is nonnegative in~$\Omega$.
 Then the following assertions hold true.
 \begin{enumerate}
   \item[$(1)$] The functional~$Q_{p,\mathcal{A},V}$ is critical in~$\Omega$ if and only if~$Q_{p,\mathcal{A},V}$ has a null-sequence in~$\core$.
   \item[$(2)$] The functional~$Q_{p,\mathcal{A},V}$ has a null-sequence if and only if the equation $Q'_{p,\mathcal{A},V}[u]=0$ has a unique positive supersolution. \item[$(3)$]The functional $Q_{p,\mathcal{A},V}$  is subcritical in~$\Omega$ if and only if $Q_{p,\mathcal{A},V}$ admits a strictly positive continuous Hardy-weight $W$ in~$\Omega$.

   \item[$(4)$] Assume that~$Q_{p,\mathcal{A},V}$ admits a ground state $\phi$ in $\Gw$. Then  there exists a strictly positive continuous
    function~$W$ such that
   for $\psi\in C^{\infty}_{c}(\Omega)$ with~$\int_{\Omega}\phi\psi\dx\neq 0$ there exists a constant $C>0$ such that the following Poincar\'{e}-type inequality holds:
   $$Q_{p,\mathcal{A},V}[\varphi]+C\left\vert\int_{\Omega}\varphi\psi\dx\right\vert^{p}\geq \frac{1}{C}\int_{\Omega}W|\varphi|^{p}\dx\qquad \forall \varphi\in C^{\infty}_{c}(\Omega).$$
 \end{enumerate}
  \end{Thm}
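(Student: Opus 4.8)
The plan is to prove the four assertions essentially in sequence, leaning heavily on Theorem~\ref{mainthm} (which already gives the equivalence between the existence of a null-sequence, the existence of a ground state, and the uniqueness of the positive supersolution) and on the AAP type theorem.

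\medskip

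\textbf{Proof of (1) and (2).} For the forward direction of (1), assume $Q_{p,\mathcal{A},V}$ is critical. By Theorem~\ref{mainthm}, if it had no null-sequence in $\core$ we could not derive a contradiction directly, so instead I would argue by constructing a null-sequence explicitly: fix a Lipschitz exhaustion $\{\omega_i\}$ and $x_0\in\omega_1$, and for each $i$ let $\lambda_1^{(i)}=\lambda_1(Q_{p,\mathcal A,V};\omega_i)$. Since $Q_{p,\mathcal A,V}\ge 0$ on $\core$, each $\lambda_1^{(i)}\ge 0$ and $\lambda_1^{(i)}\downarrow 0$ (if some $\lambda_1^{(i_0)}$ stayed positive, the gap would produce a Hardy-weight supported in $\omega_{i_0}$, contradicting criticality). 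Take principal eigenfunctions $v_i\in W^{1,p}_0(\omega_i)$ normalized so that $\|v_i\|_{L^p(U)}=1$ for a fixed $U\Subset\omega_1$; approximating each $v_i$ by $C^\infty_c(\omega_i)$ functions via Lemma~\ref{functionalcv} and taking a diagonal sequence yields $\{u_k\}\subseteq\core$ with $\|u_k\|_{L^p(U)}=1$ and $Q_{p,\mathcal A,V}[u_k]\to 0$; boundedness of each $u_k$ is arranged by truncation. This is a null-sequence. Conversely, if a null-sequence in $\core$ exists then $Q_{p,\mathcal A,V}$ cannot be subcritical: a Hardy-weight $\tilde W\in L^\infty_c(\Omega)$, $\tilde W\gneq 0$, would force $\|u_k\|_{L^p(\supp\tilde W)}\to 0$, while Theorem~\ref{mainthm} forces $u_k\to cv$ in $L^p_{\loc}$ with $c>0$ and $v>0$, a contradiction. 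For (2): ``$\Rightarrow$'' is Theorem~\ref{mainthm} (a null-sequence yields a unique positive supersolution, which equals the ground state). For ``$\Leftarrow$'', suppose the positive supersolution is unique; if $Q_{p,\mathcal A,V}$ were subcritical, then $Q_{p,\mathcal A,V}-\epsilon\tilde W\ge 0$ on $\core$ for small $\epsilon>0$ with $\tilde W\in L^\infty_c$, so by AAP both $Q_{p,\mathcal A,V}$ and $Q_{p,\mathcal A,V-\epsilon\tilde W}$ admit positive solutions; a positive solution of the latter is a proper positive supersolution of the former (proper because $\epsilon\tilde W\gneq 0$), contradicting uniqueness. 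Hence $Q_{p,\mathcal A,V}$ is critical, and by (1) it has a null-sequence.

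\medskip

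\textbf{Proof of (3).} The ``if'' direction is immediate from the definition of subcriticality. For ``only if'', assume $Q_{p,\mathcal A,V}$ is subcritical with some Hardy-weight $W_0\gneq 0$; by the remark following the definition of subcriticality we may take $0\lneq \tilde W\le W_0$ with $\tilde W\in L^\infty_c(\Omega)$. The standard device is to exhaust $\Omega$ by $\{\omega_i\}$, choose nonnegative $W_i\in C_c^\infty(\omega_i\setminus\bar\omega_{i-1})$, and set $W\triangleq \sum_i 2^{-i}\big(\|W_i\|_{\infty}+1\big)^{-1}W_i$ so that $W$ is strictly positive and continuous; then one must show $\epsilon W$ is still a Hardy-weight for small $\epsilon>0$. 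This is where the null-sequence machinery is used: if $Q_{p,\mathcal A,V}-\epsilon W$ failed to be nonnegative for every $\epsilon>0$, a compactness/diagonal argument on near-minimizers (bounded in $W^{1,p}_{\loc}$ by Proposition~\ref{mainlemma} applied with the positive supersolution given by AAP) would produce a null-sequence for $Q_{p,\mathcal A,V}$, hence by (1) criticality, contradicting subcriticality. So $Q_{p,\mathcal A,V}\ge \epsilon W$ on $\core$ with $W$ strictly positive and continuous.

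\medskip

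\textbf{Proof of (4).} Assume $Q_{p,\mathcal A,V}$ admits a ground state $\phi$; then $Q_{p,\mathcal A,V}$ is critical (it has a null-sequence). Let $W$ be the strictly positive continuous weight from the exhaustion construction in (3), rescaled so $W\in L^1_{\loc}$ as needed. I would argue by contradiction: if no $C$ works, there are $\varphi_k\in\core$ with $\int_\Omega W|\varphi_k|^p=1$ but $Q_{p,\mathcal A,V}[\varphi_k]+k|\int_\Omega\varphi_k\psi|^p\to 0$. In particular $Q_{p,\mathcal A,V}[\varphi_k]\to 0$ and $\int_\Omega \varphi_k\psi\to 0$. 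After replacing $\varphi_k$ by $|\varphi_k|$ (which only decreases $Q_{p,\mathcal A,V}$ and preserves $\int W|\varphi_k|^p$, though it may disturb $\int\varphi_k\psi$ --- here one keeps $\varphi_k$ and works with $w_k=\varphi_k/\phi$), Proposition~\ref{mainlemma} (with $v=\phi$, noting $\phi$ is a positive solution hence supersolution, and using Assumption~\ref{ngradb} when $p<2$) gives $\nabla w_k\to 0$ in $L^p_{\loc}$ and $\{w_k\}$ bounded in $W^{1,p}_{\loc}$; Rellich--Kondrachov yields $w_k\to c$ in $W^{1,p}_{\loc}$ along a subsequence, so $\varphi_k\to c\phi$ in $L^p_{\loc}$. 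Then $\int_\Omega W|\varphi_k|^p=1$ and $W$ strictly positive with compact-support truncations forces $c\ne 0$ (more precisely $c^p\int W\phi^p\le 1$ on the support piece, but also $c^p\int W\phi^p = \lim$ cannot vanish), while $\int_\Omega\varphi_k\psi\to c\int_\Omega\phi\psi\ne 0$ by hypothesis --- contradicting $\int\varphi_k\psi\to 0$. Hence the Poincaré inequality holds.

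\medskip

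\textbf{Main obstacle.} The technically delicate point is the normalization-and-compactness step that appears in both (3) and (4): one must produce a near-minimizing sequence, control it uniformly in $W^{1,p}_{\loc}$ via Proposition~\ref{mainlemma}, pass to a limit, and rule out the degenerate limit $c=0$ while simultaneously keeping the constraint ($\int W|\varphi|^p=1$, resp. $\int\varphi\psi\to 0$) alive under the limit. Getting the interplay between the local convergence, the global constraint, and the strict positivity of $W$ exactly right --- in particular handling the fact that $W$ is only locally integrable and that the support of the relevant constraint need not be compact --- is the part that needs genuine care rather than routine estimation.
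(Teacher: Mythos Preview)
Your arguments for (2) and for the backward direction of (1) are correct and close to the paper's. However, the forward direction of (1), and both (3) and (4), share a genuine gap: you repeatedly need to manufacture a null-sequence, but the definition (and Proposition~\ref{mainlemma}) requires the \emph{local} normalization $\|u_k\|_{L^p(U)}=1$ for a fixed $U\Subset\Omega$, and your constructions do not secure this. In (1), normalizing the principal eigenfunctions by $\|v_i\|_{L^p(U)}=1$ gives $Q_{p,\mathcal A,V}[v_i]=\lambda_1^{(i)}\|v_i\|^p_{L^p(\omega_i)}$, and nothing bounds $\|v_i\|_{L^p(\omega_i)}$ as $\omega_i\uparrow\Omega$, so $Q[v_i]\to 0$ does not follow. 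In (3) and (4), your contradiction arguments produce sequences normalized by the global quantity $\int_\Omega W|\varphi_k|^p=1$; mass can escape to infinity, so you can neither invoke Proposition~\ref{mainlemma} (which takes a null-sequence as input) nor rule out $c=0$ --- precisely the obstacle you flag at the end. In (4) there is the additional problem that $Q_{p,\mathcal A,V}$ is critical, so (3) does not furnish a Hardy-weight for it; your candidate $W$ has no established relationship to $Q_{p,\mathcal A,V}$.

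The paper bypasses all of this with one simple device: for each open $U\Subset\Omega$ set
\[
c_U\triangleq\inf\bigl\{Q_{p,\mathcal A,V}[\varphi]:\varphi\in\core,\ \|\varphi\|_{L^p(U)}=1\bigr\}.
\]
If $c_U>0$ then any $0\le W\in C_c^\infty(U)$ with $W\le 1$ satisfies $Q[\varphi]\ge c_U\int W|\varphi|^p$, so criticality forces $c_U=0$ and a minimizing sequence is \emph{automatically} a null-sequence --- this gives (1) forward directly. For (3), subcriticality then means $c_{U_k}>0$ for every member of a locally finite open cover $\{U_k\}$ with subordinate partition of unity $\{\chi_k\}$, and $W\triangleq\sum_k 2^{-k}\min(c_{U_k},1)\chi_k$ is a strictly positive continuous Hardy-weight by direct summation of the inequalities $2^{-k}Q[\varphi]\ge 2^{-k}c_{U_k}\|\varphi\|_{L^p(U_k)}^p\ge 2^{-k}\min(c_{U_k},1)\int\chi_k|\varphi|^p$. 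For (4), the paper first applies (3) to the \emph{subcritical} functional $Q^U[\varphi]\triangleq Q[\varphi]+\int_U|\varphi|^p$ to get $Q^U\ge\int W|\cdot|^p$ with $W>0$ continuous, and then runs your contradiction argument --- but with the correct local normalization $\|\varphi_k\|_{L^p(U)}=1$, which makes $\{\varphi_k\}$ a genuine null-sequence --- to obtain $\int_U|\varphi|^p\le C\bigl(Q[\varphi]+|\int\varphi\psi|^p\bigr)$ for some $U$; combining the two inequalities yields the Poincar\'e estimate.
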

 \begin{proof}
  $(1)$ Suppose that~$Q_{p,\mathcal{A},V}$ is critical. For every nonempty open~$U\Subset\Omega$, let$$c_{U}\triangleq\inf_{\substack{0\leq \varphi\in C^{\infty}_{c}(\Omega)\\\Vert \varphi\Vert_{L^{p}(U)}=1}}Q_{p,\mathcal{A},V}[\varphi]=\inf_{\substack{\varphi\in C^{\infty}_{c}(\Omega)\\\Vert \varphi\Vert_{L^{p}(U)}=1}}Q_{p,\mathcal{A},V}[\varphi],$$ where the equality is an immediate corollary of Lemma \ref{functionalcv}. Pick $W\in C^{\infty}_{c}(U)\setminus\{0\}$ such that~$0\leq W\leq 1$. Then for all~$\varphi\in C^{\infty}_{c}(\Omega)$ with~$\Vert \varphi\Vert_{L^{p}(U)}=1$, we have
  $$c_{U}\int_{\Omega}W|\varphi|^{p}\dx\leq c_{U}\leq Q_{p,\mathcal{A},V}[\varphi].$$ Because  $Q_{p,\mathcal{A},V}$ is critical in $\Gw$,
   it follows that $c_{U}=0$. Hence, a minimizing sequence of the above variational problem is a null-sequence of $Q_{p,\mathcal{A},V}$ in $\Gw$.

  Suppose that $Q_{p,\mathcal{A},V}$ admits a null-sequence in $\Gw$. By Theorem \ref{mainthm}, we get a positive solution~$v$  of~$Q'_{p,\mathcal{A},V}[u]=0.$  If $Q_{p,\mathcal{A},V}$ is subcritical in $\Omega$ with a nontrivial nonnegative Hardy-weight $W$,  then the  AAP type theorem gives a positive solution $w$  of the equation
  $Q'_{p,\mathcal{A},V-W}[u]=0$ in $\Gw$. The function $w$ is also a proper positive supersolution
   of the equation $Q'_{p,\mathcal{A},V}[u]=0$. Therefore, $w$ and $v$ are two positive supersolutions of the above equation, but this contradicts Theorem~\ref{mainthm}.

  $(2)$ Suppose that the equation $Q'_{p,\mathcal{A},V}[u]=0$ in $\Gw$ admits a unique positive supersolution. If~$Q_{p,\mathcal{A},V}$ does not admits a null-sequence, then~$Q_{p,\mathcal{A},V}$ is subcritical by~$(1)$. The same argument as in the second part of the proof of~$(1)$ leads to a contradiction. The other direction follows from Theorem \ref{mainthm}.


  $(3)$ Suppose that~$Q_{p,\mathcal{A},V}$ is subcritical. Let~$\{U_{k}\}_{k\in\mathbb{N}}$ be an open covering of $\Omega$ with $U_k\Subset \Gw$ and $\cup_{k\in\mathbb{N}}U_{k}=\Omega$.  Let $\{\chi_{k}\}_{k\in\mathbb{N}}$ be a locally finite smooth partition of unity subordinated to the covering. Then by the proof of $(1)$, for every~$k\in \N$, we have~$c_{U_k}>0$. Then  for all $\varphi\in C^{\infty}_{c}(\Omega)$ and every~$k\in\mathbb{N}$ we have
  $$2^{-k}Q_{p,\mathcal{A},V}[\varphi]\geq 2^{-k}c_{U_k}\int_{U_{k}}|\varphi|^{p}\dx\geq 2^{-k}C_{k}\int_{\Omega}\chi_{k}|\varphi|^{p}\dx,$$
  where~$C_{k}\triangleq \min\{c_{U_{k}},1\}$ for $k\in\mathbb{N}$.
  ~
   Then  for all $\varphi\in C^{\infty}_{c}(\Omega)$ and every~$k\in\mathbb{N}$ we have
  $$2^{-k}Q_{p,\mathcal{A},V}[\varphi]\geq 2^{-k}C_{k}\int_{U_{k}}|\varphi|^{p}\dx\geq 2^{-k}C_{k}\int_{\Omega}\chi_{k}|\varphi|^{p}\dx.$$
  Adding together all the above inequalities over all~$k\in\mathbb{N}$, we get
  $$Q_{p,\mathcal{A},V}[\varphi]\geq \int_{\Omega}W|\varphi|^{p}\dx \qquad \forall \varphi\in C^{\infty}_{c}(\Omega),$$
   where $W\triangleq \sum_{k=1}^{\infty}2^{-k}C_{k}\chi_{k} >0$ is smooth (recall that this series is locally finite).
 The other direction follows from the definition of subcriticality.

 $(4)$  For every nonempty open set~$U\Subset\Omega$ and every~$\varphi\in C^{\infty}_{c}(\Omega)$, let
  $$Q_{p,\mathcal{A},V}^{U}[\varphi]\triangleq Q_{p,\mathcal{A},V}[\varphi]+\int_{U}|\varphi|^{p}\dx,$$
  which is subcritical because $Q_{p,\mathcal{A},V}$ is nonnegative. By~$(3)$, for every nonempty open set
  ~$U\Subset\Omega$, there is a positive continuous function~$W$ in~$\Omega$ such that for all~$\varphi\in C^{\infty}_{c}(\Omega)$,
  \begin{equation}\label{eq_W}
  Q_{p,\mathcal{A},V}^{U}[\varphi]\geq \int_{\Omega}W(x)|\varphi|^{p}\dx.
  \end{equation}
  Fix $\psi\in C^{\infty}_{c}(\Omega)$ with~$\int_{\Omega}\phi\psi\dx\neq 0$.
Assume that for every~$U\!\Subset\!\Omega$, there exists a nonnegative sequence $\{\varphi_{k}\}_{k\in\mathbb{N}}\subseteq C^{\infty}_{c}(\Omega)$ such that
  $$\int_{U}|\varphi_{k}|^{p}\dx=1,\quad Q_{p,\mathcal{A},V}[\varphi_{k}]\rightarrow 0,\quad\mbox{and}~\int_{\Omega}\varphi_{k}\psi\dx\rightarrow 0,\quad\mbox{as}~k\rightarrow\infty.$$ Because~$\{\varphi_{k}\}_{k\in\mathbb{N}}$ is a null-sequence, by Theorem \ref{mainthm}, ~$\{\varphi_{k}\}_{k\in\mathbb{N}}$ converges (up to a multiplicative constant) in~$L^{p}_{\loc}(\Omega)$ to the ground state. Furthermore,$$\lim_{k\rightarrow\infty}\int_{\Omega}\varphi_{k}\psi\dx=\int_{\Omega}\phi\psi\dx\neq 0,$$
  and we arrive at a contradiction.
  Therefore, there exists a nonempty open~$U\Subset\Omega$ such that for all~$\varphi\in C^{\infty}_{c}(\Omega)$ and some positive constant~$C$,
  $$\int_{U}|\varphi|^{p}\dx\leq C\Big(Q_{p,\mathcal{A},V}[\varphi]+\Big\vert\int_{\Omega}\varphi\psi\dx\Big\vert^{p}\Big).$$
  By combining the above inequality with \eqref{eq_W}, we obtain the desired inequality.
 \end{proof}
 \begin{corollary}\label{subcriticaleg}
   Let~$\mathcal{A}$ satisfy assumptions~\ref{ass8} and \ref{ass2} and let $V\in M^{q}_{\loc}(p;\Omega)$. {In addition, for $1<p<2$ suppose that Assumption~\ref{ngradb} holds.}
   Then $Q_{p,\mathcal{A},V}$ is subcritical in a domain~$\omega\Subset\Omega$ if and only if $\lambda_{1}(Q_{p,\mathcal{A},V};\omega)>0$.
 \end{corollary}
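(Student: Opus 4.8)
The plan is to prove the two implications separately. The forward direction (positivity of $\lambda_1$ implies subcriticality) is immediate from the variational definition of $\lambda_1$, while for the converse I would argue by contradiction: assuming $\lambda_1=0$, I would produce a null-sequence and then invoke the null-sequence characterization of criticality, Theorem~\ref{thm_Poincare}(1).

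\emph{Sufficiency of $\lambda_1>0$.} Suppose $\lambda_1:=\lambda_1(Q_{p,\mathcal{A},V};\omega)>0$. By Remark~\ref{rem_lambda1},
$$Q_{p,\mathcal{A},V}[\varphi;\omega]\geq \lambda_1\int_{\omega}|\varphi|^{p}\dx \qquad \text{for all } \varphi\in C^{\infty}_{c}(\omega).$$
Since $\omega\Subset\Omega$ is bounded, the positive constant function $W\equiv\lambda_1$ belongs to $M^{q}(p;\omega)\setminus\{0\}\subseteq M^{q}_{\loc}(p;\omega)\setminus\{0\}$, so $W$ is a Hardy-weight for $Q_{p,\mathcal{A},V}$ in $\omega$; hence $Q_{p,\mathcal{A},V}$ is subcritical in $\omega$.

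\emph{Necessity of $\lambda_1>0$.} Conversely, suppose $Q_{p,\mathcal{A},V}$ is subcritical in $\omega$. Then, by definition, $Q_{p,\mathcal{A},V}$ is nonnegative on $C^{\infty}_{c}(\omega)$, so $\lambda_1\geq 0$; I claim $\lambda_1\neq 0$. Assume otherwise. By Theorem~\ref{principaleigenvalue}(1), $\lambda_1=0$ is a principal eigenvalue of $Q'_{p,\mathcal{A},V}$ in $\omega$, with an associated positive principal eigenfunction $v$, which is thus a positive solution of $Q'_{p,\mathcal{A},V}[u]=0$ in $\omega$. By Corollary~\ref{nullrem}, $v$ is (up to a positive multiplicative constant) a ground state of $Q_{p,\mathcal{A},V-\lambda_1}=Q_{p,\mathcal{A},V}$ in $\omega$; in particular $Q_{p,\mathcal{A},V}$ admits a null-sequence in $C^{\infty}_{c}(\omega)$. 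By Theorem~\ref{thm_Poincare}(1) — whose hypotheses (assumptions~\ref{ass8} and \ref{ass2}, and Assumption~\ref{ngradb} when $1<p<2$) are precisely those assumed here — the functional $Q_{p,\mathcal{A},V}$ is then critical in $\omega$, contradicting subcriticality. Therefore $\lambda_1>0$.

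As this is a corollary, there is no substantial technical obstacle; the work is entirely one of correctly assembling results already established for subdomains $\omega\Subset\Omega$ with $V\in M^{q}(p;\omega)$: that $\lambda_1$ is attained and is a (simple) principal eigenvalue (Theorem~\ref{principaleigenvalue}), that the principal eigenfunction of the shifted functional is a ground state (Corollary~\ref{nullrem}), and that criticality is equivalent to the existence of a null-sequence (Theorem~\ref{thm_Poincare}(1)). The only point deserving an explicit line is that a positive constant is an admissible Hardy-weight on a bounded domain, which is immediate from the definition of the local Morrey space.
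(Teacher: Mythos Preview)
Your proof is correct. The sufficiency direction is identical to the paper's. For the necessity direction, however, you take a different route: you argue by contradiction, assuming $\lambda_1=0$, and then go directly through Corollary~\ref{nullrem} (the principal eigenfunction is a ground state of $Q_{p,\mathcal{A},V-\lambda_1}=Q_{p,\mathcal{A},V}$) and Theorem~\ref{thm_Poincare}(1) (existence of a null-sequence is equivalent to criticality) to reach the contradiction. The paper instead proceeds constructively: from a Hardy-weight $W$ it invokes the AAP theorem to produce a positive solution of $Q'_{p,\mathcal{A},V-W}[u]=0$, observes that this is a \emph{proper} positive supersolution of $Q'_{p,\mathcal{A},V}[u]=0$, and then applies Theorem~\ref{complement} to conclude $\lambda_1>0$. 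Your argument is slightly more self-contained within the criticality framework and avoids the detour through AAP and proper supersolutions; the paper's argument, on the other hand, makes transparent the link between subcriticality and the existence of proper supersolutions, which is the content of Theorem~\ref{complement}. Both ultimately rest on the same underlying machinery (Corollary~\ref{nullrem} and Theorem~\ref{mainthm}).
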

 \begin{proof}
  Suppose that $Q_{p,\mathcal{A},V}$ is subcritical in $\omega $. Therefore, it admits a Hardy-weight $W$ in~$\omega$. The AAP type theorem (Theorem~\ref{thm_AAP}) implies that there exists a positive solution $v$  of $Q_{p,\mathcal{A},V-W}'[u]=0$ in~$\omega$. Clearly,  $v$ is a proper positive supersolution
  of $Q_{p,\mathcal{A},V}'[u]=0$ in~$\omega$. By Theorem \ref{complement}, we have~$\lambda_{1}(Q_{p,\mathcal{A},V};\omega)>0$.

On the other hand, if $\lambda_{1}(Q_{p,\mathcal{A},V};\omega)>0$, then~$\lambda_{1}$ is a Hardy-weight for~$Q_{p,\mathcal{A},V}$ in~$\omega$ and hence~$Q_{p,\mathcal{A},V}$ is subcritical.
 \end{proof}
\subsection{Perturbation results and applications}\label{ssect_pert}
The present subsection is mainly intended for certain perturbation results.  Our perturbations results are divided into two cases. One is a domain perturbation, and the other concerns certain potential perturbations. As an application we show that a critical operator admits a null-sequence that converges locally uniformly to its ground state.
\subsubsection{Criticality theory under a domain perturbation}
The following is a straightforward result (see \cite[Proposition 4.2]{Tintarev}).
 \begin{proposition}\label{two}
   Let~$\mathcal{A}$ satisfy assumptions~\ref{ass8} and \ref{ass2} and let~$V\in M^{q}_{\loc}(p;\Omega)$.  {In addition, for $1<p<2$ suppose that Assumption~\ref{ngradb} holds.}
   Let $\Omega_{1}\subseteq\Omega_{2} \subseteq \Gw$ be subdomains such that $\Omega_{2}\setminus \overline{\Omega_{1}}\neq \emptyset$.
\begin{enumerate}
     \item[$(a)$] If~$Q_{p,\mathcal{A},V}$ is nonnegative in~$\Omega_{2}$, then~$Q_{p,\mathcal{A},V}$ is subcritical in~$\Omega_{1}$.
     \item[$(b)$] If~$Q_{p,\mathcal{A},V}$ is critical in~$\Omega_{1}$, then~$Q_{p,\mathcal{A},V}$ is supercritical in~$\Omega_{2}$.
   \end{enumerate}
 \end{proposition}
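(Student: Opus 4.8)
The plan is to establish $(a)$ directly and then derive $(b)$ from it by contraposition, exploiting that nonnegativity, subcriticality, and criticality partition the possibilities: a nonnegative functional is either subcritical or critical, and a functional that fails to be nonnegative is supercritical. Thus, once $(a)$ is known, criticality of $Q_{p,\mathcal{A},V}$ in $\Omega_{1}$ forbids subcriticality in $\Omega_{1}$, hence forbids nonnegativity in $\Omega_{2}$, i.e.\ it forces the existence of $\varphi\in C_{c}^{\infty}(\Omega_{2})$ with $Q_{p,\mathcal{A},V}[\varphi]<0$, which is precisely supercriticality in $\Omega_{2}$; so everything reduces to proving $(a)$.

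To prove $(a)$, I would assume that $Q_{p,\mathcal{A},V}$ is nonnegative in $\Omega_{2}$. Extending test functions by zero shows that it is then also nonnegative in $\Omega_{1}$, so in $\Omega_{1}$ it is either subcritical (what we want) or critical. Suppose, towards a contradiction, that it is critical in $\Omega_{1}$. By Theorem~\ref{thm_Poincare}$(1)$ applied in $\Omega_{1}$, there is a null-sequence $\{u_{k}\}_{k\in\mathbb{N}}\subseteq C_{c}^{\infty}(\Omega_{1})$ of $Q_{p,\mathcal{A},V}$ in $\Omega_{1}$, normalized so that $\Vert u_{k}\Vert_{L^{p}(U)}=1$ for some fixed open set $U\Subset\Omega_{1}$. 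The key observation is that $\{u_{k}\}_{k\in\mathbb{N}}$ is simultaneously a null-sequence of $Q_{p,\mathcal{A},V}$ in $\Omega_{2}$: each $u_{k}$ is bounded and, after extension by zero, lies in $C_{c}^{\infty}(\Omega_{2})$ because $\supp u_{k}$ is a compact subset of $\Omega_{1}\subseteq\Omega_{2}$; the normalization set satisfies $U\Subset\Omega_{1}\subseteq\Omega_{2}$; and $Q_{p,\mathcal{A},V}[u_{k};\Omega_{2}]=Q_{p,\mathcal{A},V}[u_{k};\Omega_{1}]\to0$, since $u_{k}$ is supported in $\Omega_{1}$.

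Since $Q_{p,\mathcal{A},V}$ is nonnegative in $\Omega_{2}$, I would next invoke Theorem~\ref{mainthm} in $\Omega_{2}$: the null-sequence $\{u_{k}\}_{k\in\mathbb{N}}$ converges in $L^{p}_{\loc}(\Omega_{2})$, and a.e.\ in $\Omega_{2}$, to a positive supersolution $\phi$ of $Q'_{p,\mathcal{A},V}[u]=0$ in $\Omega_{2}$. Now fix a nonempty open set $K$ with $\overline{K}\subseteq\Omega_{2}\setminus\overline{\Omega_{1}}$, which is possible because $\Omega_{2}\setminus\overline{\Omega_{1}}$ is open and nonempty by hypothesis. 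Each $u_{k}$ vanishes identically on $K$, since $\supp u_{k}\subseteq\Omega_{1}$ is disjoint from $\overline{K}$; hence $\Vert u_{k}\Vert_{L^{p}(K)}=0$ for all $k$, whereas $\Vert u_{k}\Vert_{L^{p}(K)}\to\Vert\phi\Vert_{L^{p}(K)}>0$ because $\phi>0$ throughout $\Omega_{2}$. This contradiction rules out criticality in $\Omega_{1}$, so $Q_{p,\mathcal{A},V}$ is subcritical there, which proves $(a)$. The argument is in essence a bookkeeping exercise resting on Theorems~\ref{thm_Poincare} and \ref{mainthm} (which is where Assumption~\ref{ass2}, and Assumption~\ref{ngradb} for $1<p<2$, are actually used); the one point I would verify most carefully — and the place where the hypothesis $\Omega_{2}\setminus\overline{\Omega_{1}}\neq\emptyset$ is essential — is that passing from $\Omega_{1}$ to the larger domain $\Omega_{2}$ preserves all three defining properties of the null-sequence, so that the location of the supports of the $u_{k}$ collides with the strict positivity of the limiting supersolution.
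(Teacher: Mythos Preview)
Your proof is correct and follows the standard null-sequence argument that the paper has in mind: the paper does not reproduce a proof but simply labels the result ``straightforward'' and refers to \cite[Proposition~4.2]{Tintarev}, where exactly this line of reasoning (a null-sequence for $Q_{p,\mathcal{A},V}$ in $\Omega_{1}$ is also a null-sequence in $\Omega_{2}$, forcing its limit to be positive throughout $\Omega_{2}$ and hence contradicting the vanishing on $\Omega_{2}\setminus\overline{\Omega_{1}}$) is used. Your reduction of $(b)$ to $(a)$ by contraposition is also the intended observation.
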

\begin{corollary}
    Let~$\mathcal{A}$ satisfy assumptions~\ref{ass8} and \ref{ass2}, and let~$V\in M^{q}_{\loc}(p;\Omega)$. {In addition, for $1<p<2$ suppose that Assumption~\ref{ngradb} holds.}  If~$Q_{p,\mathcal{A},V}$ is nonnegative in~$\Omega$, then for all domains~$\omega\Subset\Omega$, we have~$\lambda_{1}(Q_{p,\mathcal{A},V};\omega)>0$.
 \end{corollary}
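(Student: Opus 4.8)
The plan is to reduce the claim to Proposition~\ref{two}(a) together with Corollary~\ref{subcriticaleg}, so that essentially no new work is required. Fix a domain $\omega\Subset\Omega$. First I would produce an intermediate domain $\omega'$ with $\omega\Subset\omega'\Subset\Omega$; such an $\omega'$ exists by the standard construction of a Lipschitz exhaustion of $\Omega$ (or, more directly, by joining $\overline{\omega}$ to a point of $\Omega\setminus\overline{\omega}$ by a path inside $\Omega$ and taking a small tubular neighbourhood of that path together with $\omega$). I then note that automatically $\omega'\setminus\overline{\omega}\neq\emptyset$: otherwise $\omega'\subseteq\overline{\omega}$, and combined with $\overline{\omega}\subseteq\omega'$ (which follows from $\omega\Subset\omega'$) this forces the open set $\omega'$ to coincide with the closed set $\overline{\omega}$, hence to be clopen in $\R^n$ and therefore equal to $\emptyset$ or to $\R^n$ — the former impossible since $\omega$ is a nonempty domain, the latter impossible since $\omega'\Subset\Omega$.

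Next I would check the hypotheses of Proposition~\ref{two}(a) with $\Omega_1=\omega$ and $\Omega_2=\omega'$. Since $C^\infty_c(\omega')\subseteq C^\infty_c(\Omega)$, the assumed nonnegativity of $Q_{p,\mathcal{A},V}$ in $\Omega$ restricts to nonnegativity of $Q_{p,\mathcal{A},V}$ in $\omega'$; moreover $\mathcal{A}$ and $V$ retain Assumptions~\ref{ass8} and \ref{ass2} (and, for $1<p<2$, Assumption~\ref{ngradb}) on the smaller set. Because $\omega'\setminus\overline{\omega}\neq\emptyset$, Proposition~\ref{two}(a) applies and yields that $Q_{p,\mathcal{A},V}$ is subcritical in $\omega$.

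Finally, Corollary~\ref{subcriticaleg} gives the equivalence between subcriticality of $Q_{p,\mathcal{A},V}$ in the domain $\omega\Subset\Omega$ and the strict positivity of $\lambda_1(Q_{p,\mathcal{A},V};\omega)$; the previous step furnishes the former, hence $\lambda_1(Q_{p,\mathcal{A},V};\omega)>0$, as claimed. There is no serious obstacle in this argument: the only point requiring a moment's care is the elementary topological observation that any $\omega'$ with $\omega\Subset\omega'\Subset\Omega$ satisfies $\omega'\setminus\overline{\omega}\neq\emptyset$, which is precisely the hypothesis needed to invoke Proposition~\ref{two}(a).
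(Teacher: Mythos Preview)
Your proposal is correct and follows the same route as the paper, which simply cites Proposition~\ref{two} and Corollary~\ref{subcriticaleg}. One minor simplification: the intermediate domain $\omega'$ is unnecessary, since Proposition~\ref{two}(a) allows $\Omega_2=\Omega$ directly (it only requires $\Omega_1\subseteq\Omega_2\subseteq\Omega$ with $\Omega_2\setminus\overline{\Omega_1}\neq\emptyset$), and your clopen argument applies verbatim to show $\Omega\setminus\overline{\omega}\neq\emptyset$.
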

 \begin{proof}
   The result follows directly from Proposition \ref{two} and Corollary \ref{subcriticaleg}.
 \end{proof}
 \subsubsection{Criticality theory under potential perturbations}
We state  here certain results on perturbations by a potential whose proofs are as in the proofs of \cite[Proposition 4.8]{Pinchover}, \cite[Corollary 4.17]{Pinchover},  \cite[Propositions 4.4 and 4.5]{Tintarev}.
	\begin{proposition}\label{prop1}
		Suppose that~$\mathcal{A}$ satisfies Assumption \ref{ass8}, $V_{2}\geq V_{1}$ a.e. in $\Omega$, where $V_i\in M^{q}_{\loc}(p;\Omega)$ for~$i=1,2$, and
		$\mathcal{L}^{n}(\{x\in \Gw : V_{2}(x)>V_{1}(x) \})>0$.
		\begin{enumerate}
			\item[$(1)$] If~$Q_{p,\mathcal{A},V_{1}}$ is nonnegative in~$\Omega$, then~$Q_{p,\mathcal{A},V_{2}}$ is subcritical in~$\Omega$.
			\item[$(2)$] If~$Q_{p,\mathcal{A},V_{2}}$ is critical in~$\Omega$, then~$Q_{p,\mathcal{A},V_{1}}$ is supercritical in~$\Omega$.
		\end{enumerate}
	\end{proposition}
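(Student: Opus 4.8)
The plan is to exploit the fact that $Q_{p,\mathcal{A},V}$ depends linearly and monotonically on the potential $V$, together with the definitions of subcriticality, criticality, and supercriticality. In particular, neither Assumption~\ref{ass2}, nor the AAP theorem, nor the null-sequence machinery is needed, and the argument is essentially a one-line manipulation of the energy functional.

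For part $(1)$, I would fix $\varphi\in C^{\infty}_{c}(\Omega)$ and use the definition of $Q_{p,\mathcal{A},V}$ to write
\[
Q_{p,\mathcal{A},V_{2}}[\varphi]=Q_{p,\mathcal{A},V_{1}}[\varphi]+\int_{\Omega}(V_{2}-V_{1})|\varphi|^{p}\dx\geq \int_{\Omega}(V_{2}-V_{1})|\varphi|^{p}\dx ,
\]
where the inequality is precisely the assumed nonnegativity of $Q_{p,\mathcal{A},V_{1}}$ in $\Omega$, and all integrals are finite by the Morrey--Adams theorem (Theorem~\ref{MA_thm}). Setting $W\triangleq V_{2}-V_{1}$, the hypotheses $V_{2}\geq V_{1}$ a.e.\ in $\Omega$, $\mathcal{L}^{n}(\{x\in\Omega:V_{2}(x)>V_{1}(x)\})>0$, and $V_{i}\in M^{q}_{\loc}(p;\Omega)$ guarantee that $W$ is a nonzero nonnegative element of $M^{q}_{\loc}(p;\Omega)$. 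Hence $W$ is a Hardy-weight for $Q_{p,\mathcal{A},V_{2}}$ in $\Omega$, and therefore $Q_{p,\mathcal{A},V_{2}}$ is subcritical in $\Omega$.

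For part $(2)$ I would argue via the trichotomy: every functional $Q_{p,\mathcal{A},V}$ is exactly one of subcritical, critical, or supercritical, since it is either nonnegative on $C^{\infty}_{c}(\Omega)$ (and then, by definition, subcritical or critical according to whether it admits a Hardy-weight) or it attains a negative value there (and is then supercritical). If $Q_{p,\mathcal{A},V_{1}}$ were not supercritical in $\Omega$, it would be nonnegative in $\Omega$, and part $(1)$ would then force $Q_{p,\mathcal{A},V_{2}}$ to be subcritical, contradicting its assumed criticality. Therefore $Q_{p,\mathcal{A},V_{1}}$ is supercritical in $\Omega$.

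Since the whole argument reduces to the definitions plus linearity of the potential term, I do not expect a genuine obstacle; the only points requiring a line of justification are that $W=V_{2}-V_{1}$ belongs to the (linear) space $M^{q}_{\loc}(p;\Omega)$ and is nonzero as a function, which is exactly the positive-measure hypothesis, and that the three alternatives above are mutually exclusive and exhaustive.
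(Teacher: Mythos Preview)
Your proof is correct and is precisely the standard argument; the paper itself omits the proof and simply refers to \cite[Proposition 4.8]{Pinchover} and \cite[Propositions 4.4 and 4.5]{Tintarev}, where the same one-line computation with $W=V_{2}-V_{1}$ is carried out. The only point worth noting is that you have indeed checked everything the definition of a Hardy-weight requires: $W\in M^{q}_{\loc}(p;\Omega)$ (linearity of the Morrey space), $W\geq 0$ a.e., and $W\not\equiv 0$ by the positive-measure hypothesis.
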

 \begin{cor}\label{interval}
    Let~$\mathcal{A}$ satisfy assumptions~\ref{ass8} and \ref{ass2}, and let~$V_{i}\in M^{q}_{\loc}(p;\Omega)$, where $i=0,1$.  {In addition, for $1<p<2$ suppose that Assumption~\ref{ngradb} holds.}  Assume that~$Q_{p,\mathcal{A},V_{i}}$ are nonnegative for~$i=1,2$.
     Let~$V_{t}\triangleq(1-t)V_{0}+tV_{1} $ for~$t\in [0,1]$.
     Then $Q_{p,\mathcal{A},V_{t}}$ is nonnegative in~$\Omega$ for all~$t\in [0,1]$. Moreover, if $\mathcal{L}^{n}\left(\{V_{0}\neq V_{1}\}\right)\!>\!0$, then~$Q_{p,\mathcal{A},V_{t}}$ is subcritical in~$\Omega$ for every~$t\in (0,1)$.
 \end{cor}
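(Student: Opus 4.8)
The plan is to exploit the affine dependence of the energy on the potential. For a fixed $\varphi\in\core$ the principal part $\int_\Omega|\nabla\varphi|_{\mathcal{A}}^p\dx$ carries no $V$, so for every $t\in[0,1]$
$$Q_{p,\mathcal{A},V_t}[\varphi]=(1-t)\,Q_{p,\mathcal{A},V_0}[\varphi]+t\,Q_{p,\mathcal{A},V_1}[\varphi].$$
Since $Q_{p,\mathcal{A},V_0}$ and $Q_{p,\mathcal{A},V_1}$ are nonnegative in $\Omega$, the right-hand side is nonnegative, which settles the first assertion: $Q_{p,\mathcal{A},V_t}$ is nonnegative in $\Omega$ for every $t\in[0,1]$ (this step needs neither Assumption~\ref{ass2} nor Assumption~\ref{ngradb}). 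By the criticality/subcriticality dichotomy for nonnegative functionals, it then remains to show that for $t\in(0,1)$ (and $\mathcal{L}^n(\{V_0\neq V_1\})>0$) the functional $Q_{p,\mathcal{A},V_t}$ is \emph{not} critical.

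I would argue by contradiction. Suppose $Q_{p,\mathcal{A},V_t}$ is critical for some $t\in(0,1)$. By Theorem~\ref{thm_Poincare}(1) there is a null-sequence $\{u_k\}_{k\in\mathbb{N}}\subseteq\core$ for $Q_{p,\mathcal{A},V_t}$; in particular the $u_k$ are bounded, $\Vert u_k\Vert_{L^p(U)}=1$ for some fixed open $U\Subset\Omega$, and $Q_{p,\mathcal{A},V_t}[u_k]\to0$. Plugging $u_k$ into the affine identity and using $Q_{p,\mathcal{A},V_0}[u_k]\ge0$, $Q_{p,\mathcal{A},V_1}[u_k]\ge0$ together with $0<t<1$, I obtain $0\le(1-t)Q_{p,\mathcal{A},V_0}[u_k]\le Q_{p,\mathcal{A},V_t}[u_k]\to0$ and likewise $t\,Q_{p,\mathcal{A},V_1}[u_k]\to0$. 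Hence the very same sequence $\{u_k\}$ is simultaneously a null-sequence of $Q_{p,\mathcal{A},V_0}$ and of $Q_{p,\mathcal{A},V_1}$.

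Now I would invoke Theorem~\ref{mainthm} for each of the three nonnegative functionals $Q_{p,\mathcal{A},V_0}$, $Q_{p,\mathcal{A},V_1}$, $Q_{p,\mathcal{A},V_t}$: the null-sequence $\{u_k\}$ converges in $L^p_{\loc}(\Omega)$ and a.e.\ in $\Omega$ to a positive limit which, up to a multiplicative constant, is the unique positive solution of the corresponding equation $Q'_{p,\mathcal{A},\,\cdot\,}[u]=0$. Because the $L^p_{\loc}$-limit of $\{u_k\}$ is unique as a function, there is a single positive $\phi\in W^{1,p}_{\loc}(\Omega)$ (up to normalization) that is a weak solution of $Q'_{p,\mathcal{A},V_0}[u]=0$, of $Q'_{p,\mathcal{A},V_1}[u]=0$, and of $Q'_{p,\mathcal{A},V_t}[u]=0$ in $\Omega$. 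Subtracting the weak formulations of the first two equations yields $\int_\Omega(V_0-V_1)\phi^{p-1}\psi\dx=0$ for all $\psi\in\core$, and since $\phi>0$ a.e.\ this forces $V_0=V_1$ a.e.\ in $\Omega$, contradicting $\mathcal{L}^n(\{V_0\neq V_1\})>0$. Therefore $Q_{p,\mathcal{A},V_t}$ is subcritical for every $t\in(0,1)$.

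The argument is mostly soft once the affine dependence on $V$ is noticed; the one place deserving care is the joint appeal to Theorem~\ref{mainthm}, where I must ensure that the (a priori three) $L^p_{\loc}$-limits of the common null-sequence are correctly identified as one and the same genuine positive solution of all three equations. This bookkeeping — rather than any hard estimate — is the crux, and I do not expect a real obstacle. (One could alternatively route the nonnegativity step through the AAP theorem, Theorem~\ref{thm_AAP}, and then argue subcriticality along the lines of Proposition~\ref{prop1}, but the null-sequence argument above is the most direct.)
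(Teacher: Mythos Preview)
Your argument is correct and is essentially the standard proof that the paper defers to the literature (it omits the proof, citing \cite[Corollary 4.17]{Pinchover} and \cite[Proposition 4.5]{Tintarev}): exploit the affine identity $Q_{p,\mathcal{A},V_t}[\varphi]=(1-t)Q_{p,\mathcal{A},V_0}[\varphi]+tQ_{p,\mathcal{A},V_1}[\varphi]$ to get nonnegativity, then use the null-sequence characterization (Theorem~\ref{thm_Poincare}(1)) and Theorem~\ref{mainthm} to force a common ground state solving both endpoint equations, whence $V_0=V_1$ a.e. The bookkeeping you flag---that the single $L^p_{\loc}$-limit of $\{u_k\}$ serves simultaneously as the ground state for $V_0$, $V_1$ and $V_t$---is indeed the only point requiring care, and it goes through exactly as you say.
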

 \begin{proposition}\label{prop_subcritical}
  Let~$\mathcal{A}$ satisfy assumptions~\ref{ass8} and \ref{ass2}, and let~$V\in M^{q}_{\loc}(p;\Omega)$. {In addition, for $1<p<2$ suppose that Assumption~\ref{ngradb} holds.}
   Assume that~$Q_{p,\mathcal{A},V}$ is subcritical in~$\Omega$ and $\mathbf{V}\in L_c^{\infty}(\Omega)\setminus\{0\}$ is such that~$\mathbf{V}\ngeq 0$. Then there is~$\tau_{+}>0$ and~$\tau_{-}\in[-\infty,0)$ such that $Q_{p,\mathcal{A},V+t\mathbf{V}}$ is subcritical in~$\Omega$ if and only if~$t\in(\tau_{-},\tau_{+})$. In addition,~$Q_{p,\mathcal{A},V+\tau_{+}\mathbf{V}}$ is critical in~$\Omega$.
\end{proposition}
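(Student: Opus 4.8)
The plan is to study the set $\mathcal{S}\triangleq\{t\in\R : Q_{p,\mathcal{A},V+t\mathbf{V}}\text{ is nonnegative in }\Omega\}$ and to show that its interior coincides with the set of $t$ for which $Q_{p,\mathcal{A},V+t\mathbf{V}}$ is subcritical, that $\mathcal{S}$ is a nonempty closed interval $[\tau_-,\tau_+]$ (with possibly $\tau_-=-\infty$), that $0\in\operatorname{int}\mathcal{S}$, and that the endpoints carry a critical operator. First I would observe that $\mathcal{S}$ is convex: if $t_0,t_1\in\mathcal{S}$ and $t=(1-s)t_0+st_1$ with $s\in[0,1]$, then since $V+t\mathbf{V}=(1-s)(V+t_0\mathbf{V})+s(V+t_1\mathbf{V})$, convexity of $\varphi\mapsto Q_{p,\mathcal{A},V+\cdot\mathbf{V}}[\varphi]$ in the potential variable (the potential term is linear, the gradient term is unchanged) gives $Q_{p,\mathcal{A},V+t\mathbf{V}}[\varphi]\le(1-s)Q_{p,\mathcal{A},V+t_0\mathbf{V}}[\varphi]+sQ_{p,\mathcal{A},V+t_1\mathbf{V}}[\varphi]$... wait, the inequality goes the wrong way for nonnegativity; instead I use that $Q_{p,\mathcal{A},V+t\mathbf{V}}[\varphi]=(1-s)Q_{p,\mathcal{A},V+t_0\mathbf{V}}[\varphi]+sQ_{p,\mathcal{A},V+t_1\mathbf{V}}[\varphi]$ is an exact convex combination (linearity in the potential), so nonnegativity of the two endpoints forces nonnegativity for all intermediate $t$. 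Hence $\mathcal{S}$ is an interval. Since $Q_{p,\mathcal{A},V}$ is subcritical, in particular $0\in\mathcal{S}$; moreover, because $\mathbf{V}\in L^\infty_c(\Omega)\setminus\{0\}$ the Hardy-weight for $Q_{p,\mathcal{A},V}$ dominates some $0\lneq\tilde W\in L^\infty_c(\Omega)$, and a small perturbation $|t|\,\|\mathbf{V}\|_\infty\le$ the relevant constant keeps $Q_{p,\mathcal{A},V+t\mathbf{V}}\ge(\tilde W-|t\mathbf{V}|)|\varphi|^p$ with $\tilde W-|t\mathbf V|$ still a nontrivial nonnegative weight for $|t|$ small; this shows $0\in\operatorname{int}\mathcal{S}$ and that small $|t|$ gives subcriticality, so $\tau_+>0$ and $\tau_-<0$.

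Next I would identify $\operatorname{int}\mathcal S$ with the subcriticality set. For $t$ in the interior of $\mathcal S$, pick $t'$ with $t$ strictly between $t'$ and $0$ (or between $t'$ and another interior point), both in $\mathcal S$; then $V+t\mathbf V = (1-s)(V+t'\mathbf V) + s V$ for suitable $s\in(0,1)$ and, since $\mathcal{L}^n(\{V+t'\mathbf V\neq V\})=\mathcal{L}^n(\{t'\mathbf V\neq 0\})>0$ because $t'\neq 0$ and $\mathbf V\not\equiv 0$, Corollary \ref{interval} (applied with $V_0=V$, $V_1=V+t'\mathbf V$, both nonnegative functionals) yields that $Q_{p,\mathcal{A},V+t\mathbf V}$ is subcritical. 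Conversely, if $Q_{p,\mathcal{A},V+t\mathbf V}$ is subcritical, then by the argument of the first paragraph (domination of a Hardy-weight by an $L^\infty_c$ one, then absorbing a further small perturbation $\pm\epsilon\mathbf V$) the point $t$ lies in the interior of $\mathcal S$. Therefore $\{t:Q_{p,\mathcal{A},V+t\mathbf V}\text{ subcritical}\}=\operatorname{int}\mathcal S=(\tau_-,\tau_+)$, and at an endpoint, say $\tau_+$, the functional $Q_{p,\mathcal{A},V+\tau_+\mathbf V}$ is nonnegative (closedness of $\mathcal S$, which I justify below) but not subcritical, hence critical by definition.

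For closedness of $\mathcal S$ at a finite endpoint $\tau_+$: take $t_j\uparrow\tau_+$ with $Q_{p,\mathcal{A},V+t_j\mathbf V}\ge0$; for fixed $\varphi\in C_c^\infty(\Omega)$ the map $t\mapsto Q_{p,\mathcal{A},V+t\mathbf V}[\varphi]$ is affine in $t$ and continuous, so $Q_{p,\mathcal{A},V+\tau_+\mathbf V}[\varphi]=\lim_j Q_{p,\mathcal{A},V+t_j\mathbf V}[\varphi]\ge0$; thus $\tau_+\in\mathcal S$. Finally, that $\tau_+<\infty$: since $\mathbf V\not\equiv0$ and $\mathbf V\ngeq 0$ is immaterial here — rather one uses that for $t$ large the negative part contributed on $\{\mathbf V>0\}$ (together with the $L^\infty_c$ support) drives $Q_{p,\mathcal{A},V+t\mathbf V}[\varphi]<0$ for a suitable test function supported where $\mathbf V>0$; more precisely, fixing $0\le\varphi_0\in C_c^\infty(\{\mathbf V>0\})$, $Q_{p,\mathcal{A},V+t\mathbf V}[\varphi_0]=Q_{p,\mathcal{A},V}[\varphi_0]+t\int\mathbf V|\varphi_0|^p\dx\to-\infty$ as $t\to-\infty$... again a sign check: $\int\mathbf V|\varphi_0|^p>0$, so this $\to+\infty$ as $t\to+\infty$ and $\to-\infty$ as $t\to-\infty$; hence $\tau_-$ could be $-\infty$ (which the statement allows) but to bound $\tau_+<\infty$ I instead pick $0\le\psi_0\in C_c^\infty(\{\mathbf V<0\})$ — such a set has positive measure because $\mathbf V\ngeq0$ — and then $Q_{p,\mathcal{A},V+t\mathbf V}[\psi_0]=Q_{p,\mathcal{A},V}[\psi_0]+t\int\mathbf V|\psi_0|^p\dx$ with $\int\mathbf V|\psi_0|^p<0$, so this tends to $-\infty$ as $t\to+\infty$, forcing $\tau_+<\infty$. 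The main obstacle I anticipate is the passage "subcritical $\Rightarrow$ $t\in\operatorname{int}\mathcal S$" with the correct quantitative absorption of the extra $\pm\epsilon\mathbf V$ perturbation into the Hardy-weight — i.e. making sure that after reducing the Hardy-weight $W$ to an $L^\infty_c$ one $\tilde W$, one still has room to subtract $\epsilon|\mathbf V|$ and keep a nontrivial nonnegative weight; this is exactly where the reduction "$W$ a Hardy-weight $\Rightarrow$ some $0\lneq\tilde W\le W$, $\tilde W\in L^\infty_c$" (noted after the definition of subcriticality) is used, and one must check the supports interact correctly, which is routine but must be done with care.
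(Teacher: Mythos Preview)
Your strategy is correct and is the standard one the paper defers to (via the cited references \cite{Tintarev,Pinchover}): study the convex set $\mathcal S$ of nonnegative parameters, show it is a closed interval with $0$ in its interior, identify the interior with the subcritical set via Corollary~\ref{interval}, and bound $\tau_+<\infty$ using a test function concentrated where $\mathbf V<0$.

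The one step that does not go through as written is the absorption argument you flag at the end. With the $L^\infty_c$ reduction $\tilde W$, the function $\tilde W-\epsilon|\mathbf V|$ can be negative on $\operatorname{supp}(\mathbf V)\setminus\operatorname{supp}(\tilde W)$, so ``$\tilde W-|t\mathbf V|$ still a nontrivial nonnegative weight'' need not hold for any $t\neq 0$. The clean fix is to use Theorem~\ref{thm_Poincare}(3) instead: subcriticality of $Q_{p,\mathcal A,V+t\mathbf V}$ yields a \emph{strictly positive continuous} Hardy-weight $W$; since $K\triangleq\operatorname{supp}(\mathbf V)$ is compact, $W\ge\min_K W>0$ on $K$, and hence $W+s\mathbf V>0$ everywhere for $|s|<\min_K W/\|\mathbf V\|_\infty$. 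This gives both $0\in\operatorname{int}\mathcal S$ (take $t=0$) and, at a general subcritical $t$, the implication ``subcritical $\Rightarrow t\in\operatorname{int}\mathcal S$'' in one stroke. Note that this is exactly where the hypotheses of the Proposition (Assumptions~\ref{ass8}, \ref{ass2}, and~\ref{ngradb} for $1<p<2$) are used, since Theorem~\ref{thm_Poincare}(3) requires them.
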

\begin{proposition}
    Let~$\mathcal{A}$ satisfy assumptions~\ref{ass8} and \ref{ass2}, and let~$V\in M^{q}_{\loc}(p;\Omega)$. {In addition, for $1<p<2$ suppose that Assumption~\ref{ngradb} holds.}
     Assume that~$Q_{p,\mathcal{A},V}$ is critical in~$\Omega$ with a ground state $v$. Let $\mathbf{V}\in L_c^{\infty}(\Omega)$. Then there is~$0<\tau_{+}\leq \infty$ such that~$Q_{p,\mathcal{A},V+t\mathbf{V}}$ is subcritical in~$\Omega$ for~$t\in (0,\tau_{+})$ if and only if~$\int_{\Omega}\mathbf{V}|v|^{p}\dx>0.$
  \end{proposition}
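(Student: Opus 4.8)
The plan is to establish the equivalence by proving its two implications separately, and then to use convexity (Corollary~\ref{interval}) to pass from a single good parameter to the whole interval $(0,\tau_+)$. As a preliminary observation, I would record that, $Q_{p,\mathcal A,V}$ being critical, it is nonnegative on $\core$ and, by Theorem~\ref{thm_Poincare}(1), it carries a null-sequence $\{u_k\}\subseteq\core$; by Theorem~\ref{mainthm} (with Assumption~\ref{ngradb} when $1<p<2$) every null-sequence of $Q_{p,\mathcal A,V}$ converges in $L^p_\loc(\Omega)$ and a.e.\ to a positive multiple $cv$ of the ground state. Writing $K:=\supp\mathbf V$ (compact in $\Omega$) and using $\mathbf V\in L^\infty_c(\Omega)$, this convergence gives $u_k\to cv$ in $L^p(K)$, hence $|u_k|^p\to c^p|v|^p$ in $L^1(K)$ (via $|a^p-b^p|\le p|a-b|(a^{p-1}+b^{p-1})$ and H\"older), so $\int_\Omega\mathbf V|u_k|^p\dx\to c^p\int_\Omega\mathbf V|v|^p\dx$; the analogous statement will hold for any null-sequence produced later.

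For the necessity part $(\Rightarrow)$, I would fix $t\in(0,\tau_+)$, use that subcriticality forces nonnegativity, test $Q_{p,\mathcal A,V+t\mathbf V}\ge0$ against the null-sequence $\{u_k\}$, and let $k\to\infty$ to obtain $0\le tc^p\int_\Omega\mathbf V|v|^p\dx$, so $\int_\Omega\mathbf V|v|^p\dx\ge0$. To rule out equality, note that equality would make $Q_{p,\mathcal A,V+t\mathbf V}[u_k]\to0$, so $\{u_k\}$ would be a null-sequence in $\core$ of the nonnegative functional $Q_{p,\mathcal A,V+t\mathbf V}$, and Theorem~\ref{thm_Poincare}(1) would then declare $Q_{p,\mathcal A,V+t\mathbf V}$ critical, contradicting subcriticality; hence $\int_\Omega\mathbf V|v|^p\dx>0$.

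For the sufficiency part $(\Leftarrow)$, assuming $\int_\Omega\mathbf V|v|^p\dx>0$ (so $\mathcal L^n(\{\mathbf V\ne0\})>0$ since $v>0$), the heart of the matter is to show that $Q_{p,\mathcal A,V+t_0\mathbf V}$ is nonnegative in $\Omega$ for some $t_0>0$. I would argue by contradiction: if for every $n$ there is $\varphi_n\in\core$ with $Q_{p,\mathcal A,V}[\varphi_n]+\tfrac1n\int_\Omega\mathbf V|\varphi_n|^p\dx<0$, then $Q_{p,\mathcal A,V}[\varphi_n]\ge0$ forces $\int_\Omega\mathbf V|\varphi_n|^p\dx<0$, in particular $\varphi_n\not\equiv0$ on $K$. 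Fixing an open $U_1$ with $K\subseteq U_1\Subset\Omega$ and normalizing $\int_{U_1}|\varphi_n|^p\dx=1$, one gets $0\le Q_{p,\mathcal A,V}[\varphi_n]<-\tfrac1n\int_\Omega\mathbf V|\varphi_n|^p\dx\le\tfrac1n\|\mathbf V\|_{L^\infty(\Omega)}\int_K|\varphi_n|^p\dx\le\tfrac1n\|\mathbf V\|_{L^\infty(\Omega)}\to0$, so $\{\varphi_n\}$ is a null-sequence of $Q_{p,\mathcal A,V}$; by Theorem~\ref{mainthm} it converges in $L^p_\loc(\Omega)$ to a positive multiple $cv$ of the ground state, whence $\int_\Omega\mathbf V|\varphi_n|^p\dx\to c^p\int_\Omega\mathbf V|v|^p\dx>0$, contradicting $\int_\Omega\mathbf V|\varphi_n|^p\dx<0$. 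Given such $t_0$, I would set $\tau_+:=\sup\{t>0:Q_{p,\mathcal A,V+t\mathbf V}\text{ is nonnegative in }\Omega\}\in[t_0,\infty]$; the set of such $t$ together with $0$ is an interval by the nonnegativity assertion of Corollary~\ref{interval}, and for any $t\in(0,\tau_+)$, choosing $s$ with $t<s\le\tau_+$ and $Q_{p,\mathcal A,V+s\mathbf V}$ nonnegative, the identity $V+t\mathbf V=(1-\tfrac ts)V+\tfrac ts(V+s\mathbf V)$ together with $\mathcal L^n(\{\mathbf V\ne0\})>0$ and Corollary~\ref{interval} yields subcriticality of $Q_{p,\mathcal A,V+t\mathbf V}$.

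I expect the main obstacle to be the contradiction step inside $(\Leftarrow)$: the near-sign-reversing test functions $\varphi_n$ must be turned into an honest null-sequence of $Q_{p,\mathcal A,V}$ so that Theorem~\ref{mainthm} is applicable, and the key idea making this work is to normalize on an open neighbourhood $U_1$ of $\supp\mathbf V$ — this is precisely what renders the crude estimate $Q_{p,\mathcal A,V}[\varphi_n]\le\tfrac1n\|\mathbf V\|_{L^\infty(\Omega)}\|\varphi_n\|_{L^p(U_1)}^p\to0$ effective — and then to transport the strict inequality $\int_\Omega\mathbf V|\varphi_n|^p\dx<0$ through the limit using $L^p_\loc$-convergence to the ground state and the compactness of $\supp\mathbf V$ (the only place $\mathbf V\in L^\infty_c$ is genuinely used). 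The convexity result Corollary~\ref{interval} is then the mechanism that both spreads a single nonnegativity parameter over the full interval and strengthens nonnegativity to subcriticality there.
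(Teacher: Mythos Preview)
Your argument is correct and is precisely the standard one the paper has in mind (the paper does not write out a proof but refers to \cite[Proposition~4.5]{Tintarev} and \cite[Proposition~4.8]{Pinchover}, whose proofs proceed exactly along your lines: produce or test against a null-sequence, use Theorem~\ref{mainthm} to pass to the ground state, and invoke the convexity result of Corollary~\ref{interval} to upgrade a single parameter to the full interval and to subcriticality). One cosmetic point: in the contradiction step of $(\Leftarrow)$ your $\varphi_n$ are not a priori nonnegative, while the definition of a null-sequence in the paper requires nonnegativity; simply replace $\varphi_n$ by $|\varphi_n|\in W^{1,p}_c(\Omega)$ (as in the proof of Theorem~\ref{principaleigenvalue}, $|\nabla|\varphi_n||_{\mathcal A}=|\nabla\varphi_n|_{\mathcal A}$ a.e., so both $Q_{p,\mathcal A,V}$ and the $\mathbf V$-term are unchanged) before invoking Theorem~\ref{mainthm}.
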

  \subsubsection{Locally uniformly convergent null-sequence}
  The following is an important application of the above perturbation results.
  \begin{lem}\label{localuniform}
   Let~$\mathcal{A}$ satisfy assumptions~\ref{ass8} and \ref{ass2}, and let~$V\in M^{q}_{\loc}(p;\Omega)$. {In addition, for $1<p<2$ suppose that Assumption~\ref{ngradb} holds.}
  Assume that~$Q_{p,\mathcal{A},V}$ is critical in~$\Omega$. Then~$Q_{p,\mathcal{A},V}$ admits a null-sequence $\{\phi_{i}\}_{i\in\mathbb{N}}\subseteq C^{\infty}_{c}(\Omega)$ converging locally uniformly to the ground state $\phi$.
  \end{lem}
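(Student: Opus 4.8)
The plan is to start from an arbitrary null-sequence of $Q_{p,\mathcal{A},V}$ in $\Omega$ and upgrade its mode of convergence from $L^p_{\loc}$ to locally uniform by absorbing a little of the potential into a perturbation term and then invoking the Harnack convergence principle. Concretely, since $Q_{p,\mathcal{A},V}$ is critical in $\Omega$, Theorem~\ref{thm_Poincare}(1) gives a null-sequence $\{u_k\}_{k\in\mathbb{N}}\subseteq C^{\infty}_{c}(\Omega)$, and Theorem~\ref{mainthm} tells us that (up to a subsequence and a positive multiplicative constant) $u_k\to\phi$ in $L^p_{\loc}(\Omega)$ and a.e., where $\phi$ is the ground state; moreover $\phi$ is the unique positive (super)solution of $Q'_{p,\mathcal{A},V}[u]=0$ in $\Omega$ and lies in $C^{\gamma}_{\loc}(\Omega)$.

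The key idea for producing a \emph{smooth} null-sequence that converges locally uniformly: fix a nonempty open set $U\Subset\Omega$ and consider, for each $k$, the variational problem of minimizing $Q_{p,\mathcal{A},V}[\varphi]$ over $0\le\varphi\in C^{\infty}_{c}(\Omega_k)$ with $\|\varphi\|_{L^p(U)}=1$, where $\{\Omega_k\}$ is a Lipschitz exhaustion of $\Omega$ with $U\Subset\Omega_1$. Equivalently, work in $W^{1,p}_0(\Omega_k)$: by Proposition~\ref{two}(a) (with $\Omega_1=\Omega_k$, $\Omega_2=\Omega$), $Q_{p,\mathcal{A},V}$ is subcritical in each $\Omega_k$, so by Corollary~\ref{subcriticaleg} we have $\lambda_1(Q_{p,\mathcal{A},V};\Omega_k)>0$; hence on $\Omega_k$ the functional $Q_{p,\mathcal{A},V}$ is coercive and weakly lower semicontinuous on $W^{1,p}_0(\Omega_k)$ restricted to the sphere $\{\|\varphi\|_{L^p(U)}=1\}$, and the infimum $c_k$ is attained by a nonnegative minimizer $\phi_k\in W^{1,p}_0(\Omega_k)$. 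Since $\Omega_k\uparrow\Omega$ and $Q_{p,\mathcal{A},V}$ is critical in $\Omega$, the proof of Theorem~\ref{thm_Poincare}(1) shows $c_k\downarrow 0$; thus $\{\phi_k\}$ is a null-sequence. After normalizing so that $\phi_k(x_0)=1$ at a fixed $x_0\in U$ (using Harnack to control values), each $\phi_k$ solves a Dirichlet problem $Q'_{p,\mathcal{A},V-c_k|\cdot|^{p-2}\chi_U}[u]=0$-type equation in $\Omega_k$, i.e. it is a positive solution of $Q'_{p,\mathcal{A},\mathcal{V}_k}[u]=0$ in $\Omega_k$ with $\mathcal{V}_k\triangleq V-c_k\mathbf{1}_U\to V$ weakly in $M^q_{\loc}(p;\Omega)$. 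Applying the Harnack convergence principle (Theorem~\ref{HCP}) yields a subsequence of $\{\phi_k\}$ converging weakly in $W^{1,p}_{\loc}$ and \emph{locally uniformly} in $\Omega$ to a positive solution of $Q'_{p,\mathcal{A},V}[u]=0$, which by uniqueness (Theorem~\ref{mainthm}) must be $\phi$ up to a positive constant. Finally, a standard mollification/cutoff argument replaces $\phi_k\in W^{1,p}_0(\Omega_k)$ by $\tilde\phi_k\in C^{\infty}_c(\Omega)$ with $\|\tilde\phi_k-\phi_k\|$ small enough in the relevant norms (using Lemma~\ref{functionalcv} to preserve $Q_{p,\mathcal{A},V}[\tilde\phi_k]\to 0$ and preserving local uniform convergence) so that $\{\tilde\phi_k\}$ is the desired smooth null-sequence converging locally uniformly to $\phi$.

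I would expect the main obstacle to be the bookkeeping around the Lagrange multiplier / Euler--Lagrange equation for the constrained minimizers $\phi_k$: one must verify that the minimizer of $Q_{p,\mathcal{A},V}$ under the constraint $\|\varphi\|_{L^p(U)}=1$ solves an equation of the form covered by the Harnack convergence principle (with an $M^q_{\loc}$ potential converging weakly), and that the multiplier equals $c_k\to 0$ so the perturbed potentials $\mathcal{V}_k$ genuinely converge to $V$. A secondary technical point is that the convergence in Theorem~\ref{HCP} is stated for positive \emph{solutions} on the exhausting domains with a normalization at $x_0$; one has to check the normalization $\phi_k(x_0)=1$ is compatible with $\{\phi_k\}$ still being a null-sequence (the $L^p(U)$-normalization and the pointwise normalization differ by a factor controlled by Harnack, hence bounded above and below), and that the final $C^{\infty}_c$-approximation does not destroy either the null-sequence property or the local uniform convergence — both of which are routine but need the stability lemmas (Lemma~\ref{functionalcv}) cited above.
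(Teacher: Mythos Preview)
Your proposal is correct and follows the same overall strategy as the paper: build positive solutions of slightly perturbed equations on an exhaustion $\{\Omega_k\}$, feed them into the Harnack convergence principle (Theorem~\ref{HCP}) to get locally uniform convergence to a positive solution, identify the limit as the ground state via the uniqueness in Theorem~\ref{mainthm}, and finally approximate in $C_c^\infty$ using Lemma~\ref{functionalcv}.

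The only genuine difference is in how the perturbed equation on $\Omega_k$ is produced. You set up the constrained problem $\inf\{Q_{p,\mathcal{A},V}[\varphi]:\varphi\in W^{1,p}_0(\Omega_k),\ \|\varphi\|_{L^p(U)}=1\}$, argue that the minimizer $\phi_k$ exists and satisfies the Euler--Lagrange equation $Q'_{p,\mathcal{A},V-c_k\mathbf{1}_U}[\phi_k]=0$ with multiplier $c_k\downarrow 0$, so the perturbed potentials are $\mathcal{V}_k=V-c_k\mathbf{1}_U$. The paper instead fixes a smooth bump $\mathbf{V}\in C_c^\infty(\omega_1)$ and invokes Proposition~\ref{prop_subcritical} to obtain, for each $i$, a threshold $t_i>0$ such that $Q_{p,\mathcal{A},V-t_i\mathbf{V}}$ is \emph{critical} in $\omega_i$; the ground state $\phi_i'$ there is then the principal eigenfunction with eigenvalue $0$, hence automatically a positive solution of $Q'_{p,\mathcal{A},V-t_i\mathbf{V}}[u]=0$ in $\omega_i$, and one checks $t_i\to 0$. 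The paper's route sidesteps the Lagrange-multiplier bookkeeping you flagged (and keeps the perturbing potential smooth), at the cost of relying on the perturbation result Proposition~\ref{prop_subcritical}; your route is more self-contained but requires carrying out the constrained variational argument and verifying that the multiplier equals the infimum $c_k$. Both lead to the same application of Theorem~\ref{HCP} and the same smooth-approximation endgame.
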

  \begin{proof}
    Let~$\{\omega_{i}\}_{i\in\mathbb{N}}$ be a Lipschitz exhaustion of~$\Omega$,~$x_{0}\in\omega_{1}$, and~$\mathbf{V}\in C^{\infty}_{c}(\Omega)\setminus\{0\}$  a nonnegative function such that~$\supp(\mathbf{V})\Subset\omega_{1}$. By virtue of Proposition~\ref{prop_subcritical}, for every~$i\in\mathbb{N}$, there exists~$t_{i}>0$ such that the functional~$Q_{p,\mathcal{A},V-t_{i}\mathbf{V}}$ is critical in~$\omega_{i}$.
Let $\phi_{i}'$ be the ground state of $Q_{p,\mathcal{A},V-t_{i}\mathbf{V}}$ in~$\omega_{i}$ satisfying $\phi_{i}'(x_0)=1$. Clearly. $\lim_{i\to\infty}t_i =0$, and $\lambda_{1}(Q_{p,\mathcal{A},V-t_{i}\mathbf{V}};\omega_{i})\!=\!0$, hence, $\phi_{i} \!\in\! W^{1,p}_{0}(\omega_{i})$, and~$Q_{p,\mathcal{A},V-t_{i}\mathbf{V}}[\phi_{i}']\!=\!0$.
By Theorems~\ref{HCP} and \ref{thm_Poincare}, it follows that the sequence~$\{\phi_{i}'\}_{i\in\mathbb{N}}$ converges locally uniformly to $c\phi$, the ground state of  $Q'_{p,\mathcal{A},V}$  in $\Omega$, where $c>0$ is a constant, and $\int_{\omega_{1}}|\phi'_i|^{p}\dx \asymp \int_{\omega_{1}}|\phi|^{p}\dx\asymp 1$.
It follows that~$\displaystyle{\lim_{i\rightarrow\infty}}Q_{p,\mathcal{A},V}[\phi_{i}'] \!=\!\displaystyle{\lim_{i\rightarrow\infty}}t_{i} \! \int_{\omega_{1}}\!\mathbf{V}(\phi_{i}')^{p}\dx = 0$.

By virtue of \cite[page 250, Theorem 1]{Evans} and \cite[page 630, Theorem 6]{Evans}, there exists a nonnegative approximating sequence~$\{\phi_{i}\}_{i\in\mathbb{N}} \! \subseteq \! C^{\infty}_{c}(\Omega)$ such that~$\displaystyle{\lim_{i\rightarrow\infty}}Q_{p,\mathcal{A},V}[\phi_{i}] \!=\!0,$ and $\{\phi_{i}\}$ converges locally uniformly to~$\phi$ in~$\Omega$. Hence, $\int_{\omega_{1}}|\phi_i|^{p}\dx \!\asymp \!1.$ By Lemma \ref{simplelemma}, the desired result follows.
    \end{proof}
\subsection{Hardy–Sobolev–Maz’ya inequality and $(\mathcal{A},V)$-capacity}\label{A,V-capacity}
The following definition of capacity is a counterpart of \cite[Definition 6.7]{Regev}.
\begin{Def}\label{AVcapacity}
\emph{  Let~$\mathcal{A}$ satisfy Assumption~\ref{ass8} and let~$V\in M^{q}_{\loc}(p;\Omega)$. Assume that the functional~$Q_{p,\mathcal{A},V}$ is nonnegative on~$C^{\infty}_{c}(\Omega)$. For every compact subset~$K$ of~$\Omega$, we define the\emph{~$(\mathcal{A},V)$-capacity} of~$K$ in~$\Omega$ as
   $$\capacity_{\mathcal{A},V}(K,\Omega)\triangleq \inf\left\{Q_{p,\mathcal{A},V}[\varphi]:\varphi\in C^{\infty}_{c}(\Omega), \varphi\geq 1 \mbox{ on } K\right\}.$$
}
\end{Def}
\begin{remark}
\emph{For the $p$-capacity and the $(p;r)$-capacity, see \cite[Chapter 2]{HKM} and \cite[Section 2.1]{Maly}. For a relationship between the~$p$-capacity and the~$p$-parabolicity in a Riemannian manifold, see \cite{parabolicity1,parabolicity2}. Recall that $|\xi|_{\mathcal{A}}^{p}=pF(x,\xi)$ for a.e.~$x\in\Omega$ and all~$\xi\in\mathbb{R}^{n}$. For the variational~$F$-capacity, which is a Choquet capacity as guaranteed by \cite[Theorem 5.31]{HKM}, we refer to \cite[Section 5.30]{HKM}. }
\end{remark}
The following theorem is an extension of \cite[Theorem 6.8]{Regev}, \cite[Theorem 4.5]{Regev20}, and \cite[Theorem 3.4]{Regev21}. The proof is omitted since it is similar to that of \cite[Theorem 4.5]{Regev20}.
\begin{Thm}\label{newthm}
  Let~$\mathcal{A}$ satisfy assumptions~\ref{ass8} and \ref{ass2} and let~$V\in M^{q}_{\loc}(p;\Omega)$. {In addition, for $1<p<2$ suppose that Assumption~\ref{ngradb} holds.} Assume that $Q_{p,\mathcal{A},V}$ is nonnegative on~$C^{\infty}_{c}(\Omega)$ {and  the equation~$Q'_{p,\mathcal{A},V}[w]=0$ in~$\Omega$ admits a positive supersolution~$v$ whose gradient is locally bounded.} Then the following assertions are equivalent.
  \begin{enumerate}
    \item[$(1)$] The functional~$Q_{p,\mathcal{A},V}$ is subcritical in~$\Omega$;
    \item[$(2)$] there exists a positive continuous function~$W^\ast$ in~$\Omega$ such that for all~$\varphi\in \core$,
    $$Q_{p,\mathcal{A},V}[\varphi]\geq \int_{\Omega}W^\ast(x)\left(|\nabla\varphi|_{\mathcal{A}}^{p}+|\varphi|^{p}\right)\dx;$$
    \item[$(3)$] for every nonempty open set~$U\Subset\Omega$ there exists~$c_{U}>0$ such that for all~$\varphi\in \core$,
    $$Q_{p,\mathcal{A},V}[\varphi]\geq c_{U}\left(\int_{U}|\varphi|\dx\right)^{p};$$
    \item[$(4)$] the~$(\mathcal{A},V)$-capacity of all closed balls~$B\Subset\Omega$ with positive radii in~$\Omega$ is positive;
    \item[$(4')$] the~$(\mathcal{A},V)$-capacity of some closed ball~$B\Subset\Omega$ with a positive radius in~$\Omega$ is positive.

 \medskip
Furthermore, in the case of~$p<n$,~$(1)$ holds if and only if
\item[$(5)$]  there exists a positive continuous function~$\tilde{W}$ in~$\Omega$ such that the following weighted Hardy–Sobolev–Maz’ya inequality holds true:
$$Q_{p,\mathcal{A},V}[\varphi]\geq \left(\int_{\Omega}\tilde{W}(x)|\varphi|^{p^{\ast}}\dx\right)^{p/p^{\ast}} \qquad \forall~\varphi\in\core,$$ where~$p^{\ast}\triangleq pn/(n-p)$ is the critical Sobolev exponent.
\end{enumerate}
\end{Thm}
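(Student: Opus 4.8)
The plan is to follow the cycle-of-implications strategy used in \cite[Theorem 4.5]{Regev20}, establishing $(1)\Rightarrow(2)\Rightarrow(3)\Rightarrow(4)\Rightarrow(4')\Rightarrow(1)$, then treating $(5)$ separately for $p<n$. The driving idea is a \emph{supersolution construction of a better Hardy-weight}: starting from subcriticality, upgrade the bare Hardy-weight $W$ from Theorem~\ref{thm_Poincare}(3) to one that also controls the gradient term. First I would prove $(1)\Rightarrow(2)$. By Theorem~\ref{thm_Poincare}(3), subcriticality gives a strictly positive continuous Hardy-weight $W$, so $Q_{p,\mathcal{A},W/2}\triangleq Q_{p,\mathcal{A},V-W/2}$ is still nonnegative; applying Theorem~\ref{thm_Poincare}(3) again yields a second strictly positive continuous Hardy-weight $W_1$ with $Q_{p,\mathcal{A},V}[\varphi]\ge \int_\Omega (W/2)|\varphi|^p + \int_\Omega W_1|\varphi|^p$. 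The point is then to convert a fraction of the potential-type lower bound into a gradient-type lower bound. Using the positive supersolution $v$ with locally bounded gradient together with the AAP theorem (Theorem~\ref{thm_AAP}) for the operator $Q'_{p,\mathcal{A},V-W/2}$, one gets a positive solution (hence supersolution) whose logarithmic gradient $S=\nabla v/v$ lies in $L^p_{\loc}$; substituting $\varphi=\phi v$ into Lemma~\ref{lem_alter}(1)--(2) and invoking Assumption~\ref{ass2} (the strong convexity inequality \eqref{eq_strict}) produces, after the weak Harnack inequality makes $v,1/v$ locally bounded, a term of the form $\int C_\omega v^p |\nabla w|_\mathcal{A}^p\ge c\int |\nabla\varphi|_\mathcal{A}^p$ modulo lower-order terms absorbed into $\int W_1|\varphi|^p$ and, via the Morrey–Adams theorem, into a small multiple of $\|\nabla\varphi\|^p$. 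Gathering terms and using a partition of unity (exactly as in Theorem~\ref{thm_Poincare}(3)) to make the resulting weight strictly positive and continuous yields $W^\ast$.

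The implication $(2)\Rightarrow(3)$ is the soft step: on a fixed $U\Subset\Omega$, $(2)$ gives $Q_{p,\mathcal{A},V}[\varphi]\ge c\int_U(|\nabla\varphi|^p+|\varphi|^p)\dx$ (after bounding $|\nabla\varphi|_\mathcal{A}$ below by $\alpha_U^{1/p}|\nabla\varphi|$), and then the Sobolev/Poincar\'e embedding $W^{1,p}(U)\hookrightarrow L^1(U)$ (here one only needs $\|\varphi\|_{L^1(U)}\le C(U)\|\varphi\|_{W^{1,p}(U)}$, which holds for bounded $U$ with no regularity assumption) gives $(3)$ with $c_U$ depending on $U$. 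The implications $(3)\Rightarrow(4)\Rightarrow(4')$ are immediate: testing $(3)$ against any $\varphi\ge 1$ on a closed ball $B$ gives $Q_{p,\mathcal{A},V}[\varphi]\ge c_B(\mathcal{L}^n(B))^p>0$, so $\capacity_{\mathcal{A},V}(B,\Omega)>0$, and $(4)\Rightarrow(4')$ is trivial. For $(4')\Rightarrow(1)$ I would argue by contrapositive: if $Q_{p,\mathcal{A},V}$ is critical, then by Lemma~\ref{localuniform} there is a null-sequence $\{\phi_i\}\subseteq C^\infty_c(\Omega)$ converging locally uniformly to the ground state $\phi>0$; for a fixed closed ball $B\Subset\Omega$, $\phi\ge 2\delta$ on $B$ for some $\delta>0$, so $\phi_i/(2\delta)\ge 1$ on $B$ for large $i$ (after a cutoff keeping it compactly supported), whence $\capacity_{\mathcal{A},V}(B,\Omega)\le (2\delta)^{-p}Q_{p,\mathcal{A},V}[\phi_i]\to 0$, contradicting $(4')$.

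Finally, for $p<n$, the equivalence with $(5)$: the direction $(5)\Rightarrow(1)$ is immediate since the right-hand side of the Hardy–Sobolev–Maz'ya inequality is a (nonlinear, but positive) lower bound forcing $Q_{p,\mathcal{A},V}[\varphi]>0$ for $\varphi\not\equiv 0$, so taking $W=\tilde W^{p/p^\ast}(\cdot)\,\chi$ against the unit-ball normalization — more carefully, one uses $(5)$ together with H\"older on a fixed $U$ to reproduce $(3)$, hence $(1)$. For $(1)\Rightarrow(5)$ one combines $(2)$ with the classical Sobolev inequality $\|\varphi\|_{L^{p^\ast}}^p\le C_S\|\nabla\varphi\|_{L^p}^p$: from $(2)$, $Q_{p,\mathcal{A},V}[\varphi]\ge \int W^\ast|\nabla\varphi|_\mathcal{A}^p\ge \alpha\int W^\ast|\nabla\varphi|^p$, and then a standard localization (a partition of unity $\{\chi_k\}$ with $\sum 2^{-k}$ weights, controlling $|\nabla(\chi_k^{1/p}\varphi)|^p$ by $|\nabla\varphi|^p + |\varphi|^p$ on each patch and absorbing the lower-order piece into the remaining fraction of $Q_{p,\mathcal{A},V}$ from a second application of $(2)$) converts the local Sobolev inequalities into the global weighted one with $\tilde W$ strictly positive and continuous. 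The main obstacle is the first step $(1)\Rightarrow(2)$: making the passage from a potential-type Hardy-weight to a gradient-type one fully rigorous requires carefully invoking Assumption~\ref{ass2}, the local boundedness of $v$ and $1/v$ (weak Harnack, or Harnack when $p>n$), the local boundedness of $\nabla v$ (the hypothesis on $v$), and Assumption~\ref{ngradb} when $1<p<2$ so that the simplified-energy estimate $[\xi,\eta]_\mathcal{A}$ can be handled — and then an approximation argument to pass from $u=\phi v$ back to general $\varphi\in C^\infty_c(\Omega)$ via $w=\varphi/v$; this is precisely where all the standing assumptions of the theorem are consumed.
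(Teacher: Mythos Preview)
Your cycle $(1)\Rightarrow(2)\Rightarrow(3)\Rightarrow(4)\Rightarrow(4')\Rightarrow(1)$ and the separate treatment of $(5)$ are exactly the route the paper intends (it defers to \cite[Theorem~4.5]{Regev20}), and the implications $(2)\Rightarrow(3)\Rightarrow(4)\Rightarrow(4')$ as well as $(4')\Rightarrow(1)$ via Lemma~\ref{localuniform} are correct (the cutoff you mention in $(4')\Rightarrow(1)$ is unnecessary: the null-sequence $\{\phi_i\}$ already lies in $C_c^\infty(\Omega)$).

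The step $(1)\Rightarrow(2)$, however, is muddled. There is no need to apply Theorem~\ref{thm_Poincare}(3) twice, and invoking the AAP theorem for $Q'_{p,\mathcal{A},V-W/2}$ to produce a \emph{new} positive solution is a misstep: that solution would have no reason to satisfy $\nabla v\in L^\infty_{\loc}$, which is precisely the extra hypothesis of the theorem. The correct argument uses the \emph{given} supersolution $v$ directly. With $W$ the positive continuous Hardy-weight from Theorem~\ref{thm_Poincare}(3) and $u=|\varphi|$, $w=u/v$, the computation of Proposition~\ref{mainlemma} (Picone identity plus Assumption~\ref{ass2}) gives, on each $\omega\Subset\Omega$,
\[
Q_{p,\mathcal{A},V}[\varphi]\;\geq\; C_\omega(p,\mathcal{A})\int_\omega [\nabla u, w\nabla v]_{\mathcal{A}}\,\mathrm{d}x \;\geq\; c_\omega\int_\omega v^{p}|\nabla w|_{\mathcal{A}}^{p}\,\mathrm{d}x
\]
(for $1<p<2$ the last inequality is exactly where Assumption~\ref{ngradb} and the locally bounded gradient of $v$ enter, via the $p<2$ branch of \eqref{eq_strict}). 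Since $|\nabla\varphi|_{\mathcal{A}}^{p}\leq C\bigl(v^{p}|\nabla w|_{\mathcal{A}}^{p}+|\varphi|^{p}\,|\nabla v/v|_{\mathcal{A}}^{p}\bigr)$ and $|\nabla v/v|_{\mathcal{A}}$ is locally bounded by hypothesis, the second term is absorbed by the Hardy-weight $\int_\Omega W|\varphi|^{p}$, not by Morrey--Adams. A partition-of-unity summation as in the proof of Theorem~\ref{thm_Poincare}(3) then yields the positive continuous $W^{\ast}$. Your sketch of $(1)\Leftrightarrow(5)$ for $p<n$ is fine; the subadditivity $(\sum a_k)^{p/p^\ast}\leq \sum a_k^{p/p^\ast}$ makes the partition-of-unity globalization of the local Sobolev estimates work.
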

\section{Positive solution of minimal growth}\label{minimal}
This section concerns the removability of an isolated singularity, the existence of positive solutions of minimal growth in a neighborhood of infinity in $\Omega$, and their relationships with the criticality or subcriticality. We also study the minimal decay of Hardy-weights.
 \subsection{Removability of isolated singularity}
 In this subsection, we consider the removability of an isolated singularity (see also \cite{Fraas, Serrin1964, Regev25} and references therein).
 \begin{lemma}\label{newev}
    Fix $x_0\in \Omega$. Denote by~$B_r\triangleq B_{r}(x_0)$ the open ball of the radius $r>0$ centered at $x_0$. Suppose that $\mathcal{A}$ satisfies Assumption~\ref{ass8}, and let $V\in M^{q}(p;B_R)$ for some  $R>0$ with~$B_{R}\Subset\Omega$.
Then there exists~$R_1\in(0,R)$ such that $\lambda_{1}(Q_{p,\mathcal{A},V};B_r)>0$ for all $0<r<R_1$.
 \end{lemma}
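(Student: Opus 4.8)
The plan is to exploit the Morrey--Adams theorem (Theorem~\ref{MA_thm}) together with the ellipticity of $\mathcal{A}$ to show that the Rayleigh quotient defining $\lambda_1(Q_{p,\mathcal{A},V};B_r)$ stays bounded below by a positive quantity once $r$ is small enough, because the constant multiplying the $L^p$-norm of the potential term degenerates as the domain shrinks. First I would fix $R$ with $B_R\Subset\Omega$ so that $V\in M^q(p;B_R)$, and record that for any $0<r\le R$ one has $\|V\|_{M^q(p;B_r)}\le \|V\|_{M^q(p;B_R)}=:M$ (the Morrey norm is monotone in the domain, since the supremum in Definition~\ref{Morreydef1} is taken over a smaller family of balls and smaller radii). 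Then for $\varphi\in C^\infty_c(B_r)\setminus\{0\}$, applying part~$(1)$ of Theorem~\ref{MA_thm} on $B_r$ with a parameter $\delta>0$ to be chosen, together with the lower ellipticity bound $|\nabla\varphi|^p_{\mathcal{A}}=\mathcal{A}(x,\nabla\varphi)\cdot\nabla\varphi\ge \alpha_{B_R}|\nabla\varphi|^p$ from \eqref{structure}, gives
\[
Q_{p,\mathcal{A},V}[\varphi;B_r]\ \ge\ (\alpha_{B_R}-\delta)\,\|\nabla\varphi\|^p_{L^p(B_r;\mathbb{R}^n)}-\frac{C(n,p,q)}{\delta^{n/(pq-n)}}\,M^{pq/(pq-n)}\,\|\varphi\|^p_{L^p(B_r)}.
\]
Choosing $\delta=\alpha_{B_R}/2$ makes the gradient coefficient equal to $\alpha_{B_R}/2>0$, so
\[
Q_{p,\mathcal{A},V}[\varphi;B_r]\ \ge\ \frac{\alpha_{B_R}}{2}\,\|\nabla\varphi\|^p_{L^p(B_r;\mathbb{R}^n)}-K\,\|\varphi\|^p_{L^p(B_r)},
\]
with $K=K(n,p,q,\alpha_{B_R},M)$ independent of $r$.

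Next I would bring in the Poincar\'e (Friedrichs) inequality on the ball $B_r$: there is a dimensional constant $c_n>0$ such that $\|\varphi\|^p_{L^p(B_r)}\le (c_n r)^p\|\nabla\varphi\|^p_{L^p(B_r;\mathbb{R}^n)}$ for all $\varphi\in C^\infty_c(B_r)$ --- this follows by the standard scaling argument from the fixed-ball Poincar\'e inequality. Substituting,
\[
Q_{p,\mathcal{A},V}[\varphi;B_r]\ \ge\ \Big(\frac{\alpha_{B_R}}{2}-K(c_n r)^p\Big)\|\nabla\varphi\|^p_{L^p(B_r;\mathbb{R}^n)}.
\]
Pick $R_1\in(0,R)$ so small that $K(c_n R_1)^p<\alpha_{B_R}/4$; then for every $0<r<R_1$ the bracketed factor is at least $\alpha_{B_R}/4>0$, and applying the Poincar\'e inequality once more in the reverse direction ($\|\nabla\varphi\|^p_{L^p}\ge (c_n r)^{-p}\|\varphi\|^p_{L^p}$) yields
\[
Q_{p,\mathcal{A},V}[\varphi;B_r]\ \ge\ \frac{\alpha_{B_R}}{4}\,(c_n r)^{-p}\,\|\varphi\|^p_{L^p(B_r)}\qquad\forall\,\varphi\in C^\infty_c(B_r)\setminus\{0\}.
\]
Dividing by $\|\varphi\|^p_{L^p(B_r)}$ and taking the infimum gives $\lambda_1(Q_{p,\mathcal{A},V};B_r)\ge \tfrac{\alpha_{B_R}}{4}(c_n r)^{-p}>0$, which is in fact stronger than claimed.

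The only subtle point --- and thus the main obstacle --- is making sure the Morrey--Adams constant does not blow up in a way that defeats the argument as $r\to 0$: this is precisely why one must keep $\|V\|_{M^q(p;B_r)}$ controlled by the fixed quantity $M=\|V\|_{M^q(p;B_R)}$ rather than letting it depend on $r$, and why $\delta$ is chosen independently of $r$ (as $\alpha_{B_R}/2$, using that the ellipticity constant $\alpha_{B_R}$ from Theorem~\ref{thm_1} is valid on all of $B_R\supseteq B_r$). In the borderline cases $p=n$ and $p>n$ the Morrey--Adams statement in Theorem~\ref{MA_thm}(1) still provides, for any $\delta>0$, a finite constant depending only on $n,p,q,\delta$ and $\|V\|_{M^q(p;B_r)}\le M$, so the same computation goes through verbatim; no separate treatment is needed. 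Once the displayed lower bound is established, the conclusion $\lambda_1(Q_{p,\mathcal{A},V};B_r)>0$ for all $0<r<R_1$ is immediate.
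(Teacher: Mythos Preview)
Your proof is correct and follows essentially the same route as the paper: both combine the ellipticity lower bound, the Morrey--Adams estimate with $\delta=\alpha_{B_R}/2$, and the scaled Poincar\'e inequality $\Vert\nabla u\Vert^p_{L^p(B_r)}\ge C r^{-p}\Vert u\Vert^p_{L^p(B_r)}$ to conclude that $\lambda_1(Q_{p,\mathcal{A},V};B_r)\ge Cr^{-p}-K>0$ for small $r$. The only cosmetic difference is that you invoke monotonicity of the Morrey norm to work on $B_r$, whereas one can equivalently extend $\varphi\in C^\infty_c(B_r)$ by zero and apply Theorem~\ref{MA_thm} directly on $B_R$.
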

 \begin{proof}
 By \cite[Theorem 13.19]{Leoni}, for all~$0<r<R$ and~$u\in W^{1,p}_{0}(B_{r})\setminus\{0\}$, we have the lower bound~$\Vert \nabla u\Vert^{p}_{L^{p}(B_{r})}\geq C(n,p)r^{-p}\Vert u\Vert^{p}_{L^{p}(B_{r})}.$ Let $\gd= \alpha_{B_{R}}/2$. Then by the Morrey-Adams theorem, for all~$0\!<\!r\!<\!R$ and $u\!\in\! W^{1,p}_{0}(B_{r})\setminus\{0\}$ with
 $\Vert u\Vert^{p}_{L^{p}(B_{r})}=1$, we get
 \begin{eqnarray*}
  \lambda_{1}(Q_{p,\mathcal{A},V};B_{r})&\geq &  \int_{B_{r}}|\nabla u|_{\mathcal{A}}^{p}\dx+\int_{B_{r}}V|u|^{p}\dx
   \geq \alpha_{B_{R}}\int_{B_{r}}|\nabla u|^{p}\dx+\int_{B_{r}}V|u|^{p}\dx\\
   &\geq& \delta C(n,p)r^{-p}-\frac{C(n,p,q)}{\delta^{n/(pq-n)}}\Vert V\Vert^{n/(pq-n)}_{M^{q}(p;B_{R})} \,.
   \end{eqnarray*}
 Thus, for all sufficiently small radii~$r>0$, the principal eigenvalue $\lambda_{1}(Q_{p,\mathcal{A},V};B_{r})>0$.
 \end{proof}
 The following theorem can be proved by essentially the same arguments as those of \cite[Theorems 5.4]{Pinchover}, and therefore it is omitted.
  \begin{Thm}\label{singularity}
    Assume that~$p\leq n$,~$x_{0}\in\Omega$, $\mathcal{A}$ satisfies Assumption~\ref{ass8}, and $V\in M^{q}_{\loc}(p;\Omega)$. 
    Consider a positive solution~$u$ of~$Q'_{p,\mathcal{A},V}[w]=0$ in a punctured neighborhood $U_{x_0}$ of $x_0$. If $u$ is bounded in some punctured neighborhood of~$x_{0}$, then~$u$ can be extended to a nonnegative solution in $U_{x_0}$. Otherwise,~$\displaystyle{\lim_{x\rightarrow x_{0}}}u(x)=\infty.$
  \end{Thm}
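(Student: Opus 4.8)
The plan is to fix a small ball $B_R=B_R(x_0)\Subset\Omega$ with $V\in M^{q}(p;B_R)$, shrinking $R$ if necessary so that Lemma~\ref{newev} applies on every ball centered at $x_0$, and to treat the two alternatives of the statement separately, following the scheme of \cite[Theorem~5.4]{Pinchover}: either $u$ stays bounded near $x_0$, in which case the singularity is removable, or it blows up, in which case the blow-up is uniform.

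First I would establish the dichotomy. Consider the annuli $A_r\triangleq B_{2r}(x_0)\setminus\overline{B_{r/2}(x_0)}$ and the rescaled functions $u_r(y)\triangleq u(x_0+ry)$, which, by the homogeneity of $\mathcal{A}$ and a change of variables $x=x_0+ry$, solve on a fixed unit annulus an equation of the same type with operator $\mathcal{A}(x_0+ry,\cdot)$ (same ellipticity constants, since they come from the fixed ball $B_R$) and potential $\tilde V_r(y)\triangleq r^{p}V(x_0+ry)$. The same substitution gives $\|\tilde V_r\|_{M^{q}(p;\,\cdot\,)}\leq r^{\,p-n/q}\|V\|_{M^{q}(p;B_R)}$ when $p<n$ (with an analogous logarithmically weighted bound when $p=n$), and since $q>n/p$ forces $p-n/q>0$, these rescaled Morrey norms tend to $0$ as $r\to 0$. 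Hence the interior Harnack inequality of Section~\ref{toolbox}, applied to $u_r$ on a fixed unit sub-annulus, yields a constant $C_H$ independent of all small $r$ with $\sup_{\partial B_r(x_0)}u\leq C_H\inf_{\partial B_r(x_0)}u$, so $\limsup_{x\to x_0}u(x)\leq C_H\liminf_{x\to x_0}u(x)$. In particular, if $\liminf_{x\to x_0}u<\infty$ then $u$ is bounded near $x_0$; by contraposition, if $u$ is unbounded near $x_0$, then $\lim_{x\to x_0}u(x)=\infty$, which is the second alternative.

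It remains to treat the bounded case, say $0<u\leq M$ on $B_{R_0}(x_0)\setminus\{x_0\}$. Since $p\leq n$, a point has zero $p$-capacity: for each $\varepsilon>0$ pick a Lipschitz cutoff $\zeta_\varepsilon$ with $\zeta_\varepsilon\equiv 0$ on $B_\varepsilon(x_0)$, $\zeta_\varepsilon\equiv 1$ off $B_{s(\varepsilon)}(x_0)$ (with $s(\varepsilon)=2\varepsilon$ if $p<n$ and $s(\varepsilon)=\sqrt{\varepsilon}$ if $p=n$), $0\leq\zeta_\varepsilon\leq 1$ and $\|\nabla\zeta_\varepsilon\|_{L^{p}(B_R)}\to 0$. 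Testing $Q'_{p,\mathcal{A},V}[u]=0$ in the punctured ball against $\zeta_\varepsilon^{p}\eta^{p}u\in W^{1,p}_{c}(B_{R_0}(x_0)\setminus\{x_0\})$, where $\eta\in C^{\infty}_{c}(B_{R_0}(x_0))$ equals $1$ on $B_{R_0/2}(x_0)$, then expanding the gradient, invoking the ellipticity \eqref{structure}, absorbing the cross terms via Young's inequality, and using $u\leq M$ together with $\int_{B_{R_0}(x_0)}|V|\dx<\infty$, gives $\int\zeta_\varepsilon^{p}\eta^{p}|\nabla u|^{p}\dx\leq C$ uniformly in $\varepsilon$; letting $\varepsilon\to 0$ and applying Fatou's lemma shows $\nabla u\in L^{p}(B_{R_0/2}(x_0))$, hence $u\in W^{1,p}(B_{R_0/2}(x_0))$. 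Next, for $\varphi\in C^{\infty}_{c}(B_{R_0/2}(x_0))$ the function $\zeta_\varepsilon\varphi$ vanishes near $x_0$ and is admissible, so $\int\mathcal{A}(x,\nabla u)\cdot\nabla(\zeta_\varepsilon\varphi)\dx+\int V|u|^{p-2}u\,\zeta_\varepsilon\varphi\dx=0$; writing $\nabla(\zeta_\varepsilon\varphi)=\zeta_\varepsilon\nabla\varphi+\varphi\nabla\zeta_\varepsilon$, the $\nabla\zeta_\varepsilon$-term is controlled by $\beta_{B_{R_0}}\|\varphi\|_{\infty}\|\nabla u\|_{L^{p}}^{p-1}\|\nabla\zeta_\varepsilon\|_{L^{p}}\to 0$, while the remaining two terms converge by dominated convergence (with dominating functions in $L^{1}$, using $\nabla u\in L^{p}$ and $V\in L^{1}_{\loc}$). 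Thus $u$ is a weak solution of $Q'_{p,\mathcal{A},V}[w]=0$ in $B_{R_0/2}(x_0)$; the local H\"older estimate provides a continuous representative, which coincides with $u>0$ off $x_0$, so by the weak Harnack inequality the extension is positive, in particular a nonnegative solution in $U_{x_0}$.

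The main obstacle is the uniformity of the Harnack constant in the dichotomy step: one must guarantee that this constant does not degenerate as $r\to 0$, and this rests precisely on the decay of the rescaled Morrey norm $\|\tilde V_r\|_{M^q(p;\,\cdot\,)}$, hence on the \emph{strict} inequality $q>n/p$ built into the definition of $M^{q}(p;\Omega)$. The case $p=n$ requires slightly more care, because the Morrey norm carries the logarithmic weight $\varphi_q$; I would handle it by the same substitution $x=x_0+ry$, checking that the $\varphi_q$-weighted integrals of $\tilde V_r$ over unit-scale balls also tend to $0$, which again uses $q>n$.
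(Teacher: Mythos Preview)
Your treatment of the bounded case is correct and is exactly the capacity--cutoff argument that \cite[Theorem~5.4]{Pinchover} uses: once $u\in W^{1,p}$ across $x_0$, the $\nabla\zeta_\varepsilon$ error term vanishes because $\{x_0\}$ has zero $p$-capacity when $p\le n$, and the limit identifies $u$ as a weak solution in the full ball.

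The dichotomy step, however, has a genuine gap. The inference ``$\sup_{\partial B_r}u\le C_H\inf_{\partial B_r}u$ for all small $r$, so $\limsup_{x\to x_0}u\le C_H\liminf_{x\to x_0}u$'' is not valid. Writing $M(r)=\sup_{\partial B_r}u$ and $m(r)=\inf_{\partial B_r}u$, one has $\limsup_{x\to x_0}u=\limsup_{r\to 0}M(r)$ and $\liminf_{x\to x_0}u=\liminf_{r\to 0}m(r)$; the pointwise bound $M(r)\le C_H m(r)$ yields only $\limsup M\le C_H\limsup m$, not $\limsup M\le C_H\liminf m$. The uniform Harnack inequality you establish controls $u$ only \emph{within} each dyadic annulus and says nothing about how $m(r)$ behaves across scales; Harnack chains between scales $r$ and $s$ cost a factor $C_H^{N}$ with $N\sim|\log(r/s)|$, so no uniform bound follows. (Abstractly, a radial function $u$ with $M(r)=m(r)=f(r)$ satisfies the sphere inequality with $C_H=1$, yet $f$ can oscillate between $1$ and $\infty$.)

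The missing ingredient is precisely Lemma~\ref{newev}, which you invoke at the outset but never use. Since $\lambda_1(Q_{p,\mathcal{A},V};B_R)>0$, the AAP theorem gives a positive continuous solution $\phi$ of $Q'_{p,\mathcal{A},V}[w]=0$ in $B_R$, and the weak comparison principle (Theorem~\ref{thm_wcp}) is available on every Lipschitz subdomain of $B_R$. If $\liminf_{x\to x_0}u=L<\infty$, pick $r_k\downarrow 0$ with $m(r_k)\to L$; your sphere Harnack then gives $M(r_k)\le C_H(L+1)$ for $k$ large. Fix such an $r_1$ and choose $c=C_H(L+1)/\inf_{\overline{B_{r_1}}}\phi$, so that $c\phi\ge C_H(L+1)\ge u$ on $\partial B_{r_1}\cup\partial B_{r_k}$. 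Applying Theorem~\ref{thm_wcp} on the annulus $B_{r_1}\setminus\overline{B_{r_k}}$ (where $\lambda_1>0$ by domain monotonicity) yields $u\le c\phi$ there; letting $k\to\infty$ gives $u\le c\sup_{B_{r_1}}\phi<\infty$ on $B_{r_1}\setminus\{x_0\}$. This barrier argument, not the bare Harnack estimate, is what drives the dichotomy in \cite[Theorem~5.4]{Pinchover}.
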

\subsection{Positive solutions of minimal growth}
In this subsection, we study positive solutions of minimal growth at infinity in $\Gw$, a notion that was introduced by Agmon in \cite{Agmon} for second-order linear elliptic operators and was later extended to the quasilinear case \cite{Tintarev} and graphs \cite{Keller}. In particular, we give a further characterization of criticality in terms of global minimal positive solutions.
\subsubsection{Positive solutions of minimal growth}
We call a compact subset~$K$ of~$\Omega$ \emph{admissible} if~$K$ is the closure of a (nonempty) Lipschitz domain with connected boundary.
\begin{lemma}
For every Lipschitz domain~$\omega_{0}\Subset\Omega$,~$\overline{\omega_{0}}$ is admissible if and only if for some (all)  domain(s)~$\omega$ with~$K\subseteq\omega \Subset\Omega$,~$\omega\setminus K$ is a domain. In particular, for every admissible compact subset~$K$ of~$\Omega$ and  every Lipschitz exhaustion~$\{\omega_{i}\}_{i\in\mathbb{N}}$ of~$\Omega$, it holds that  $\omega_{i}\setminus K$ is a domain for all sufficiently large~$i\in\mathbb{N}$.
\end{lemma}
\begin{proof}
Consider an arbitrary Lipschitz domain~$\omega_{0}\Subset\Omega$.

If there exists a domain~$\omega$ with~$\omega_{0}\Subset\omega\Subset\Omega$ such that $\omega\setminus \overline{\omega_{0}}$ is a domain, then it is easy to check that~$\R^{n}\setminus\overline{\omega_{0}}$ is path-connected and hence connected. It follows that~$\R^{n}\setminus\omega_{0}=\overline{\R^{n}\setminus\overline{\omega_{0}}}$ is connected. By the theorem of \cite{connect},~$\partial\omega_{0}$ is connected. Hence, $\overline{\omega_{0}}$ is admissible. 

Now suppose that~$\overline{\omega_{0}}$ is admissible. Then~$\partial\omega_{0}$ is connected. Note that~$\partial\omega_{0}$ is locally path-connected because~$\omega_{0}$ is a Lipschitz domain. Then~$\partial\omega_{0}$ is path-connected. Pick any domain~$\omega$ with~$\omega_{0}\Subset\omega\Subset\Omega$. By means of finite covering, it is a simple matter to check that~$\omega\setminus\overline{\omega_{0}}$ is path-connected and hence a domain.
\end{proof}
The following definition of positive solutions of minimal growth is a modification of \cite[Definition 7.3]{HPR}. 
  \begin{Def}
 \emph{Let $\mathcal{A}$ satisfy Assumption~\ref{ass8} and let $V\in M^{q}_{\loc}(p;\Omega)$. Let $K_{0}$ be a compact subset of~$\Omega$ such that~$\Omega\setminus K_{0}$ is a domain. A positive solution~$u$ of~$Q'_{p,\mathcal{A},V}[w]=0$ in~$\Omega\setminus K_{0}$ is called a \emph{positive solution of minimal growth in a neighborhood of infinity} in $\Omega$ if for all admissible compact subsets~$K$ of~$\Omega$ with~$K_{0}\Subset \mathring{K}$, and all positive solutions~$v\in C\left(\Omega\setminus \mathring{K}\right)$ of~$Q'_{p,\mathcal{A},V}[w]=g\geq 0$  in~$\Omega\setminus K$ for some $g\in M^{q}_{\loc}(p;\Omega\setminus K)$ such that $u\leq v$ on~$\partial K$, it holds that~$ u\leq v$ in~$\Omega\setminus K$. For such a positive solution~$u$, we write~$u\in \mathcal{M}_{\Omega;K_{0}}=\mathcal{M}_{\Omega;K_{0}}^{\mathcal{A},V}$. 
 If~$K_{0}=\emptyset$, then $u$ is called a \emph{global minimal positive solution} of~$Q'_{p,\mathcal{A},V}[w]=0$ in~$\Omega$.}
  \end{Def}
Next we define a slightly stronger local Morrey space~$\widetilde{M}^{q}_{\loc}(p;\Omega)$. For potentials in this space, solutions of~$Q'[u]=0$ with zero boundary value on bounded Lipschitz domains are H\"older continuous up to the boundary (see \cite[Theorem 2.3]{Byun}), which will be used when we apply the weak comparison principle. See also \cite[Definition 2.11]{Hou2}. \begin{definition}\label{strm}
\emph{A function~$V\in M^{q}_{\loc}(p;\Omega)$ is in~$\widetilde{M}^{q}_{\loc}(p;\Omega)$ if~$q>n$ when~$p<n$, and when~$p=n$, for some~$\theta\in (n-1,n)$ and all domains~$\omega\Subset\Omega$,
$$\sup_{\substack{y\in\omega\\0<r<\diam(\omega)}}\frac{1}{r^{\theta}}\int_{B_{r}(y)\cap\omega}|V|\dx<\infty.$$
}
\end{definition}
  \begin{Thm}\label{grcon}
  Let $Q_{p,\mathcal{A},V}\geq 0$ in~$\core$ with~$\mathcal{A}$ satisfying Assumption~\ref{ass8}, and let $V\in \widetilde{M}^{q}_{\loc}(p;\Omega)$. Then for every~$x_{0}\in\Omega$, the equation~$Q'_{p,\mathcal{A},V}[w]=0$ has a solution~$u\in \mathcal{M}_{\Omega;\{x_{0}\}}$.
  \end{Thm}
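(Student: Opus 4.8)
The plan is to construct the minimal positive solution $u \in \mathcal{M}_{\Omega;\{x_0\}}$ as a limit of solutions on an exhausting sequence of domains, each of which blows up at $x_0$ in a controlled way. First I would fix a Lipschitz exhaustion $\{\omega_i\}_{i\in\mathbb{N}}$ of $\Omega$ with $x_0 \in \omega_1$, and, using Lemma~\ref{newev}, choose $R_1>0$ small enough so that $B_{R_1}(x_0)\Subset\omega_1$ and $\lambda_1(Q_{p,\mathcal{A},V};B_r)>0$ for all $0<r<R_1$. For each large $i$, pick a radius $r_i \to 0$ and consider the punctured domain $\omega_i \setminus \overline{B_{r_i}(x_0)}$; since $Q_{p,\mathcal{A},V}\geq 0$ in $C_c^\infty(\Omega)$, by the AAP theorem (Theorem~\ref{thm_AAP}) there is a positive solution (or supersolution) of $Q'_{p,\mathcal{A},V}[w]=0$ in $\Omega$, which in particular gives positivity of the relevant principal eigenvalues on the subdomains, so that via Theorem~\ref{maximum} the Dirichlet problem $Q'_{p,\mathcal{A},V}[u_i]=0$ in $\omega_i\setminus\overline{B_{r_i}(x_0)}$ with boundary data $u_i=1$ on $\partial B_{r_i}(x_0)$ and $u_i=0$ on $\partial\omega_i$ is uniquely solvable by a nonnegative function, which the weak Harnack inequality forces to be positive in the interior. (One must be a little careful: the cleanest route is to first solve in $\omega_i\setminus\overline{B_{r_i}}$ with boundary value $1$ on the inner sphere and $0$ on $\partial\omega_i$, after subtracting an appropriate smooth function supported near $\partial B_{r_i}$ to reduce to the zero-boundary-data solvability statement of Theorem~\ref{maximum}, or use the super/sub-solution technique of Theorem~\ref{5proposition}.)

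Next I would normalize each $u_i$ so that $u_i(y_0)=1$ at a fixed reference point $y_0\in\omega_1\setminus\overline{B_{R_1}(x_0)}$, and apply the Harnack convergence principle (Theorem~\ref{HCP}) on the fixed punctured domain $\Omega\setminus\{x_0\}$ — more precisely, on the domains $\omega_i\setminus\overline{B_{r_i}}$ which exhaust $\Omega\setminus\{x_0\}$ — to extract a subsequence converging weakly in $W^{1,p}_{\mathrm{loc}}(\Omega\setminus\{x_0\})$ and locally uniformly to a positive solution $u$ of $Q'_{p,\mathcal{A},V}[w]=0$ in $\Omega\setminus\{x_0\}$. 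The key point to verify here is that the limit is not identically zero and does not blow up everywhere: the weak comparison principle (Theorem~\ref{thm_wcp}), applied with a fixed positive solution of $Q'_{p,\mathcal{A},V}[w]=0$ in a neighborhood of $y_0$ as a barrier, pins $u_i$ from above and below on a fixed annulus around $y_0$, uniformly in $i$, so $u(y_0)=1$ and the family is locally bounded away from $0$ and $\infty$ there, after which the Harnack convergence principle does the rest.

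Finally I would check that $u$ so obtained indeed lies in $\mathcal{M}_{\Omega;\{x_0\}}$, i.e. satisfies the minimal-growth property. Let $K$ be a smooth compact set with $x_0\in\mathring K$ and let $v\in C(\Omega\setminus\mathring K)$ be any positive supersolution of $Q'_{p,\mathcal{A},V}[w]=0$ in $\Omega\setminus K$ with $u\leq v$ on $\partial K$. For $i$ large, $K\subseteq\omega_i$; on $\partial\omega_i$ one has $u_i=0\leq v$, and on $\partial K$, since $u_i\to u\leq v$ locally uniformly and $v$ is continuous and bounded below by a positive constant on $\partial K$, we get $u_i\leq (1+\varepsilon)v$ on $\partial K$ for $i$ large; the weak comparison principle (Theorem~\ref{thm_wcp}) on $\omega_i\setminus K$, whose principal eigenvalue is positive because $Q_{p,\mathcal{A},V}$ is nonnegative on all of $\Omega$ hence subcritical on the proper subdomain $\omega_i\setminus K$ (Proposition~\ref{two} together with Corollary~\ref{subcriticaleg}), then yields $u_i\leq (1+\varepsilon)v$ in $\omega_i\setminus K$. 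Letting $i\to\infty$ and then $\varepsilon\to 0$ gives $u\leq v$ in $\Omega\setminus K$, as required. The main obstacle I anticipate is the bookkeeping at the inner sphere: handling the nonzero boundary value $u_i=1$ on $\partial B_{r_i}$ within the $W^{1,p}_0$-solvability framework of Theorem~\ref{maximum} (this is why one reduces via a cutoff or invokes Theorem~\ref{5proposition}), and ensuring that as $r_i\to 0$ the inner boundary does not force degeneracy of the limit near $x_0$ — which is controlled precisely because $\lambda_1(Q_{p,\mathcal{A},V};B_{r})>0$ for small $r$ (Lemma~\ref{newev}) prevents the inner normalization from swamping the solution on the fixed reference annulus.
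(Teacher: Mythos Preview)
Your overall architecture---exhaust $\Omega\setminus\{x_0\}$, solve auxiliary problems, normalize at a fixed point, pass to the limit via the Harnack convergence principle, and verify minimality by the weak comparison principle---matches the paper's. There is, however, a genuine gap: you repeatedly need strict positivity of $\lambda_1(Q_{p,\mathcal{A},V};\,\cdot\,)$ on proper subdomains (first for solvability, then for the comparison in the minimality step), and you justify it via Proposition~\ref{two} and Corollary~\ref{subcriticaleg}. Both of those require Assumption~\ref{ass2} (and Assumption~\ref{ngradb} when $1<p<2$), whereas the present theorem assumes only Assumption~\ref{ass8}. From $Q_{p,\mathcal{A},V}\geq 0$ alone you obtain only $\lambda_1\geq 0$; the implication ``proper positive supersolution $\Rightarrow \lambda_1>0$'' that you implicitly use is precisely Theorem~\ref{complement}, which again needs Assumption~\ref{ass2}. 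So as written, neither the solvability step via Theorem~\ref{maximum} nor the application of Theorem~\ref{thm_wcp} is justified under the stated hypotheses.

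The paper sidesteps this by perturbing the potential: on $U_i=\omega_i\setminus\overline{B_{r_1/(i+1)}(x_0)}$ it solves $Q'_{p,\mathcal{A},V+1/i}[u]=f_i$ in $W^{1,p}_0(U_i)$, where $0\leq f_i\in C_c^\infty$ is supported in a shrinking annulus about $x_0$. The shift $V\mapsto V+1/i$ forces $\lambda_1(Q_{p,\mathcal{A},V+1/i};U_i)\geq 1/i>0$ directly from the nonnegativity of $Q_{p,\mathcal{A},V}$, so Theorem~\ref{maximum} applies with no extra structural hypotheses. In the minimality step, any positive supersolution $v$ of $Q'_{p,\mathcal{A},V}[w]=0$ is automatically a supersolution of $Q'_{p,\mathcal{A},V+1/i}[w]=0$, and $\lambda_1(Q_{p,\mathcal{A},V+1/i};\omega_i\setminus K)\geq 1/i>0$, so Theorem~\ref{thm_wcp} applies on $\omega_i\setminus K$. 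This device also eliminates your inner-boundary bookkeeping: the forcing $f_i$ replaces the Dirichlet datum $u_i=1$ on $\partial B_{r_i}$, the solutions sit in $W^{1,p}_0(U_i)$, and neither the cutoff reduction nor Lemma~\ref{newev} is needed. Your scheme becomes correct once you insert exactly this $1/i$-shift.
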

  \begin{proof}
   Let~$\{\omega_{i}\}_{i\in\mathbb{N}}$ be a Lipschitz exhaustion of~$\Omega$ with~$x_{0}\in\omega_{1}$. We define the inradius of~$\omega_{1}$ as $r_{1}\triangleq\sup_{x\in\omega_{1}}\mathrm{d}(x,\partial\omega_{1})$, and  consider  the open sets~$U_{i}\triangleq\omega_{i}\setminus\overline {B_i}\triangleq\omega_{i}\setminus \overline{B_{r_{1}/(i+1)}(x_{0})},$ for $i\in\mathbb{N}$. Fix a point~$x_{1}\in U_{1}$. Note that~$\{U_{i}\}_{i\in\mathbb{N}}$ is an exhaustion of~$\Omega\setminus\{x_{0}\}.$ Pick a sequence of nonnegative functions $f_{i}\in C^{\infty}_{c}\left(B_{i}(x_{0})\setminus \overline{B_{i+1}(x_{0})}\right)\setminus\{0\}$, for all~$i\in\mathbb{N}$. The principal eigenvalue
   	$$\lambda_{1}\left(Q_{p,\mathcal{A},V+1/i};U_{i}\right)>0,$$
   	because~$Q_{p,\mathcal{A},V}$ is nonnegative in~$\Omega$. Then, by virtue of Theorem \ref{maximum}, for every~$i\in\mathbb{N}$, there exists a positive solution~$v_{i}\in W^{1,p}_{0}(U_{i})$ of~$Q'_{p,\mathcal{A},V+1/i}[u]=f_{i}$ in~$U_{i}$. The Harnack convergence principle yields a subsequence of
  ~$\big\{u_{i}\triangleq v_{i}(x)/v_{i}(x_{1})\big\}_{i\in\mathbb{N}}$ converging locally uniformly in $\Omega\setminus\{x_{0}\}$ to a positive solution~$u$ of $Q'_{p,\mathcal{A},V}[u]=0$ in $\Omega\setminus\{x_{0}\}$.

  We claim that~$u\in\mathcal{M}_{\Omega;\{x_{0}\}}$. Consider any admissible compact subset~$K$ of $\Omega$ with~$x_{0}\in\mathring{K}$, and any positive solution~$v\in C\left(\Omega\setminus \mathring{K}\right)$ of~$Q'_{p,\mathcal{A},V}[w]=g$ in~$\Omega\setminus K$ satisfying~$u\leq v$ on~$\partial K$, where $g\in M^{q}_{\loc}(p;\Omega\setminus K)$ is nonnegative. 
  For an arbitrary~$\delta>0$, there exists~$i_{K}\in\mathbb{N}$ such that~for all~$i\geq i_{K}$, $\supp{f_{i}}\Subset K$,~$\omega_{i}\setminus K$ is a domain, and~$u_{i}\leq (1+\delta)v$ on~$\partial\left(\omega_{i}\setminus K\right)$. The weak comparison principle (Theorem~\ref{thm_wcp}) gives~$u_{i}\leq (1+\delta)v$ in~$\omega_{i}\setminus K$. Then by letting~$i\rightarrow\infty$ and then $\delta\rightarrow 0$, we obtain~$u\leq v$ in~$\Omega\setminus K$.
  \end{proof}
  \begin{Def}
   \emph{A function~$u\in \mathcal{M}_{\Omega;\{x_{0}\}}$ is called a \emph{minimal positive Green function of~$Q'_{p,\mathcal{A},V}$ in~$\Omega$ with singularity} at~$x_{0}$, if~$u$ admits a nonremovable singularity at~$x_{0}$.  We denote such a Green function by~$G^{\Omega}_{\mathcal{A},V}(x,x_{0})$.}
  \end{Def}
  \begin{Rem}
  \emph{See \cite{PinchoverGreen, PinchoverGreen2,Pinchoverlinear} for more on minimal positive Green functions of linear elliptic operators of the second order.}
  \end{Rem}
  \subsubsection{Further characterization of criticality}\label{tacit}
 We characterize the criticality and subcriticality of $Q_{p,\mathcal{A},V}$ in terms of the existence of a global minimal  positive solution and give sufficient conditions for the subcriticality by virtue of the existence of a Green function.
  \begin{Thm}
Let~$\mathcal{A}$ satisfy assumptions~\ref{ass8} and \ref{ass2}, and let~$V\in \widetilde{M}^{q}_{\loc}(p;\Omega)$. {In addition, for $1<p<2$ suppose that Assumption~\ref{ngradb} holds.}
 Consider the nonnegative functional~$Q_{p,\mathcal{A},V}$. Then~$Q_{p,\mathcal{A},V}$ is subcritical in $\Gw$ if and only if the equation $Q'_{p,\mathcal{A},V}[u]=0$ does not admit a global minimal positive solution in~$\Omega$. Moreover, a ground state of~$Q'_{p,\mathcal{A},V}[u]=0$ in~$\Omega$ is a global minimal positive solution of~$Q'_{p,\mathcal{A},V}[u]=0$ in~$\Omega$.
\end{Thm}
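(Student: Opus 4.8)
The plan is to prove the two directions of the equivalence and then the "moreover" clause, leaning heavily on the characterizations of criticality already established (Theorem \ref{thm_Poincare}), the uniqueness of positive supersolutions in the critical case (Theorem \ref{mainthm}), and the construction of positive solutions of minimal growth from the preceding theorem.

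\medskip

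\textbf{Subcriticality $\Rightarrow$ no global minimal positive solution.} Suppose, for contradiction, that $Q_{p,\mathcal{A},V}$ is subcritical in $\Omega$ yet the equation admits a global minimal positive solution $u\in\mathcal{M}_{\Omega;\emptyset}$. Since $Q_{p,\mathcal{A},V}$ is subcritical, it admits a Hardy-weight $0\lneq W\in L^\infty_c(\Omega)$, and by the AAP type theorem (Theorem~\ref{thm_AAP}) the equation $Q'_{p,\mathcal{A},V-W}[w]=0$ has a positive solution $\psi\in W^{1,p}_{\loc}(\Omega)$; this $\psi$ is a \emph{proper} positive supersolution of $Q'_{p,\mathcal{A},V}[w]=0$ in $\Omega$. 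Now fix any smooth compact $K\Subset\Omega$ with $\mathring K\ne\emptyset$. After multiplying $\psi$ by a suitable positive constant we may assume $u\le\psi$ on $\partial K$, and then, since $\psi$ is a positive supersolution on $\Omega\setminus K$ and $u$ is of minimal growth, $u\le\psi$ in $\Omega\setminus K$. Doing this for an exhausting sequence of such $K$ and keeping track of the normalizing constants (which stay bounded, by the Harnack inequality applied at a reference point), we conclude $u\le C\psi$ in all of $\Omega$ for some $C>0$; hence $\psi$ is itself, up to scaling, a positive supersolution dominating a positive solution — more to the point, both $u$ and $\psi$ are positive supersolutions of $Q'_{p,\mathcal{A},V}[w]=0$ in $\Omega$. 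But $u$, being a global minimal positive solution, forces $Q_{p,\mathcal{A},V}$ to be critical: indeed if $Q_{p,\mathcal{A},V}$ were subcritical then by Theorem~\ref{thm_Poincare}(2) the equation would have a \emph{unique} positive supersolution, contradicting that $u$ and $\psi$ are two (non-proportional, since $\psi$ is proper while $u$ is a solution) positive supersolutions. This contradiction proves the forward direction.

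\medskip

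\textbf{No global minimal positive solution $\Rightarrow$ subcriticality.} I prove the contrapositive: if $Q_{p,\mathcal{A},V}$ is critical in $\Omega$, then it admits a global minimal positive solution, which will simultaneously establish the "moreover" clause. Since $Q_{p,\mathcal{A},V}$ is critical and nonnegative, by Theorem~\ref{thm_Poincare}(1) it has a null-sequence in $C_c^\infty(\Omega)$, and by Theorem~\ref{mainthm} this null-sequence converges in $L^p_{\loc}$ and a.e. to a ground state $\phi$, which is the \emph{unique} positive solution and the unique positive supersolution of $Q'_{p,\mathcal{A},V}[w]=0$ in $\Omega$. I claim $\phi\in\mathcal{M}_{\Omega;\emptyset}$. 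Let $K\Subset\Omega$ be smooth with $\mathring K\ne\emptyset$, and let $v\in C(\Omega\setminus\mathring K)$ be any positive supersolution of $Q'_{p,\mathcal{A},V}[w]=0$ in $\Omega\setminus K$ with $\phi\le v$ on $\partial K$. Using the null-sequence $\{\phi_i\}\subseteq C_c^\infty(\Omega)$ converging locally uniformly to $\phi$ (available by Lemma~\ref{localuniform}), for each $\delta>0$ one has, for $i$ large, $\phi_i\le(1+\delta)v$ on $\partial K$ and $\supp\phi_i$ meets $\Omega\setminus K$ in a compact set; then a weak-comparison argument on the Lipschitz domain $\omega_i\setminus K$ (Theorem~\ref{thm_wcp}, applicable because $\lambda_1(Q_{p,\mathcal{A},V};\omega_i\setminus K)>0$ by the domain-monotonicity Proposition~\ref{two} together with Corollary~\ref{subcriticaleg}, since $\Omega\setminus\overline{\omega_i\setminus K}\neq\emptyset$) gives $\phi_i\le(1+\delta)v$ in $\omega_i\setminus K$ — more carefully one compares $\phi_i$ against $v$ only where $\phi_i$ is nonzero, treating $\phi_i$ as a subsolution there. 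Passing to the limit $i\to\infty$ and then $\delta\to 0$ yields $\phi\le v$ in $\Omega\setminus K$, so $\phi$ is a positive solution of minimal growth in a neighborhood of infinity with $K_0=\emptyset$, i.e.\ a global minimal positive solution.

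\medskip

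\textbf{Main obstacle.} The delicate point is the comparison step in the second direction: the $\phi_i$ are compactly supported and so are only \emph{subsolutions} of $Q'_{p,\mathcal{A},V}[w]=0$ on the region where they are positive, and $v$ is merely a supersolution, so one must invoke the weak comparison principle (Theorem~\ref{thm_wcp}) with care about where the supports lie and about the strict positivity of the relevant principal eigenvalue on the truncated domains; the boundary behaviour on $\partial(\omega_i\setminus K)$ splits into the part on $\partial K$ (where $\phi_i\le(1+\delta)v$ by local uniform convergence and $\phi\le v$) and the part on $\partial\omega_i$ (where $\phi_i=0\le v$ once $\supp\phi_i\Subset\omega_i$). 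Getting this bookkeeping exactly right, and confirming that the normalizing constants in the forward direction remain bounded, are the steps that require genuine attention; everything else is a direct appeal to the already-established criticality machinery.
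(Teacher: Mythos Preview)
Your forward direction contains a genuine logical error. You write that ``if $Q_{p,\mathcal{A},V}$ were subcritical then by Theorem~\ref{thm_Poincare}(2) the equation would have a \emph{unique} positive supersolution.'' This is backwards: Theorem~\ref{thm_Poincare}(1)--(2) together say that $Q_{p,\mathcal{A},V}$ is \emph{critical} if and only if the equation has a unique positive supersolution. Producing two non-proportional supersolutions $u$ and $\psi$ is therefore perfectly consistent with subcriticality and yields no contradiction. The preceding paragraph (exhausting $K$'s, bounding constants, obtaining $u\le C\psi$) is correct but leads nowhere. What is actually needed is the opposite: you must show that the mere existence of $u\in\mathcal{M}_{\Omega;\emptyset}$ \emph{forces} uniqueness of positive supersolutions. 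For this, fix any positive supersolution $v$ in $\Omega$, apply the minimal-growth property with $K=\overline{B_r(x_0)}$ and the scaling constant $c_r=\max_{\partial K}(u/v)$, let $r\to 0$, and use continuity to obtain $u(x)v(x_0)\le u(x_0)v(x)$ for all $x,x_0$; by symmetry $u/v$ is constant. Hence every positive supersolution is a multiple of $u$, so by Theorem~\ref{thm_Poincare}(2) the functional is critical, the desired contradiction.

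Your backward direction has the right architecture but a real gap in the comparison step. The null-sequence $\{\phi_i\}\subseteq C_c^\infty(\Omega)$ furnished by Lemma~\ref{localuniform} consists of smooth test functions, not subsolutions; the assertion that they are ``subsolutions where positive'' is unjustified, so Theorem~\ref{thm_wcp} does not apply to them. The fix is to go back to the \emph{proof} of Lemma~\ref{localuniform}: choose the perturbation $\mathbf{V}\in C_c^\infty(\mathring K)\setminus\{0\}$ with support inside the given compact $K$ (possible since $\mathring K\neq\emptyset$), and work with the principal eigenfunctions $\phi_i'\in W^{1,p}_0(\omega_i)$ of $Q'_{p,\mathcal{A},V-t_i\mathbf{V}}$. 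Since $\mathbf{V}\equiv 0$ in $\omega_i\setminus K$, each $\phi_i'$ is a genuine \emph{solution} of $Q'_{p,\mathcal{A},V}[w]=0$ there, vanishes on $\partial\omega_i$, and by local uniform convergence satisfies $\phi_i'\le(1+\delta)v$ on $\partial K$ for $i$ large. Now Theorem~\ref{thm_wcp} (with $\lambda_1>0$ on $\omega_i\setminus K$ by Proposition~\ref{two} and Corollary~\ref{subcriticaleg}) gives $\phi_i'\le(1+\delta)v$ in $\omega_i\setminus K$, and passing to the limit finishes the argument.
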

\begin{proof}
 The proof is similar to that of \cite[Theorem 5.9]{Pinchover} and hence omitted.
\end{proof}
    \begin{Thm}
   Let~$\mathcal{A}$ satisfy assumptions~\ref{ass8} and \ref{ass2}, and let $V\in \widetilde{M}^{q}_{\loc}(p;\Omega)$. {In addition, for $1<p<2$ suppose that Assumption~\ref{ngradb} holds.}
  Assume that the  functional~$Q_{p,\mathcal{A},V}$ is nonnegative in $\Gw$, and fix $u\in \mathcal{M}_{\Omega;\{x_{0}\}}$ for some $x_{0}\in\Omega$.
  \begin{enumerate}
    \item[$(1)$] If~$p\leq n$ and~$u$ has a nonremovable singularity at~$x_{0}$, then~$Q_{p,\mathcal{A},V}$ is subcritical in~$\Omega$.
    \item[$(2)$] If~$p> n$,~$u$ has a nonremovable singularity at~$x_{0},$ and~$\lim_{x\rightarrow x_{0}}u(x)=c$ for some positive constant~$c,$ then~$Q_{p,\mathcal{A},V}$ is subcritical in~$\Omega$.
  \end{enumerate}
\end{Thm}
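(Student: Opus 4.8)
The plan is to deduce all three statements from the characterization obtained in the preceding theorem — that a nonnegative $Q_{p,\mathcal{A},V}$ is critical in $\Omega$ exactly when $Q'_{p,\mathcal{A},V}[w]=0$ admits a global minimal positive solution — together with the local behaviour of positive solutions near $x_{0}$ (Theorem~\ref{singularity}) and the weak comparison principle (Theorem~\ref{thm_wcp}); the overall scheme parallels \cite[Theorem~5.9]{Pinchover}. For part~(1): since the singularity at $x_{0}$ is removable, $u$ extends to a positive solution $\tilde u\in W^{1,p}_{\loc}(\Omega)$ of $Q'_{p,\mathcal{A},V}[w]=0$ in all of $\Omega$ (for $p\le n$ this is the first alternative in Theorem~\ref{singularity}). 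I then want to show that $\tilde u$ is a \emph{global} minimal positive solution, i.e.\ $\tilde u\in\mathcal{M}_{\Omega;\emptyset}$: given a smooth compact $K\subseteq\Omega$ with $\mathring{K}\neq\emptyset$ and a positive supersolution $v\in C(\Omega\setminus\mathring{K})$ of $Q'_{p,\mathcal{A},V}[w]=0$ in $\Omega\setminus K$ with $\tilde u\le v$ on $\partial K$, I would enlarge $K$ to a smooth compact $\hat K$ with $x_{0}\in\mathring{\hat K}$ whose extra piece is a small ball $B_{r}(x_{0})$ on which $\lambda_{1}(Q_{p,\mathcal{A},V};B_{r}(x_{0}))>0$ (possible by Lemma~\ref{newev}); combining the weak comparison principle on $B_{r}(x_{0})$ with the minimal growth of $u$ with respect to $\{x_{0}\}$ should propagate $\tilde u\le v$ from $\partial K$ to all of $\Omega\setminus K$. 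Once $\tilde u\in\mathcal{M}_{\Omega;\emptyset}$, the preceding theorem (and nonnegativity, which excludes supercriticality) forces $Q_{p,\mathcal{A},V}$ to be critical.

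For parts~(2) and~(3) I argue by contradiction. If $Q_{p,\mathcal{A},V}$ is not subcritical then, being nonnegative, it is critical; hence by Theorem~\ref{thm_Poincare} it has a null-sequence, whose $L^{p}_{\loc}$-limit is, by Theorem~\ref{mainthm}, a ground state $\phi$, and by the preceding theorem $\phi$ is a global minimal positive solution — in particular a positive solution in \emph{all} of $\Omega$, so $\phi$ is continuous and bounded near $x_{0}$ and $\phi\in\mathcal{M}_{\Omega;\{x_{0}\}}$. Fix a small ball $B_{r}(x_{0})\Subset\Omega$. Since $u$ is a positive solution of $Q'_{p,\mathcal{A},V}[w]=0$ in $\Omega\setminus\{x_{0}\}$, it is continuous and strictly positive on $\partial B_{r}(x_{0})$; hence $c_{r}\triangleq\bigl(\max_{\partial B_{r}(x_{0})}\phi\bigr)\bigl(\min_{\partial B_{r}(x_{0})}u\bigr)^{-1}$ is finite and positive and $c_{r}u\ge\phi$ on $\partial B_{r}(x_{0})$. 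As $c_{r}u$ is a positive supersolution of $Q'_{p,\mathcal{A},V}[w]=0$ on $\Omega\setminus\overline{B_{r}(x_{0})}$, the minimal growth of $\phi$ with respect to $\{x_{0}\}$ yields $\phi\le c_{r}u$ on $\Omega\setminus\overline{B_{r}(x_{0})}$, while $\max_{\partial B_{r}(x_{0})}\phi\to\phi(x_{0})$ as $r\to0$.

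In case~(2), $p\le n$ and the singularity is nonremovable, so $u(x)\to\infty$ as $x\to x_{0}$ by Theorem~\ref{singularity}; thus $\min_{\partial B_{r}(x_{0})}u\to\infty$, so $c_{r}\to0$, and passing to the limit $r\to0$ in $\phi(x)\le c_{r}u(x)$ at a fixed $x\neq x_{0}$ gives $\phi(x)\le0$, contradicting $\phi>0$. In case~(3), $p>n$ and $\lim_{x\to x_{0}}u(x)=c>0$, so $\min_{\partial B_{r}(x_{0})}u\to c$, $c_{r}\to c_{0}\triangleq\phi(x_{0})/c>0$, and letting $r\to0$ gives $\phi\le c_{0}u$ on $\Omega\setminus\{x_{0}\}$. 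Running the symmetric comparison — using that $\phi$ (a positive solution in $\Omega$) is a positive supersolution on $\Omega\setminus\overline{B_{r}(x_{0})}$ and that $u\in\mathcal{M}_{\Omega;\{x_{0}\}}$ — yields $u\le c_{0}^{-1}\phi$ on $\Omega\setminus\{x_{0}\}$, so $\phi=c_{0}u$ there; but $\phi$ extends to a positive solution across $x_{0}$, hence so does $u=c_{0}^{-1}\phi$, i.e.\ $u$ has a \emph{removable} singularity, contradicting the hypothesis of~(3). In both cases $Q_{p,\mathcal{A},V}$ must be subcritical.

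The main obstacle is the step in part~(1) of upgrading minimal growth with respect to $\{x_{0}\}$ to minimal growth with respect to $\emptyset$, i.e.\ verifying that the extended solution $\tilde u$ is genuinely a global minimal positive solution; the remaining ingredients are routine bookkeeping with the weak comparison principle and the local behaviour of solutions near $x_{0}$. An alternative for part~(1) argues by contradiction: a subcritical $Q_{p,\mathcal{A},V}$ admits, via the AAP theorem (Theorem~\ref{thm_AAP}) applied to $V$ minus a Hardy-weight, a proper positive supersolution $\psi$; the minimality of $u$ near $x_{0}$ then forces $\tilde u\le c\psi$ in $\Omega$ with equality at $x_{0}$, and a strong comparison would give $\tilde u\equiv c\psi$, contradicting the properness of $\psi$.
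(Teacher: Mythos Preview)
The paper omits its own proof and refers to \cite[Theorem~5.10]{Pinchover}; your overall scheme matches that reference. Your arguments for parts~(2) and~(3) are correct: comparing the ground state $\phi\in\mathcal{M}_{\Omega;\{x_0\}}$ with $c_r u$ on $\partial B_r(x_0)$, using Theorem~\ref{singularity} for $p\le n$ and the two-sided comparison for $p>n$, gives the contradictions cleanly.

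For part~(1) the obstacle you flag is genuine. Your enlargement $\hat K=K\cup\overline{B_r(x_0)}$ combined with minimal growth on $\Omega\setminus\hat K$ and weak comparison on $B_r(x_0)$ yields only $\tilde u\le\max(1,M_r)\,v$ on $\Omega\setminus K$, where $M_r=\max_{\partial B_r(x_0)}(\tilde u/v)$. If $x_0\notin K$ and $\tilde u(x_0)>v(x_0)$, a short bootstrap shows $M_r\equiv M_*=\tilde u(x_0)/v(x_0)>1$ is \emph{constant} in $r$, so $\tilde u/v$ attains its maximum $M_*$ at the interior point $x_0$ of $B_{r_0}(x_0)\subset\Omega\setminus K$. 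Eliminating this case is exactly a strong comparison (tangency) principle for a solution touching a supersolution from below---the very ingredient your alternative approach invokes explicitly. The present paper proves only the \emph{weak} comparison principle (Theorem~\ref{thm_wcp}), so to close~(1) you must either supply a tangency principle valid under Assumptions~\ref{ass8}, \ref{ass2} (and~\ref{ngradb} for $1<p<2$), or trace through \cite[Theorem~5.10]{Pinchover} and verify that its handling of this step transfers to the $\mathcal A$-Laplacian setting without change.
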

\begin{proof}
  The proof is similar to that of \cite[Theorem 5.10]{Pinchover} and hence omitted.
\end{proof}
\subsection{How large can Hardy-weights be?}\label{tacit2}
 The following theorem is a generalization of \cite[theorems~3.1 and 3.2]{Kovarik}.
\begin{Thm}
	Let~$\mathcal{A}$ satisfy assumptions~\ref{ass8} and \ref{ass2} and let~$V\in \widetilde{M}^{q}_{\loc}(p;\Omega)$. In addition, for all~$1<p<\infty$ suppose that Assumption~\ref{ngradb} holds.
  Assume that~$Q_{p,\mathcal{A},V}$ is nonnegative in~$\Omega$. For a compact $K\subseteq\Omega$ such that~$\Omega\setminus K$ is a domain, let $\phi\in W^{1,p}_{\loc}(\Gw\setminus K)$ be a positive solution of the equation $Q'_{p,\mathcal{A},V}[u]=0$ in  $\Gw\setminus K$ of minimal growth\vspace{0.5mm} in a neighborhood of infinity in $\Gw$. Let~$\mathcal{K}$ be an admissible compact subset of~$\Omega$ such that~$K\Subset \mathring{\mathcal{K}}$.
	Then for every Hardy-weight $W$ of $Q_{p,\mathcal{A},V}$ in $\Gw\setminus K$, we have $$\int_{\mathcal{K}^{c}}W|\phi|^{p}\dx<\infty.$$
\end{Thm}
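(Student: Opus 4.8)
The plan is to extract, from the Hardy weight $W$, a positive supersolution of $Q'_{p,\mathcal{A},V}[u]=0$ near infinity which dominates $\phi$, and then to run a ground-state-substitution/integration-by-parts argument against it. Since $W$ is a Hardy weight for $Q_{p,\mathcal{A},V}$ in $\Omega\setminus K$, the functional $Q_{p,\mathcal{A},V-W}$ is nonnegative on $C_c^\infty(\Omega\setminus K)$ and $V-W\in M^q_{\loc}(p;\Omega\setminus K)$; by the AAP theorem (Theorem~\ref{thm_AAP}), applied on $\Omega\setminus K$ (which we may take connected, else argue near each end), there is a positive solution $\psi\in W^{1,p}_{\loc}$ of $Q'_{p,\mathcal{A},V-W}[u]=0$ there, continuous by the local H\"older estimate. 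As $W\ge 0$, $\psi$ satisfies $Q'_{p,\mathcal{A},V}[\psi]=W\psi^{p-1}\ge 0$ weakly, so $c\psi$ is a positive supersolution of $Q'_{p,\mathcal{A},V}[u]=0$ in $\Omega\setminus K$ for every $c>0$. Fixing a smooth compact $K'$ with $K\Subset\mathring{K'}\subseteq K'\subseteq\mathring{\mathcal{K}}$ and setting $C:=\sup_{\partial K'}\phi/\psi<\infty$ (finite by continuity and positivity on the compact set $\partial K'$), the function $C\psi\in C(\Omega\setminus\mathring{K'})$ is a positive supersolution of $Q'_{p,\mathcal{A},V}[u]=0$ in $\Omega\setminus K'$ with $\phi\le C\psi$ on $\partial K'$; the minimal-growth property of $\phi$ then yields $\phi\le C\psi$ on $\Omega\setminus K'$, in particular on $\mathcal{K}^c$.

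The key computation is an energy identity. For $\chi\in C_c^\infty(\Omega\setminus K)$ with $0\le\chi\le1$, test $Q'_{p,\mathcal{A},V-W}[\psi]=0$ against $\chi^p\phi^p\psi^{1-p}\in W^{1,p}_c(\Omega\setminus K)$ and $Q'_{p,\mathcal{A},V}[\phi]=0$ against $\chi^p\phi\in W^{1,p}_c(\Omega\setminus K)$; using the $(p-1)$-homogeneity of $\mathcal{A}(x,\cdot)$ and the algebraic identity of Lemma~\ref{RL}, and substituting one identity into the other, one obtains
\[
\int_{\Omega\setminus K} W\chi^p\phi^p\dx+\int_{\Omega\setminus K}\chi^p L(\phi,\psi)\dx
=p\int_{\Omega\setminus K}\chi^{p-1}\phi^p\big(\mathcal{A}(x,\tfrac{\nabla\psi}{\psi})-\mathcal{A}(x,\tfrac{\nabla\phi}{\phi})\big)\cdot\nabla\chi\dx ,
\]
where both terms on the left are nonnegative (Lemma~\ref{RL}, since $W\ge0$). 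For $1<p<2$, Assumption~\ref{ngradb} guarantees that these are admissible test functions and that all product and chain rules used above are valid.

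Finally, take a Lipschitz exhaustion $\{\omega_j\}$ of $\Omega$ with $\mathcal{K}\Subset\omega_1$ and cutoffs $\chi_j\in C_c^\infty(\Omega\setminus K)$, $0\le\chi_j\le1$, with $\chi_j\equiv0$ on a fixed neighbourhood of $K$ inside $\mathring{\mathcal{K}}$, $\chi_j\equiv1$ on $\mathcal{K}^c\cap\omega_j$, and $\supp\chi_j\subseteq\omega_{j+1}$, so that $\nabla\chi_j$ is supported in a fixed compact annulus around $K$ together with the shell $\omega_{j+1}\setminus\omega_j$. On the fixed annulus the right-hand side of the identity is, for large $j$, a fixed finite constant (an integral of a fixed $L^1$ function). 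On the shell, the generalized H\"older inequality (Lemma~\ref{ass1}), the bound $\phi\le C\psi$, and Young's inequality with a small parameter bound the contribution by a small multiple of $\int_{\omega_{j+1}\setminus\omega_j}\chi_j^p\phi^p[\tfrac{\nabla\phi}{\phi},\tfrac{\nabla\psi}{\psi}]_{\mathcal{A}}\dx$ (absorbed into $\int\chi_j^p L(\phi,\psi)\dx$ on the left via the lower bound of Assumption~\ref{ass2}) plus a "cutoff-energy" term $C'\int_{\omega_{j+1}\setminus\omega_j}(\phi^p+\psi^p)|\nabla\chi_j|_{\mathcal{A}}^p\dx$. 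Once this last term is shown to remain bounded along a suitable subsequence, letting $j\to\infty$ and applying Fatou's lemma to $\int W\chi_j^p\phi^p\dx$ (with $\chi_j\nearrow1$ on $\mathcal{K}^c$) gives $\int_{\mathcal{K}^c}W\phi^p\dx<\infty$. The main obstacle is precisely this last point: bounding the flux of the cutoffs through the large level sets. This is where the minimal growth of $\phi$ must genuinely be invoked — together with the comparison with $\psi$, one chooses the exhaustion and the cutoffs judiciously (e.g.\ adapted to the level sets of $\psi$, combined with a Maz'ya/coarea or dyadic-shell pigeonhole argument) so that this energy does not blow up; this is the argument carried out for the model operators in \cite{Kovarik}. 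All the remaining steps are either direct applications of results already established (the AAP theorem, the Harnack and H\"older estimates, the minimal-growth comparison) or routine computations.
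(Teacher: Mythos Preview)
Your proposal contains a genuine gap, and it is exactly the one you flag yourself: you never control the ``cutoff-energy'' term $\int_{\omega_{j+1}\setminus\omega_j}(\phi^p+\psi^p)|\nabla\chi_j|_{\mathcal{A}}^p\dx$. You write that this ``is the argument carried out for the model operators in \cite{Kovarik}'' and that one should choose cutoffs ``judiciously'', but you give no argument. There is no a~priori reason the positive solution $\psi$ of $Q'_{p,\mathcal{A},V-W}[u]=0$ in $\Omega\setminus K$ should have bounded $p$-energy across an exhausting family of shells; minimal growth of $\phi$ gives you $\phi\le C\psi$, not any decay or energy control on $\psi$ itself. Without this, the right-hand side of your identity need not stay bounded, and the argument does not close.

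The paper's proof avoids this obstacle entirely by a different mechanism. Instead of working directly with $\phi$ and a supersolution $\psi$ and pushing cutoffs to infinity, it perturbs $V$ by a nonnegative $\tilde V\in C_c^\infty(\mathring{\mathcal{K}})$ so that $Q_{p,\mathcal{A},V-\tilde V}$ becomes \emph{critical in all of $\Omega$} (Proposition~\ref{prop_subcritical}). By Lemma~\ref{localuniform} this critical functional has a null sequence $\{\varphi_k\}\subseteq C_c^\infty(\Omega)$ converging locally uniformly to its ground state $\varphi$. The crucial point is that each $\varphi_k$ has \emph{compact support in $\Omega$}, so there is no ``shell at infinity'' to estimate: one only cuts off near $K$ with a fixed $f$ ($f=0$ on $K$, $f=1$ on $\mathcal{K}^c$), tests the Hardy inequality in $\Omega\setminus K$ against $f\varphi_k$, and bounds $Q_{p,\mathcal{A},V}[f\varphi_k]$ by $Q_{p,\mathcal{A},V-\tilde V}[\varphi_k]\to 0$ plus terms living on the fixed compact set $\mathcal{K}$, which are controlled because null sequences are locally bounded in $L^\infty\cap W^{1,p}$ (Proposition~\ref{mainlemma}). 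Fatou then gives $\int_{\mathcal{K}^c}W\varphi^p\dx<\infty$, and since the ground state $\varphi$ is itself a positive solution of minimal growth at infinity (Theorem~\ref{mainthm} and Section~\ref{tacit}), one has $\phi\asymp\varphi$ on $\mathcal{K}^c$, finishing the proof. The null-sequence device is what replaces your missing estimate at infinity.
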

\begin{proof}
	Let $K\Subset\mathring{\mathcal{K}}\Subset\Omega$, and let $\tilde V\in C_c^\infty(\mathring{\mathcal{K}})$ be a nonnegative function such that $Q_{p,\mathcal{A},V-\tilde V}$ is critical in $\Gw$.  There exists a null sequence $\{\varphi_{k}\}_{k\in\mathbb{N}}\subseteq \core$ for $Q_{p,\mathcal{A},V-\tilde V}$ in $\Gw$ converging locally uniformly to its ground state $\vgf$. Then $\vgf_{k}\geq 0$,~$\Vert\vgf_{k}\Vert_{L^{p}(K)}=1$, and~$\lim_{k\rightarrow\infty}Q_{p,\mathcal{A},V-\tilde V}[\vgf_{k}]\!=\!0$.
		 Let $f\in C^{1}(\Omega)$  satisfy $0\leq f\leq 1$,~$f|_{K} =  0$, $f|_{\mathcal{K}^{c}}=1$, and~$|\nabla f(x)|_{\mathcal{A}}\leq C_{0}$ for some constant~$C_{0}$ and all~$x\in\Omega$.  Then~$Q_{p,\mathcal{A},V}[f\vgf_{k}]\geq \int_{K^{c}}W|f\vgf_{k}|^{p}\dx\geq \int_{\mathcal{K}^{c}}W|\vgf_{k}|^{p}\dx$. Moreover,
	\begin{eqnarray*}
		\int_{\mathcal{K}^{c}}W|\vgf_{k}|^{p}\dx \!&\leq&\! Q_{p,\mathcal{A},V}[f\vgf_{k}]\!=\!\int_{\mathcal{K}^{c}} \!\!\! (|\nabla\vgf_{k}|_{\mathcal{A}}^{p} \!+\! V|\vgf_{k}|^{p})\dx+\int_{\mathcal{K}\setminus K} \!\!\!( |\nabla(f\vgf_{k})|_{\mathcal{A}}^{p}+V|f\vgf_{k}|^{p})\!\dx\\
		\!&\leq&\!\!Q_{p,\mathcal{A},V-\tilde V}[\vgf_{k}]+2\!\int_{\mathcal{K}}\!(|V|+\tilde V) |\vgf_{k}|^{p}\!\dx +C\|\vgf_k\|^p_{W^{1,p}(\mathcal{K})}.
	\end{eqnarray*}
Since the null-sequence $\{\varphi_{k}\}$ is locally bounded in $L^\infty(\Omega)$ and~$W^{1,p}(\Gw)$, it follows that$$\int_{\mathcal{K}^{c}}W|\vgf_{k}|^{p}\dx < C_1.$$ Consequently, the Fatou Lemma implies that $\int_{\mathcal{K}^{c}}W|\vgf|^{p}\dx\leq C_1$. Note that the ground state $\vgf$ is a positive solution of $Q'_{p,\mathcal{A},V}[u]=0$ of minimal growth at infinity of~$\Omega$. Hence, $\gf\asymp\vgf$ in $\mathcal{K}^c$. Thus, $\int_{\mathcal{K}^{c}}W|\gf|^{p}\dx< \infty$.
\end{proof}

\subsection*{Data Availability Statement}
Data sharing is not applicable to this article as no datasets were generated or analyzed during the current study.

\subsection*{Declarations}
\begin{itemize}
\item
This paper is based on the
{thesis of the first author for the M.Sc. degree in Mathematics conferred by the  Technion-Israel Institute of Technology under the supervision of Professors Yehuda Pinchover and Antti Rasila and  on his subsequent Ph.D. research at the Technion.}
\item Y.H. and A.R. gratefully acknowledge the generous financial help of NNSF of China (No. 11971124) and NSF of Guangdong Province (No. 2021A1515010326). {Y.H. and Y.P.  acknowledge the support of the Israel Science Foundation (grant 637/19) founded by the Israel Academy of Sciences and Humanities.} {Y.H. is grateful to the Technion for supporting his study.}
\item
The authors have no competing interests to declare that are relevant to the content of this article.
\end{itemize}
 {\small
{}
}


\begin{thebibliography}{}	\bibitem{Agmon} S.~Agmon, \textit{On positivity and decay of solutions of second-order
	elliptic equations on Riemannian manifolds}, in ``Methods of
	Functional Analysis and Theory of Elliptic Equations" (Naples,
	1982), pp. 19--52, Liguori Editore, Naples, 1983.
\bibitem{Allegretto1974} W. Allegretto, \textit{On the equivalence of two types of oscillation for elliptic operators}, Pacific J. Math. \textbf{55} (1974), 319--328.
\bibitem{Regev1} W. Allegretto and Y. X. Huang, \textit{A Picone's identity for the~$p$-Laplacian and applications}, Nonlinear Anal. \textbf{32} (1998), 819--830.
\bibitem{Regev2} W. Allegretto and Y. X. Huang, \textit{Principal eigenvalues and Sturm comparison via Picone's identity}, J. Differential Equations \textbf{156} (1999), 427--438.
\bibitem{Anane1987} A. Anane, \textit{Simplicit\'{e} et isolation de la premi\`{e}re valeur propre du p-laplacien avec poids}, C. R. Acad. Sci. Paris S\'{e}r. I Math. \textbf{305} (1987), 725--728.
%
\bibitem{Beckus} S. Beckus and Y. Pinchover, \textit{Shnol-type theorem for the Agmon ground state}, J. Spectr. Theory \textbf{10} (2020), 355--377.
\bibitem{Brezis} H. Brezis, ``Functional Analysis, Sobolev Spaces and Partial Differential Equations", Universitext, Springer, New York, 2011.
\bibitem {Byun} S.-S. Byun, D. K. Palagachev, and P. Shin,
\emph{Global H\"{o}lder continuity of solutions to quasilinear equations with Morrey data}, Commun. Contemp. Math. \textbf{24} (2022), Paper No. 2150062, 41 pp..
\bibitem{connect} A. Czarnecki, M. Kulczycki, and W. Lubawski,
\emph{On the connectedness of boundary and complement for domains}, Ann. Polon. Math.   \textbf{103} (2012), 189--191.
\bibitem{smooth} D.~Daners, \emph{Domain perturbation for linear and semi-linear boundary value problems,} pp. 1--81, in: ``Handbook of Differential Equations: Stationary Partial Differential Equations", Vol. \textbf{6}, Elsevier, 2008.
\bibitem{Diaz} J. I. D\'{\i}az and J. E. Sa\'{a}, \textit{Existence et unicit\'e de solutions positives pour certaines équations elliptiques
quasilin\'eaires}, C. R. Acad. Sci. Paris Sér. I Math. \textbf{305} (1987), 521--524.
\bibitem{Evans} L. C. Evans, ``Partial Differential Equations", Graduate Studies in Mathematics, Vol. \textbf{19}, American Mathematical Society, 1998.
\bibitem{Fischer4} F. Fischer, \emph{A non-local quasi-linear ground state representation and criticality theory}, Calc. Var. Partial Differential Equations \textbf{62} (2023), Paper No. 163, 33 pp..
\bibitem{Fischer7} {F. Fischer, \emph{Quasi-linear criticality theory and Green's functions on graphs}, to appear in J. Anal. Math., arXiv:2207.05445.}
\bibitem{Fraas} M. Fraas and Y. Pinchover, \textit{Isolated singularities of positive solutions of~$p$-Laplacian type equations in~$\R^{d}$}, J. Differential Equations \textbf{254} (2013), 1097--1119.
\bibitem{Trudinger} D. Gilbarg and N. S. Trudinger, ``Elliptic Partial Differential Equations of Second Order", Reprint of the 1998 edition, Classics Math.,
Springer-Verlag, Berlin, 2001.
\bibitem{Giri} R. Giri, and Y. Pinchover, \textit{Positive Liouville theorem and asymptotic behaviour for (p,A)-Laplacian type elliptic equations with Fuchsian potentials in Morrey space}, appeared in a topical collection: Harmonic Analysis and PDE dedicated to the 80th birthday of  Vladimir Maz'ya,  Anal. Math. Phys. \textbf{10}, Article number: 67 (2020).
\bibitem{HKM} J.~Heinonen, T. Kilpel\"{a}inen, and O. Martio, ``Nonlinear Potential Theory of Degenerate Elliptic Equations", Dover Publications, Inc. Mineola, New York, 2006.
\bibitem{cutoff} L. H\"ormander, ``The Analysis of Linear Partial Differential Operators I", Grund\-lehren der Mathematischen Wissenschaften, \textbf{256} (2nd ed.), Springer-Verlag, Berlin-Heidelberg-New York, 1990.
\bibitem{Hou} Y. Hou, \emph{On positive solutions of the~$\mathcal{A}$-Laplacian with a potential}, M.Sc. Thesis, Technion-Israel Institute of Technology, 2021.
\bibitem{HPR} Y. Hou, Y. Pinchover, and A. Rasila, \emph{Positive solutions of the~$\mathcal{A}$-Laplace equation with a potential}, Potential Anal. \textbf{60} (2024), 721–758.
\bibitem{Hou2} Y. Hou, \emph{Finsler $p$-Laplace equation with a potential: Maz'ya-type characterization and attainments of the Hardy constant}, 34 pp., arXiv:2405.18159v2.
\bibitem{newpicone} J. Jaro\v{s}, \textit{Caccioppoli estimates through an anisotropic Picone's identity}, Proc. Amer. Math. Soc. \textbf{143}  (2015), 1137--1144.
 \bibitem{firstreference} B. J. Jaye, V. G. Maz’ya, and I. E. Verbitsky, \textit{Existence and regularity of positive solutions of elliptic equations of Schrödinger type}, J. Anal. Math. \textbf{118} (2012), 577--621.
\bibitem{KellerHardy} M. Keller, Y. Pinchover, and F. Pogorzelski, \textit{Optimal Hardy inequalities for Schr\"{o}dinger operators on graphs}, Comm. Math. Phys. \textbf{358} (2018), 767--790.
\bibitem{Keller} M. Keller, Y. Pinchover, and F. Pogorzelski, \textit{Criticality theory for Schrödinger operators on graphs}, J. Spectr. Theory \textbf{10} (2020), 73--114.
\bibitem{Kovarik} H.~Kova\v{r}\'{\i}k and Y.~Pinchover, \textit{On minimal decay at infinity of Hardy-weights},  Commun. Contemp. Math. \textbf{22} (2020), 1950046.
\bibitem{AAPform} D. Lenz, P. Stollmann, and I. Veseli\'{c}, \textit{The Allegretto-Piepenbrink theorem for strongly local
Dirichlet forms,} Doc. Math. \textbf{14} (2009), 167--189.
\bibitem{Leoni} G. Leoni, ``A First Course in Sobolev Spaces", 2nd ed., Graduate Studies in Mathematics \textbf{181}, American Mathematical Society, Providence, 2017.
\bibitem{Lieb}  E. H.  Lieb and M. Loss, ``Analysis", 2nd ed., Graduate Studies in Mathematics \textbf{14}, American Mathematical Society, Providence, 2001.
\bibitem{Lieb1} {G. M. Lieberman, \emph{Sharp forms of estimates for subsolutions and supersolutions of quasilinear elliptic equations involving measures}, Comm. Partial Differential Equations {\bf 18} (1993),  1191--1212. }
\bibitem{Lieberman} {G. M. Lieberman, \emph{Gradient estimates for anisotropic elliptic equations}, Adv. Differential Equations \textbf{10} (2005), 767--812.}
\bibitem{Lindqvist} P. Lindqvist, \textit{On the equation~$\dive(|\nabla u|^{p-2}\nabla u)+\lambda|u|^{p-2}u=0$}, Proc. Amer. Math. Soc. \textbf{109}  (1990), 157--164.
\bibitem{Maly}  J. Mal\'{y} and W. P. Ziemer, ``Fine Regularity of Solutions of Elliptic Partial Differential Equations", Mathematical surveys and monographs, Vol. \textbf{51}, Amer. Math. Soc., 1997.
\bibitem{Morrey1966} C. B. Morrey, Jr., ``Multiple Integrals in the Calculus of Variations", Reprint of the 1966 edition, Classics Math., Springer-Verlag, Berlin, 2008.
\bibitem{Moser} J. Moser, \textit{On Harnack's theorem for elliptic differential equations}, Comm. Pure Appl.
	Math. \textbf{14} (1961), 577--591.
	%
	\bibitem{Murata} M. Murata, \textit{Structure of positive solutions to~$(-\Delta+V)u=0$ in~$\R^{n}$}, Duke Math. J., \textbf{53} (1986), 869–943.
	%
	\bibitem{MT} M. Murata and T. Tsuchida, \textit{Monotonicity of non-Liouville property for positive solutions of skew product elliptic equations}, Proc. Roy. Soc. Edinburgh Sect. A {\bf 150} (2020), 1429--1449.
 %
\bibitem{PinchoverGreen} Y. Pinchover, \textit{Large time behavior of the heat kernel and the behavior of the Green function near
criticality for nonsymmetric elliptic operators}, J. Funct. Anal., \textbf{104} (1992), 54--70.
\bibitem{PinchoverGreen2} Y. Pinchover, \textit{On the equivalence of Green functions of second order elliptic equations in~$\R^{n}$}, Differential and Integral Equations \textbf{5} (1992), 481--493.
\bibitem{Pinchovergp} Y. Pinchover, \textit{Maximum and anti-maximum principles and eigenfunctions estimates via perturbation theory of positive solutions of elliptic equations}, Math. Ann. {\bf 314} (1999), 555--590.
\bibitem{Pinchoverlinear} Y. Pinchover, \textit{Topics in the theory of positive solutions of second-order elliptic and parabolic partial differential equations}, in ``Spectral Theory and Mathematical Physics: A Festschrift in Honor of Barry Simon’s 60th
Birthday", 329--355, Proc. Sympos. Pure Math., \textbf{76}, Part 1, Amer. Math. Soc., Providence, RI, 2007.
\bibitem{Pinchover} Y. Pinchover and G. Psaradakis, {\em On positive solutions of the~$(p,A)$-Laplacian with a potential in Morrey space}, Anal. PDE {\bf 9} (2016), 1317--1358.
\bibitem{Regev} Y. Pinchover and N. Regev, {\em Criticality theory of half-linear equations with the $(p,A)$-Laplacian}, Nonlinear Anal. {\bf 119} (2015), 295--314.
 \bibitem{Lioupincho} Y. Pinchover, A. Tertikas, and K. Tintarev, {\em A Liouville-type theorem for the $p$-Laplacian with potential term}, Ann. Inst. H. Poincar\'{e}-Anal. Non Lin\'{e}aire  {\bf 25} (2008), 357--368.
 \bibitem{Tintarev} Y. Pinchover and K. Tintarev, \textit{Ground state alternative for~$p$-Laplacian with potential term}, Calc. Var. Partial Differential Equations {\bf 28} (2007), 179--201.
 \bibitem{Regev20} Y. Pinchover and K. Tintarev, \textit{On positive solutions of minimal growth for singular~$p$-Laplacian with potential term}, Advanced Nonlinear Studies \textbf{8} (2008), 213--234.
\bibitem{HSM} Y. Pinchover and K. Tintarev, \textit{On the Hardy-Sobolev-Maz’ya inequality and its generalizations}, in “Sobolev Spaces in Mathematics I: Sobolev Type Inequalities”, ed. V. Maz’ya, International Mathematical Series \textbf{8}, Springer, 2009, 281--297.
\bibitem{Regev21} Y. Pinchover and K. Tintarev, \textit{On positive solutions of p-Laplacian-type equations}, in: “Analysis, Partial Differential Equations and Applications – The Vladimir Maz'ya Anniversary Volume”, eds. A. Cialdea et al., Operator Theory: Advances and Applications, Vol. \textbf{193}, Birk\"auser Verlag, Basel, 2009, 245--268.
\bibitem{Pinsky} R. G. Pinsky, ``Positive Harmonic Functions and Diffusion", Cambridge Studies in Advanced Mathematics \textbf{45},
Cambridge University Press, Cambridge 1995.
\bibitem{Pucci} P. Pucci and  J. Serrin, ``The Maximum Principle", Progress in Nonlinear Differential Equations and their Applications \textbf{73}, Birkhäuser, Basel, 2007.
\bibitem{Rakotoson1990} J.-M. Rakotoson and W. P. Ziemer, \textit{Local behavior of solutions of quasilinear elliptic equations
with general structure}, Trans. Amer. Math. Soc. \textbf{319} (1990), 747--764.
\bibitem{Serrin1964} J. Serrin, \textit{Local behavior of solutions of quasi-linear equations}, Acta Math. \textbf{111} (1964), 247--302.
\bibitem{Regev25} J. Serrin, \textit{Isolated singularities of solutions of quasi-linear equations}, Acta Math. \textbf{113} (1965), 219--240.
\bibitem{Simon} B. Simon, ``Convexity: An Analytic Viewpoint", Cambridge University Press, New York, 2011.
 \bibitem{Struwe} M. Struwe, ``Variational Methods: Applications to Nonlinear Partial Differential Equations and Hamiltonian Systems", 4 Ed., Ergebnisse der Mathematik (3) \textbf{34}, Springer, Berlin, 2008.
\bibitem{Takeda2014} M. Takeda, \textit{Criticality and subcriticality of generalized Schr\"{o}dinger forms}, Illinois J. Math. \textbf{58} (2014), 251–277.
\bibitem{Takeda2016} M. Takeda, \textit{Criticality for Schr\"{o}dinger type operators based on recurrent symmetric stable processes}, Trans.
Amer. Math. Soc. \textbf{368} (2016), 149--167.
\bibitem{parabolicity1} M. Troyanov, \textit{Parabolicity of manifolds}, Siberian Adv. Math. \textbf{9} (1999), 125--150.
\bibitem{parabolicity2} M. Troyanov, \textit{Solving the $p$-Laplacian on manifolds}, Proc. Amer. Math. Soc. \textbf{128} (2000), 541--545.
 \bibitem{Trudinger1967} N. S. Trudinger, \textit{On Harnack type inequalities and their application to quasilinear elliptic equations}, Comm. Pure Appl. Math. \textbf{20} (1967), 721--747.
\end{thebibliography}
\end{document}